\numberwithin{equation}{section}
\newtheorem{thm}{Theorem}[section]
\newtheorem{stm}{Statement}
\newtheorem{defn}[thm]{Definition}
\newtheorem{prop}[thm]{Proposition}
\newtheorem{lemma}[thm]{Lemma}
\newtheorem{cor}[thm]{Corollary}
\newtheoremstyle{boldremark}
    {\dimexpr\topsep/2\relax} 
    {\dimexpr\topsep/2\relax} 
  {}          
    {}          
    {\bfseries} 
    {.}         
    {.5em}      
    {}          
\theoremstyle{boldremark}
\newtheorem{remark}[thm]{Remark}
\newcommand{\wasstwospace}{\mathscr{P}(\mathbb{T}^d)}
\newcommand{\dtorus}{\mathbb{T}^d}
\newcommand{\N}{{\ensuremath{\mathbb{N}}}}
\newcommand{\Z}{{\ensuremath{\mathbb{Z}}}}
\newcommand{\R}{{\ensuremath{\mathbb{R}}}}
\newcommand{\frmbar}{\ensuremath{\bar{\mathfrak{m}}}}
\newcommand{\frm}{\ensuremath{\mathfrak{m}}}
\newcommand{\eps}{\varepsilon}
\title{Short time solution to the master equation of a first order mean field game system}
\author{Sergio Mayorga\footnote{\href{mailto:smayorga3@gatech.edu}{smayorga3@gatech.edu}} \\ {\footnotesize School of Mathematics, Georgia Institute of Technology, Atlanta, GA 30332, USA}  }
\begin{document}
\maketitle
\begin{abstract} 
The goal of this paper is to show existence of short-time classical solutions to the so called Master Equation of \emph{first order} Mean Field Games, which can be thought of as the limit of the corresponding master equation of a stochastic mean field game as the individual noises approach zero. Despite being the equation of an idealistic model, its study is justified as a way of understanding mean field games in which the individual players'~randomness is negligible; in this sense it can be compared to the study of ideal fluids. We restrict ourselves to mean field games with smooth coefficients but do not impose any monotonicity conditions on the running and initial costs, and we do not require convexity of the Hamiltonian, thus extending the result of Gangbo and Swiech to a considerably broader class of Hamiltonians. 
\end{abstract}

{\footnotesize \textbf{MSC}: 34A12, 35R06, 35R15, 45K05, 49L99, 49N70, 91A13, 91A23.}  

{\footnotesize \textbf{Keywords}: mean field games; master equation; Hamilton-Jacobi equations; fixed-point method; characteristic equations; Wasserstein gradient.} 
\tableofcontents

\section{Introduction} The \textit{master equation} (ME, for short) of \emph{first order} mean field games, namely,  \begin{equation}\label{eq:ME}  \left\{ \begin{aligned}   \partial_s &u(s,q,\mu) + \int_{\dtorus} \nabla_{\mu} u(s,q,\mu)(x) \cdot \nabla_pH(x,\nabla_q u(s,x,\mu)) \mu(dx) \\ &\ \quad  + H(q,\nabla_q u(s,q,\mu)) + F(q,\mu) = 0  \  \qquad \qquad \textrm{ in } (0,T)\times\dtorus\times\wasstwospace , \\ &\ \hspace{4cm} u(0,q,\mu) =   g(q,\mu) ,  \qquad  \textrm{ on } \dtorus\times\wasstwospace  \end{aligned} \right. \end{equation} is a non-local, infinite-dimensional partial differential equation that arises in mean field game (abbreviated MFG) theory and can be interpreted as either the limit, as $N\to\infty,$ of a system of $N$ coupled Hamilton-Jacobi equations that represent the Nash equilibrium of a differential game played by $N$ interacting deterministic particles, or as the limiting case (formally, at least) of the master equation of a stochastic differential game when the viscosity parameter, associated with the intrinsic noise of the infinitesimal particles, tends to zero. In (\ref{eq:ME}), $\wasstwospace$ is the Wasserstein space of Borel probability measures on the $d$-dimensional torus $\dtorus.$ The objective of this paper is to construct a short-time solution to (\ref{eq:ME}) for an arbitrary smooth Hamiltonian $H.$ To our knowledge, existence of solutions to (\ref{eq:ME}) has only been shown for the particular case \cite{mfgmain} of the quadratic Hamiltonian $H(q,p)=\frac{1}{2}|p|^2.$ 

Mean-field theory in differential games began with the works of P.L.~Lions and J.M.~Lasry \cite{lionsjapanese} and M.~Huang, P.E.~Caines, R.P.~Malham\'e \cite{huangcainesmalhame}, attracting great interest since then for its numerous applications and posing challenging theoretical questions. Equation (\ref{eq:ME}) and its higher-order version were first introduced by Lasry and Lions \cite{courscollegefrance}, motivated by, among other reasons \cite{fourhorses}, the need to clarify the connection between games with finitely but many players, and MFGs. Regarding the latter, and considerably more scrutinized so far than (\ref{eq:ME}), is the so called \textit{first order mean field game system}: \begin{empheq}[left=\empheqlbrace]{align}  \partial_t U(t,q) + H(q,\nabla_q U(t,q) ) + F(q,\sigma_t) = &\ 0 \quad \textrm{ in } (0,s)\times\dtorus , \label{eq:mfg1} \\ \partial_t \sigma_t  + \textrm{div}(\sigma_t \nabla_pH(q,\nabla_q U )) = &\ 0 \quad \textrm{ in }   \mathcal{D}'((0,s)\times\dtorus),  \label{eq:mfg2} \\ U(0,\cdot) = &\ g (\cdot,\sigma_0), \label{eq:mfg3} \\ \sigma_s = &\ \mu ,\label{eq:mfg4}  \end{empheq}  which describes a Nash-type equilibrium state of a differential game\footnote{Systems such as (\ref{eq:mfg1}-\ref{eq:mfg4}) are often called deterministic because they derive from MFGs where the differential equation that governs the evolution of each player has no stochastic terms, with the resulting MFG system featuring no second order derivatives.} played by a continuum of players on $\dtorus$ who seek to minimize a certain cost function that depends on the collective behavior of all the players; in such a state, $U(t,q)$ represents the value function of a typical player $q$ at time $t$ and $\sigma_t$ is the mass distribution of all the players at time $t$, represented by a Borel probability measure on $\dtorus.$ The first equation in (\ref{eq:mfg1}-\ref{eq:mfg4}) is a forward Hamilton-Jacobi equation and the second a backward continuity equation. These equations can be derived as the optimality conditions of the aforementioned game (see, e.g.~\cite{benamousantambrogio,cardaliaguetnotes,gueant}) or as the limit of approximate Nash equilibria in finitely-many player games (e.g.~\cite{lackerlimit}). Alternatively, a solution to (\ref{eq:ME}) can be used directly to construct an optimal control for the Nash equilibrium of the mean field game \cite{gangboberkeleynotes}. The now extensive and rapidly growing literature on MFGs includes surveys and books \cite{bensoussanbook,cardaliaguetnotes,carmonabible,diogomodelssurvey} that give comprehensive accounts of the theory, its models and applications, with at least one book \cite{diogoregularitybook} devoted entirely to regularity theory of MFGs.

In the present work, the Hamiltonian $H$ is smooth, and not necessarily convex in $p$ (which is commonly required), while the couplings $F$ and $g$ are smooth and jointly Lipschitz, with a continuous ``mixed'' derivative (see Section \ref{subsection:dataformfg} for the assumptions). In the master equation, $F$ and $g$ are required to possess some more regularity in both variables. For convenience, we refer the reader to Section \ref{section:summaryofresults} for a summary of our results. When $H(q,\cdot)$ is not convex, the typical interpretation of the MFG system as an optimization problem in mean field games is lost, since the Legendre transform of $H$ is not defined (however, nonconvex MFGs appear in the literature \cite{trannonconvex}, \cite{bardicirant}). Nevertheless, this does not prevent us from looking at (\ref{eq:ME}) and (\ref{eq:mfg1}-\ref{eq:mfg4}) as a legitimate problem in PDEs. Considerable understanding of the relationship between master equations and mean field game systems has been gained since their inception, and, indeed, an established strategy \cite{fourhorses}, which we also use here, to construct a solution to the master equation is to use solutions to the MFG system as the starting point; a probabilistic approach can be found, e.g., in \cite{chassagneux,carmonabible}. However, it is MFG systems that have seen more rigorous and abundant treatment in the literature. Theoretically, this is mostly due to the derivative in measure that appears in (\ref{eq:ME}). The main issue with the strategy just mentioned becomes to prove that the solutions to (\ref{eq:mfg1}-\ref{eq:mfg4}) behave nicely enough with respect to the terminal measure $\mu.$ 

Concerning the system\footnote{Our presentation of the MFG is reversed in time with respect to the most frequent one encountered in literature, i.e., with a terminal condition for $g$ and an initial one for $\sigma,$ and with minus signs in front of the time derivatives.} (\ref{eq:mfg1}-\ref{eq:mfg4}), weak solutions in the viscosity sense were initially obtained on $\dtorus$ and $\R^d$ for regularizing couplings, Hamiltonians with quadratic growth in the momentum variable and arbitrary time horizons \cite{averbukh14,cardaliaguetnotes,cardaliaguetweakkam,lionsjapanese}. Uniqueness is usually obtained by imposing monotonicity conditions on the couplings. Several significant modifications and refinements of these results have been obtained since then, e.g., existence and uniqueness of weak solutions in the case of local couplings \cite{cardaliaguetgraber}, first-order \cite{cardaliaguetporretta,lavenantsantambrogio,santambrogioregularity} and higher \cite{grabermeszaros} Sobolev estimates of such solutions, with different growth conditions of the Hamiltonian, whose convexity in the momentum variable is always required, and absolute continuity of the terminal measure $\mu;$  all the approaches in these developments work for arbitrary time horizons. Regarding second order MFGs, i.e., \begin{equation}\label{eq:mfgsystem2} \left\{ \begin{aligned} \epsilon \Delta U + \partial_t U(t,q) +  H(q,\nabla_q U(t,q) ) + F(q,\sigma_t) = &\ 0 \quad \textrm{ in } [0,s]\times\dtorus ,  \\ -\epsilon \Delta U + \partial_t \sigma_t + \textrm{div}(\sigma_t \nabla_pH(q,\nabla_q U )) = &\ 0 \quad \textrm{ in }   \mathcal{D}'([0,s]\times\dtorus),  \end{aligned} \right. \end{equation} strong solutions for considerably larger classes of coefficients, in which the interaction of the particles enters the equation directly in $H$ and not necessarily through $F$ as a separate summand, have been obtained by Ambrose \cite{ambrosesmallstrong,ambrosestrong}, with the requirement of certain smallness assumptions on the data (see, also, \cite{ciranttonon}). In general, the presence of the viscosity term in (\ref{eq:mfgsystem2}) affords solutions to enjoy better regularity properties. For this reason, and because (\ref{eq:mfgsystem2}) accommodates models where players act non-deterministically, second order mean field games have received more attention in the scientific literature \cite{diogomodelssurvey,diogoregularitybook}. Incidentally, we see that due to the sign of the viscosity term in the first equation of (\ref{eq:mfgsystem2}), the system can be well posed only if an initial condition is prescribed while the opposite sign in the second equation makes it mandatory to prescribe the terminal value of $\sigma.$ 

In the course of our work towards the ME, we obtain classical solutions for short times of the system (\ref{eq:mfg1}-\ref{eq:mfg4}), with the conditions on the coefficients $H, F, g$ mentioned above.

With respect to master equations, most available literature has dealt with the higher-order variants \cite{bensoussanthree, bensoussanthree2,carmonabible,carmonadelaruelarge}. The recent paper by Cardaliaguet et al.~\cite{fourhorses} includes rigorous proofs of classical solutions for the master equations of second-order MFGs, such as (\ref{eq:mfg1}-\ref{eq:mfg4}) and their characterization as the convergence of $N$-player Nash systems as $N\to\infty.$ As for the first-order master equation (\ref{eq:ME}), a major step was achieved by Gangbo-\'Swi\k{e}ch in \cite{mfgmain},  where short-time strong solutions to both the MFG system (\ref{eq:mfg1}-\ref{eq:mfg4}) and the ME are obtained for quadratic Hamiltonians and potential-derived couplings. The same result was later proved by Bessi \cite{bessi} using different techniques. The present paper works with general smoothness conditions on $H$, $F$ and $g$ that include \cite{mfgmain} as a particular case, but which constitute a solid generalization, especially since it does not require any geometric assumptions: the Hamiltonian $H$ may be nonconvex in $p.$ We use similar ideas and techniques to arrive at the ME, but our route to the MFG system is different. Let us explain the differences with \cite{mfgmain}. Given $H, F, g$ as in Section \ref{subsection:dataformfg}, and given $0<s<T,$ $\mu\in\wasstwospace,$ we prove that, granted $T$ is small, there are functions $\Sigma^1:[0,T]\times\dtorus\to\dtorus,$ $\Sigma^2:[0,T]\times\dtorus\to\R^d$ that solve the infinite-dimensional Hamiltonian system \begin{equation}\label{eq:introhamsystem} \partial_t \Sigma^1 = \nabla_p H(\Sigma^1,\Sigma^2), \qquad \partial_t \Sigma^2 = -\nabla_q H(\Sigma^1,\Sigma^2) - \nabla_q F(\Sigma^1,\Sigma^1_{\#}\mu) \end{equation} with initial and terminal conditions $$ \Sigma^2(0,q) = \nabla_q g(\Sigma^1(0,q),{{}\Sigma^1(0,\cdot)}_{\#}\mu) , \qquad \Sigma^1(s,q) = q , $$ providing us with a path in $\wasstwospace$ given by $$ t\mapsto \sigma_t := {}{\Sigma^1_t}_{\#}\mu $$ and prove that there is a function $U:[0,T]\times\dtorus\to\R$ such that\footnote{We follow the convention, common in this field, of using the subindex $t$ to mean ``at time $t$'', and thus a shorthand for $(t,\ldots)$ rather than the time derivate.} \begin{equation}\label{eq:gradgeneralH} \nabla_q U(t,\Sigma^1_t) = \Sigma_t^2. \end{equation} On the other hand, we have that the velocity vector $v_t$ driving the path $\sigma_t$ satisfies $$ v(t,\Sigma^1_t) = \partial_t\Sigma_t^1,$$ and the first equation in (\ref{eq:introhamsystem}) implies \begin{equation}\label{eq:gradspecificH} \nabla_p H(q,\nabla_q U(t,\Sigma^1_t)) = \partial_t \Sigma^1_t. \end{equation} Comparing (\ref{eq:gradspecificH}) and (\ref{eq:gradgeneralH}), we see that they are the same if $H(q,p)=\frac{1}{2}|p|^2,$ which is the Hamiltonian in \cite{mfgmain}, and, indeed, in that case, $\partial_t\Sigma^1 = \Sigma^2,$ with $\nabla_q U(t,\cdot)$ coinciding with the velocity $v_t$ (\textit{a posteriori} from (\ref{eq:gradgeneralH})). Thus, the function $\Sigma^2$ is not present in \cite{mfgmain}, with $\partial_t \Sigma^1$ taking its place, while the relationship $\nabla_q U(t,\Sigma_t^1) = \partial_t \Sigma^1_t$ is obtained via the link of the MFG system with a variational problem: if $L(x,v):=\frac{1}{2}|v|^2,$ and having shown that the pair $(\sigma,v)$ is the unique minimizer\footnote{See also \cite{ghoussoub} for a recent connection between value functionals such as (\ref{eq:theminimizerof}) and Hopf-Lax formulae on the Wasserstein space.} of \begin{equation}\label{eq:theminimizerof} \mathcal{U}(s,\mu) = \inf_{\substack{(\sigma,v)}}\big\{ \int_0^s \int_{\dtorus} \big( L(q,v_t(q)) - \mathcal{F}(\sigma_t) \sigma_t(dq) \big) dt + \mathcal{G}(\sigma_0) \ \big| \ \sigma_s = \mu,\ \sigma\in AC^2(0,s;\wasstwospace) \big\} , \end{equation} where $\mathcal{F},\mathcal{G}:\wasstwospace\to\R$ are functions whose Wasserstein gradients are $F$ and $g,$ the minimality of the norm of $v_t$ then follows, leading to the symmetry of $\nabla_q v_t(q),$ which is used to establish $\nabla_q U(t,\Sigma_t^1)=\partial_t \Sigma_t^1$ and, in turn, the Hamilton-Jacobi equation in (\ref{eq:mfg1}-\ref{eq:mfg4}). In the case of the general Hamiltonian, it is no longer clear how this approach can give us (\ref{eq:gradgeneralH}). We turn, instead, to a more direct procedure (Lemma \ref{lemma:zlemma}) that also helps to shed further light on how the equations (\ref{eq:hamODEs}) are the \textit{characteristics} of (\ref{eq:mfg1}-\ref{eq:mfg4}). This optic allows us to present a sort of uniqueness counterpart (Theorem \ref{thm:uniquenessmfg}) to the existence result, namely, that if a solution $(\tilde U,\tilde \sigma)$ is in $W^{2,3;\infty}((0,T)\times\dtorus)\times AC^2(0,T;\wasstwospace),$ then it must coincide, at least for a shorter time $T$, with the pair $(U,\sigma)$ constructed from $(\Sigma^1,\Sigma^2).$ With this approach we manage to circumvent the specific potential forms for $F$ and $g$ present in \cite{mfgmain}. In Section \ref{section:regularityinmu} we work out the differentiability of $\Sigma$ in $\mu$ through the same discretization approach used in \cite{mfgmain}. This is followed by the chain rules and Lipschitz estimates of the composite functions that enter the representation formula for $u$ in (\ref{eq:specificu}). We should say that the formulas for the Wasserstein gradients have to be defined and their Lipschitz estimates proved; there is no general rigorous rule on composite functions that we can invoke. Finally, and due to the preceding remarks about $\partial_t\Sigma_t^1$ and $\Sigma_t^2,$ an extra tool (Lemma \ref{lemma:keylemma}) is needed to complete the chain rule for $u$ that is really the essence of the master equation (Theorem \ref{thm:masterequation1}).
 \section{Preliminaries}\label{section:preliminaries} For full details on the theory of optimal transport and the Wasserstein space of probability measures on the $d$-dimensional torus $\dtorus:=\R^d/\Z^d$, we refer the reader to \cite{weakkam}. In this section we set down the notation for the paper, present a few general results that will be needed and fix the class of coefficients for the MFG equations. We also fix our meaning of classical solution to the MFG system and to the master equation. \begin{itemize} 
\item The set of equivalence classes on $\R^d$ with respect to the equivalence relation: $$ x\sim y \qquad  \textrm{iff} \qquad \textrm{ there exist integers } n_1, \ldots, n_d \textrm{ such that } x^{(j)}-y^{(j)} = n_j, \ j=1,\ldots,d $$ is denoted by $\dtorus,$ where $x^{(j)},$ $y^{(j)}$ are the $j$-th coordinates of $x,$ $y.$ If $x,y\in\dtorus$ then, $$ |x-y|_{\dtorus} := \min\{ |x'-y'| \ \big| \ x, y \in\R^d, x'\sim x, y'\sim y \}.  $$ 
\item If $\mu,$ $\nu$ are Borel probability measures on $\R^d$, $\Gamma(\mu,\nu)$ denotes the set of those Borel probability measures $\gamma$ on $\R^d\times\R^d$ whose marginals are $\mu$ and $\nu$: that is, $\pi^1_{\#}\gamma=\mu$ and $\pi^2_{\#}\gamma=\nu,$ where $\pi^1,\pi^2:\R^d\times \R^d\to \R^d$ are the first and second coordinate projections, respectively, and the subindex $_{\#}$ stands for the pushforward operator. 
\item We use the standard notation $\mathscr{P}_2(\R^d)$ for the Wasserstein space of Borel probability measures on $\R^d$ whose second moments are finite, with quadratic Wasserstein distance $W_2.$ 
\item For $\mu, \nu\in\mathscr{P}_2(\R^d),$ we define \begin{equation}\label{eq:defofW} \mathscr{W}(\mu,\nu) = \bigg( \inf_{\gamma\in\Gamma(\mu,\nu)} \int_{\R^d\times\R^d} |x-y|^2_{\dtorus}\gamma(dx,dy) \bigg)^{1/2} ,  \end{equation} and let $\Gamma_0(\mu,\nu)$ denote the set of optimal transport plans $\gamma$ between $\mu$ and $\nu,$ i.e.~those for which the infimum in (\ref{eq:defofW}) is attained. With the equivalence relation \begin{equation*}
\mu \sim \nu \qquad \textrm{ iff } \qquad \int_{\R^d}\phi d\mu = \int_{\R^d} \phi d\nu \ \textrm{ for all }\  \phi\in C(\dtorus) 
\end{equation*} 
on $\mathscr{P}_2(\R^d),$ where $C(\dtorus)$ are all real-valued continuous functions $\phi$ on $\R^d$ such that $\phi(x)=\phi(x')$ whenever $x\sim x',$ it is true that $\mathscr{W}(\mu,\nu) = \mathscr{W}(\mu',\nu')$ whenever $\mu\sim\mu'$ and $\nu\sim\nu'.$ In this way, $\mathscr{W}$ in formula (\ref{eq:defofW}) is defined on the set of equivalence classes, which we henceforth denote by $\wasstwospace.$ Moreover, $\mathscr{W}$ is a metric on $\wasstwospace,$ with respect to which $\wasstwospace$ is compact.
\item By a mapping $F:\dtorus\to S,$ where $S$ is any set, we mean $F:\R^d\to S$ such that $F(x)=F(x')$ whenever $x\sim x'.$ Likewise, a mapping $\mathcal{F}:\wasstwospace\to S$ is a function $\mathcal{F}:\mathscr{P}(\R^d)\to S$ that takes constant values on the equivalence classes of $\wasstwospace.$ Furthermore, a function $F:\dtorus\to\dtorus$ is to be understood as a function $F:\R^d\to\R^d$ such that $F(x)\sim F(y)$ whenever $x\sim y.$ 
\item If $x=(x_1,\ldots,x_n)\in (\R^d)^n,$ then $\mu^x\in\mathscr{P}_2(\R^d)$ denotes the measure $\mu^x = \frac{1}{n}\sum_{j=1}^n \delta_{x_j}.$ Such measures are called averages of Dirac masses.
\item If $f, g : \R^d\to \R^d$ are Borelian, and $\mu\in\mathscr{P}_2(\R^d),$ then, estimating through $(f\times g)_{\#}\mu$ one obtains  
\begin{equation}\label{eq:remarkfg} 
W_2(f_{\#}\mu,g_{\#}\mu)\leq\|f-g\|_{L^{2}(\R^d,\mu)}.
\end{equation} 
\item Let $\mu\in\mathscr{P}_2(\R^d).$ Then $L^2(\dtorus,\mu)$ denotes the completion of $C(\dtorus)$ with respect to the $L^2(\R^d,\mu)$ norm: $L^2(\dtorus,\mu)=\overline{C(\dtorus)}^{L^2(\R^d,\mu)}.$ At the same time, we define the \textit{tangent space to $\wasstwospace$ at $\mu$}, $\mathscr{T}_{\mu}\wasstwospace,$  to be the $L^2(\R^d,\mu)$-completion of the subspace of $L^2(\dtorus,\mu)$ consisting of gradients of smooth periodic functions on $\R^d$: $ \mathscr{T}_{\mu}\wasstwospace := \overline{\nabla C^{\infty}(\dtorus;\R)}^{L^2(\R^d,\mu)}.$
Since $L^2(\dtorus,\mu)$ is a Hilbert space, if we have $\xi, \eta \in L^2(\dtorus,\mu),$ and $\eta\in\mathscr{T}_{\mu}\wasstwospace,$  then 
\begin{equation}\label{eq:projfact}
\int_{\dtorus} \xi(x)\cdot \eta(x) \mu(dx) = \int_{\dtorus} \bar\xi(x) \cdot \eta(x) \mu(dx),
\end{equation} 
where $\bar\xi$ is the projection of $\xi$ onto $\mathscr{T}_{\mu}\wasstwospace.$ 
\item \textit{Wasserstein distance between average of Dirac masses.} If $\mu, \nu \in\mathscr{P}_2(\R^d)$ are such that $\mu=\frac{1}{n}\sum_{j=1}^n \delta_{x_j}$ and $\nu=\frac{1}{n}\sum_{j=1}^n\delta_{y_j},$ where $x_j\neq x_k, y_j\neq y_k$ for $j\neq k,$ then there is a permutation $p:\{1,\ldots,n\}\to\{1,\ldots,n\}$ such that $$ \mathscr{W}^2(\mu,\nu) = \frac{1}{n} \sum_{j=1}^n |y_{p(j)}-x_j|_{\dtorus}^2 .$$ 
\item We denote by $AC^{2}(0,T;\wasstwospace)$ the set of paths $\mu:(0,T)\to\wasstwospace$ for which there exists $m\in L^2(0,T)$ such that $\mathscr{W}(\mu_{t_1},\mu_{t_2})\leq \int_{t_1}^{t_2}m(\tau) d\tau$ whenever $0<t_1\leq t_2<T.$ 
\item 
We say that a time-dependent velocity vector field $v_t:\dtorus\to\R^d$ is a velocity vector field for the absolutely continuous path $\mu_t$ if $v_t\in L^p(\dtorus,\mu)$, 
$$ 
\int_0^T\int_{\dtorus}|v_t(q)|\mu_t(dq) dt < \infty
$$
and the \textit{continuity equation} is true:
\begin{equation*}
\partial_t \mu_t + \textrm{div}(v_t\mu_t) = 0 \quad \textrm{ in } \mathcal{D}'((0,T)\times\dtorus).
\end{equation*}
\item
A path $\mu_t$ in $\wasstwospace,$ $0\leq t\leq 1,$ is said to be a constant-speed geodesic if
$$
\mathscr{W}(\mu_{t_1},\mu_{t_2}) = |t_2-t_1|\mathscr{W}(\mu_0,\mu_1), \quad t_1,t_2\in[0,1].
$$
Given a path $\mu_t$ in $\wasstwospace,$ a velocity $v_t$ for $\mu_t$ is in $L^2(\dtorus;\mu)$ but it may or may not be in $\mathscr{T}_{\mu_t}\wasstwospace.$ However, if $\mu_t$ is an $AC^2(0,T;\wasstwospace)$ path, a velocity field of minimal $L^2(\dtorus,\mu)$-norm always exists, and it belongs to $\mathscr{T}_{\mu_t}\wasstwospace.$ This is the content of \cite[Theorem 8.3.1]{gradientflows}.
\begin{remark}\label{remark:littlefact}
Let $\mu, \nu \in \wasstwospace,$ $\gamma\in\Gamma_0(\mu,\nu).$ For each $0\leq \tau\leq 1,$ let $$\mu^{\tau}:= [(1-\tau)\pi^1 + \tau\pi^2]_{\#}\gamma.$$ Let $w^{\tau},$ $0\leq \tau\leq 1,$ be the velocity vector field of minimal norm for $\mu^{\tau}.$ 
\begin{enumerate}[(i)]
\item For $0\leq \tau\leq 1,$ $\|w^{\tau}\|_{L^2(\mu^{\tau})} = \mathscr{W}(\mu,\nu).$
\item For every $f\in C(\dtorus;\R^d),$ every $0\leq \tau\leq 1,$
\begin{align*}
\int\limits_{\dtorus\times\dtorus} f((1-\tau)x+\tau y)\cdot w^{\tau}((1-\tau)x+\tau y)\gamma(dx,dy) = \int\limits_{\dtorus\times\dtorus} f((1-\tau)x+\tau y)\cdot (y-x) \gamma(dx,dy). 
\end{align*} 
\item Furthermore, fix $\tau\in (0,1),$ and let $\gamma^{\tau}\in\Gamma_0(\mu,\mu^{\tau}).$ Then, for every $f_1, f_2\in C(\dtorus;\R^d),$ 
\begin{align*}
\int_{\dtorus\times\dtorus} [  f_2(y)\cdot w^{\tau}(y) - f_1(x) \cdot w^{0}(x) ] \gamma^{\tau}(dx,dy) 
=
\int_{\dtorus\times\dtorus} [f_2(y) - f_1(x)] \cdot \frac{y-x}{\tau} \gamma^{\tau}(dx,dy).
\end{align*}  
\end{enumerate}
\end{remark}
We omit the proof of this remark, for which the reader can refer to \cite{majorga}. $\sslash$
\item
\textit{Density of average of Dirac masses in $\wasstwospace.$} Let $\mu\in\mathscr{P}_2(\R^d).$ As it is well known (see, for instance, \cite[Ex.~8.1.6]{bogachev}), the set of average of Dirac masses is dense in $\mathscr{P}_2(\R^d)$ with respect to narrow convergence. In $\wasstwospace,$ this convergence coincides with convergence in $\mathscr{W}.$ Thus, there exists a sequence $\{\mu(n)\}_1^{\infty}\subset\mathscr{P}_2(\R^d),$ with $ \mu(n) = \frac{1}{n} \sum_{j=1}^n \delta_{x_j(n)},$ an average of Dirac masses, such that $\mathscr{W}(\mu,\mu(n))\to 0$ as $n\to\infty.$ Moreover, this sequence can be chosen so that each $x_j(n)\in\textrm{supp}(\mu),$ where $\textrm{supp}(\mu)$ is the support of the measure $\mu.$    
\end{itemize}  \subsection{Assumptions for the mean-field game equations}\label{subsection:dataformfg} \begin{enumerate} \item Let $H\in C^3(\dtorus\times\R^d), $ $H=H(q,p).$ In this manuscript, $\nabla_qH(\cdot,\cdot)$ will always denote the gradient of $H$ with respect to $q$, evaluated at $(\cdot,\cdot).$ Similarly for $\nabla_pH(\cdot,\cdot),$ and higher-order derivatives. 
\item Let $F=F(q,\mu),$ $q\in\dtorus,$ $\mu\in\wasstwospace,$ be continuous in the $\mu$ variable and of class $C^3$ in $q$, and let $\kappa>0$ be a constant such that  $$ |\nabla_q F(q,\mu)|,\ |\nabla^2_{qq}F(q,\mu)|,\ |\nabla^3_{qqq}F(q,\mu)| \leq \kappa, \quad q\in\dtorus, \mu\in\wasstwospace.  $$ Suppose, further, that $\nabla_q F$ is $\kappa$-Lipschitz on $\dtorus\times\wasstwospace,$ meaning that $$ |\nabla_q F(q_1,\mu_1)-\nabla_q F(q_2,\mu_2)|\leq\kappa\sqrt{|q_1-q_2|^2+\mathscr{W}^2(\mu_1,\mu_2)}, \quad q_1, q_2 \in \dtorus, \ \mu_1, \mu_2 \in \wasstwospace .
$$  
\item Furthermore: we require that the vector field $\nabla_q F(q,\mu)$ is differentiable with respect to every $\mu,$ at every $q,$ and
$$
\nabla_{\mu}\nabla_qF(q,\mu)(x) =: \nabla^2_{\mu q}F(q,\mu)(x)
$$
is continuous in $(q,\mu,x)$ (hence, uniformly bounded).
\item 
Let $g=g(q,\mu),$ $q\in\dtorus,$ $\mu\in\wasstwospace,$ and suppose $g$ satisfies exactly the same conditions asked of $F.$ 
\end{enumerate}
We call the triple $(H,F,g)$ \textit{the coefficients} for the mean-field game equations.
\subsection{Assumptions for the master equation}\label{subsection:medata}
In addition to the previous set of conditions, here we suppose that the functions 
$$
(q,\mu)\mapsto F(q,\mu), \quad  (q,\mu)\mapsto g(q,\mu) \quad \textrm{ are twice differentiable}
$$ 
in the sense explained below in Section \ref{subsubsection:twicedifferentiability}, and that 
$$
\nabla_{\mu}F, \  \nabla^2_{q\mu}F, \  \nabla^2_{\mu\mu} F, \  \nabla^2_{x\mu}F \quad \textrm{ are continuous in all its variables}
$$ (and, therefore, uniformly bounded). We suppose an identical statement holds for $g$.

\textit{Examples.} \textit{1.} The following is the case in \cite{mfgmain}:
$$
F(q,\mu) = \int_{\dtorus}\phi(q-y)\mu(dy), \quad g(q,\mu) = U^0(q) + \int_{\dtorus}U^1(q-y)\mu(dy),
$$
where $U^0,$ $U^1,$ $\phi$ are smooth functions and $\phi, U^1$ are even.

\textit{2.} We can take
$$
F(q,\mu) = U(q) + \frac{1}{m}\int\limits_{(\dtorus)^m}\Phi(q,y_1,\ldots,y_m)\mu(dy_1)\cdots\mu(dy_m)
$$
(and similarly for $g$), where $U$ and $\Phi$ are smooth and $\Phi$ is symmetric in its $m+1$ variables; see \cite{chowgangbo}.
\subsection{Definitions of classical (strong) solutions}\label{subsection:defnofclassicalmfg} Let $T>0,$  and $F,g: \dtorus\times\wasstwospace\to\R$ be continuous; let $H:\dtorus\times\R^d\to\R$ be continuous and differentiable in $p.$ \paragraph{MFG system} Let $0<s<T,$ $\mu\in\wasstwospace.$ We say that the pair of functions $U:(0,T)\times\dtorus \to \R,$ $\sigma : (0,T)\to \wasstwospace$ is a \textit{classical solution to the first-order MFG system} (\ref{eq:mfg1}-\ref{eq:mfg4}) \textit{on $\dtorus$ with coefficients $(H,F,g)$ and parameters $s,$ $\mu$} if the following hold: \begin{itemize}\renewcommand{\labelitemi}{\tiny$\blacksquare$} \item $U\in C^1((0,T)\times\dtorus);$ \item the path $\sigma\in AC^2(0,T;\wasstwospace)$ and (\ref{eq:mfg2}) is true in the sense of distributions, i.e., for every $\varphi\in C_c^{\infty}((0,T)\times\dtorus)$: 
\begin{equation*}
\int_0^T \int_{\R^d} [ \partial_t \varphi(t,q) + \nabla \varphi(t,q)\cdot \nabla_pH(q,\nabla_q U(t,q)) ]  \sigma_t(dq)  dt = 0  ; 
\end{equation*}
\item equation (\ref{eq:mfg1}) is satisfied pointwise, along with the condition (\ref{eq:mfg3}) at time $t=0$ for $U$ and the condition (\ref{eq:mfg4}) at time $t=s$ for $\sigma$. \end{itemize} We will often refer to the function $U$ in (\ref{eq:mfg1}) as the \textit{value function}. \paragraph{Master equation} We say that the function $u:(0,T)\times\dtorus\times\wasstwospace\to\R$ is a \textit{classical solution the master equation of first-order MFGs} (\ref{eq:ME}) \textit{with coefficients $(H,F,g)$} if: \begin{itemize}\renewcommand{\labelitemi}{\tiny$\blacksquare$} \item  $u$ is differentiable in $s,$ with $\partial_s u(\cdot,\cdot,\mu)$ continuous at every $\mu\in\wasstwospace;$ \item  $u$ is differentiable in $q$, with $\nabla_qu$ continuous in all three variables; \item $u$ is differentiable in $\mu$ (see the following section), and $u$ satisfies (\ref{eq:ME}) pointwise.\end{itemize} We will refer to the function $u$ in (\ref{eq:ME}) as the \textit{full value function}.
\subsection{Differentiability in the Wasserstein space}\label{subsection:differentiabilityinwasserstein} 
Let $\mathcal{W}$ be a real-valued function on $\wasstwospace$ and let $\mu\in\mathscr{P}_2(\R^d)$ be fixed. For $\xi\in L^2(\dtorus,\mu),$ $\nu\in\mathscr{P}_2(\R^d),$ $\gamma\in\Gamma(\mu,\nu),$ define 
\begin{equation*}
e(\nu,\xi,\gamma) := \mathcal{W}(\nu) - \mathcal{W}(\mu) - \int\limits_{\R^d\times\R^d} \xi(x)\cdot (y-x)\gamma(dx,dy). 
\end{equation*} 
We have chosen to present this section with a notation similar to the one found in the paper \cite{gangtud2017}, which unifies the different notions of differentiability on $\mathscr{P}_2(\R^d)$ used in the literature. If $r>0,$ set 
\begin{equation*} 
e[\xi,r] = \sup_{\gamma\in\Gamma(\mu,\nu)}\sup_{\nu\in\mathscr{P}_2(\R^d)} \big\{ \frac{|e(\nu,\xi,\gamma)|}{\|\pi^1-\pi^2\|_{\gamma}} \ \big| \ \|\pi^1-\pi^2\|_{\gamma} \leq r \big\} 
\end{equation*} 
and 
\begin{equation*} e^0[\xi,r] = \sup_{\gamma\in\Gamma_{0}(\mu,\nu)}\sup_{\nu\in\mathscr{P}_2(\R^d)} \big\{ \frac{|e(\nu,\xi,\gamma)|}{\|\pi^1-\pi^2\|_{\gamma}} \ \big| \ \|\pi^1-\pi^2\|_{\gamma} \leq r \big\}.    
\end{equation*} 
Here $\pi^1:\dtorus\times\dtorus\to\dtorus$ denotes projection onto the first component; and $\pi^2$ onto the second.
\begin{defn}
\label{defn:defnofgradient} 
With the preceding notation, we say that $\mathcal{W}$ is differentiable at $\mu$ if 
\begin{equation}\label{eq:limhate} 
\lim_{r\to 0^+} e^0[\xi,r] = 0. 
\end{equation} 
The set of all $\xi\in L^2(\dtorus,\mu)$ for which (\ref{eq:limhate}) holds is denoted  $\partial\mathcal{W}(\mu).$ \end{defn} 
\begin{lemma}
\label{lemma:atmostonedifferential} 
If $\xi\in\partial\mathcal{W}(\mu),$ then so is its projection $\bar{\xi}$ onto $\mathscr{T}_{\mu}\mathscr{P}(\dtorus),$ which is then the unique element of minimal norm in $\partial\mathcal{W}(\mu)$ and is denoted by $$ \nabla_{\mu}\mathcal{W}(\mu).$$ We will call it the \emph{Wasserstein gradient of $\mathcal{W}$ at $\mu.$}  
\end{lemma}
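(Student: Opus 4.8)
The plan is to separate the two assertions --- that $\bar\xi\in\partial\mathcal{W}(\mu)$, and that $\bar\xi$ is then the unique minimal-norm element of $\partial\mathcal{W}(\mu)$ --- and to work throughout in the Hilbert space $L^2(\dtorus,\mu)$, writing $\zeta:=\xi-\bar\xi$, so that $\zeta\perp\mathscr{T}_{\mu}\mathscr{P}(\dtorus)$; in particular $\int_{\dtorus}\zeta\cdot\nabla\phi\,\mu(dx)=0$ for every $\phi\in C^{\infty}(\dtorus;\R)$. The single non-bookkeeping input I will use is that, for an \emph{optimal} plan $\gamma\in\Gamma_0(\mu,\nu)$, the integrated displacement $y-x$ is a tangent vector at $\mu$: taking $\tau=0$ in Remark \ref{remark:littlefact}(ii), $\int_{\dtorus\times\dtorus}f(x)\cdot(y-x)\,\gamma(dx,dy)=\int_{\dtorus}f(x)\cdot w^{0}(x)\,\mu(dx)$ for all $f\in C(\dtorus;\R^d)$, where $w^{0}$ is the minimal-norm velocity at $\tau=0$ of the displacement geodesic $\tau\mapsto\mu^{\tau}$; by \cite[Theorem 8.3.1]{gradientflows} one has $w^{0}\in\mathscr{T}_{\mu}\mathscr{P}(\dtorus)$, and $\|w^{0}\|_{L^2(\dtorus,\mu)}=\mathscr{W}(\mu,\nu)$ by Remark \ref{remark:littlefact}(i).

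The first step is to show that $e(\nu,\bar\xi,\gamma)=e(\nu,\xi,\gamma)$ for every $\nu$ and every $\gamma\in\Gamma_0(\mu,\nu)$, which amounts to $\int_{\dtorus\times\dtorus}\zeta(x)\cdot(y-x)\,\gamma(dx,dy)=0$. For the latter, approximate $\zeta$ in $L^2(\dtorus,\mu)$ by $\zeta_k\in C(\dtorus;\R^d)$ (possible since $L^2(\dtorus,\mu)$ is the $L^2(\R^d,\mu)$-closure of $C(\dtorus)$), apply the displayed identity with $f=\zeta_k$, and let $k\to\infty$: the right-hand side tends to $\int_{\dtorus}\zeta\cdot w^{0}\,\mu(dx)=0$ because $w^{0}\in\mathscr{T}_{\mu}\mathscr{P}(\dtorus)\perp\zeta$, while $\big|\int_{\dtorus\times\dtorus}(\zeta-\zeta_k)(x)\cdot(y-x)\,\gamma(dx,dy)\big|\le\|\zeta-\zeta_k\|_{L^2(\dtorus,\mu)}\,\|\pi^1-\pi^2\|_{\gamma}\to 0$. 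Since $e(\nu,\bar\xi,\gamma)-e(\nu,\xi,\gamma)=\int_{\dtorus\times\dtorus}\zeta(x)\cdot(y-x)\,\gamma(dx,dy)$, this yields $e^{0}[\bar\xi,r]=e^{0}[\xi,r]\to 0$ as $r\to 0^{+}$, hence $\bar\xi\in\partial\mathcal{W}(\mu)$.

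The second step is that any two elements of $\partial\mathcal{W}(\mu)$ differ by a vector orthogonal to $\mathscr{T}_{\mu}\mathscr{P}(\dtorus)$. Given $\xi_1,\xi_2\in\partial\mathcal{W}(\mu)$ and $\phi\in C^{\infty}(\dtorus;\R)$, let $\nu_t:=(\mathrm{id}+t\nabla\phi)_{\#}\mu$ and $\gamma_t:=(\mathrm{id},\,\mathrm{id}+t\nabla\phi)_{\#}\mu$; for $t>0$ small, $\gamma_t\in\Gamma_0(\mu,\nu_t)$ (a small gradient perturbation of the identity is an optimal map; see \cite{weakkam}) and $\|\pi^1-\pi^2\|_{\gamma_t}=t\,\|\nabla\phi\|_{L^2(\dtorus,\mu)}$. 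Since $e(\nu_t,\xi_1,\gamma_t)-e(\nu_t,\xi_2,\gamma_t)=-t\int_{\dtorus}(\xi_1-\xi_2)\cdot\nabla\phi\,\mu(dx)$ while $|e(\nu_t,\xi_i,\gamma_t)|\le e^{0}[\xi_i,\,t\|\nabla\phi\|_{L^2(\dtorus,\mu)}]\cdot t\|\nabla\phi\|_{L^2(\dtorus,\mu)}$, dividing by $t$ and letting $t\to 0^{+}$ forces $\int_{\dtorus}(\xi_1-\xi_2)\cdot\nabla\phi\,\mu(dx)=0$ for all $\phi$, i.e.~$\xi_1-\xi_2\perp\mathscr{T}_{\mu}\mathscr{P}(\dtorus)$. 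One concludes as follows: for an arbitrary $\eta\in\partial\mathcal{W}(\mu)$, Step~1 gives $\bar\eta\in\partial\mathcal{W}(\mu)$, so by Step~2 the vector $\bar\xi-\bar\eta$ both lies in $\mathscr{T}_{\mu}\mathscr{P}(\dtorus)$ and is orthogonal to it, whence $\bar\eta=\bar\xi$; therefore $\eta=\bar\xi+(\eta-\bar\xi)$ with $\eta-\bar\xi\perp\mathscr{T}_{\mu}\mathscr{P}(\dtorus)$, and the Pythagorean identity $\|\eta\|_{L^2(\dtorus,\mu)}^{2}=\|\bar\xi\|_{L^2(\dtorus,\mu)}^{2}+\|\eta-\bar\xi\|_{L^2(\dtorus,\mu)}^{2}$ shows that $\bar\xi$ is the unique element of minimal norm in $\partial\mathcal{W}(\mu)$.

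I expect the main obstacle to be precisely the ``directions of optimal plans are tangent vectors'' identity invoked at the outset: it is the content of Remark \ref{remark:littlefact} together with \cite[Theorem 8.3.1]{gradientflows}, so in this paper it is available ready-made, but it is the heart of the matter --- without the optimality of $\gamma$ one only has the useless Cauchy--Schwarz bound $\big|\int\zeta(x)\cdot(y-x)\gamma\big|\le\|\zeta\|_{L^2(\dtorus,\mu)}\|\pi^1-\pi^2\|_{\gamma}$. A lesser point that still needs care is the optimality on $\dtorus$, for small $t$, of the perturbed plans $\gamma_t$, which is classical but should be quoted precisely.
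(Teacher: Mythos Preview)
Your argument is correct. The paper does not actually give a proof of this lemma: it simply writes ``Its proof can be found in \cite{weakkam}.'' So there is no in-paper proof to compare against; you have supplied a self-contained one using only tools already available in Section~\ref{section:preliminaries}.

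Your two steps are the natural ones and are carried out cleanly. Step~1 hinges on the identity from Remark~\ref{remark:littlefact}(ii) at $\tau=0$, which says precisely that for $\gamma\in\Gamma_0(\mu,\nu)$ the displacement $y-x$ acts, against continuous test fields, as the tangent vector $w^0\in\mathscr{T}_\mu\wasstwospace$; together with the density approximation this gives $\int\zeta\cdot(y-x)\,\gamma=0$ and hence $e^0[\bar\xi,r]=e^0[\xi,r]$. Step~2 is the standard ``test along small gradient flows'' argument, and your final Pythagorean conclusion is exactly right. The two points you flag as needing care are genuine but both are addressed in the references already cited by the paper: the tangency of optimal displacements is Remark~\ref{remark:littlefact} (proved in \cite{majorga}) combined with \cite[Theorem~8.3.1]{gradientflows}, and the optimality on $\dtorus$ of $(\mathrm{id},\mathrm{id}+t\nabla\phi)_\#\mu$ for small $t$ is covered in \cite{weakkam}. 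One cosmetic remark: in Step~2 you could bypass the optimality of $\gamma_t$ altogether by invoking Step~1 directly --- since both $\bar\xi_1$ and $\bar\xi_2$ lie in $\partial\mathcal{W}(\mu)\cap\mathscr{T}_\mu\wasstwospace$, the same test-along-$\gamma_t$ argument shows $\bar\xi_1-\bar\xi_2\perp\mathscr{T}_\mu\wasstwospace$, hence $\bar\xi_1=\bar\xi_2$; but what you wrote is fine as is.
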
 
Its proof can be found in \cite{weakkam}. 
\begin{remark}
\label{remark:todefnofgradient} 
The following is an alternative characterization of a vector field $\xi\in L^2(\dtorus,\mu)$ that satisfies (\ref{eq:limhate}): \begin{equation}\label{eq:eqvwassgradient} \mathcal{W}(\nu) - \mathcal{W}(\mu) - \sup\limits_{\gamma\in\Gamma_0(\mu,\nu)} \int\limits_{\R^d\times\R^d} \xi(x) \cdot (y-x) \gamma(dx,dy) = o(\mathscr{W}(\mu,\nu)). 
\end{equation} 
Likewise, $\xi$ satisfies 
$$
\lim_{r\to 0^+}e[\xi,r]=0
$$
if and only if
\begin{equation*}
\mathcal{W}(\nu) - \mathcal{W}(\mu) - \sup\limits_{\gamma\in\Gamma(\mu,\nu)} \int\limits_{\R^d\times\R^d} \xi(x) \cdot (y-x) \gamma(dx,dy) = o(\mathscr{W}(\mu,\nu)). \quad \qquad \sslash 
\end{equation*} 
\end{remark}
The following lemma will be used in the final section. 
\begin{lemma}
\label{lemma:keylemma} 
With the foregoing notation, if $\mathcal W:\wasstwospace\to \R$ is differentiable at $\mu,$ then 
$$ 
\lim_{r\to 0^+} e[\nabla_{\mu}\mathcal{W}(\mu),r] = 0. 
$$ 
By Remark \ref{remark:todefnofgradient}, this is the same as 
$$ 
\mathcal{W}(\nu) - \mathcal{W}(\mu) - \sup\limits_{\gamma\in\Gamma(\mu,\nu)} \int\limits_{\R^d\times\R^d} \nabla_{\mu}\mathcal{W}(\mu)(x) \cdot (y-x) \gamma(dx,dy) = o(\mathscr{W}(\mu,\nu)).
$$ 
\end{lemma}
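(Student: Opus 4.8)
The strategy is to boost the ``optimal-plan'' differentiability built into the hypothesis (the condition $e^0[\cdot,r]\to0$) into the ``all-plans'' statement $e[\nabla_\mu\mathcal W(\mu),r]\to0$, by exploiting the elementary fact that a gradient field paired against the displacement $y-x$ of a transport plan only detects the two marginals, up to an error of order $\|\pi^1-\pi^2\|_\gamma^2$. Write $\xi:=\nabla_\mu\mathcal W(\mu)\in\mathscr T_\mu\wasstwospace$. By Lemma \ref{lemma:atmostonedifferential}, $\xi\in\partial\mathcal W(\mu)$, so $e^0[\xi,r]\to0$ as $r\to0^+$, and it suffices to prove $e[\xi,r]\to0$. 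Fix $\eps>0$. Since $\mathscr T_\mu\wasstwospace$ is the $L^2(\R^d,\mu)$-closure of $\nabla C^\infty(\dtorus;\R)$, I would pick $\psi\in C^\infty(\dtorus;\R)$ with $\|\xi-\nabla\psi\|_{L^2(\R^d,\mu)}\le\eps/4$, and set $C_\psi:=\tfrac12\sup_{\dtorus}|\nabla^2\psi|$.

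Next, fix $\nu\in\mathscr P_2(\R^d)$, an arbitrary $\gamma\in\Gamma(\mu,\nu)$, and some $\gamma_0\in\Gamma_0(\mu,\nu)$ (nonempty, $\dtorus$ being compact); put $R:=\|\pi^1-\pi^2\|_\gamma$, so $R\ge\mathscr W(\mu,\nu)=\|\pi^1-\pi^2\|_{\gamma_0}$. The heart of the argument is the decomposition
\[
e(\nu,\xi,\gamma)=e(\nu,\xi,\gamma_0)+\Big(\int\xi(x)\cdot(y-x)\,\gamma_0(dx,dy)-\int\xi(x)\cdot(y-x)\,\gamma(dx,dy)\Big),
\]
together with two bounds. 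First, provided $R\le r$, optimality of $\gamma_0$ and the definition of $e^0$ give $|e(\nu,\xi,\gamma_0)|\le e^0[\xi,r]\,\mathscr W(\mu,\nu)\le e^0[\xi,r]\,R$. Second, in the parenthesis I replace $\xi$ by $\nabla\psi$: since $\pi^1_{\#}\gamma=\pi^1_{\#}\gamma_0=\mu$, Cauchy--Schwarz bounds the replacement error by $\tfrac{\eps}{4}R+\tfrac{\eps}{4}\mathscr W(\mu,\nu)\le\tfrac{\eps}{2}R$. Then the Taylor estimate $|\psi(y)-\psi(x)-\nabla\psi(x)\cdot(y-x)|\le C_\psi|y-x|_{\dtorus}^2$ (read with the representative of $y$ nearest $x$, which is the convention under which $\xi(x)\cdot(y-x)$ is understood) yields
\[
\int\nabla\psi(x)\cdot(y-x)\,\gamma(dx,dy)=\int_{\dtorus}\psi\,d\nu-\int_{\dtorus}\psi\,d\mu+\theta,\qquad |\theta|\le C_\psi R^2,
\]
and the analogous identity for $\gamma_0$ with remainder at most $C_\psi\mathscr W(\mu,\nu)^2\le C_\psi R^2$. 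Crucially $\int\psi\,d\nu-\int\psi\,d\mu$ does not depend on the transport plan, so these leading terms cancel and the $\nabla\psi$-version of the parenthesis is at most $2C_\psi R^2$ in absolute value.

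Collecting everything, $|e(\nu,\xi,\gamma)|\le\big(e^0[\xi,r]+\tfrac{\eps}{2}+2C_\psi R\big)R$ whenever $R=\|\pi^1-\pi^2\|_\gamma\le r$. Dividing by $R$ and taking the supremum over all admissible $\nu$ and $\gamma$ gives $e[\xi,r]\le e^0[\xi,r]+\tfrac{\eps}{2}+2C_\psi r$; letting $r\to0^+$ gives $\limsup_{r\to0^+}e[\xi,r]\le\eps/2$, and since $\eps>0$ was arbitrary, $\lim_{r\to0^+}e[\nabla_\mu\mathcal W(\mu),r]=0$. The $o(\mathscr W(\mu,\nu))$ reformulation is then immediate from Remark \ref{remark:todefnofgradient}. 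The one genuinely delicate point is the interleaving of the two limiting procedures — the smooth approximant $\psi$, hence $C_\psi$, must be frozen (depending on $\eps$) \emph{before} $r$ is sent to $0$, so the estimate cannot be produced in a single step — together with the routine torus bookkeeping that renders $\xi(x)\cdot(y-x)$ and the Taylor remainder well defined; beyond that the proof is a direct comparison of an arbitrary plan with an optimal one.
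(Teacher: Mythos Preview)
Your proof is correct and follows essentially the same approach as the paper's: both compare an arbitrary plan $\gamma$ with an optimal one $\gamma_0$, control the difference of the pairings $\int\xi(x)\cdot(y-x)$ by replacing $\xi$ with a smooth gradient $\nabla\psi$ (whose pairing depends on the marginals only up to a Taylor remainder of order $\|\pi^1-\pi^2\|^2$), and then fix $\psi$ before sending $r\to0$, using density of $\nabla C^\infty(\dtorus)$ in $\mathscr T_\mu\wasstwospace$ at the very end. The organization differs cosmetically---the paper subtracts $e(\nu,\nabla_\mu\mathcal W,\gamma)-e(\nu,\nabla_\mu\mathcal W,\bar\gamma)$ directly and isolates the ancillary inequality (\ref{eq:ancillarywg1}) at the outset---but the substance is the same.
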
 
\begin{proof} 
See the Appendix.
\end{proof}
\subsubsection{Twice differentiability}
\label{subsubsection:twicedifferentiability} 
In \cite{chowgangbo}, the notion of Hessian of a function on the Wasserstein space is defined. We will follow the same framework. Let $\rho, \epsilon$ be moduli of continuity, with $\rho$ concave. We will say that a function 
$$
V:\dtorus\times\wasstwospace\to\R $$ 
is \textit{twice differentiable} at $(q,\mu)$ if the following hold:
\begin{itemize}
\item the mapping $x\mapsto \nabla_{\mu}V(q,\nu)(x)$ exists and is differentiable for every $\nu$ in a neighbourhood of $\mu$, with its derivative denoted by $\nabla_{x\mu}^2V(q,\nu)(x)$;
\item the gradient $\nabla_q\nabla_{\mu}V(q,\mu)(x)=:\nabla^2_{q\mu}V(q,\mu)(x)$ exists;
\item there exist a Borel, bounded matrix-valued function $A_{\mu\mu}:(\dtorus)^3\to\R^{d\times d}$ such that 
\begin{align}
& \sup\limits_{\gamma\in\Gamma_0(\mu,\nu)}| \nabla_{\mu}V(\bar{q},\nu)(y) - \nabla_{\mu}V(q,\mu)(x) - \nabla^2_{q\mu}V(q,\mu)(x)(\bar{q}-q) - P_{\gamma}[\mu](q,x,y) | \notag  \\ 
\leq & \  o(|\bar{q}-q|) + \big(\mathscr{W}(\mu,\nu)+|x-y|\big)\big(\rho(\mathscr{W}(\mu,\nu))+\epsilon(|x-y|)\big), \notag
\end{align}
where
\begin{align*}
P_{\gamma}[\mu](q,x,y) = \nabla^2_{x\mu}V(q,\mu)(x)(y-x) + \int_{\dtorus\times\dtorus} A_{\mu\mu}(q,x,a)(b-a)\gamma(da,db).
\end{align*}
\end{itemize}
Without loss of generality, we may suppose that $A_{\mu\mu}(q,x,\cdot)\in\mathscr{T}_{\mu}\wasstwospace$ for all $q,x.$  We put
\begin{align*}
\nabla^2_{\mu\mu}V(q,\mu)(\cdot,\cdot) := A_{\mu\mu}(q,\cdot,\cdot), \qquad q\in\dtorus.
\end{align*}
In regard to the former definition and notation, the following fact will be useful.
\begin{prop}\label{prop:timeder}
Let $V:\dtorus\times\wasstwospace\to\R$ be twice differentiable, in the sense explained above. Let $h\mapsto q^h,$ $h\mapsto x^h,$ be differentiable paths in $\dtorus$ defined on an interval $I,$ and $\mu_h\in AC^2(I;\wasstwospace),$ with $v_h$ a continuous in $h$ velocity vector field for $\mu_h.$ 
\begin{enumerate}[(i)]
\item There exists a set $J\subset I,$ of equal measure to that of $I$, such that, if $h_0\in I,$ then the function $h\mapsto \nabla_{\mu} V(q^h,\mu_h)(x^h)$ is differentiable at $h_0$ and 
\begin{align}
 \frac{d}{dh}[\nabla_{\mu}V(q^h,\mu_h)(x^h)]&\big|_{h=h_0} = \notag \\ = & \ \nabla^2_{qh}V(q^{h_0},\mu_{h_0})(x^{h_0})(q^{h})'|_{h=h_0} + \nabla^2_{xh}V(q^{h_0}),\mu_{h_0})(x^{h_0})(x^{h})'|_{h=h_0} \notag \\ & \  + \int_{\dtorus}\nabla^2_{\mu\mu}V(q^{h_0},\mu_{h_0})(x^{h_0},r)v_{h_0}(r)\mu_{h_0}(dr). \notag 
\end{align}
\item If $\nabla^2_{q\mu}V,$ $\nabla^2_{x\mu}V,$ $\nabla^2_{\mu\mu}V$ are continuous, and the paths $h\mapsto x^{h},$ $h\mapsto q^{h}$ are in $C^1(I),$ then
$$ 
\nabla_{\mu}V(q^b,\mu_b)(x^b)-\nabla_{\mu}V(q^a,\mu_a)(x^a)=  \int_a^b \frac{d}{dh}\nabla_{\mu}V(q^h,\mu_h)(x^h)dh
$$
for any $a,b\in I.$ 
\end{enumerate}
\end{prop}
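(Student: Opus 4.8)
The plan is to obtain (i) by substituting the given paths into the defining inequality of twice differentiability from Section~\ref{subsubsection:twicedifferentiability} and passing to the limit in the resulting difference quotient, and then to deduce (ii) from (i) together with the added continuity hypotheses via an absolute-continuity argument and the fundamental theorem of calculus.

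\emph{Step for (i): the difference quotient.} First I would take $J\subset I$ to be the set of $h_0$ at which $h\mapsto\mu_h$ has a metric derivative and at which, for $\gamma_\delta\in\Gamma_0(\mu_{h_0},\mu_{h_0+\delta})$, the rescaled plans $\big(\pi^1,\tfrac{\pi^2-\pi^1}{\delta}\big)_{\#}\gamma_\delta$ converge narrowly, as $\delta\to0$ through either sign, to $(\mathrm{id},\hat v_{h_0})_{\#}\mu_{h_0}$, where $\hat v_{h_0}$ is the velocity field of minimal $L^2(\mu_{h_0})$-norm of $\mu_h$; that $|I\setminus J|=0$ is part of the theory of $AC^2$ curves in $\wasstwospace$ \cite{gradientflows}. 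Fixing $h_0\in J$, small $\delta$, and $\gamma_\delta\in\Gamma_0(\mu_{h_0},\mu_{h_0+\delta})$, I would apply the inequality of Section~\ref{subsubsection:twicedifferentiability} at $(q^{h_0},\mu_{h_0})$ with $\bar q=q^{h_0+\delta}$, $\nu=\mu_{h_0+\delta}$, $x=x^{h_0}$, $y=x^{h_0+\delta}$, $\gamma=\gamma_\delta$, then divide by $\delta$; this writes the difference quotient of $h\mapsto\nabla_\mu V(q^h,\mu_h)(x^h)$ at $h_0$ as $T_1+T_2+T_3+R_\delta$, where $T_1=\nabla^2_{q\mu}V(q^{h_0},\mu_{h_0})(x^{h_0})\,\tfrac{q^{h_0+\delta}-q^{h_0}}{\delta}$, $T_2=\nabla^2_{x\mu}V(q^{h_0},\mu_{h_0})(x^{h_0})\,\tfrac{x^{h_0+\delta}-x^{h_0}}{\delta}$, $T_3=\tfrac1\delta\int_{\dtorus\times\dtorus}\nabla^2_{\mu\mu}V(q^{h_0},\mu_{h_0})(x^{h_0},a)(b-a)\,\gamma_\delta(da,db)$, and $|R_\delta|\le\tfrac1\delta o(|q^{h_0+\delta}-q^{h_0}|)+\tfrac1\delta\big(\mathscr{W}(\mu_{h_0},\mu_{h_0+\delta})+|x^{h_0+\delta}-x^{h_0}|\big)\big(\rho(\mathscr{W}(\mu_{h_0},\mu_{h_0+\delta}))+\epsilon(|x^{h_0+\delta}-x^{h_0}|)\big)$.

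\emph{Step for (i): the limit.} Now I would send $\delta\to0$. Differentiability of $q^h,x^h$ gives $T_1\to\nabla^2_{q\mu}V(q^{h_0},\mu_{h_0})(x^{h_0})(q^h)'|_{h=h_0}$ and $T_2\to\nabla^2_{x\mu}V(q^{h_0},\mu_{h_0})(x^{h_0})(x^h)'|_{h=h_0}$, and also $|q^{h_0+\delta}-q^{h_0}|,|x^{h_0+\delta}-x^{h_0}|=O(\delta)$, while $h_0\in J$ forces $\mathscr{W}(\mu_{h_0},\mu_{h_0+\delta})=O(\delta)$; hence $R_\delta\to0$, since $\tfrac1\delta o(|q^{h_0+\delta}-q^{h_0}|)\to0$ and $\tfrac1\delta\big(\mathscr{W}(\mu_{h_0},\mu_{h_0+\delta})+|x^{h_0+\delta}-x^{h_0}|\big)$ stays bounded while $\rho(\mathscr{W}(\mu_{h_0},\mu_{h_0+\delta}))+\epsilon(|x^{h_0+\delta}-x^{h_0}|)\to0$. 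Writing $T_3=\int\nabla^2_{\mu\mu}V(q^{h_0},\mu_{h_0})(x^{h_0},a)\cdot w\;d\big[(\pi^1,\tfrac{\pi^2-\pi^1}{\delta})_{\#}\gamma_\delta\big](a,w)$, the narrow convergence on $J$ together with the uniform bound $\big\|\tfrac{\pi^2-\pi^1}{\delta}\big\|_{\gamma_\delta}=\tfrac{\mathscr{W}(\mu_{h_0},\mu_{h_0+\delta})}{|\delta|}$ (which gives uniform integrability of the factor $w$) lets me pass to the limit and get $\int_{\dtorus}\nabla^2_{\mu\mu}V(q^{h_0},\mu_{h_0})(x^{h_0},r)\,\hat v_{h_0}(r)\,\mu_{h_0}(dr)$; finally, since $\nabla^2_{\mu\mu}V(q^{h_0},\mu_{h_0})(x^{h_0},\cdot)=A_{\mu\mu}(q^{h_0},x^{h_0},\cdot)\in\mathscr{T}_{\mu_{h_0}}\wasstwospace$, identity~(\ref{eq:projfact}) replaces $\hat v_{h_0}$ by the given velocity $v_{h_0}$, yielding the integral in the statement of (i). As the limit is independent of the choice of $\gamma_\delta$, this proves (i). The step I expect to be the main obstacle is exactly this identification of the measure-derivative term $T_3$; the rest of (i) is soft once the $AC^2$-velocity characterization is in hand, and one must take some care that that characterization applies for $\delta$ of either sign.

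\emph{Step for (ii).} Under the extra hypotheses, applying the inequality of Section~\ref{subsubsection:twicedifferentiability} at $(q^a,\mu_a)$ with $\bar q=q^b$, $\nu=\mu_b$, $x=x^a$, $y=x^b$ gives, for $a\le b$ in $I$,
\[
\big|\nabla_\mu V(q^b,\mu_b)(x^b)-\nabla_\mu V(q^a,\mu_a)(x^a)\big|\le C\big(|q^b-q^a|+|x^b-x^a|+\mathscr{W}(\mu_a,\mu_b)\big)+o(|q^b-q^a|),
\]
with $C$ depending only on the sup-norms of $\nabla^2_{q\mu}V,\nabla^2_{x\mu}V,A_{\mu\mu}$ and on $\rho,\epsilon$. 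Because $q^h,x^h\in C^1(I)$ are Lipschitz and $\mu\in AC^2(I;\wasstwospace)$ forces $\mathscr{W}(\mu_a,\mu_b)\le\int_a^b m(\tau)\,d\tau$ for some $m\in L^2$, the real function $h\mapsto\nabla_\mu V(q^h,\mu_h)(x^h)$ is absolutely continuous on $I$; by (i) its derivative equals, a.e., the right-hand side of the formula of (i), which, under the continuity of $\nabla^2_{q\mu}V,\nabla^2_{x\mu}V,\nabla^2_{\mu\mu}V$ and of $h\mapsto v_h,\mu_h,q^h,x^h$, is a continuous function of $h$. An absolutely continuous function whose a.e.\ derivative coincides with a continuous function is $C^1$, so the fundamental theorem of calculus delivers the integral identity of (ii).
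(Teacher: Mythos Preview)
Your proposal is correct and follows essentially the same approach as the paper. For (i) you both substitute the paths into the twice-differentiability inequality, divide by the increment, and identify the measure-derivative term via the convergence of rescaled optimal plans from \cite[Proposition~8.4.6]{gradientflows}, then swap the minimal-norm velocity $\hat v_{h_0}$ for the given $v_{h_0}$ using $\nabla^2_{\mu\mu}V(q^{h_0},\mu_{h_0})(x^{h_0},\cdot)\in\mathscr{T}_{\mu_{h_0}}\wasstwospace$ and (\ref{eq:projfact}); for (ii) the paper simply notes that the derivative formula from (i) is continuous in $h$ and concludes, while you supply the intermediate absolute-continuity step that justifies applying the fundamental theorem of calculus---a welcome bit of extra care, but not a different argument.
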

\begin{proof} 
See the Appendix.
\end{proof}
\section{Main statements}
\label{section:summaryofresults} 
We collect here the three main statements that were proved in this paper. Let $H, F, g$ be as in Section \ref{subsection:dataformfg}. \begin{stm} 
(Theorem \ref{thm:soltomfg}) If $T$ is sufficiently small, in a way that depends only on the coefficients $(H,F,g),$ then, for every $0<s<T,$ $\mu\in\wasstwospace,$ the MFG system (\ref{eq:mfg1}-\ref{eq:mfg4}) admits a classical solution $(U,\sigma)$, in the sense of Section \ref{subsection:defnofclassicalmfg}. Moreover, $(U,\sigma)\in W^{2,2;\infty}((0,T)\times\dtorus)\times AC^2(0,T;\wasstwospace).$ 
\end{stm} 
\begin{stm} 
(Theorem \ref{thm:uniquenessmfg}) If $(\tilde U,\tilde \sigma)\in W^{2,3;\infty}((0,T)\times\dtorus)\times AC^2(0,T;\wasstwospace)$ is a classical solution to the MFG system (\ref{eq:mfg1}-\ref{eq:mfg4}), then, at least during a possibly shorter interval $[0,T]$ than the one in the previous statement, the pair $(\tilde U,\tilde \sigma)$ must be the pair constructed for Theorem \ref{thm:soltomfg}.  
\end{stm} 
Additionally, let $F, g$ be as in Section \ref{subsection:medata}.
\begin{stm} 
(Theorem \ref{thm:masterequation1}) If $T$ is small enough, in a way that depends only on the coefficients $(H,F,g),$ then the master equation (\ref{eq:ME}) admits a classical solution in the sense of Section \ref{subsection:defnofclassicalmfg}. 
\end{stm}  
\section{Solving the MFG system}\label{section:ODEs} In this section, we construct a solution to the first-order MFG system (\ref{eq:mfg1}-\ref{eq:mfg4}). A fixed-point argument will give us existence and uniqueness of solutions to the \emph{characteristics} of the system. These solutions are functions $\Sigma:[0,T]\times\dtorus\to\dtorus\times\R^d$ that depend on $s$ and $\mu$ and are the backbone of our work. They incorporate enough regularity that we can construct classical solutions (in the sense defined above) to the MFG system on $\dtorus.$   
\subsection{System of equations and its solution.} 
For $T>0,$ we will denote by $\mathcal{M}$ the space of continuous functions $$ Z=(Q,P): [0,T]\times\dtorus\longrightarrow\dtorus\times\R^d, $$ endowed with the uniform norm, 
$$ 
\|Z\|_{\infty}=\max_{\substack{0\leq  t\leq      T}}|Z(t,q)|=\max\{(|Q(t,q)|^2+|P(t,q)|^2)^{1/2} \ | \ t\in[0,T],q\in\dtorus \}.
$$ That is, 
$$ 
\mathcal{M}=C([0,T]\times\dtorus;\dtorus\times\R^d).
$$ 
Similarly, let 
$$
\mathcal{M}^1 := C([0,T]\times\dtorus;\dtorus), \quad \mathcal{M}^2 := C([0,T]\times\dtorus;\R^d) .
$$ 
\begin{defn}\label{defn:defofM} 
Let $\theta\in\R^+,$ and fix $\mu\in\wasstwospace,$ $s\in[0,T].$  
\begin{enumerate} 
\item (Fixed point operator) Define the operator $\bar{\mathfrak{m}}^{s,\mu}:\mathcal{M}\to\mathcal{M},$ $\frmbar=((\frmbar^{s,\mu})^1,(\frmbar^{s,\mu})^2)$ as follows: 
\begin{align}
    \textrm{ If } \bar Z = &\ (\bar Q,\bar P) \in \mathcal M , \textrm{ then }
    \bar{\mathfrak{m}}^{s,\mu} (\bar Z) = ((\bar{\mathfrak{m}}^{s,\mu})^1(\bar Z),(\bar{\mathfrak{m}}^{s,\mu})^2(\bar Z)), \textrm{ where:} \notag
    \\ (\frmbar^{s,\mu})^1(\bar Z)(t,q) &\ = q + \int_s^t
    \nabla_pH(\bar Q_{\tau}(q),\theta\bar P_{\tau}(q))d\tau , \label{eq:mathfrak1}
    \\ (\frmbar^{s,\mu})^2(\bar Z)(t,q) &\ =
    \frac{1}{\theta}\nabla_qg(\bar Q_0(q),{{}\bar{Q}_0}_{\#}\mu)-\frac{1}{\theta}\int_0^t\nabla_qH(\bar Q_{\tau}(q),\theta\bar P_{\tau}(q))+\nabla_qF(\bar Q_{\tau}(q),{{}\bar{Q}_{\tau}}_{\#}\mu)d\tau, \label{eq:mathfrak2} \end{align} 
$0\leq t\leq T,$ $q\in\R^d.$ In equalities (\ref{eq:mathfrak1}) and (\ref{eq:mathfrak2}), $\bar Q_{\tau}(q):=\bar Q(\tau,q),$   $\bar P_{\tau}(q):=\bar P(\tau,q),$ $\tau\in[0,T], q\in\R^d.$ \item (Coefficient bounds I) For $B>0,$ let 
\begin{align*} 
\bar{l}(B) &\ := \max_{\substack{q\in \R^d, |p|\leq B, \\ \mu\in\wasstwospace}} \big\{ \sqrt{2}\ |\nabla H(q,\theta p)| + |\nabla_qF(q,\mu)| \big\} , \\ \bar{h}(B) &\ := \max_{\substack{q\in \R^d,|p|\leq B, \\ \mu\in\wasstwospace}}\big\{ \sqrt{2}|\nabla^2 H(q,\theta p)|+\sqrt{2}|\nabla^3H(q,\theta p)| + |\nabla_{qq}^2F(q,\mu)| + |\nabla_{qqq}^3F(q,\mu)| \big\} , \\ & c :=
\max\{d,\kappa\}. 
\end{align*}
Thus, for a fixed $B,$ the numbers $\bar{l}(B),$ $\bar{h}(B), c$ depend only on the coefficients ($H,$ $F,$ $g$). \end{enumerate}  \end{defn} \noindent \textbf{Notes.} (1) Since $\bar Q,$ $\bar P$ are periodic in $q$ (i.e., $q\in\dtorus$), if $q'\sim q$ then $(\bar{\mathfrak{m}}^{s,\mu})^1(\bar Z)(t,q) \sim (\bar{\mathfrak{m}}^{s,\mu})^1(\bar Z)(t,q'),$ so $(\bar{\mathfrak{m}}^{s,\mu})^1(\bar Z)(t,\cdot)$ is indeed a mapping into $\dtorus,$ in the sense explained in the Preliminaries.

(2) Both the fixed-point operator $\bar{\mathfrak{m}}^{s,\mu}$ and the coefficient bounds depend on the value of $\theta.$

(3) Throughout this text, $|\nabla
  H(q,p)|^2=$ $\sum_{j=1}^d\big|\frac{\partial H}{\partial (q^{(j)})^2}(q,p)\big|^2+\sum_{j=1}^d\big|\frac{\partial H}{\partial (p^{(j)})^2}(q,p)\big|^2 , $ and the norms of second  order derivatives are defined similarly, i.e., we are using quadratic norms. $\sslash$ \\

 Suppose that the operator $\bar{\mathfrak{m}}^{s,\mu}$ has a fixed point $(\bar Q,\bar P)$, so on the left-hand side of (\ref{eq:mathfrak1}) and (\ref{eq:mathfrak2}) we would see $\bar Q(t,q)$ and $\bar P(t,q)$ respectively. Set $Q := \bar Q$ and $P := \theta\bar P.$ Then $Z:=(Q,P)$ satisfies 
 \begin{equation}\label{eq:hamODEs}
 \qquad \left\{ 
 \begin{aligned} 
 \partial_t Q(t,q) = &\ \nabla_p H(Q(t,q),P(t,q)) \qquad \textrm{ in } [0,s]\times\dtorus \ ,  \\ \partial_t P(t,q) = &\ -\nabla_qH(Q(t,q),P(t,q)) -\nabla_qF(Q(t,q),Q(t,\cdot)_{\#}\mu) \quad \textrm{ in } [0,s]\times\dtorus \ , \\ Q(s,q) = &\ q \qquad \textrm{ on } \dtorus \ ,  \\ P(0,q) =  &\ \nabla_qg(Q(0,q),Q(0,\cdot)_{\#}\mu) \qquad \textrm{ on } \dtorus.  
 \end{aligned}    \right.   
 \end{equation} 
 We will refer to the system (\ref{eq:hamODEs}) as the \textit{Hamiltonian ODEs with parameters $s$ and $\mu.$}   \begin{defn}\label{defn:Msubzero} 
 If $T>0,$ $A_1,A_2,B,E,E_1,E_2>0,$ define 
 $$ 
 \mathcal{M}_0(A_1,A_2,B,E,E_1,E_2,T)\subset\mathcal{M}
 $$ 
 to be the subset of those $\bar Z(\cdot,\cdot)=(\bar Q(\cdot,\cdot),\bar P(\cdot,\cdot))$  such that: 
 
 (i) $\bar Z(\cdot,\cdot)$ belongs to $W^{2,2;\infty}([0,T]\times\dtorus;\dtorus\times\R^d)$;
 
 (ii) the following bounds hold: 
 \begin{equation}
 \label{eq:defofM0} 
 \left\{ \begin{aligned} & \|\partial_t\bar Q\|_{\mathcal{M}^1}\leq A_1 , \  \|\nabla_q \bar Q\|_{C([0,T]\times\dtorus;\dtorus\times\dtorus)} \leq A_1 , \ \|\nabla^2_{qq}\bar Q\|_{C([0,T]\times\dtorus;\mathbb{T}^{2d}\times\dtorus)} \leq A_1   ; \\ & \|\partial_t\bar{P}\|_{\mathcal{M}^2}\leq A_2 , \ \ \|\nabla_q \bar P\|_{C([0,T]\times\dtorus;\R^d\times\R^d)} \leq A_2 , \ \|\nabla^2_{qq} \bar P\|_{C([0,T]\times\dtorus;\mathbb{R}^{2d}\times\R^d)} \leq A_2 \     \\  &  \| \bar P \|_{\mathcal{M}^2}\leq B  ; \\ & \|\nabla_q\bar Q_0\|_{C(\dtorus;\dtorus\times\dtorus)}  
 , \|\nabla_{qq}^2\bar Q_0\|_{C(\dtorus;\mathbb{T}^{2d}\times\dtorus)}  \leq E   ;\ \end{aligned} \right.  \end{equation}

(iii) $\|\partial^2_{tt}\bar{Q}\|_{\mathcal{M}} \leq E_1, \|\partial^2_{tt}\bar{P}\|_{\mathcal{M}} \leq E_2. $ 
\end{defn}  
Here $W^{2,2;\infty}([0,T]\times\dtorus;\dtorus\times\R^d)$ is the Sobolev space of functions periodic in $q,$ taking values in $\dtorus\times\R^d,$ with essentially bounded second-order weak derivatives in $t$ and second-order weak gradients in $q.$ 
Since functions in $W^{1,1;\infty}$ are Lipschitz, $\mathcal{M}_0(A_1,A_2,B,E,E_1,E_2,T)$ is indeed a subset of $\mathcal{M}.$ The following is a standard fact, so we will omit its proof.
 \begin{prop}
 \label{prop:M0isclosed} 
 For any $A_1,A_2,B,E,E_1,E_2,T>0,$ $\mathcal{M}_0(A_1,A_2,B,E,E_1,E_2,T)$ is closed in   $\mathcal{M}.$ 
 \end{prop}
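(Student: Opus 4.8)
The assertion to establish is that $\mathcal M_0(A_1,A_2,B,E,E_1,E_2,T)$ is closed in $\mathcal M=C([0,T]\times\dtorus;\dtorus\times\R^d)$ with the uniform norm. The plan is simply to check that each requirement listed in Definition \ref{defn:Msubzero} — membership in $W^{2,2;\infty}$, the pointwise bound on $\bar P$, the sup bounds on the first- and second-order weak derivatives of $\bar Q$ and $\bar P$, and the bounds on $\nabla_q\bar Q_0$ and $\nabla^2_{qq}\bar Q_0$ — cuts out a subset of $\mathcal M$ that is stable under uniform limits. Being a finite intersection of such sets, $\mathcal M_0$ is then closed.

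The one elementary fact that makes everything go through is a duality reformulation of an $L^\infty$ derivative bound. For a bounded measurable function $u$ on a finite-measure domain $\Omega$ and a constant-coefficient differential operator $D$ with formal adjoint $D^*$, the function $u$ has a weak derivative $Du\in L^\infty(\Omega)$ with $\|Du\|_{L^\infty}\le C$ if and only if $\big|\int_\Omega u\,D^*\varphi\big|\le C\|\varphi\|_{L^1(\Omega)}$ for every $\varphi\in C_c^\infty(\Omega)$. The forward implication is integration by parts; the converse uses $L^\infty(\Omega)=(L^1(\Omega))^*$, so that the $L^1$-bounded functional $\varphi\mapsto\pm\int_\Omega u\,D^*\varphi$ is represented by an $L^\infty$ function of norm $\le C$. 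Now, for each fixed $\varphi$ the map $u\mapsto\int_\Omega u\,D^*\varphi$ is continuous on $\mathcal M$, since $\big|\int_\Omega(u_n-u)D^*\varphi\big|\le\|u_n-u\|_{\mathcal M}\|D^*\varphi\|_{L^1}$; hence each inequality $\big|\int_\Omega u\,D^*\varphi\big|\le C\|\varphi\|_{L^1}$ defines a closed subset of $\mathcal M$, and therefore so does the condition $\|Du\|_{L^\infty}\le C$.

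Applying this on $\Omega=(0,T)\times\dtorus$ with $D$ running over $\partial_t,\ \partial^2_{tt},\ \nabla_q,\ \nabla^2_{qq}$ and the constants $A_1,A_2,B,E_1,E_2$, every bound in (ii) and (iii) on $\bar Q$ and $\bar P$ describes a closed subset of $\mathcal M$; moreover, on the intersection of these sets every $\bar Z$ automatically has all the relevant weak derivatives in $L^\infty$, so (being also continuous and bounded, since $\bar Q$ is $\dtorus$-valued and $\|\bar P\|_\infty\le B$) it lies in $W^{2,2;\infty}$, which disposes of (i). The inequality $\|\bar P\|_\infty\le B$ is visibly a closed condition. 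For the bounds at $t=0$: the trace $\bar Z\mapsto\bar Z(0,\cdot)$ is continuous from $\mathcal M$ into $C(\dtorus;\dtorus\times\R^d)$, because uniform convergence on $[0,T]\times\dtorus$ restricts to uniform convergence on $\{0\}\times\dtorus$; since any $\bar Z\in W^{2,2;\infty}$ is Lipschitz (it lies in $W^{1,1;\infty}$, as noted in the text), $\bar Q_0=\bar Q(0,\cdot)$ is a genuine function, and the bounds $\|\nabla_q\bar Q_0\|_{L^\infty(\dtorus)}\le E$, $\|\nabla^2_{qq}\bar Q_0\|_{L^\infty(\dtorus)}\le E$ are closed conditions, obtained by pulling back, through this continuous trace map, the sets $\{f\in C(\dtorus):\|\nabla_q f\|_{L^\infty}\le E\}$ and $\{f\in C(\dtorus):\|\nabla^2_{qq}f\|_{L^\infty}\le E\}$, which are closed by the same duality argument now on $\dtorus$. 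Intersecting everything, $\mathcal M_0$ is closed.

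There is no genuine obstacle here; the argument is bookkeeping, which is why the statement is reasonably called standard. If one prefers to avoid the duality characterization, the equivalent route is to take $\bar Z^{(n)}\to\bar Z$ in $\mathcal M$ with $\bar Z^{(n)}\in\mathcal M_0$, extract via Banach--Alaoglu a subsequence along which each of $\partial_t\bar Z^{(n)},\ \partial^2_{tt}\bar Z^{(n)},\ \nabla_q\bar Z^{(n)},\ \nabla^2_{qq}\bar Z^{(n)}$ converges in the weak-$*$ topology of $L^\infty(\Omega)$, identify each limit with the corresponding weak derivative of $\bar Z$ by integrating by parts against test functions and using the uniform convergence $\bar Z^{(n)}\to\bar Z$, and push the bounds to the limit by weak-$*$ lower semicontinuity of the $L^\infty$ norm; the $t=0$ traces are handled identically on $\dtorus$. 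The only point deserving even a moment's thought in either presentation is the meaning of the $t=0$ trace, and this is immediate from the Lipschitz regularity already built into $\mathcal M_0$.
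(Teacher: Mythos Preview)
Your proof is correct; the paper itself omits the proof entirely, remarking only that it is ``a standard fact.'' Your duality argument (and the alternative Banach--Alaoglu route you sketch) are exactly the kind of standard verification the paper has in mind, so there is nothing further to compare.
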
 
\begin{lemma}
\label{lemma:existenceofABE} 
Let $\theta>0$ of Definition \ref{defn:defofM} be arbitrary. There exist $A_1, A_2, B, E , E_1 , E_2 > 0,$ and $T>0,$ such that $\frmbar^{s,\mu}$ maps $\mathcal{M}_0(A_1,A_2,B,E,E_1,E_2,T)$ into itself, for any $s\in(0,T)$ and $\mu\in\wasstwospace.$ The numbers $A_1, A_2, B, E, E_1, E_2, T$ depend only on the coefficients.
\end{lemma}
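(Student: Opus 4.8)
\emph{Proof strategy (proposal).} The plan is to prove the self-map property by a priori estimates applied directly to the defining formulas (\ref{eq:mathfrak1})--(\ref{eq:mathfrak2}), exploiting that for small $T$ the operator $\frmbar^{s,\mu}$ is a small perturbation of the ``frozen'' map $\bar Z=(\bar Q,\bar P)\mapsto\big(q,\ \tfrac1\theta\nabla_q g(\bar Q_0(q),{\bar Q_0}_{\#}\mu)\big)$. Writing $(\tilde Q,\tilde P):=\frmbar^{s,\mu}(\bar Z)$ for $\bar Z\in\mathcal M_0(A_1,A_2,B,E,E_1,E_2,T)$, the point is that each of the bounds defining $\mathcal M_0$, evaluated on $(\tilde Q,\tilde P)$, decomposes as a \emph{$T$-independent main term} plus \emph{$T$ times a constant built from $B,A_1,A_2,E$ and the coefficients}. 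One then chooses the six constants, \emph{in a fixed order that avoids circularity}, so that each main term is dominated by, say, half its target, and finally takes $T$ small enough that the $O(T)$ remainders are likewise dominated. Since the coefficient-side quantities $\kappa$, $\bar l(B)$, $\bar h(B)$ and $L_1:=\max_{q,|p|\le B}|\nabla_p H(q,\theta p)|$ are uniform in $\mu\in\wasstwospace$ (this is where compactness of $\wasstwospace$ enters) and all limits of integration lie in $[0,T]$, uniformity in $(s,\mu)$ is automatic.

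The order of the choices is as follows. \textbf{(i)} Fix $B:=2\kappa/\theta$, which determines $\bar l(B),\bar h(B),L_1$; from (\ref{eq:mathfrak2}) one has $|\tilde P(t,q)|\le\kappa/\theta+(T/\theta)\bar l(B)$, which is $\le B$ once $T\le\kappa/\bar l(B)$. \textbf{(ii)} From (\ref{eq:mathfrak1}), $\partial_t\tilde Q=\nabla_p H(\bar Q_t,\theta\bar P_t)$ has modulus $\le L_1$, while differentiating in $q$ and using $H\in C^3$ gives $|\nabla_q\tilde Q|\le\sqrt d+TC$ and $|\nabla^2_{qq}\tilde Q|\le TC$ (main terms $\sqrt d$ and $0$), so put $A_1:=2\max(\sqrt d,L_1)$; since moreover $\nabla_q\tilde Q(0,\cdot)=I+O(T)$ and $\nabla^2_{qq}\tilde Q(0,\cdot)=O(T)$, put $E:=2\sqrt d$. \textbf{(iii)} In (\ref{eq:mathfrak2}) the measure arguments ${\bar Q_\tau}_{\#}\mu$ and ${\bar Q_0}_{\#}\mu$ do not depend on $q$, so differentiating in $q$ invokes only the chain rule in the $q$-slot of $F$ and $g$ together with $|\nabla_q F|,|\nabla^2_{qq}F|,|\nabla^3_{qqq}F|\le\kappa$ (and the same for $g$); this yields $|\partial_t\tilde P|\le\bar l(B)/\theta$, $|\nabla_q\tilde P|\le\kappa E/\theta+(T/\theta)C$ and $|\nabla^2_{qq}\tilde P|\le\kappa(E^2+E)/\theta+(T/\theta)C$, so put $A_2:=(2/\theta)\max\!\big(\bar l(B),\kappa E,\kappa(E^2+E)\big)$. \textbf{(iv)} For the second time-derivatives, $\partial_t\tilde Q$ is Lipschitz in $t$ with constant $\le\bar h(B)(A_1+\theta A_2)$, and $\partial_t\tilde P$ is Lipschitz in $t$ with constant $\le\tfrac1\theta\big[\bar h(B)(A_1+\theta A_2)+\sqrt2\,\kappa A_1\big]$, the last term coming from the Lipschitz bound on $\nabla_q F$ in Section~\ref{subsection:dataformfg} combined with $\mathscr W({\bar Q_{t_1}}_{\#}\mu,{\bar Q_{t_2}}_{\#}\mu)\le\|\partial_t\bar Q\|_\infty|t_1-t_2|$ from (\ref{eq:remarkfg}); set $E_1,E_2$ equal to these constants. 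None of $A_1,A_2,B,E,E_1,E_2$ depends on $T$, so now choose $T$ small enough to make every $O(T)$ remainder at most half its target. \textbf{(v)} Finally, part (i) of Definition \ref{defn:Msubzero}, that $(\tilde Q,\tilde P)\in W^{2,2;\infty}$, holds because $\bar Z\in W^{2,2;\infty}$ has Lipschitz first derivatives, $H\in C^3$ and $F,g\in C^3$ in $q$, so the integrands in (\ref{eq:mathfrak1})--(\ref{eq:mathfrak2}) are $C^1$ in $(t,q)$ with essentially bounded weak second derivatives, a property preserved under the $\tau$-integration.

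I expect the main difficulty to be bookkeeping rather than anything conceptual: one must check that the chain rule in step (iii) never produces a Wasserstein derivative of $F$ or $g$, so that the plain $\kappa$-bounds of Section~\ref{subsection:dataformfg} suffice (the finer $\mu$-regularity is not needed for this lemma), and that the ``main term $+\,O(T)$'' split is genuine for the second-order $q$-derivatives of $\tilde Q$ and $\tilde P$ --- their main terms being exactly $0$ and $\kappa(E^2+E)/\theta$, and in particular not containing $A_1$ or $A_2$, which is precisely what lets the loop $A_1\to A_2\to A_1$ close. Everything else is a routine application of the triangle inequality to the integral representations, using the uniform coefficient bounds.
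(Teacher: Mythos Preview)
Your proposal is correct and follows essentially the same scheme as the paper: write out the integral formulas, identify for each quantity in Definition~\ref{defn:Msubzero} a $T$-independent main term plus an $O(T)$ remainder, choose the constants in an order that avoids circularity (first $B$, then $E$ and $A_1$, then $A_2$, then $E_1,E_2$), and finally shrink $T$. The paper's conditions (a)--(g) and its explicit formula for $R$ in~(\ref{eq:defnofR}) are exactly your ``main term $\le$ half the target'' inequalities rewritten.

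One small difference is worth noting. For the bound $\|\partial^2_{tt}\tilde P\|_\infty\le E_2$, the paper computes $\partial^2_{tt}(\frmbar^{s,\mu})^2(\bar Z)$ explicitly, which produces the term $\int_{\dtorus}\nabla^2_{\mu q}F(\bar Q_t(q),{\bar Q_t}_{\#}\mu)(\bar Q_t(x))\partial_t\bar Q_t(x)\,\mu(dx)$ and therefore invokes assumption~3 of Section~\ref{subsection:dataformfg} (existence and continuity of $\nabla^2_{\mu q}F$). Your route --- bounding the Lipschitz constant of $t\mapsto\nabla_q F(\bar Q_t(q),{\bar Q_t}_{\#}\mu)$ directly via the joint $\kappa$-Lipschitz hypothesis on $\nabla_q F$ together with~(\ref{eq:remarkfg}) --- is slightly more elementary and shows that assumption~3 is not actually needed for this lemma; it is only the later sections that require it. Either argument yields a valid choice of $E_2$.
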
 
\begin{proof} 
Observe that\footnote{We make a convention here and in the rest of the paper that in the application of the classical chain rule, and only if are concerned solely about estimates, juxtaposition is enough, i.e., we will not pay attention to the order of the factors or whether they are properly transposed.}  \begin{equation*} |(\frmbar^{s,\mu})^2(\bar Z)(t,q)|\leq \frac{1}{\theta}|\nabla_q g(\bar Q_0(q),{{}\bar{Q}_0}_{\#}\mu)| + t \frac{1}{\theta} \underset{0\leq\tau\leq t}{\sup}[|\nabla_q H(\bar  Q_{\tau}(q),\theta\bar P_{\tau}(q))| + |\nabla_q F(\bar Q_{\tau}(q),{{}\bar{Q}_{\tau}}_{\#}\mu ) | ], \end{equation*}  
\begin{align*} |\partial_t(\frmbar^{s,\mu})^1(\bar Z)(t,q))| \leq &\ |\nabla_p H(\bar Q_t(q),\theta\bar P_t(q))|\ , \\ |\partial_t(\frmbar^{s,\mu})^2(\bar Z)(t,q)| \leq &\  \frac{1}{\theta} |\nabla_qH(\bar Q_{t}(q),\theta \bar P_{t}(q))| + \frac{1}{\theta} |\nabla_q F(\bar Q_{t}(q),{{}\bar{Q}_{t}}_{\#}\mu) | \ ;  \end{align*} 
\begin{equation*} |\nabla_q(\frmbar^{s,\mu})^1(\bar Z)(t,q)| \leq \sqrt{d} + \int_s^t \big| \nabla^2_{qp}H(\bar Q_{\tau}(q),\theta\bar P_{\tau}(q))\nabla_q \bar Q_{\tau}(q) + \nabla^2_{pp}H(\bar Q_{\tau}(q),\theta \bar P_{\tau}(q)) \theta \nabla_q \bar P_{\tau}(q)  \big| d\tau \ ;   \end{equation*} 
\begin{align*} 
  |\nabla_q(\frmbar^{s,\mu})^2(\bar Z)(t,q&)| \\ \leq &\ \frac{1}{\theta} |\nabla^2_{qq}g(\bar Q_0(q),{{}\bar{Q}_0}_{\#}\mu)\nabla_q \bar Q_0(q)| + \frac{1}{\theta} \int_0^t |\nabla^2_{qq}F(\bar Q_{\tau}(q),{{}\bar{Q}_{\tau}}_{\#}\mu)\nabla_q\bar Q_{\tau}(q)| d\tau   \\ & \  +  \frac{1}{\theta} \int_0^t \big| \nabla^2_{qq}H(\bar Q_{\tau}(q),\theta\bar P_{\tau}(q))\nabla_q\bar Q_{\tau}(q) + \nabla^2_{pq}H(\bar Q_{\tau}(q),\theta \bar P_{\tau}(q))\theta \nabla_q \bar P_{\tau}(q)  \big| d\tau
\end{align*} 
The previous lines are inequalities for the moduli of $Q,$ $P,$ and their derivatives. Let us also compute second-order derivatives to find: 
\begin{align*}  
|\nabla^2_{qq}(\frmbar^{s,\mu})^1&(\bar Z)(t,q)| \\ \leq &  \int_s^t \big| ( \nabla^3_{qqp}H(\bar Q_{\tau}(q),\theta\bar P_{\tau}(q))\nabla_q \bar Q_{\tau}(q)+\nabla^3_{pqp}H(\bar Q_{\tau}(q),\theta\bar P_{\tau}(q))\theta\nabla_q\bar P_{\tau}(q) )\nabla_q \bar Q_{\tau}(q)   \\ & \quad  +   \nabla^2_{qp}H(\bar Q_{\tau}(q),\theta\bar P_{\tau}(q))\nabla^2_{qq}\bar Q_{\tau}(q)   \\ & \quad +  ( \nabla^3_{qpp}H(\bar Q_{\tau}(q),\theta\bar P_{\tau}(q))\nabla_q \bar Q_{\tau}(q)+\nabla^3_{ppp}H(\bar Q_{\tau}(q),\theta\bar P_{\tau}(q))\theta \nabla_q \bar P_{\tau}(q) )\theta \nabla_q \bar P_{\tau}(q)   \\  & \quad  + \nabla^2_{pp}H(\bar Q_{\tau}(q),\theta\bar P_{\tau}(q))\theta \nabla^2_{qq}\bar P_{\tau}(q)  \big|  d\tau  
\end{align*} 
for the first component of $\frmbar$, and, for the second component, 
\begin{align*} 
|\nabla^2_{qq}(\frmbar^{s,\mu})^2&(\bar Z)(t,q)|\\   \leq &  \ \frac{1}{\theta} \big| (\nabla^3_{qqq}g(\bar Q_0(q),{{}\bar{Q}_0}_{\#}\mu)\nabla_q \bar Q_0(q))\nabla_q \bar Q_0(q)  +  \nabla^2_{qq}g(\bar Q_0(q),{{}\bar{Q}_0}_{\#}\mu)\nabla^2_{qq}\bar Q_0(q) \big|   \\        & +   \frac{1}{\theta} \int_0^t \big| ( \nabla^3_{qqq}H(\bar Q_{\tau}(q),\theta \bar P_{\tau}(q))\nabla_q\bar Q_{\tau}(q)+\nabla^3_{pqq}H(\bar Q_{\tau}(q),\theta\bar P_{\tau}(q))\theta \nabla_q\bar P_{\tau}(q) )\nabla_q \bar Q_{\tau}(q)  \\ & \qquad  \qquad   + \nabla^2_{qq}H(\bar Q_{\tau}(q),\theta\bar P_{\tau}(q))\nabla^2_{qq}\bar Q_{\tau}(q)  \\ &\  \qquad \qquad + ( \nabla^3_{qpq}H(\bar Q_{\tau}(q),\theta\bar P_{\tau}(q))\nabla_q \bar Q_{\tau}(q)+\nabla^3_{ppq}H(\bar Q_{\tau}(q),\theta\bar P_{\tau}(q))\theta \nabla_q\bar P_{\tau}(q) )\theta \nabla_q \bar P_{\tau}(q)  \\ & \ \qquad \qquad + \nabla^2_{pq}H(\bar Q_{\tau}(q),\theta\bar P_{\tau}(q))\theta \nabla^2_{qq}\bar P_{\tau}(q)\big| d\tau  \\ &  + \frac{1}{\theta} \int_0^t \big| (\nabla^3_{qqq}F(\bar Q_{\tau}(q),{{}\bar{Q}_{\tau}}_{\#}\mu)\nabla_q\bar Q_{\tau}(q))\nabla_q\bar Q_{\tau}(q)  +  \nabla^2_{qq}F(\bar Q_{\tau}(q),{{}\bar{Q}_{\tau}}_{\#}\mu)\nabla^2_{qq}\bar Q_{\tau}(q)   \big|  d\tau.    
\end{align*} 
We deal with $A_1, A_2, B, E, T$ first. Let $A_1,$ $A_2,$ $B,$ $E,$ $T$ be for the moment arbitrary positive numbers. Suppose that $\bar Z\in\mathcal{M}_0,$ that is, $\bar Z=(\bar P,\bar Q)$ satisfies (\ref{eq:defofM0}). From the latter inequalities we see that: \begin{align*} 
\textrm{(a)} & \quad c/\theta  + T \bar{h}(B)/\theta \leq B \  \qquad  \textrm{ implies } \ |(\frmbar^{s,\mu})^2(\bar Z)| \leq B ;  \\  \textrm{(b1)} & \quad \bar{l}(B) \leq A_1 \  \qquad \quad  \textrm{ implies } \ |\partial_t(\frmbar^{s,\mu})^1(\bar Z)|  \leq A_1   ; \\ \textrm{(b2)} & \quad \bar{l}(B)/\theta \leq A_2 \  \qquad \quad  \textrm{ implies } \ |\partial_t(\frmbar^{s,\mu})^2(\bar Z)|  \leq A_2   ; \end{align*} \begin{align*} \textrm{(c)} & \quad c + T \bar{h}(B)(A_1 +\theta A_2)  \leq A_1 \  \qquad   \textrm{ implies } \ |\nabla_q(\frmbar^{s,\mu})^1(\bar Z)| \leq A_1 ; \\ \textrm{(d)} & \quad cE/\theta + \frac{T}{\theta}\bar{h}(B)(A_1+\theta A_2) \leq A_2 \  \qquad  \textrm{ implies } \ |\nabla_q(\frmbar^{s,\mu})^2(\bar Z)| \leq A_2 ; \\ \textrm{(e)} & \quad c + T\bar{h}(B)(A_1+\theta A_2) \leq E \  \qquad  \textrm{ implies} \ |\nabla_q(\frmbar^{s,\mu})^1(\bar Z)|\big|_{t=0} \leq E ; \end{align*} \begin{align*} \textrm{(f1)} & \quad T\bar{h}(B) (A_1+\theta A_2)(A_1+\theta A_2+1) \leq A_1 \ \quad \textrm{ implies } \ |\nabla^2_{qq}(\frmbar^{s,\mu})^1(\bar Z)| \leq A_1 ; \\ \textrm{(f2)} & \quad T\bar{h}(B) (A_1+\theta A_2)(A_1+\theta A_2+1) \leq E \ \quad \textrm{ implies } \ |\nabla^2_{qq}(\frmbar^{s,\mu})^1(\bar Z)|\big|_{t=0} \leq E ; \\  \textrm{(g)} & \quad \frac{1}{\theta }cE(E+1) + \frac{T}{\theta }\bar{h}(B)(A_1+\theta A_2)(A_1+\theta A_2+1) + \frac{T}{\theta }\bar{h}(B)A_1(A_1+1) \leq A_2 \\   & \qquad \ \textrm{ implies } \ |\nabla^2_{qq}(\frmbar^{s,\mu})^2(\bar Z)| \leq A_2  . 
\end{align*} 
We need to set $A_1, A_2, B, E, T$ so that the above inequalities hold simultaneously. First choose $B> c/\theta .$ The number $B$ now depends only on the coefficients and $\theta$ (through $c$), and thus $\bar{l}(B), \bar{h}(B)$ depend only on the coefficients and $\theta$, through $B.$ Let $T$ be small enough that $c/\theta +(T/\theta )\bar{l}(B)\leq B$ (i.e.~$T<(\theta B-c)/\bar{l}(B)$). This gives (a). Choose $E$ to be any number such that $E>c,$ and pick $A_1, A_2$ such that \begin{align} A_1 > &\ \max\{\bar{l}(B),E,c \} , \label{eq:A1}  \\ A_2 > &\ \max\{\frac{\bar{l}(B)}{\theta }, \frac{1}{\theta }cE(E+1)\}. \label{eq:A2}  \end{align} This gives (b1), (b2). Making $T$ possibly smaller by letting \begin{align} T < &\ R:=  \min\big\{ \frac{\theta B-c}{\bar{l}(B)}, \frac{E-c}{\bar{h}(B)(A_1+\theta A_2}, \notag  \\ & \quad \qquad  \frac{A_2-cE(E+1)/\theta }{(1/\theta )\bar{h}(B)[(A_1+\theta A_2)(A_1+\theta A_2+1)+A_1(A_1+1)]}, \frac{E}{\bar{h}(B)(A_1+\theta A_2)(A_1+\theta A_2+1)}   \big\} , \label{eq:defnofR} \end{align} 
we make sure that: (e) holds, and, consequently, (c) holds because $A_1 > E$; (g) holds and therefore (d) as well; and (f2) holds, hence, (f1) is true.
Finally, let us answer the existence of the constants $E_1, E_2.$ We compute $\partial^2_{tt}(\frmbar^{s,\mu})^1(\bar Z)(t,q)$ to be  
\begin{align*}
\partial^2_{tt}(\frmbar^{s,\mu})^1(\bar Z)(t,q) = \nabla^2_{pq}H(\bar{Q}_t(q),\theta\bar{P}_t(q))\partial_t\bar{Q}_t(q) + \theta\nabla^2_{pp}H(\bar{Q}_t(q),\theta\bar{P}_t(q))\partial_t\bar{P}_t(q)
\end{align*}
and, thanks to the conditions in Section \ref{subsection:dataformfg}, we get for $\partial^2_{tt}(\frmbar^{s,\mu})^2(\bar Z)(t,q)$:
\begin{align*}
\partial^2_{tt}(\frmbar^{s,\mu})^2(\bar Z)(t,q) = &\ -\frac{1}{\theta} \nabla^2_{qq}H(\bar{Q}_t(q),\theta\bar{P}_t(q))\partial_t\bar{Q}_t(q) + \nabla^2_{pq}H(\bar{Q}_t(q),\theta\bar{P}_t(q))\partial_t\bar{P}_t(q) 
\\
& \  - \frac{1}{\theta}\nabla^2_{qq}F(\bar{Q}_t(q),{{}\bar{Q}_t}_{\#}\mu)\partial_t\bar{Q}_t(q) 
- \frac{1}{\theta}\int_{\dtorus} \nabla^2_{\mu q}F(\bar{Q}_t(q),{{}\bar{Q}_t}_{\#}\mu)(\bar{Q}_t(x))\partial_t\bar{Q}_t(x)\mu(dx).
\end{align*}
Therefore, it is enough to choose $E_1, E_2$ large enough such that 
\begin{align*}
\bar{h}(B)A_1 + \theta\bar{h}(B)A_2 \leq & \ E_1/\sqrt{2} ,
\\
\frac{1}{\theta}\bar{h}(B)A_1 + \bar{h}(B)A_2 + \frac{1}{\theta}c A_1 + \frac{1}{\theta}\|\nabla^2_{\mu q}F\|_{\infty}A_1  \leq & \ E_2/\sqrt{2}.
\end{align*}
\end{proof}   
\begin{prop}
\label{prop:m0islipschitz} (Contraction property) 
Let $\theta >2\kappa.$ Then there exist positive numbers $A_1$,$A_2,$ $B,$ $E,$ $E_1,$ $E_2,$ $T$ such that for any $s\in[0,T],$ $\mu\in\wasstwospace,$ the operator $\frmbar^{s,\mu}$ maps $\mathcal{M}_0(A,B,E,E_1,E_2,T)$ into itself and is a contraction. 
\end{prop}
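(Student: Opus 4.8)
The plan is to combine the self-mapping result of Lemma \ref{lemma:existenceofABE} with a standard contraction estimate, carefully tracking how the constant $\theta$ enters. First I would fix $\theta > 2\kappa$ and invoke Lemma \ref{lemma:existenceofABE} to obtain constants $A_1, A_2, B, E, E_1, E_2, T_0 > 0$ depending only on the coefficients such that $\frmbar^{s,\mu}$ maps $\mathcal{M}_0 := \mathcal{M}_0(A_1,A_2,B,E,E_1,E_2,T_0)$ into itself for every $s \in (0,T_0)$, $\mu \in \wasstwospace$. Since $\mathcal{M}_0$ is closed in $\mathcal{M}$ (Proposition \ref{prop:M0isclosed}) and $\mathcal{M}$ is complete, it then suffices to show that, after possibly shrinking $T_0$ to some $T \le T_0$ depending only on the coefficients, $\frmbar^{s,\mu}$ is a contraction on $\mathcal{M}_0$ in the $\|\cdot\|_{\infty}$ norm.

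For the contraction estimate I would take $\bar Z = (\bar Q, \bar P)$ and $\bar Z' = (\bar Q', \bar P')$ in $\mathcal{M}_0$ and bound $\|\frmbar^{s,\mu}(\bar Z) - \frmbar^{s,\mu}(\bar Z')\|_{\infty}$ term by term using \eqref{eq:mathfrak1} and \eqref{eq:mathfrak2}. For the first component, the difference $(\frmbar^{s,\mu})^1(\bar Z)(t,q) - (\frmbar^{s,\mu})^1(\bar Z')(t,q)$ is $\int_s^t [\nabla_p H(\bar Q_\tau(q), \theta \bar P_\tau(q)) - \nabla_p H(\bar Q'_\tau(q), \theta \bar P'_\tau(q))]\,d\tau$, which by the $C^2$-bound $\bar h(B)$ on the second derivatives of $H$ (valid since $|\theta \bar P|, |\theta \bar P'| \le \theta B$ is controlled — here one uses $\|\bar P\|_{\mathcal{M}^2} \le B$) is at most $|t-s|\, \bar h(B)\,(\|\bar Q - \bar Q'\|_\infty + \theta \|\bar P - \bar P'\|_\infty) \le T\, \bar h(B)(1+\theta)\,\|\bar Z - \bar Z'\|_\infty$. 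For the second component, the term with $g$ contributes $\frac1\theta |\nabla_q g(\bar Q_0(q), \bar Q_{0\#}\mu) - \nabla_q g(\bar Q'_0(q), \bar Q'_{0\#}\mu)|$; here I would use the joint $\kappa$-Lipschitz hypothesis on $\nabla_q g$ from Section \ref{subsection:dataformfg} together with the estimate $W_2(\bar Q_{0\#}\mu, \bar Q'_{0\#}\mu) \le \|\bar Q_0 - \bar Q'_0\|_{L^2(\mu)} \le \|\bar Q - \bar Q'\|_\infty$ from \eqref{eq:remarkfg}, yielding a bound $\le \frac{\sqrt2 \kappa}{\theta}\|\bar Z - \bar Z'\|_\infty$. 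The integral terms in \eqref{eq:mathfrak2} contribute, analogously, $\frac{T}{\theta}(\bar h(B)(1+\theta) + \sqrt2\kappa)\|\bar Z - \bar Z'\|_\infty$ (the $F$-term handled with its $\kappa$-Lipschitz property and \eqref{eq:remarkfg} exactly as for $g$). Collecting everything, one gets
\begin{align*}
\|\frmbar^{s,\mu}(\bar Z) - \frmbar^{s,\mu}(\bar Z')\|_\infty \le \Big( \tfrac{\sqrt2\kappa}{\theta} + T\, C(B,\theta,\kappa) \Big)\|\bar Z - \bar Z'\|_\infty,
\end{align*}
where $C(B,\theta,\kappa)$ depends only on the coefficients and $\theta$.

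The key point — and the reason the hypothesis $\theta > 2\kappa$ appears — is that the $T$-independent part of the contraction constant is $\tfrac{\sqrt2\kappa}{\theta}$ (up to the precise bookkeeping of the $\sqrt2$'s coming from the quadratic norm convention in Note (3); if the accounting gives $\tfrac{2\kappa}{\theta}$ this is exactly $<1$ under the stated hypothesis). Since $\theta > 2\kappa$ forces this part strictly below $1$, I can choose $T \le T_0$ small enough, depending only on the coefficients (and $\theta$, which is itself pinned to the coefficients via $\kappa$), that $\tfrac{\sqrt2\kappa}{\theta} + T\,C(B,\theta,\kappa) \le \lambda < 1$. Then $\frmbar^{s,\mu}$ is a $\lambda$-contraction on the complete metric space $\mathcal{M}_0$, which is what the statement asserts. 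The main obstacle I anticipate is not conceptual but the careful bookkeeping: one must verify that in \emph{every} term where a first derivative of $H$, $F$, or $g$ is differenced, the relevant second derivative is evaluated at arguments lying in the compact set $\{|p| \le \theta B\}$ (so that $\bar h(B)$ genuinely bounds it) and that the measure arguments are compared via \eqref{eq:remarkfg}; and one must make sure the final smallness requirement on $T$ is compatible with — indeed, can be taken to refine — the one already imposed in Lemma \ref{lemma:existenceofABE}, so that the self-mapping property is retained. There is also the minor point, implicit in the statement's notation $\mathcal{M}_0(A,B,E,E_1,E_2,T)$, that one should read this as the six-parameter family $\mathcal{M}_0(A_1,A_2,B,E,E_1,E_2,T)$ produced above.
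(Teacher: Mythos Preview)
Your proposal is correct and follows essentially the same route as the paper: invoke Lemma \ref{lemma:existenceofABE} for the self-mapping, estimate the two components of $\frmbar^{s,\mu}(\bar Z)-\frmbar^{s,\mu}(\bar Z')$ separately via the mean-value theorem for $\nabla H$ and the joint $\kappa$-Lipschitz assumption on $\nabla_q g,\nabla_q F$ together with \eqref{eq:remarkfg}, and then observe that the only $T$-independent contribution to the Lipschitz constant is the $g$-term $\sim\kappa/\theta$, which the hypothesis $\theta>2\kappa$ forces below $1$ so that a further shrinking of $T$ yields a contraction. The paper's bookkeeping indeed produces $2\kappa/\theta$ (an outer $\sqrt{2}$ from combining the two components times the inner $\sqrt{2}\kappa/\theta$ from the $g$-term), exactly as you anticipated, and your $(1+\theta)$ factor in the $H$-terms is harmless since those terms carry a $T$.
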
 
\begin{proof} 
We run the previous lemma to obtain the numbers $A_1,$ $A_2,$ $B,$ $E,$ $E_1$, $E_2,$ and $T,$ and decrease $T,$ if necessary, so that \begin{equation}\label{eq:TinABElemma} T < \min\big\{ R, \frac{1-\frac{2\kappa}{\theta }}{\bar{h}(B)\sqrt{2}(1+1/\theta )+2\kappa/\theta} \big\}, \end{equation} where $R$ is the number defined in (\ref{eq:defnofR}).   Let $\bar Z=(\bar Q,\bar P), \bar Z'=(\bar Q',\bar P')\in \mathcal{M}_0.$ Let $s\in[0,T],$ $\mu\in\wasstwospace$ be arbitrary. We have, for the first component of $\frmbar^{s,\mu},$ that $$ |(\frmbar^{s,\mu})^1(\bar Z)(t,q)-(\frmbar^{s,\mu})^1(\bar Z')(t,q)| \leq |s-t|\underset{t\leq\tau\leq s}{\max}|\nabla_pH(\bar Q_{\tau}(q),\theta \bar P_{\tau}(q))-\nabla_pH(\bar Q'_{\tau}(q),\theta \bar P'_{\tau}(q))|. $$ Since $H$ is $C^2,$ we can write  $$ |\nabla_pH(\bar Q_{\tau}(q),\theta \bar P_{\tau}(q))-\nabla_pH(\bar Q'_{\tau}(q),\theta  \bar P'_{\tau}(q))| \leq M^1_{\tau,q}|\bar Z(\tau,q)-\bar Z'(\tau,q)|, $$ where $$M^1_{\tau,q} = \max_{0\leq\lambda\leq 1}|\nabla(\nabla_p H)[(1-\lambda)\bar Q_{\tau}(q)+\lambda \bar Q'_{\tau}(q) ,(1-\lambda)\theta \bar P_{\tau}(q)+\lambda \theta  \bar P'_{\tau}(q) ]| . $$ For the second component of $\frmbar$ we apply  (\ref{eq:remarkfg}) to get \begin{align*} & |(\frmbar^{s,\mu})^2(\bar Z)(t,q)-(\frmbar^{s,\mu})^2(\bar Z')(t,q)| \\ &  \leq  \frac{\kappa}{\theta }\sqrt{|\bar Q_0(q)-\bar Q_0'(q)|^2+\|\bar Q_0-\bar Q_0'\|^2_{L^2(\mu)}}\     + \frac{1}{\theta }t  \underset{0\leq\tau\leq t}{\max}\big[ |\nabla_q H(\bar Q_{\tau}(q),\theta \bar P_{\tau}(q))-\nabla_q H(\bar Q'_{\tau}(q),\theta \bar P'_{\tau}(q))|  \\ & \quad  +  \kappa\sqrt{|\bar Q_{\tau}(q)-\bar Q_{\tau}'(q)|^2+\|\bar Q_{\tau}-\bar Q_{\tau}'\|^2_{L^2(\mu)}} \ \big] \\ &\ \leq \sqrt{2}\frac{\kappa}{\theta }(1+t)\|\bar Z-\bar Z'\|_{\infty}     + \frac{t}{\theta }\underset{0\leq\tau\leq t}{\max} |\nabla_q H(\bar Q_{\tau}(q),\theta \bar P_{\tau}(q))-\nabla_q H(\bar Q'_{\tau}(q),\theta \bar P'_{\tau}(q))| , \end{align*} with $$ |\nabla_q H(\bar Q_{\tau}(q),\theta \bar P_{\tau}(q))-\nabla_q H(\bar Q'_{\tau}(q),\theta \bar P'_{\tau}(q))| \leq M^2_{\tau,q}|\bar Z(\tau,q)-\bar Z'(\tau,q)|, $$ where $$ M^2_{\tau,q} = \max_{0\leq\lambda\leq 1}|\nabla(\nabla_q H)[(1-\lambda)\bar Q_{\tau}(q)+\lambda \bar Q'_{\tau}(q) ,(1-\lambda)\theta \bar P_{\tau}(q)+\lambda \theta \bar P'_{\tau}(q) ]|. $$ But, since $\mathcal{M}_0$ is a convex subset of $\mathcal{M},$ it is true that $M^1_{\tau,q}, M^2_{\tau,q} \leq \bar{h}(B),$ $(\tau,q)\in[0,s]\times\dtorus.$  It follows that $$ |(\frmbar^{s,\mu})^1(\bar Z)(t,q)-(\frmbar^{s,\mu})^1(\bar  Z')(t, q)| \leq |s-t|\bar{h}(B)\|\bar Z - \bar Z'\|_{\infty} \ , \quad 0\leq t\leq s , \  q\in\dtorus,  $$ and  $$ |(\frmbar^{s,\mu})^2(\bar Z)(t,q)-(\frmbar^{s,\mu})^2(\bar Z')(t,q)| \leq \sqrt{2}\frac{1}{\theta }\kappa(1+t)\|\bar Z-\bar Z'\|_{\infty} + \frac{1}{\theta }t \bar{h}(B)\|\bar Z-\bar Z'\|_{\infty}.$$ Consequently, since $0\leq t,s\leq T,$ we obtain 
\begin{align} \|\frmbar^{s,\mu}(\bar Z)-\frmbar^{s,\mu}(\bar Z')\|_{\infty} &\ \leq \sqrt{2}T\bar{h}(B) + \sqrt{2}(\sqrt{2}\frac{1}{\theta }\kappa (1+T) + \frac{1}{\theta } T\bar{h}(B)) \|\bar Z-\bar Z'\|_{\infty} \notag \\ &\ = \big[\frac{2\kappa}{\theta } + T\big( \bar{h}(B)\sqrt{2}(1+\frac{1}{\theta })+\frac{2\kappa}{\theta }\big)\big] \|\bar Z-\bar Z'\|_{\infty}.  \label{eq:mislipschitz} \end{align} Due to (\ref{eq:TinABElemma}), the expression inside the square brackets in (\ref{eq:mislipschitz}) is less than $1.$  
\end{proof} 
It follows now that the operator (\ref{eq:mathfrak1}) and (\ref{eq:mathfrak2}) has a unique fixed point in $\mathcal{M}_0(A_1,A_2,B,E,E_1,E_2,T),$ where $A_1,A_2,B,E,E_1,E_2,T$ are as above. 
\begin{defn}
\label{defn:mathfrakm} 
Fix $\mu\in\wasstwospace,$ $s\in[0,T].$ 
Define the operator $\mathfrak{m}^{s,\mu}:\mathcal{M}\to\mathcal{M},$ $\frm=((\frm^{s,\mu})^1,(\frm^{s,\mu})^2)$ as follows: 
\begin{align}
    \textrm{ If } Z = &\ (Q,P) \in \mathcal M , \textrm{ then }
    \mathfrak m^{s,\mu} (Z) = ((\mathfrak m^{s,\mu})^1( Z),(\mathfrak
    m^{s,\mu})^2( Z)), \textrm{ where :} \notag
    \\ (\frm^{s,\mu})^1(Z)(t,q) &\ = q + \int_s^t
    \nabla_pH(Q_{\tau}(q),P_{\tau}(q))d\tau , \label{eq:mathfrak1again}
    \\ (\frm^{s,\mu})^2(Z)(t,q) &\ =
    \nabla_q g( Q_0(q),{{}Q_0}_{\#}\mu)-\int_0^t\nabla_q H(Q_{\tau}(q),P_{\tau}(q))+\nabla_qF(Q_{\tau}(q),{{}Q_{\tau}}_{\#}\mu)d\tau, \label{eq:mathfrak2again} 
    \end{align} 
    $0\leq t\leq T,$ $q\in\dtorus.$ 
    \end{defn} 
\begin{cor}
\label{cor:hamODEshassolution} 
\begin{enumerate}[(i)] \item For any $T>0,$ $s\in[0,T], \mu\in\wasstwospace,$ $\theta >0,$ the operator $\bar{\mathfrak{m}}$ has a unique fixed point in $\mathcal{M}_0(A_1,A_2,B,E,E_1,E_2,T)$ if, and only if, $\mathfrak{m}$ has a unique fixed point in \newline $\mathcal{M}_0(A_1,\theta A_2 ,\theta B ,E,E_1,\theta E_2,T).$ \item With $\theta >2\kappa,$ fix $s\in[0,T],$ $\mu\in\wasstwospace.$ Let $A_1,A_2,B,E,E_1,E_2,T$ be as obtained in Lemma \ref{lemma:existenceofABE}. Then $\mathfrak{m}^{s,\mu}$ maps $\mathcal{M}_0(A_1,\theta A_2 ,\theta B ,E,E_1,\theta E_2,T)$ into itself and the system (\ref{eq:hamODEs}) has a unique solution in \newline $\mathcal{M}_0(A_1,\theta A_2 ,\theta B ,E,E_1,\theta E_2,T).$ 
\end{enumerate}
\end{cor}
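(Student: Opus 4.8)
The plan is to reduce everything to the rescaling $\bar P\mapsto\theta\bar P$ already used informally before Definition~\ref{defn:Msubzero}, and then to the Banach fixed point theorem. Introduce the rescaling bijection $\Phi:\mathcal{M}\to\mathcal{M}$, $\Phi(\bar Q,\bar P):=(\bar Q,\theta\bar P)$. First I would check the intertwining identity $\mathfrak{m}^{s,\mu}\circ\Phi=\Phi\circ\bar{\mathfrak{m}}^{s,\mu}$: this is a term-by-term comparison of (\ref{eq:mathfrak1})--(\ref{eq:mathfrak2}) with (\ref{eq:mathfrak1again})--(\ref{eq:mathfrak2again}), using that $\theta\bar P_\tau=P_\tau$ wherever $\theta\bar P$ occurs, that the first components are literally identical, and that multiplying the second component of $\bar{\mathfrak{m}}^{s,\mu}$ by $\theta$ clears the factors $1/\theta$. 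Next I would verify that $\Phi$ carries $\mathcal{M}_0(A_1,A_2,B,E,E_1,E_2,T)$ bijectively onto $\mathcal{M}_0(A_1,\theta A_2,\theta B,E,E_1,\theta E_2,T)$: comparing (\ref{eq:defofM0}) and condition (iii) of Definition~\ref{defn:Msubzero}, the bounds governed by $A_1$, $E$, $E_1$ (all involving $\bar Q$ only) are unchanged since the first component is untouched, while each bound on $\bar P$ and its derivatives (those governed by $A_2$ and $B$, and $\|\partial^2_{tt}\bar P\|\le E_2$) is simply multiplied by $\theta$, and $W^{2,2;\infty}$ membership is preserved. Combining the intertwining identity with this bijection gives part (i): $\bar Z$ is the unique fixed point of $\bar{\mathfrak{m}}^{s,\mu}$ in the first set if and only if $\Phi(\bar Z)$ is the unique fixed point of $\mathfrak{m}^{s,\mu}$ in the second.

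For part (ii), I would invoke Proposition~\ref{prop:m0islipschitz} (valid since $\theta>2\kappa$): for the constants $A_1,A_2,B,E,E_1,E_2$ and the (possibly shrunk) $T$ it furnishes, $\bar{\mathfrak{m}}^{s,\mu}$ maps $\mathcal{M}_0(A_1,A_2,B,E,E_1,E_2,T)$ into itself and is a contraction there, for every $s\in[0,T]$ and $\mu\in\wasstwospace$. Since $\mathcal{M}_0(A_1,A_2,B,E,E_1,E_2,T)$ is closed in $\mathcal{M}$ by Proposition~\ref{prop:M0isclosed}, hence a complete metric space, the Banach fixed point theorem yields a unique fixed point $\bar Z$ of $\bar{\mathfrak{m}}^{s,\mu}$ in it. By the intertwining identity, $\mathfrak{m}^{s,\mu}$ sends $\mathcal{M}_0(A_1,\theta A_2,\theta B,E,E_1,\theta E_2,T)=\Phi(\mathcal{M}_0(A_1,A_2,B,E,E_1,E_2,T))$ into itself, and $Z:=\Phi(\bar Z)$ is its unique fixed point there by part (i).

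It remains to identify the fixed point equation for $\mathfrak{m}^{s,\mu}$ with the Hamiltonian ODEs (\ref{eq:hamODEs}). Any $Z=(Q,P)$ in the relevant $\mathcal{M}_0$ lies in $W^{2,2;\infty}$, so $t\mapsto Z(t,q)$ is $C^1$ (bounded weak second $t$-derivatives $\Rightarrow$ Lipschitz first $t$-derivatives); differentiating (\ref{eq:mathfrak1again})--(\ref{eq:mathfrak2again}) in $t$ recovers the two evolution equations of (\ref{eq:hamODEs}), while setting $t=s$ in the first equation and $t=0$ in the second recovers the boundary conditions, and conversely integrating a $C^1$ solution of (\ref{eq:hamODEs}) returns the fixed point equations. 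Therefore (\ref{eq:hamODEs}) has a unique solution in $\mathcal{M}_0(A_1,\theta A_2,\theta B,E,E_1,\theta E_2,T)$, namely $Z$.

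I do not expect a genuine obstacle: the corollary is essentially a bookkeeping consequence of the contraction estimate already established in Proposition~\ref{prop:m0islipschitz}. The only points requiring care are (a) tracking the $\theta$-scaling consistently through all bounds — in particular noticing that $E_1$, a bound on $\partial^2_{tt}\bar Q$, does \emph{not} rescale while $E_2$, a bound on $\partial^2_{tt}\bar P$, does — and (b) the regularity observation that elements of $\mathcal{M}_0$ are $C^1$ in $t$, so that passing between the integral (fixed-point) formulation and the differential system (\ref{eq:hamODEs}) is legitimate and the latter holds in the classical pointwise sense.
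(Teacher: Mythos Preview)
Your proposal is correct and follows essentially the same route as the paper: both argue via the rescaling $(\bar Q,\bar P)\mapsto(\bar Q,\theta\bar P)$, which conjugates $\bar{\mathfrak m}^{s,\mu}$ to $\mathfrak m^{s,\mu}$ and carries one $\mathcal M_0$ onto the other, and then invoke the contraction of Proposition~\ref{prop:m0islipschitz} together with the closedness of $\mathcal M_0$ (Proposition~\ref{prop:M0isclosed}) to apply Banach's fixed point theorem. Your packaging via the explicit bijection $\Phi$ and the intertwining identity is a bit cleaner than the paper's direct verification, and your added remark on passing between the integral fixed-point formulation and the differential system~(\ref{eq:hamODEs}) makes explicit a point the paper treats just before the corollary.
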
 
\begin{proof} 
\begin{enumerate}[(i)] \item Suppose $\bar{\mathfrak{m}}^{s,\mu}$ has a unique fixed point $\bar{\Sigma}[s,\mu]=(\bar{\Sigma}^1[s,\mu],\bar{\Sigma}^2[s,\mu])$ that satisfies the bounds of Definition \ref{defn:Msubzero} with $\bar Q=\bar \Sigma^1$ and $\bar P=\bar \Sigma^2.$  Define \begin{equation}\label{eq:SigmabartoSigma} \Sigma^2[s,\mu] := \bar{\Sigma}^1[s,\mu], \quad \Sigma^2[s,\mu] := \theta \bar{\Sigma}^2[s,\mu]. \end{equation} Then it is straighforward to check that $\Sigma[s,\mu] = (\Sigma^1[s,\mu],\Sigma^2[s,\mu])$ is the unique fixed point of the operator $\mathfrak{m}^{s,\mu}$ such that the inequalities of Definition \ref{defn:Msubzero} are true for $\Sigma^1,$ $\Sigma^2$ with the new constants $\theta A_2 ,$ $\theta B,$ $\theta E_2$ in place of $A_2$ and $B,$ $E_2$ respectively, that is, such that $Q=\Sigma^1,$ $P=\Sigma^2$ satisfy 
\begin{equation}
\label{eq:defofM0theta} 
\left\{ 
\begin{aligned} 
& \|\partial_t Q\|_{\mathcal{M}^1}\leq A_1 , \  \|\nabla_q  Q\|_{C([0,T]\times\dtorus;\dtorus\times\dtorus)} \leq A_1 , \ \|\nabla^2_{qq} Q\|_{C([0,T]\times\dtorus;\mathbb{T}^{2d}\times\dtorus)} \leq A_1   ; \\ & \|\partial_t P\|_{\mathcal{M}^2}\leq \theta A_2 , \ \ \|\nabla_q  P\|_{C([0,T]\times\dtorus;\R^d\times\R^d)} \leq \theta A_2 , \ \|\nabla^2_{qq} P\|_{C([0,T]\times\dtorus;\mathbb{R}^{2d}\times\R^d)} \leq \theta A_2; \     \\  &  \| P \|_{\mathcal{M}^2}\leq \theta B  ; \\ & \|\nabla_q Q_0\|_{C(\dtorus;\dtorus\times\dtorus)}  
 , \|\nabla_{qq}^2 Q_0\|_{C(\dtorus;\mathbb{T}^{2d}\times\dtorus)}  \leq E   , \\
 & \|\partial^2_{tt} Q\|_{\mathcal{M}} \leq E_1, \|\partial^2_{tt} P\|_{\mathcal{M}} \leq \theta E_2. 
 \end{aligned}
 \right.  
 \end{equation}  
 The sufficiency part of the statement is equally easily verified. 
 \item 
 We take $Z=(Q,P)\in\mathcal{M}_0(A_1,\theta A_2 ,\theta B ,E,E_1,\theta E_2,T),$ and evaluate $\bar{\mathfrak{m}}^{s,\mu}$ at $\bar Z = (\bar Q, \bar P)$ where $\bar Q = Q$ and $\bar P = P/\theta ,$ $\bar Z\in\mathcal{M}_0(A_1,A_2,B,E,E_1,E_2,T).$ Then, by Lemma \ref{lemma:existenceofABE}, \newline $\bar{\mathfrak{m}}^{s,\mu}(\bar Z)\in \mathcal{M}_0(A_1,A_2,B,E,E_1,E_2,T).$ But $(\bar{\mathfrak{m}}^{s,\mu})^1(\bar Z)=(\mathfrak{m}^{s,\mu})^1(Z)$ and $(\bar{\mathfrak{m}}^{s,\mu})^2(\bar Z)=\frac{1}{\theta }(\mathfrak{m}^{s,\mu})^1(Z)$, thus, $\mathfrak{m}^{s,\mu}(Z) \in \mathcal{M}_0(A_1,\theta A_2 ,\theta B ,E,E_1,\theta E_2,T).$ Furthermore, Proposition \ref{prop:m0islipschitz} provides a unique fixed point $\bar\Sigma[s,\mu]$ of $\bar{\mathfrak{m}}$ in $\mathcal{M}_0(A_1,A_2,B,E,E_1,E_2,T).$ Defining $\Sigma[s,\mu]$ as in (\ref{eq:SigmabartoSigma}), then, by (i), $\Sigma[s,\mu]$ is the unique fixed point of the operator $\mathfrak{m}^{s,\mu}$ on $\mathcal{M}_0(A_1,\theta A_2 ,\theta B ,E,E_1,\theta E_2,T)$, so it is the unique solution to (\ref{eq:hamODEs}) in $\mathcal{M}_0(A_1,\theta A_2 ,\theta B ,E,E_1,\theta E_2,T)$. 
\end{enumerate}
\end{proof}  
We stress that, as long as $T$ is as in (\ref{eq:TinABElemma}), we have a solution $\Sigma[s,\mu]$ to (\ref{eq:hamODEs}) for any $\mu\in\wasstwospace,$ and $0\leq s\leq T,$  and moreover, its $Q$ and $P$ components $\Sigma^1[s,\mu]$ and $\Sigma^2[s,\mu]$ satisfy the bounds of Definition \ref{defn:Msubzero} with $\theta A_2,$ and $\theta B,$ $\theta E_2$ in place of $A,$ $B,$ $\theta E_2$ respectively, \emph{independently of} $\mu,$ with solutions being continuous and differentiable in $t$ and $q$.
\begin{center}
\emph{\textbf{$\Sigma[s,\mu]$ will always denote such solutions to (\ref{eq:hamODEs}) as found in Corollary \ref{cor:hamODEshassolution}}.}
\end{center}
\begin{remark}
\label{remark:remarkaboutconstantT} 
The preceding proofs make it clear that $T$ can be assumed to be smaller if necessary at each following step, without affecting the validity of the previous statements. We choose to refer back to this remark in later stages instead of imposing tighter bounds on $T$ than (\ref{eq:TinABElemma}) above that would make their purpose unclear at first reading. We may sometimes just say ``$T$ is small'', having this remark in mind. $ \sslash $
\end{remark} 
\subsection{First regularity properties}\label{subsubsection:firstregularityproperties}
\begin{lemma}
\label{lemma:continuityinsandmu} 
For any fixed $Z=(Q,P)\in\mathcal M,$ $t\in[0,T],$ $q\in\R^d,$ and $\mu\in\wasstwospace,$ the function $$ s\mapsto \mathfrak{m}^{s,\mu}(Z)(t,q) $$ is continuous. Likewise, for any fixed $Z\in\mathcal M,$ $t\in[0,T],$ $q\in\R^d,$ and $s\in[0,T],$ the function 
$$ 
\mu\mapsto \mathfrak{m}^{s,\mu}(Z)(t,q) 
$$ 
is continuous. 
\end{lemma}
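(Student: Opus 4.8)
The statement to prove is Lemma~\ref{lemma:continuityinsandmu}: for fixed $Z=(Q,P)\in\mathcal{M}$, $t\in[0,T]$, $q\in\R^d$, the maps $s\mapsto \mathfrak{m}^{s,\mu}(Z)(t,q)$ and $\mu\mapsto \mathfrak{m}^{s,\mu}(Z)(t,q)$ are continuous.

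\textbf{Plan of proof.} The plan is to read off continuity directly from the explicit integral formulas \eqref{eq:mathfrak1again} and \eqref{eq:mathfrak2again}, treating the two components and the two parameters $s$ and $\mu$ separately. For the $s$-dependence: the only place $s$ enters $(\frm^{s,\mu})^1(Z)(t,q) = q + \int_s^t \nabla_p H(Q_\tau(q),P_\tau(q))\,d\tau$ is as the lower limit of integration, so if $s_n\to s$ then the difference is $\int_{s}^{s_n}\nabla_pH(Q_\tau(q),P_\tau(q))\,d\tau$, which is bounded in modulus by $|s_n-s|\cdot\sup_{\tau\in[0,T]}|\nabla_pH(Q_\tau(q),P_\tau(q))|$; since $Z\in\mathcal{M}$ is continuous on the compact set $[0,T]\times\dtorus$ and $H\in C^3$, that supremum is finite, giving continuity. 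The second component $(\frm^{s,\mu})^2(Z)(t,q)$ contains no $s$ at all (the integral runs from $0$ to $t$), so it is trivially continuous in $s$. This takes care of the first assertion.

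For the $\mu$-dependence: $\mu$ enters only through the push-forwards ${Q_0}_{\#}\mu$ and ${Q_\tau}_{\#}\mu$ appearing inside $\nabla_q g$ and $\nabla_q F$. First I would note that if $\mu_n\to\mu$ in $\mathscr{W}$, then for each fixed $\tau$, ${Q_\tau}_{\#}\mu_n\to {Q_\tau}_{\#}\mu$ in $\mathscr{W}$: indeed, by \eqref{eq:remarkfg}-type estimates (push the optimal plan between $\mu_n$ and $\mu$ forward by $Q_\tau\times Q_\tau$), $\mathscr{W}({Q_\tau}_{\#}\mu_n,{Q_\tau}_{\#}\mu)\leq \mathrm{Lip}(Q_\tau)\,\mathscr{W}(\mu_n,\mu)$, and $Q_\tau$ is Lipschitz since $Q\in\mathcal{M}$ is continuous on a compact set — actually one only needs continuity of $Q_\tau$, which suffices because narrow convergence is preserved by pushforward under any continuous map; this is cleaner and avoids worrying about Lipschitz constants. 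Then, since $\nabla_q F$ (resp.\ $\nabla_q g$) is continuous in $(q,\mu)$ jointly by the assumptions of Section~\ref{subsection:dataformfg} (in fact $\kappa$-Lipschitz in $\mu$), we get $\nabla_q F(Q_\tau(q),{Q_\tau}_{\#}\mu_n)\to \nabla_q F(Q_\tau(q),{Q_\tau}_{\#}\mu)$ for each $\tau$, and similarly for $g$ at $\tau=0$. To pass the limit through the integral $\int_0^t$ I would invoke dominated convergence: the integrand $\nabla_q H(Q_\tau(q),P_\tau(q)) + \nabla_q F(Q_\tau(q),{Q_\tau}_{\#}\mu_n)$ is bounded uniformly in $n$ and $\tau$ — the $H$ term because $Z$ ranges over the compact set $[0,T]\times\dtorus$ and the $F$ term because $|\nabla_q F|\leq\kappa$ everywhere — so convergence of the integrals follows, and hence $(\frm^{s,\mu_n})^2(Z)(t,q)\to(\frm^{s,\mu})^2(Z)(t,q)$. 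The same argument (even simpler, since it is just the pointwise term at $\tau=0$) handles the $\nabla_q g(Q_0(q),{Q_0}_{\#}\mu)$ summand, and the first component $(\frm^{s,\mu})^1(Z)(t,q)$ does not depend on $\mu$ at all.

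\textbf{Main obstacle.} There is essentially no hard step here; the lemma is a routine continuity statement extracted from explicit formulas. The only point requiring a moment's care is the convergence ${Q_\tau}_{\#}\mu_n\to{Q_\tau}_{\#}\mu$ in the Wasserstein metric, i.e.\ checking that pushing forward under the (merely continuous, $\dtorus$-valued) map $Q_\tau$ is continuous as a map $\wasstwospace\to\wasstwospace$ — this uses that on the compact torus $\dtorus$ narrow convergence coincides with $\mathscr{W}$-convergence (noted in the Preliminaries) together with the standard fact that pushforward under a continuous map is narrowly continuous; alternatively one uses \eqref{eq:remarkfg} after invoking that $Q_\tau$, being continuous on the compact torus, is uniformly continuous, but the modulus-of-continuity route is slightly more delicate to phrase uniformly in $\tau$, so I would prefer the narrow-convergence argument, applied pointwise in $\tau$ and then combined with dominated convergence for the $\tau$-integral.
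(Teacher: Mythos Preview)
Your proof is correct and follows essentially the same route as the paper: read off $s$-continuity directly from the integral formula, and for $\mu$-continuity show that $\mu\mapsto {Q_\tau}_{\#}\mu$ is continuous, then use the (Lipschitz) continuity of $\nabla_q F$, $\nabla_q g$ together with dominated convergence.

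One small difference is worth noting. The paper argues continuity of $\mu\mapsto {Q_\tau}_{\#}\mu$ via the Lipschitz estimate $\mathscr{W}({Q_\tau}_{\#}\mu_n,{Q_\tau}_{\#}\mu)\leq \mathrm{Lip}(Q_\tau)\,\mathscr{W}(\mu_n,\mu)$, which requires $Q_\tau$ to be Lipschitz in $q$; accordingly the paper's proof concludes only for $Z\in\mathcal{M}_0$ (this is all that is used later). Your narrow-convergence argument---pushforward under a continuous map preserves narrow convergence, and on the compact torus narrow convergence coincides with $\mathscr{W}$-convergence---needs only continuity of $Q_\tau$ and so actually covers the full generality $Z\in\mathcal{M}$ claimed in the statement. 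In that sense your version is slightly cleaner, though the practical content is the same.
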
 
\begin{proof} 
Continuity of $ s\mapsto \mathfrak{m}^{s,\mu}(Z)(t,q)$ for fixed $t,q,\mu$ is immediate from Definition \ref{defn:mathfrakm} (formulas (\ref{eq:mathfrak1again}) and (\ref{eq:mathfrak2again})). Looking at the same definition for the continuity with respect to $\mu,$ note that for each $\tau,$ $0\leq \tau\leq T,$ the function $q\mapsto Q_{\tau}(q)$ is Lipschitz. This implies (see, e.g., \cite[Remark 3.3]{mfgmain}) that the function $\mu\mapsto {{}Q_{\tau}}_{\#}\mu$ is Lipschitz from $\wasstwospace$ into itself, with the same constant. Furthermore, since the mapping $(\tau,q)\mapsto Q(\tau,q)$ is Lipschitz, the Lipschitz constants of the functions $q\mapsto Q_{\tau}(q)$  are bounded with respect to $\tau,$ $0\leq \tau \leq T.$ These facts, combined with the Lipschitz continuity of $\nabla_q F$ and $\nabla_q g,$ show that 
\begin{equation*}
\mu_n\to \mu \textrm{ in } \wasstwospace \quad \Longrightarrow \quad \mathfrak m^{s,\mu_n}(Z)(t,q) \to \mathfrak m^{s,\mu}(Z)(t,q)   
\end{equation*} 
for all $Z\in \mathcal{M}_0,$ $t,s\in [0,T],$ $q\in\R^d.$ 
\end{proof} 
We will need the continuity and differentiability of the fixed point $\Sigma[s,\mu]$  with respect to $s,$ and its continuity with respect to $\mu.$   This is addressed in Lemmas \ref{lemma:continuityofSigmains} and \ref{lemma:continuityofSigmainmu} below. Before that, let us name the coefficient bounds that will appear in the calculations.
\begin{defn}
\label{defn:databoundsii} 
(Coefficient bounds II) For $B>0,$ let \begin{align*} l(B) &\ := \max_{\substack{q\in \R^d, |p|\leq B, \\ \mu\in\wasstwospace}} \sqrt{2}\ |\nabla H(q,p)| + |\nabla_qF(q,\mu)| , \\ h(B) &\ := \max_{\substack{q\in \R^d,|p|\leq B, \\ \mu\in\wasstwospace}}\sqrt{2}|\nabla^2 H(q,p)|+\sqrt{2}|\nabla^3H(q,p)| + |\nabla_{qq}^2F(q,\mu)| + |\nabla_{qqq}^3F(q,\mu)|. \end{align*}  \end{defn} Unlike the coefficient bounds $\bar{l}(B)$ and $\bar{h}(B),$ here $l(B)$ and $h(B)$ are independent of the number $\theta .$ However, if, say, $(Q,P)\in\mathcal{M}_0(A_1,\theta A_2 ,\theta B ,E,E_1,\theta E_2,T),$ then $|\nabla_p H(Q(t,q),P(t,q))| \leq l(\theta B ).$    
\begin{defn}
\label{defn:Mzerostar} 
For any $D=(D_1,D_2),$ $D_1,D_2>0,$ define:

(i) 
$$ 
\mathcal M_{0,D}^*(A_1,\theta A_2,\theta B ,E,E_1,\theta E_2,T)\subset W^{1,2,2;\infty}([0,T]\times[0,T]\times\dtorus;\dtorus\times\R^d) 
$$ 
as the subset of those $Z(\cdot;\cdot,\cdot)$ such that, for each $s\in[0,T],$
$$
Z(s;\cdot,\cdot)\in\mathcal M_0(A_1,\theta A_2,\theta B ,E,E_1,\theta E_2,T),
$$ 
and 
\begin{equation}
\label{eq:partialsZ} 
\| \partial_s Q (s;\cdot,\cdot) \|_{\infty} \leq D_1 , \quad \| \partial_s P(s;\cdot,\cdot) \|_{\infty} \leq D_2
\end{equation} 
wherever $\partial_sQ,$ $\partial_s P$ are defined;

(ii) 
$$ 
\mathcal Q_{0,D_1}^*(A_1,\theta A_2,\theta B ,E,E_1,\theta E_2,T)  \subset W^{1,2,2;\infty}([0,T]\times[0,T]\times\dtorus;\dtorus)
$$
as the subset of those $Q(\cdot;\cdot,\cdot)$ such that, for each $s\in[0,T],$ 
$$
(Q(s;\cdot,\cdot),0) \in \mathcal M_0^*(A_1,\theta A_2 ,\theta B ,E,E_1,\theta E_2,T),
$$ 
and 
\begin{equation*}
\| \partial_s Q (s;\cdot,\cdot) \|_{\infty} \leq D_1 
\end{equation*} 
wherever $\partial_s Q$ is defined.  
\end{defn} 
By an argument similar to that of Proposition \ref{prop:M0isclosed}, the sets $\mathcal M_0^*$ and $\mathcal Q_0^*$ just defined are closed subsets for the uniform convergence of $C([0,T]\times[0,T]\times\dtorus;\dtorus\times\R^d)$ and $C([0,T]\times[0,T]\times\dtorus;\dtorus)$ respectively.  
\begin{lemma}
\label{lemma:continuityofSigmains} 
For fixed $\mu\in\wasstwospace,$ using the same notation of Definition \ref{defn:Mzerostar}, and $\theta >2\kappa$: \begin{enumerate}[(i)] 
\item 
There exists a pair of positive constants $D=(D_1,D_2)$ such that, if \newline $Z\in\mathcal{M}_{0,D}^*(A_1,\theta A_2 ,\theta B ,E,E_1,\theta E_2,T)$, then the function $$ (s,t,q)\mapsto \mathfrak{m}^{s,\mu}(Z(s;\cdot,\cdot))(t,q) $$ belongs to $\mathcal{M}_{0,D}^*(A_1,\theta A_2 ,\theta B ,E,E_1,\theta E_2,T)$ for any $\mu\in\wasstwospace$.  
\item 
The mapping $$ s\mapsto \Sigma[s,\mu](t,q)$$ is differentiable in $s$ $a.e.$~on the interval $0<s<T,$ for every $\mu\in\wasstwospace,$ $t\in[0,T],$ and $q\in\dtorus,$ and it satisfies
$$ 
\| \partial_s \Sigma^1[s,\mu](\cdot,\cdot)\|_{C([0,T]\times\dtorus;\R^d)} \leq  D_1 , \quad \| \partial_s \Sigma^2[s,\mu](\cdot,\cdot)\|_{C([0,T]\times\dtorus;\R^d)} \leq  D_2 
$$ 
for a.e.~$s\in[0,T], \mu\in\wasstwospace .$ 
\end{enumerate} 
\end{lemma}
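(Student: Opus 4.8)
The plan is to establish (i) and (ii) by a further fixed-point argument, now over the parameter $s$, carried out ``slicewise'' over $s\in[0,T]$. I fix $\theta>2\kappa$ and keep the constants $A_1,A_2,B,E,E_1,E_2,T$ produced by Proposition \ref{prop:m0islipschitz} and Corollary \ref{cor:hamODEshassolution}, allowing $T$ to be shrunk further whenever needed (Remark \ref{remark:remarkaboutconstantT}). For part (i), let $Z=(Q,P)\in\mathcal{M}^*_{0,D}$ with $D=(D_1,D_2)$ still to be fixed. For every $s$ the slice $Z(s;\cdot,\cdot)$ lies in $\mathcal{M}_0(A_1,\theta A_2,\theta B,E,E_1,\theta E_2,T)$, so Corollary \ref{cor:hamODEshassolution}(ii) already delivers, uniformly in $s$, all the $t$- and $q$-derivative bounds of (\ref{eq:defofM0theta}) for $\mathfrak{m}^{s,\mu}(Z(s;\cdot,\cdot))(\cdot,\cdot)$; the only new point is the bound (\ref{eq:partialsZ}) on $\partial_s$, which I would obtain by estimating $s$-increments directly. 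In the first component, $(\mathfrak{m}^{s,\mu})^1(Z(s;\cdot,\cdot))(t,q)-(\mathfrak{m}^{s',\mu})^1(Z(s';\cdot,\cdot))(t,q)$ splits into a contribution of the moving lower limit of the integral, bounded by $l(\theta B)|s-s'|$, and a contribution of the $s$-dependence of the slices $Q(s;\cdot,\cdot),P(s;\cdot,\cdot)$, bounded — because $H\in C^3$ and $\|\partial_sQ\|_\infty\le D_1$, $\|\partial_sP\|_\infty\le D_2$ — by $T\,h(\theta B)(D_1+D_2)|s-s'|$. The second component depends on $s$ only through those slices; there I would use the joint $\kappa$-Lipschitz continuity of $\nabla_qg,\nabla_qF$ from Section \ref{subsection:dataformfg} together with (\ref{eq:remarkfg}), which turns $\mathscr{W}\big(Q(s;\tau,\cdot)_{\#}\mu,Q(s';\tau,\cdot)_{\#}\mu\big)$ into $\|Q(s;\tau,\cdot)-Q(s';\tau,\cdot)\|_{L^2(\mu)}\le D_1|s-s'|$, reaching a bound of the shape $\big(\sqrt2\,\kappa D_1(1+T)+T\,h(\theta B)(D_1+D_2)\big)|s-s'|$. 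Hence $s\mapsto\mathfrak{m}^{s,\mu}(Z(s;\cdot,\cdot))(t,q)$ is Lipschitz, for each fixed $(t,q)$, with the stated constants; choosing first $D_1>l(\theta B)$, then $D_2>\sqrt2\,\kappa D_1$ (each with a little slack), and finally shrinking $T$ so the $T$-dependent pieces fit into the slack, makes these constants $\le D_1$ and $\le D_2$, i.e.\ (\ref{eq:partialsZ}) holds. With the $\mathcal{M}_0$-bounds and continuity in $(t,q)$, the composite then lies in $\mathcal{M}^*_{0,D}$, and the resulting $D$ depends only on the coefficients and $\theta$.

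For part (ii), the cleanest route passes through the bar-picture, where the fixed-point operator is an honest contraction. Let $\bar{\mathcal{M}}^*_{0,\bar D}$ denote the analogue of $\mathcal{M}^*_{0,D}$ built from $\bar{\mathfrak{m}}^{s,\mu}$ and the unscaled constants $A_1,A_2,B,E,E_1,E_2$, and let $\bar{\mathcal{N}}$ be the operator $\bar Z\mapsto\big[(s,t,q)\mapsto\bar{\mathfrak{m}}^{s,\mu}(\bar Z(s;\cdot,\cdot))(t,q)\big]$. Rerunning the argument of part (i) for $\bar{\mathfrak{m}}$ shows $\bar{\mathcal{N}}$ maps $\bar{\mathcal{M}}^*_{0,\bar D}$ into itself for a suitable $\bar D=(\bar D_1,\bar D_2)$; since $\bar{\mathcal{N}}$ acts slicewise in $s$ and, by the estimate (\ref{eq:mislipschitz}), $\bar{\mathfrak{m}}^{s,\mu}$ is a contraction on $\mathcal{M}_0(A_1,A_2,B,E,E_1,E_2,T)$ with a constant $<1$ independent of $s$, $\bar{\mathcal{N}}$ is a contraction for the uniform norm of $C([0,T]\times[0,T]\times\dtorus;\dtorus\times\R^d)$. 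As $\bar{\mathcal{M}}^*_{0,\bar D}$ is a closed subset of that complete space, $\bar{\mathcal{N}}$ has a unique fixed point whose $s$-slices are fixed points of $\bar{\mathfrak{m}}^{s,\mu}$ in $\mathcal{M}_0$, hence equal $\bar\Sigma[s,\mu]$; thus $\bar\Sigma[\cdot,\mu]\in\bar{\mathcal{M}}^*_{0,\bar D}$, so $\|\partial_s\bar\Sigma^1\|_\infty\le\bar D_1$ and $\|\partial_s\bar\Sigma^2\|_\infty\le\bar D_2$ a.e.\ in $s$. By (\ref{eq:SigmabartoSigma}), $\Sigma^1[s,\mu]=\bar\Sigma^1[s,\mu]$ and $\Sigma^2[s,\mu]=\theta\,\bar\Sigma^2[s,\mu]$, so $\|\partial_s\Sigma^1\|_\infty\le\bar D_1$ and $\|\partial_s\Sigma^2\|_\infty\le\theta\bar D_2$. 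Enlarging $D_1,D_2$ so as to dominate the constants produced in part (i) as well, and shrinking $T$ once more, the same $D$ serves in (i) and (ii); the a.e.\ differentiability of $s\mapsto\Sigma[s,\mu](t,q)$ for each fixed $(t,q)$ then follows because, by the estimates of part (i) applied with $Z=\Sigma[\cdot,\mu]$, this map is Lipschitz in $s$, hence differentiable a.e.\ on $(0,T)$.

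The step I expect to be the crux — though it is a mild obstacle — is the $s$-dependence of the measure arguments $Q(s;\tau,\cdot)_{\#}\mu$ appearing inside $\nabla_qF$ and $\nabla_qg$: one cannot differentiate through the pushforward, and the decisive point is that the joint $\kappa$-Lipschitz hypothesis on $\nabla_qF,\nabla_qg$, combined with (\ref{eq:remarkfg}), controls the $s$-increments of these composite terms without ever invoking a derivative in measure. The only other care needed is bookkeeping: keeping the $\theta$-rescaling between $\bar{\mathfrak{m}}$ and $\mathfrak{m}$ consistent — in particular running the contraction step of (ii) with $\bar{\mathfrak{m}}$, not $\mathfrak{m}$, since $\mathfrak{m}^{s,\mu}$ itself need not be a contraction — and ensuring the contraction constant of Proposition \ref{prop:m0islipschitz} survives the extra shrinking of $T$ demanded by the $D$-estimates.
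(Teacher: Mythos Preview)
Your proof is correct. Part~(i) matches the paper's approach closely: the paper also bounds $\partial_s Q'$ by $l(\theta B)+Th(\theta B)(D_1+D_2)$ and $\partial_s P'$ by a constant of the form $2\kappa D_1+T(\cdots)$, then chooses $D_1>l(\theta B)$, $D_2>2\kappa D_1$ and shrinks $T$. Your use of Lipschitz increments rather than formal $s$-differentiation is arguably more careful, since the paper writes $(\partial_s)(g(Q(s;\tau,q),\sigma^s_\tau))$ without justifying differentiability through the pushforward --- but the content is the same.

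Part~(ii) is where the routes diverge. The paper does \emph{not} invoke a contraction on the enlarged space $C([0,T]^2\times\dtorus)$. Instead it iterates $Z^{k+1}=\mathfrak{m}^\mu(Z^k)$ starting from any $Z^0\in\mathcal{M}^*_{0,D}$, observes that for each fixed $s$ the slice $Z^k(s;\cdot,\cdot)$ converges to $\Sigma[s,\mu]$ by the fixed-point theorem on $\mathcal{M}_0$, and then upgrades this slicewise (hence pointwise in $(s,t,q)$) convergence to uniform convergence via equicontinuity and compactness of $[0,T]^2\times\dtorus$. Closedness of $\mathcal{M}^*_{0,D}$ finishes. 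Your route --- passing to the bar-picture so that $\bar{\mathcal N}$ is a genuine contraction with constant independent of $s$, and applying Banach directly on the closed set $\bar{\mathcal{M}}^*_{0,\bar D}$ --- is cleaner in that it avoids the Ascoli-type step entirely, at the cost of the $\theta$-rescaling bookkeeping you flag. The paper's route stays in the $\mathfrak{m}$-picture throughout and trades that bookkeeping for a compactness argument. Both are short and neither is strictly more elementary.
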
 
\begin{proof} 
Given a function $Z\in\mathcal{M}_{0,D}^*(A_1,\theta A_2 ,\theta B ,E,E_1,\theta E_2,T),$ define $\mathfrak{m}^{\mu}(Z)\in C([0,T]\times[0,T]\times\dtorus;\dtorus\times\R^d)$ to be the first function displayed in the statement. 

(i) Let us show that $\mathfrak{m}^{\mu}(Z)\in\mathcal M_{0,D}^*(A_1,\theta A_2 ,\theta B ,E,E_1,\theta E_2,T)$ for an appropriate $D.$ Indeed, that $\mathfrak{m}^{\mu}(Z)$ is continuous is evident. For a.e.~$s\in(0,T),$
\begin{align*} 
\partial_s Q'(s;t,q) =  -\nabla_pH(Q(s;s,q),P(s;s,q))  +  \int_s^t  &\ [ \nabla^2_{qp}H(Q(s;\tau,q),P(s;\tau,q))\partial_s Q(s;\tau,q) 
\\ 
&\ + \nabla^2_{pp}H(Q(s;\tau,q),P(s;\tau,q)) \partial_s P(s;\tau,q) ] d\tau , 
\end{align*} 
so 
\begin{equation*}
\| \partial_s Q'(s;\cdot,\cdot)\|_{\infty} \leq l(\theta B ) + Th(\theta B )(D_1+D_2) . 
\end{equation*} 
Put $Q(s;\tau,\cdot)_{\#}\mu=: \sigma^s_{\tau},$ $0\leq \tau\leq T.$ Then, for a.e.~$s\in(0,T),$  \begin{align*} \partial_s P'(s;t,q) = &\ (\partial_s)(g(Q(s;0,q),\sigma^s_0))   \\ -  \int_0^t &\ [ \nabla^2_{qq}H(Q(s;\tau,q),P(s;\tau,q))\partial_s Q(s;\tau,q) \\ &\ \qquad + \nabla^2_{pq}H(Q(s;\tau,q),P(s;\tau,q)) \partial_s P(s;\tau,q)  +(\partial_s)(g(Q(s;\tau,q),\sigma^s_\tau))  ] d\tau .  \end{align*} By the joint Lipschitz constants of $g$ and $F$ being bounded by $\kappa,$ we obtain 
\begin{equation*}
\| \partial_s P'(s;\cdot,\cdot)\|_{\infty} \leq 2\kappa D_1 + T(2\kappa D_1 + h(\theta B )(D_1+D_2)).  \end{equation*} 
Thus, if $D_1>l(\theta B ),$ and $D_2>2\kappa D_1,$ we refer to Remark \ref{remark:remarkaboutconstantT} and assume that 
$$ 
T < \min\big\{\frac{D_1-l(\theta B )}{h(\theta B )(D_1+D_2)}, \frac{D_2-2\kappa D_1}{2\kappa D_1+h(\theta B )(D_1+D_2)}  \big\}
$$ 
to obtain $\| \partial_s Q'(s;\cdot,\cdot)\|_{\infty} \leq D_1$ and $\| \partial_s P'(s;\cdot,\cdot)\|_{\infty} \leq D_2.$ 

The fact that $\mathfrak{m}^{\mu}(Z)(s;\cdot,\cdot)\in\mathcal M_0(A_1,\theta A_2 ,\theta B ,E,E_1,\theta E_2,T) $ follows from Corollary \ref{cor:hamODEshassolution}. 

(ii) Let $Z^0\in\mathcal M_{0,D}(A_1,\theta A_2 ,\theta B ,E,E_1,\theta E_2,T)$ be arbitrary, with $D$ from part (i). Define inductively $Z^k=\mathfrak{m}^{\mu}(Z^{k-1}),$ $k=1,\ldots$ Then $\{Z^k\}_{k=0}^{\infty}$ is a sequence in $\mathcal M_{0,D}^*(\cdots),$ and for each $s\in[0,T],$ $Z^k(s;\cdot,\cdot) = \mathfrak{m}^{s,\mu}(Z^{k-1}(s;\cdot,\cdot)),$ so for each fixed $s\in[0,T],$ 
$$ 
Z^k(s;\cdot,\cdot) \longrightarrow \Sigma[s,\mu](\cdot,\cdot) \quad \textrm{uniformly in } \mathcal M_0(A_1,\theta A_2 ,\theta B ,E,E_1,\theta E_2,T) ,
$$ 
by the fixed point theorem. We thus have pointwise convergence of $Z^k(\cdot;\cdot,\cdot)$ to $Z[\cdot,\mu](\cdot,\cdot).$ We call on the equicontinuity, uniform boundedness of the sequence and the periodicity of the functions (i.e., they are defined on $[0,T]\times[0,T]\times\dtorus$) to conclude that this convergence is actually uniform. The closedness of the subspace $\mathcal M_{0,D}^*(\cdots)$ with respect to uniform convergence now ensures that $\Sigma[\cdot,\mu](\cdot,\cdot)$ belongs to this subspace, so it is differentiable with respect to $s$ for a.e.~$s\in[0,T]$ and satisfies (\ref{eq:partialsZ}).  
\end{proof} 
In the following, the constants $D_1,D_2$ will always be as in Lemma \ref{lemma:continuityofSigmains}. The next remark will not be used before Section \ref{section:regularityinmu}. 
\begin{remark}
\label{remark:Dbar} 
If $Z=(Q,P)$ and $\bar Z=(\bar Q, \bar P)$ are related as in the proof of Corollary \ref{cor:hamODEshassolution}(ii), that is, $Q = \bar Q,$ $P = \theta \bar P,$ then $Z\in\mathcal{M}^*_{0,D}(A_1,\theta A_2,\theta B,E,E_1,\theta E_2,T)$ if, and only if, $\bar Z \in \mathcal{M}^*_{0,\bar D}(A_1,A_2,B,E,E_1,E_2,T),$ where 
\begin{equation*}
\bar D = (\bar D_1, \bar D_2), \quad \bar D_1 := D_1, \ \bar D_2 := D_2 / \theta . \qquad \qquad \sslash \end{equation*}     
\end{remark}
\begin{lemma}
\label{lemma:continuityofSigmainmu} 
Let $\theta > 2\kappa.$ The mapping
\begin{align*}
[0,T]\times[0,T]\times\dtorus\times\wasstwospace &\ \longrightarrow [0,T]\times[0,T]\times\dtorus\times\wasstwospace
\\ 
(t,s,q,\mu) &\ \longmapsto (t,s,\Sigma^1[s,\mu](t,q),\mu)
\end{align*}
is continuous, and for any fixed $\mu\in\wasstwospace,$ $\Sigma^1[\cdot,\mu](\cdot,\cdot)$ is a $C^1$ diffeomorphism.  
\end{lemma}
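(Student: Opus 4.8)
The first claim reduces to the joint continuity of $(t,s,q,\mu)\mapsto\Sigma^1[s,\mu](t,q)$, since the remaining three coordinates are simply transported. I would establish this by a three-term split: for $(t_n,s_n,q_n,\mu_n)\to(t,s,q,\mu)$,
\[ \bigl|\Sigma^1[s_n,\mu_n](t_n,q_n)-\Sigma^1[s,\mu](t,q)\bigr| \le \bigl|\Sigma^1[s_n,\mu_n](t_n,q_n)-\Sigma^1[s,\mu_n](t_n,q_n)\bigr| + \bigl\|\Sigma[s,\mu_n]-\Sigma[s,\mu]\bigr\|_{\infty} + \bigl|\Sigma^1[s,\mu](t_n,q_n)-\Sigma^1[s,\mu](t,q)\bigr| . \]
The first summand is at most $D_1|s_n-s|$ by Lemma \ref{lemma:continuityofSigmains}(ii), whose $s$-Lipschitz bound is uniform in $t,q,\mu$; the third tends to $0$ because $\Sigma^1[s,\mu]$ is $C^1$, hence continuous, in $(t,q)$ by Corollary \ref{cor:hamODEshassolution}. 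The middle term is the crucial one: I would upgrade the pointwise $\mu$-continuity of Lemma \ref{lemma:continuityinsandmu} to a uniform Lipschitz bound by using that $\mathfrak{m}^{s,\mu}$ is a contraction of $\mathcal{M}_0$ with constant $\lambda<1$ independent of $s,\mu$ (Proposition \ref{prop:m0islipschitz}) whose fixed point is $\Sigma[s,\mu]$, so that
\[ \bigl\|\Sigma[s,\mu_n]-\Sigma[s,\mu]\bigr\|_{\infty} \le \frac{1}{1-\lambda}\,\bigl\|\mathfrak{m}^{s,\mu_n}(\Sigma[s,\mu])-\mathfrak{m}^{s,\mu}(\Sigma[s,\mu])\bigr\|_{\infty} . \]
By (\ref{eq:mathfrak1again})--(\ref{eq:mathfrak2again}) the first component of $\mathfrak{m}^{s,\cdot}$ carries no $\mu$, while in the second the $\mu$-dependence lies only in $\nabla_q g(\Sigma^1_0,{\Sigma^1_0}_{\#}\cdot)$ and $\nabla_q F(\Sigma^1_\tau,{\Sigma^1_\tau}_{\#}\cdot)$; using the Lipschitz continuity of $\nabla_q g,\nabla_q F$ in the measure slot, the fact (already used in the proof of Lemma \ref{lemma:continuityinsandmu}) that ${\Sigma^1_\tau}_{\#}$ is Lipschitz on $\wasstwospace$ with constant $\le\|\nabla_q\Sigma^1\|_{\infty}\le A_1$, and (\ref{eq:remarkfg}), the right-hand side above is $\le\frac{\kappa A_1(1+T)}{1-\lambda}\,\mathscr{W}(\mu_n,\mu)$. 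Hence $\mu\mapsto\Sigma[s,\mu]$ is Lipschitz from $\wasstwospace$ into $\mathcal{M}$, uniformly in $s$, and the split closes.

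For the second claim, fix $s\in[0,T]$ and $\mu\in\wasstwospace$; I will show that, after shrinking $T$ in a way depending only on the coefficients (Remark \ref{remark:remarkaboutconstantT}), the map $X(t,q):=(t,\Sigma^1[s,\mu](t,q))$ is a $C^1$-diffeomorphism of $[0,T]\times\dtorus$ onto itself, whence, in particular, each slice $q\mapsto\Sigma^1[s,\mu](t,q)$ is a $C^1$-diffeomorphism of $\dtorus$. Differentiating the fixed-point identity (\ref{eq:mathfrak1again}) in $q$ gives $\nabla_q\Sigma^1[s,\mu](t,q)=I+\int_s^t\bigl(\nabla^2_{qp}H\,\nabla_q\Sigma^1+\nabla^2_{pp}H\,\nabla_q\Sigma^2\bigr)d\tau$, and the integrand is bounded on $\mathcal{M}_0(A_1,\theta A_2,\theta B,E,E_1,\theta E_2,T)$ by $h(\theta B)(A_1+\theta A_2)$ (Definition \ref{defn:databoundsii}); choosing $T$ small enough that $T\,h(\theta B)(A_1+\theta A_2)<1$ gives $\|\nabla_q\Sigma^1[s,\mu](t,\cdot)-I\|_{\infty}<1$ for all $t,s,\mu$. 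Writing $\Sigma^1[s,\mu](t,q)=q+v(q)$ with $v(q):=\int_s^t\nabla_pH(\Sigma^1_\tau(q),\Sigma^2_\tau(q))\,d\tau$, the map $v$ is $C^1$ and $\Z^d$-periodic --- this follows from $\Sigma^1_\tau(q+n)=\Sigma^1_\tau(q)+n$ and $\Sigma^2_\tau(q+n)=\Sigma^2_\tau(q)$ for $n\in\Z^d$ (immediate from (\ref{eq:mathfrak1})--(\ref{eq:mathfrak2})) together with the $\Z^d$-periodicity of $H$ in $q$ --- and satisfies $\|Dv\|_{\infty}<1$. Hence for each $y\in\R^d$ the map $q\mapsto y-v(q)$ is a contraction of the complete space $\R^d$, so $q+v(q)=y$ has a unique solution; therefore $q\mapsto q+v(q)$ is a bijection of $\R^d$ with $C^1$ inverse, by the inverse function theorem since $I+Dv$ is everywhere invertible, and, being $\Z^d$-equivariant, it descends to a $C^1$-diffeomorphism of $\dtorus$. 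Finally, the Jacobian of $X$ is block-triangular with diagonal blocks $1$ and $\nabla_q\Sigma^1[s,\mu](t,q)$, hence everywhere invertible, and $X$ maps each slice $\{t\}\times\dtorus$ onto itself, restricting to a bijection there, so $X$ is a bijection of $[0,T]\times\dtorus$ whose inverse is $C^1$.

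The two places I expect to need care are: (i) upgrading the merely pointwise $\mu$-continuity of Lemma \ref{lemma:continuityinsandmu} to the uniform Lipschitz estimate for $\mu\mapsto\Sigma[s,\mu]\in\mathcal{M}$, which is precisely what lets the three-term bound close simultaneously in all four variables; and (ii) ensuring that the smallness of $T$ forcing $\|\nabla_q\Sigma^1[s,\mu](t,\cdot)-I\|_{\infty}<1$ can be chosen uniformly in $(s,\mu)$, which is automatic because every constant entering it ($A_1,A_2,B,E,h(\theta B),\kappa,\theta$) depends only on $(H,F,g)$. The passage between $\R^d$ and $\dtorus$, and the upgrade from a local to a global diffeomorphism, are then routine, the map being a small perturbation of the identity.
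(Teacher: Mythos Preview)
Your argument is correct, and in both halves it takes a somewhat different (and in fact more direct) route than the paper.

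For the continuity in $\mu$, the paper extracts a subsequence via Arzel\`a--Ascoli (using that $\Sigma^1[\cdot,\mu_k](\cdot,\cdot)$ lives in the equicontinuous, uniformly bounded set $\mathcal{Q}^*_{0,D_1}$), shows any subsequential limit $S$ must satisfy $S(s;\cdot,\cdot)=\mathfrak{m}^{s,\mu}(S(s;\cdot,\cdot))$ by passing the fixed-point identity through the continuous dependence of $\mathfrak{m}^{s,\mu}(Z)$ on $(Z,\mu)$, and then invokes uniqueness of the fixed point. Your approach---the standard parameter-stability estimate for contractive fixed points---is shorter and yields a quantitative Lipschitz bound $\mathscr{W}(\mu_n,\mu)$ rather than mere continuity, which the paper only obtains later in Section~\ref{section:regularityinmu}. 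One small caveat: Proposition~\ref{prop:m0islipschitz} gives the contraction for $\bar{\mathfrak{m}}^{s,\mu}$ on $\mathcal{M}_0(A_1,A_2,B,\ldots)$, not for $\mathfrak{m}^{s,\mu}$ on the $\theta$-rescaled set, and the rescaling by $\theta>1$ in the second component can spoil the contraction constant in the uniform norm. The fix is immediate: run your perturbation inequality with $\bar{\mathfrak{m}}$ and $\bar\Sigma$, note that $\Sigma^1=\bar\Sigma^1$, and you recover the same bound (indeed with an extra favourable factor $1/\theta$ in front of the $\nabla_q g,\nabla_q F$ terms).

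For the diffeomorphism, the paper defers to Lemma~\ref{lemma:invertibilityofsigma}, where injectivity is obtained exactly as you do (identity plus a strict contraction), but surjectivity is argued via the topological degree: since $\sup_q|\Sigma^1_t(q)-q|$ is bounded, $\deg(\Sigma^1_t,B_R(0),y)$ is well defined and, by homotopy invariance along $t\in[0,T]$, equals $\deg(\mathrm{id},B_R(0),y)=1$. Your Banach fixed-point argument for surjectivity ($q\mapsto y-v(q)$ is a contraction) is more elementary and equally valid; the degree argument has the slight advantage of not requiring the Lipschitz constant of $v$ to be strictly below $1$, only boundedness of $\Sigma^1_t-\mathrm{id}$, but under Remark~\ref{remark:remarkaboutconstantT} both hypotheses are available. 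Your block-triangular Jacobian observation cleanly promotes the slice-wise result to a diffeomorphism in $(t,q)$, which the paper leaves implicit.
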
  
\begin{proof} 
Let $\{\mu_k\}_{k=1}^{\infty}$ be a sequence in $\wasstwospace$ converging to $\mu\in\wasstwospace.$ Consider the sequence $\{\Sigma^1[\cdot,\mu_k](\cdot,\cdot)\}_1^{\infty}$ and an arbitrary subsequence $\{\Sigma^1[\cdot,\mu_{k_j}](\cdot,\cdot)\}_{j=1}^{\infty}.$ Being in $\mathcal Q_{0,D}^*(A_1,\theta A_2 ,\theta B ,E,E_1,\theta E_2,T)$, the latter is equicontinuous and uniformly bounded, so there is a sub-subsequence, which we still index with $j$, converging to $S$ for some $S\in\mathcal Q_{0,D}^*(\cdots)$ as $j\to\infty.$ For each $s\in[0,T],$ on one hand, $\Sigma^1[s,\mu^{k_j}](\cdot,\cdot) \underset{j\to\infty}{\longrightarrow} S(s;\cdot,\cdot).$ On the other, since, by Corollary \ref{cor:hamODEshassolution} and Lemma \ref{lemma:continuityinsandmu}, the mapping $(s,Z,\mu)\mapsto \mathfrak{m}^{s,\mu}(Z)$ is a continuous mapping of $[0,T]\times\mathcal M_0(A_1,\theta A_2 ,\theta B ,E,E_1,\theta E_2,T)\times\wasstwospace$ into $\mathcal M_0(A_1,\theta A_2 ,\theta B ,E,E_1,\theta E_2,T)$ (because the Lipschitz constant of $\mathfrak{m}^{s,\mu}$ is independent of $s$ and $\mu$), we have $$ \Sigma^1[s,\mu^{k_j}](\cdot,\cdot) = (\mathfrak{m}^1)^{s,\mu^{k_j}}(\Sigma^1[s,\mu^{k_j}](\cdot,\cdot)) \underset{j\to\infty}{\longrightarrow} (\mathfrak{m}^1)^{s,\mu}(S(s;\cdot,\cdot)).$$ Therefore, for each $s\in[0,T],$ $S(s;\cdot,\cdot) = (\mathfrak m^1)^{s,\mu}(S(s;\cdot,\cdot)),$ that is, $S(s;\cdot,\cdot)$ is a fixed point of $(\mathfrak{m}^1)^{s,\mu},$ so, by uniqueness, $S(s;\cdot,\cdot)=\Sigma^1[s,\mu](\cdot,\cdot).$ Thus, every subsequence of $\{\Sigma^1[\cdot,\mu_k](\cdot,\cdot)\}_1^{\infty}$ has a subsequence that converges to $\Sigma^1[\cdot,\mu](\cdot,\cdot) \in \mathcal Q_{0,D}^*(A_1,\theta A_2 ,\theta B ,E,E_1,\theta E_2,T).$ Hence, $$  \Sigma^1[\cdot,\mu_k](\cdot,\cdot) \longrightarrow \Sigma^1[\cdot,\mu](\cdot,\cdot) \qquad \textrm{uniformly,} $$ which implies the claimed continuity. 

The second assertion of the lemma will be an immediate consequence of Lemma \ref{lemma:invertibilityofsigma}.       \end{proof}    
\begin{lemma}
\label{lemma:invertibilityofsigma} Let $\theta >2\kappa,$ $0\leq s\leq T,$ $\mu\in\wasstwospace.$  The mapping $q\mapsto \Sigma^1[s,\mu](t,q)$ is a $C^1$ diffeomorphism, for $0\leq t \leq T,$ with \begin{equation}\label{eq:boundsfornablaqsigma} \frac{1}{2} < \det \nabla_q\Sigma^1[s,\mu](t,q), \qquad |(\nabla_q\Sigma^1[s,\mu](t,q))^{-1}| < 4(1+\sqrt{d})^{d-1},   \end{equation} provided $T$ is sufficiently small.  \end{lemma}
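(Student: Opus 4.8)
The plan is to differentiate the fixed-point identity (\ref{eq:mathfrak1again}) for $\Sigma^1[s,\mu]$ in $q$, use the a priori bounds that $\Sigma[s,\mu]$ carries by virtue of lying in $\mathcal{M}_0(A_1,\theta A_2,\theta B,E,E_1,\theta E_2,T)$ to show that $\nabla_q\Sigma^1[s,\mu](t,q)$ stays within $O(T)$ of the identity matrix — uniformly in $s,t\in[0,T]$, $\mu\in\wasstwospace$ and $q$ — and then read off both inequalities in (\ref{eq:boundsfornablaqsigma}) together with global invertibility from elementary linear algebra and a contraction-mapping argument on $\R^d$. All further shrinking of $T$ is made under Remark \ref{remark:remarkaboutconstantT}.

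\emph{Step 1: the $q$-Jacobian is close to the identity.} By Corollary \ref{cor:hamODEshassolution}, $\Sigma[s,\mu]=(\Sigma^1[s,\mu],\Sigma^2[s,\mu])$ is the fixed point of $\mathfrak{m}^{s,\mu}$ in $\mathcal{M}_0(A_1,\theta A_2,\theta B,E,E_1,\theta E_2,T)$; in particular $\|\Sigma^2[s,\mu]\|_\infty\le\theta B$ and the first-order $q$-derivatives $\nabla_q\Sigma^1[s,\mu]$, $\nabla_q\Sigma^2[s,\mu]$ exist, are continuous, and are bounded by $\max\{A_1,\theta A_2\}$. Differentiating (\ref{eq:mathfrak1again}) in $q$ and using the chain rule gives
$$
\nabla_q\Sigma^1[s,\mu](t,q)=I+\int_s^t\Big[\nabla^2_{qp}H\big(\Sigma^1_\tau(q),\Sigma^2_\tau(q)\big)\nabla_q\Sigma^1_\tau(q)+\nabla^2_{pp}H\big(\Sigma^1_\tau(q),\Sigma^2_\tau(q)\big)\nabla_q\Sigma^2_\tau(q)\Big]d\tau.
$$
Because $|\Sigma^2_\tau(q)|\le\theta B$, the second-order derivatives of $H$ occurring here are bounded by $h(\theta B)$ (Definition \ref{defn:databoundsii}), so
$$
\big|\nabla_q\Sigma^1[s,\mu](t,q)-I\big|\le|t-s|\,h(\theta B)\big(A_1+\theta A_2\big)\le T\,h(\theta B)\big(A_1+\theta A_2\big)=:TC_0,
$$
with $C_0$ depending only on the coefficients, and the same bound holds for $q\in\R^d$.

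\emph{Step 2: the two estimates and global invertibility.} Shrink $T$ so that $TC_0<\eta_d$, where $\eta_d\in(0,1)$ is a threshold depending only on $d$. Writing $M:=\nabla_q\Sigma^1[s,\mu](t,q)$, continuity of the determinant as a polynomial in the entries gives $\det M>\tfrac12$ once $\eta_d$ is small; and since the columns of $M$ (hence the columns of every $(d-1)\times(d-1)$ submatrix) have Euclidean norm at most $1+\sqrt d$, Hadamard's inequality bounds the cofactors of $M$ by $(1+\sqrt d)^{d-1}$, so the adjugate formula $M^{-1}=(\det M)^{-1}\adj(M)$ yields $|M^{-1}|<4(1+\sqrt d)^{d-1}$; this proves (\ref{eq:boundsfornablaqsigma}) for all $s,t\in[0,T]$, $\mu\in\wasstwospace$, $q\in\dtorus$. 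For the diffeomorphism claim, lift $\Sigma^1[s,\mu](t,\cdot)$ to $\widetilde\Sigma^1[s,\mu](t,\cdot):\R^d\to\R^d$ as in the Preliminaries; for each $n\in\Z^d$ the map $(t,x)\mapsto\widetilde\Sigma^1[s,\mu](t,x+n)-\widetilde\Sigma^1[s,\mu](t,x)$ is continuous, $\Z^d$-valued, and equals $n$ at $t=s$ (since $\Sigma^1(s,q)=q$), hence is identically $n$; consequently $u(x):=\widetilde\Sigma^1[s,\mu](t,x)-x$ is $\Z^d$-periodic and $|\nabla u|\le TC_0<1$ on all of $\R^d$. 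Therefore, for each $y\in\R^d$ the map $x\mapsto y-u(x)$ is a contraction of $\R^d$, whose unique fixed point is the unique preimage of $y$, so $\widetilde\Sigma^1[s,\mu](t,\cdot)$ is a bijection of $\R^d$; since $\det\nabla_q\Sigma^1[s,\mu](t,q)>\tfrac12>0$ everywhere, the inverse function theorem makes it a $C^1$ diffeomorphism of $\R^d$, which descends to the asserted $C^1$ diffeomorphism $q\mapsto\Sigma^1[s,\mu](t,q)$ of $\dtorus$.

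\emph{Main obstacle.} There is no deep obstacle: once one has the uniform-in-$(s,\mu)$ bounds already produced for $\Sigma[s,\mu]$, the whole statement follows by making $T$ small. The points that require care are the bookkeeping of the explicit constants $\tfrac12$ and $4(1+\sqrt d)^{d-1}$ in Step 2 — where the choice of matrix norm matters — and the correct handling of periodicity in Step 3, where one must pass to the universal cover $\R^d$ in order to turn the local-diffeomorphism property into a global bijection on $\dtorus$.
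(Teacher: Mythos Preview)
Your proof is correct and follows essentially the same skeleton as the paper: differentiate the fixed-point identity to show $|\nabla_q\Sigma^1(t,q)-I|\le T\,h(\theta B)(A_1+\theta A_2)$, then read off the determinant and inverse bounds and conclude global invertibility. The one genuine difference is in how you establish that $q\mapsto\Sigma^1(t,q)$ is a bijection of $\R^d$. The paper separates injectivity (from the Lipschitz-constant-$<1$ perturbation, as you do implicitly) from surjectivity, and for the latter invokes topological degree: since $|\Sigma^1(t,q)-q|$ is uniformly bounded, $\deg(\Sigma^1_t,B_R(0),y)$ is well defined, constant in $t$, and equals $1$ at $t=s$. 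Your contraction-mapping argument---showing $u(x)=\widetilde\Sigma^1(t,x)-x$ is $\Z^d$-periodic with $|\nabla u|<1$, so that $x\mapsto y-u(x)$ has a unique fixed point---handles injectivity and surjectivity in one stroke and is more elementary; the degree-theory route has the mild advantage of not needing the periodicity bookkeeping you carry out, but yours avoids importing degree theory altogether.

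One small point on the explicit constant: your Hadamard argument bounds each cofactor by $(1+\sqrt d)^{d-1}$, but passing to the Frobenius norm of the adjugate then gives $|\adj(M)|\le d(1+\sqrt d)^{d-1}$ and hence $|M^{-1}|<2d(1+\sqrt d)^{d-1}$, not $4(1+\sqrt d)^{d-1}$. The paper obtains the sharper bound by citing the inequality $|\nabla\det(\xi)|\le 2|\xi|^{d-1}$ (in the Frobenius norm), which is exactly $|\adj(\xi)|\le 2|\xi|^{d-1}$; combined with $|M|\le 1+\sqrt d$ and $\det M>\tfrac12$ this yields the stated $4(1+\sqrt d)^{d-1}$. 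Since the specific constant is used later only as a uniform bound, your weaker constant would work just as well throughout the paper, but if you want to match the statement verbatim you should invoke that sharper estimate rather than raw Hadamard.
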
 \begin{proof} We know the mapping is already $C^1$ because $W^{2;\infty}$ mappings are continuously differentiable. To prove invertibility, put $\Theta(t,q):= \Sigma^1(t,q)-q.$ Computing $\nabla_q\Theta(t,q),$ we have 
\begin{equation}\label{eq:neededjustbelow} |\nabla_q\Theta(t,q)|\leq |s-t|(A_1+\theta A_2 )h(\theta B )  \end{equation} 
because $\Sigma\in\mathcal{M}_0(A_1,\theta A_2 ,\theta B ,E,E_1,\theta E_2,T).$ By Remark \ref{remark:remarkaboutconstantT}, this means the function $q\mapsto \Theta(t,q)$ has Lipschitz constant strictly less than $1.$ Therefore, the function $q\mapsto q + \Theta(t,q) = \Sigma^1(t,q)$ is injective, for $0\leq t\leq T.$

  To prove that $q\mapsto\Sigma^1(t,q)$ is onto, note that $\sup_{q\in\R^d}|\Sigma^1(t,q)-q| \leq T l(\theta B ) < 2Tl(\theta B ),$ for $0\leq t\leq T.$ Let $y$ be a point in the ball of radius $R-2Tl(\theta B )$ in $\R^d$ centered at the origin, where $R>1>2Tl(\theta B ).$ Then for all $q$ on the \emph{boundary} of $B_R(0)$  ---the ball of radius $R$ in $\R^d$ centered at the origin--- we have: $\Sigma^1(t,q)\neq y,$ for $0\leq t\leq T.$ Therefore $$ f(t) := \textrm{deg}(\Sigma^1_t,B_R(0),y) , \quad 0\leq t\leq T ,  $$ the topological degree of $\Sigma_t^1$ is well defined at $y\in B_{R-2Tl(\theta B )}(0).$ This counts the number of ``signed'' solutions (see, e.g., \cite{degreetheory}) $x$ in $B_R(0)$ of the equation $\Sigma_t^1(x) = y.$ Since $f$ is a continuous function taking on integer values only, we conclude that $f(t)=f(s)=1.$ This means that the range of $\Sigma_t^1$ includes $B_{R-2Tl(\theta B )}(0).$ Since $R>1$ is arbitrary, we conclude that the range of $\Sigma_t^1$ is $\R^d.$ \begin{align*} \textrm{\emph{We will}} &\textrm{\emph{ denote the inverse of }}  \Sigma^1 \textrm{ by }  X, \textrm{\emph{ so }} \\ & X[s,\mu](t,q)= [\Sigma^1[s,\mu](t)]^{-1}(q)  ,\end{align*} for $0<s\leq T ,$ $0\leq t\leq T,$ $q\in\dtorus,$ $\mu\in\wasstwospace.$ Next, note that for $0\leq t\leq T,$ $q\in\dtorus,$ $|\nabla_q\Sigma^1_t(q)-I_d|\leq T(A_1+\theta A_2 )h(\theta B ) <1,$ since $T$ is small, where $I_d$ is the $d\times d$ matrix with $1$'s in the diagonal and $0$'s everywhere else. This implies that $\nabla_q\Sigma^1_t(q)$ is an invertible matrix, for $0\leq t\leq T,$ $q\in\dtorus$. By the inverse function theorem, $X_t$ is differentiable. Moreover, since \begin{equation}\label{eq:inverseformula} \nabla_q X_t(q)=[\nabla_q\Sigma^1_t(X_t(q))]^{-1}, \end{equation} and $q\mapsto \nabla_q \Sigma^1_t(q)$ is continuous, continuity of matrix inversion gives that the mapping $q\mapsto \nabla_q X_t(q)$ is continuous; this means that $X$ is $C^1$ in $q.$ 

To show (\ref{eq:boundsfornablaqsigma}), we may use the fact that\footnote{See, for instance, \cite[Remark 3.11]{mfgmain}.} the determinant function $\det:\R^{d^2}\to\R$ has derivative $\nabla \det$ satisfying $$ |\nabla \det (\xi)| \leq 2|\xi|^{d-1} , \quad \xi\in\R^{d^2}  $$ and the inverse matrix formula $$ \xi^{-1} = \frac{1}{\det\xi}(\nabla \det\xi)^t , $$ where the superscript $t$ denotes transposition.  By the mean-value theorem, there is $\tau\in[0,1]$ such that $\det(I_d+\frac{s}{T}\nabla_q\Theta)-\det I_d=\nabla\det(I_d+\tau\frac{s}{T}\nabla_q\Theta)\cdot \frac{s}{T}\nabla_q\Theta,$ where $\nabla\Theta$ abbreviates $\nabla\Theta(t,q)$ at an arbitrary $(t,q)\in[0,T]\times\dtorus.$ Hence, by the aforementioned fact, \begin{align*} |\det(I_d+\frac{s}{T}\nabla_q\Theta)-\det I_d| &\ \leq \frac{s}{T}|\nabla\det(I_d+\tau\frac{s}{T}\nabla_q\Theta)||\nabla_q\Theta|  \leq \frac{s}{T} 2 |I_d + \tau\frac{s}{T}\nabla_q\Theta|^{d-1}||\nabla_q\Theta| \\ &\ \leq 2(\sqrt{d}+|\nabla_q\Theta|)^{d-1}|\nabla_q\Theta| \leq 2(1+\sqrt{d})^{d-1}T(A_1+A_2)h(\theta B ) \\ &\ < \frac{1}{2},  \end{align*} by (\ref{eq:neededjustbelow}) and because $T$ is small enough that $$ T < \frac{1}{4(1+\sqrt{d})^{d-1}(A_1+\theta A_2 )h(\theta B )}.$$ Since $I_d+\nabla_q\Theta(t,q)=\nabla_q\Sigma^1(t,q)$ and $\det I_d=1,$ we obtain the first inequality in (\ref{eq:boundsfornablaqsigma}). Using the inverse matrix formula and the inequality $|\nabla\det(\xi)|\leq 2|\xi|^{d-1}$ once more, we have \begin{align*} |\nabla_q X(t,q)| = &\ |(I_d+\nabla_q\Theta)^{-1}| = \big|(\det(I_d+\nabla_q\Theta))^{-1}[\nabla \det(I_d+\nabla_q\Theta)]^t\big| \\ \leq &\ \frac{2}{\det(I_d+\nabla_q\Theta)}|I_d+\nabla_q\Theta|^{d-1} \leq  \frac{2}{\det(I_d+\nabla_q\Theta)}(1+\sqrt{d})^{d-1} \\ < &\ 4 (1+\sqrt{d})^{d-1},    \end{align*} and since this holds for any $t\in[0,T]$ and $q\in\dtorus,$ we have obtained the second inequality in (\ref{eq:boundsfornablaqsigma}). 

\textbf{Bound on $\nabla_q X$.} Due to the formula $\nabla_q X_t(q)=(\nabla_q\Sigma^1_t)^{-1}\circ X_t(q),$ i.e., formula (\ref{eq:inverseformula}), the second inequality in (\ref{eq:boundsfornablaqsigma}) implies \begin{equation}
\label{eq:boundsfornablaqsigma1} 
\|\nabla_q X[s,\mu](t,\cdot)\|_{C(\dtorus;\dtorus\times\dtorus)} < 4(1+\sqrt{d})^{d-1}.  
\end{equation}     
\end{proof} 
\begin{defn}
\label{defn:sigmaandvvfield}  
Let $\theta >2\kappa.$ Given $\mu \in  \mathscr P(\mathbb T^d)$, $s\in[0,T],$ and $\Sigma$ the unique fixed point of $\mathfrak{m}^{s,\mu}$ in \newline $\mathcal{M}_0(A_1,\theta A_2 ,\theta B ,E,E_1,\theta E_2,T),$ set 
\begin{equation}
\label{eq:settingsigma}  
\sigma_t = \sigma(t) := \Sigma^1[s,\mu](t,\cdot)_{\#}\mu, \quad v[s,\mu](t,q) := \partial_t\Sigma^1_t[s,\mu] \circ X_t[s,\mu](q),  
\end{equation} 
for $0\leq t\leq T.$ 
\end{defn} 
It should be kept in mind that the path $\sigma_t$ depends on $s$ and $\mu.$ Also, the arguments $s,\mu$ may often be omitted in the notation for $v,$ as has been done for $\Sigma.$ 
\begin{prop}\label{prop:hmvisavelocity} 
The path $\sigma$ belongs to $AC^{2}(0,T;\wasstwospace)$ and $v$ is a velocity associated to $\sigma,$ that is, $\partial_t \sigma + \textrm{div}(\sigma v) =0$ in the distribution sense, with $v_t=v(t,\cdot)\in L^{2}(\dtorus,\sigma_t),$ $0< t < T.$ 
\end{prop}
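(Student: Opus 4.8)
The plan is to verify the two claims separately: first that $\sigma\in AC^2(0,T;\wasstwospace)$, and second that $v$ satisfies the continuity equation. For the first claim, I would use the estimate from \eqref{eq:remarkfg} applied to the maps $\Sigma^1[s,\mu](t_1,\cdot)$ and $\Sigma^1[s,\mu](t_2,\cdot)$: since $\sigma_{t_i} = \Sigma^1[s,\mu](t_i,\cdot)_{\#}\mu$, we get
$$
\mathscr{W}(\sigma_{t_1},\sigma_{t_2}) \leq \|\Sigma^1[s,\mu](t_1,\cdot)-\Sigma^1[s,\mu](t_2,\cdot)\|_{L^2(\dtorus,\mu)} \leq \int_{t_1}^{t_2}\|\partial_t\Sigma^1[s,\mu](\tau,\cdot)\|_{L^2(\dtorus,\mu)}\,d\tau,
$$
where the last step uses that $\Sigma^1$ is $C^1$ in $t$ (indeed $W^{2,2;\infty}$) and the fundamental theorem of calculus in $t$, followed by Minkowski's integral inequality. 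Since $\|\partial_t\Sigma^1[s,\mu](\tau,\cdot)\|_{L^2(\dtorus,\mu)} \leq \|\partial_t\Sigma^1\|_{\mathcal M^1} \leq A_1$ by Definition \ref{defn:Msubzero}, the function $m(\tau) := \|\partial_t\Sigma^1[s,\mu](\tau,\cdot)\|_{L^2(\dtorus,\mu)}$ is bounded, hence in $L^2(0,T)$, and this is exactly the defining property of $AC^2(0,T;\wasstwospace)$.

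For the continuity equation, the idea is the standard one: $\sigma_t$ is the pushforward of the fixed measure $\mu$ along the flow map $t\mapsto\Sigma^1_t$, and $v_t$ is, by its very definition \eqref{eq:settingsigma}, the Eulerian velocity field obtained by expressing the Lagrangian velocity $\partial_t\Sigma^1_t$ in terms of the current position via the inverse map $X_t$. Concretely, for $\varphi\in C_c^\infty((0,T)\times\dtorus)$ I would compute, using the change of variables $q = \Sigma^1_t(x)$,
$$
\int_{\dtorus}\varphi(t,q)\,\sigma_t(dq) = \int_{\dtorus}\varphi(t,\Sigma^1_t(x))\,\mu(dx),
$$
then differentiate in $t$ (legitimate because $\varphi$ and $\Sigma^1$ are $C^1$ in $t$ and $\mu$ is a fixed finite measure, so differentiation under the integral sign applies with the uniform bounds of Definition \ref{defn:Msubzero} as dominating functions), obtaining
$$
\frac{d}{dt}\int_{\dtorus}\varphi(t,q)\,\sigma_t(dq) = \int_{\dtorus}\Big[\partial_t\varphi(t,\Sigma^1_t(x)) + \nabla_q\varphi(t,\Sigma^1_t(x))\cdot\partial_t\Sigma^1_t(x)\Big]\mu(dx).
$$
Undoing the change of variables and using $\partial_t\Sigma^1_t\circ X_t = v_t$ gives $\frac{d}{dt}\int\varphi\,d\sigma_t = \int[\partial_t\varphi + \nabla_q\varphi\cdot v_t]\,d\sigma_t$, which upon integrating in $t$ against the compact support of $\varphi$ is precisely the distributional form of $\partial_t\sigma_t + \mathrm{div}(v_t\sigma_t)=0$. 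Finally, $v_t\in L^2(\dtorus,\sigma_t)$ because, again by the change of variables $q=\Sigma^1_t(x)$, $\int_{\dtorus}|v_t(q)|^2\sigma_t(dq) = \int_{\dtorus}|\partial_t\Sigma^1_t(x)|^2\mu(dx) \leq A_1^2 < \infty$.

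The main obstacle is not conceptual but a matter of care: justifying differentiation under the integral sign and the change of variables requires knowing $\Sigma^1_t$ is a bi-Lipschitz $C^1$ diffeomorphism with controlled Jacobian, which is exactly what Lemma \ref{lemma:invertibilityofsigma} supplies (bounds \eqref{eq:boundsfornablaqsigma}); and the time-differentiation needs the $C^1$-in-$t$ regularity with a uniform $L^\infty$ bound on $\partial_t\Sigma^1$, furnished by membership in $\mathcal M_0(A_1,\theta A_2,\theta B,E,E_1,\theta E_2,T)$. One small point worth flagging: the identity $\partial_t\Sigma^1_t\circ X_t = v_t$ must be read $\mu$-almost everywhere pushed forward, i.e.\ $v_t$ is defined so that $v_t(\Sigma^1_t(x)) = \partial_t\Sigma^1_t(x)$ for all $x$, and since $\Sigma^1_t$ is a homeomorphism of $\dtorus$ this determines $v_t$ everywhere and continuously, so there is no measure-theoretic ambiguity. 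Once these regularity inputs are in hand, both assertions follow from the computations above.
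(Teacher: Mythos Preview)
Your proposal is correct and is precisely the ``direct estimation and calculation'' that the paper alludes to before omitting the proof. The two ingredients you use---the pushforward bound \eqref{eq:remarkfg} for the $AC^2$ property and the chain-rule computation on $\int\varphi(t,\Sigma^1_t(x))\,\mu(dx)$ for the continuity equation---are exactly the standard route, and the regularity inputs you invoke (membership in $\mathcal{M}_0$ and Lemma~\ref{lemma:invertibilityofsigma}) are the right ones.
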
 
\begin{proof} 
The proof is simple and follows by direct estimation and calculation, so we omit it.
\end{proof}
Note that the definition of the field $v_t$ means that the mappings $t\mapsto \Sigma^1(t,q)$ are the flow lines of $v_t.$   
\subsection{Value function $U$ and characteristics}\label{subsection:thecharacteristicsmethod} In what follows, the hypotheses of Corollary \ref{cor:hamODEshassolution} will be in force, with $\Sigma$ denoting the solution to (\ref{eq:hamODEs}). The following statement stems from the fact that $\Sigma^1[s,\mu](\cdot,\cdot)\in W^{2,2;\infty}((0,T)\times\dtorus;\dtorus);$ we omit its proof. \begin{prop}\label{prop:XisC1int} For every $s\in[0,T],$ $\mu\in\wasstwospace,$ the function $X[s,\mu](\cdot,\cdot)$ is  in $W^{2,2;\infty}((0,T)\times\dtorus;\dtorus).$  \end{prop}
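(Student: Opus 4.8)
The plan is to obtain every weak derivative of $X=X[s,\mu]$ of order $\le 2$ by implicitly differentiating the identity $\Sigma^1(t,X(t,q))=q$ (writing $\Sigma^1=\Sigma^1[s,\mu]$) and feeding in the regularity already available for $\Sigma^1$. Recall that $\Sigma^1$ lies in $\mathcal M_0(A_1,\theta A_2,\theta B,E,E_1,\theta E_2,T)$, so $\partial_t\Sigma^1,\nabla_q\Sigma^1,\partial^2_{tt}\Sigma^1,\nabla^2_{qq}\Sigma^1\in L^\infty((0,T)\times\dtorus)$; moreover, because $\Sigma^1$ solves the first equation of (\ref{eq:hamODEs}), one has $\partial_t\nabla_q\Sigma^1=\nabla^2_{qp}H(\Sigma^1,\Sigma^2)\nabla_q\Sigma^1+\nabla^2_{pp}H(\Sigma^1,\Sigma^2)\nabla_q\Sigma^2\in L^\infty$ as well, thanks to $H\in C^3$ and the bounds of Definition \ref{defn:Msubzero}. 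In particular $\Sigma^1\in C^1$ with Lipschitz gradient. Lemma \ref{lemma:invertibilityofsigma} already supplies that $X_t$ is a bi-Lipschitz $C^1$ diffeomorphism with $\det\nabla_q\Sigma^1_t>1/2$, $\|\nabla_qX_t\|_\infty<4(1+\sqrt d)^{d-1}$, and the identity (\ref{eq:inverseformula}), $\nabla_qX_t=(\nabla_q\Sigma^1_t)^{-1}\circ X_t$, uniformly in $(s,t,\mu)$.

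First I would record the two first-order formulas. Equation (\ref{eq:inverseformula}) gives $\nabla_qX\in L^\infty$ at once. Differentiating $\Sigma^1(t,X(t,q))=q$ in $t$ (legitimate since $\Sigma^1\in C^1$ and $X$ is $C^1$ in $q$, Lipschitz in $t$) yields $\partial_tX(t,q)=-[\nabla_q\Sigma^1_t(X(t,q))]^{-1}\,\partial_t\Sigma^1(t,X(t,q))$, whence $\|\partial_tX\|_\infty\le 4(1+\sqrt d)^{d-1}\,l(\theta B)$. So $X$ has bounded first-order weak derivatives and is Lipschitz on $(0,T)\times\dtorus$, with constants uniform in $s$ and $\mu$; in particular $X_t$ is bi-Lipschitz jointly in $(t,q)$.

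Next I would pass to second order. The right-hand sides of the two displayed formulas are built from (a) the matrix field $(\nabla_q\Sigma^1)^{-1}$, which is Lipschitz in $(t,q)$ since $\nabla_q\Sigma^1$ is Lipschitz, $X$ is Lipschitz, and $\xi\mapsto\xi^{-1}$ is Lipschitz on $\{\xi:\det\xi\ge 1/2,\ |\xi|\le\mathrm{const}\}$; (b) $\partial_t\Sigma^1\circ X$, likewise Lipschitz; and (c) finite sums of products of such factors. Hence $\nabla_qX$ and $\partial_tX$ are Lipschitz on $(0,T)\times\dtorus$, i.e.\ $X\in C^{1,1}=W^{2,\infty}$, which is contained in $W^{2,2;\infty}$. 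If explicit second-order identities and bounds are wanted, one differentiates the two formulas once more using $D(A^{-1})=-A^{-1}(DA)A^{-1}$ and the chain rule for an $L^\infty$ (or $W^{1,\infty}$) map composed with the bi-Lipschitz change of variables $X_t$; every resulting term is a finite product of entries of $(\nabla_q\Sigma^1)^{-1}$ (bounded by (\ref{eq:boundsfornablaqsigma})), of second derivatives of $\Sigma^1$ evaluated at $X$ (in $L^\infty$ by the previous paragraph and bi-Lipschitzness of $X_t$), and of first derivatives of $X$ (bounded above), giving $\|\nabla^2_{qq}X\|_\infty,\ \|\partial_t\nabla_qX\|_\infty,\ \|\partial^2_{tt}X\|_\infty<\infty$.

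The only point requiring genuine care — the expected (mild) obstacle — is the passage to weak derivatives: $\Sigma^1$ is merely $C^{1,1}$, so its second derivatives exist only a.e., and in the composition $\nabla^2_{qq}\Sigma^1\circ X_t$ one must invoke the chain rule for Sobolev functions precomposed with a bi-Lipschitz map, which relies on $X_t$ carrying Lebesgue-null sets to null sets — guaranteed here by $\det\nabla_qX_t=(\det\nabla_q\Sigma^1_t\circ X_t)^{-1}<2$. Equivalently, one may mollify $\Sigma^1$ to $\Sigma^{1,\varepsilon}\in C^\infty$, invert it (for small $T$ these remain diffeomorphisms by the same degree/Lipschitz argument), write the classical identities for the inverses $X^\varepsilon$, and pass to the limit using the uniform second-order bounds just derived. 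Either way, all constants are uniform in $s\in[0,T]$ and $\mu\in\wasstwospace$ because those in Definition \ref{defn:Msubzero} and Lemma \ref{lemma:invertibilityofsigma} are.
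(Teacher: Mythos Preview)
Your proposal is correct and follows exactly the route the paper indicates: the paper merely states that the result ``stems from the fact that $\Sigma^1[s,\mu](\cdot,\cdot)\in W^{2,2;\infty}((0,T)\times\dtorus;\dtorus)$'' and omits the proof, and your argument---implicitly differentiating $\Sigma^1(t,X(t,q))=q$, invoking (\ref{eq:inverseformula}) and (\ref{eq:boundsfornablaqsigma}), and checking the chain rule under the bi-Lipschitz change of variables---supplies precisely those omitted details. The formula you derive for $\partial_t X$ is in fact recorded later in the paper as (\ref{eq:partialtX}), confirming that this is the intended approach.
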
 Define now \begin{align} \mathcal{V}[s,\mu](t,q) :=  \Sigma^2_t[s,\mu]\circ X_t[s,\mu](q)  =  \Sigma^2_t[s,\mu]\circ (\Sigma_t^1[s,\mu])^{-1}(q)  , \label{eq:mathcalV} \end{align}for $s,t\in[0,T], \mu\in\wasstwospace, q\in\dtorus.$ Alternatively, we may write $\mathcal{V}_t[s,\mu](q).$ We can now proceed to solve the MFG system. \begin{lemma}\label{lemma:zlemma} Let T be small according to Remark \ref{remark:remarkaboutconstantT}, $s\in[0,T],$ $\mu\in\wasstwospace,$ and, as in Definition \ref{defn:sigmaandvvfield}: $\sigma_t = \Sigma^1[s,\mu](t,\cdot)_{\#}\mu.$ For each $q\in\dtorus,$ let $$ U(t,q) = z(t,X[s,\mu](t,q)), \quad t\in[0,T] , $$ where $z(\cdot,q)$ satisfies \begin{align} \partial_t z(t,q) = &\ \Sigma^2[s,\mu](t,q)\cdot \nabla_p H(\Sigma^1[s,\mu](t,q),\Sigma^2[s,\mu](t,q)) \label{eq:zode}  \\  &\ -  H(\Sigma^1[s,\mu](t,q),\Sigma^2[s,\mu](t,q)) - F(\Sigma^1[s,\mu](t,q),\sigma_t) \quad \textrm{ in } (0,T), \notag \\ z(0,q) = &\ g(\Sigma^1[s,\mu](0,q),\sigma_0).      \end{align} Then $U\in C^1((0,T)\times\dtorus)$ and solves the Hamilton-Jacobi equation of the mean-field game system: \begin{align}\partial_t U(t,x) + H(x,\nabla_q U(t,x)) + F(x,\sigma_t) = &\  0 \quad \textrm{ in } (0,T)\times \dtorus, \label{eq:HJEx} \tag{\ref{eq:mfg1}} \\ U(0,\cdot) = &\ g(\cdot,\sigma_0). \tag{\ref{eq:mfg3}} \label{eq:HJExinitial}   \end{align}  \end{lemma}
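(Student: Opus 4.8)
The plan is to run the classical method of characteristics: the system (\ref{eq:hamODEs}) is exactly the characteristic system of the Hamilton--Jacobi equation (\ref{eq:mfg1}), with $z$ carrying the value along characteristics and $X_t=(\Sigma^1_t)^{-1}$ pulling it back to an Eulerian function $U(t,\cdot)$. Since the right-hand side of the ODE defining $z$ does not contain $z$ itself, one has outright
\[
z(t,q) = g\big(\Sigma^1_0[s,\mu](q),\sigma_0\big) + \int_0^t\big[\Sigma^2_\tau\cdot\nabla_pH(\Sigma^1_\tau,\Sigma^2_\tau) - H(\Sigma^1_\tau,\Sigma^2_\tau) - F(\Sigma^1_\tau,\sigma_\tau)\big]\,d\tau .
\]
First I would record that $z\in C^1((0,T)\times\dtorus)$: the integrand is continuous in $\tau$ and $C^1$ in $q$, with $q$-derivatives bounded uniformly in $\tau$ (using $H\in C^3$, the $W^{2,2;\infty}$ bounds on $\Sigma^1,\Sigma^2$ from Corollary~\ref{cor:hamODEshassolution}, the assumptions on $F$ in Section~\ref{subsection:dataformfg}, and continuity of $\tau\mapsto\sigma_\tau$ in $\wasstwospace$), so one may differentiate under the integral sign, getting $\partial_t z$ equal to the integrand at $\tau=t$ and $\nabla_q z$ given by the analogous integral, both jointly continuous. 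Since $X$ is $C^1$ by Lemma~\ref{lemma:invertibilityofsigma} and Proposition~\ref{prop:XisC1int}, it follows that $U=z(t,X_t(\cdot))$ is $C^1$ on $(0,T)\times\dtorus$.

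The heart of the matter is the identity
\[
\nabla_q z(t,q) = \big(\nabla_q\Sigma^1_t[s,\mu](q)\big)^\top\,\Sigma^2_t[s,\mu](q), \qquad 0\le t\le T,
\]
which I would establish by showing the two sides agree at $t=0$ and have the same $t$-derivative. At $t=0$, differentiating $z(0,q)=g(\Sigma^1_0(q),\sigma_0)$ in $q$ gives $(\nabla_q\Sigma^1_0(q))^\top\nabla_q g(\Sigma^1_0(q),\sigma_0)$, which equals $(\nabla_q\Sigma^1_0(q))^\top\Sigma^2_0(q)$ by the initial condition $\Sigma^2(0,\cdot)=\nabla_q g(\Sigma^1(0,\cdot),\sigma_0)$ in (\ref{eq:hamODEs}). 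For the $t$-derivative, one differentiates in $q$ the integral formulas for $z$, $\Sigma^1$ and $\Sigma^2$ (the interchanges of $\partial_t$ and $\nabla_q$ being legitimate since the relevant integrands are $C^1$ in $q$ uniformly in time) and plugs in (\ref{eq:hamODEs}); a direct computation then shows $\partial_t\nabla_q z$ and $\partial_t\big[(\nabla_q\Sigma^1_t)^\top\Sigma^2_t\big]$ are the same function. The key cancellation is between the term $\nabla_q\Sigma^2_t\cdot\nabla_pH$ coming from differentiating $\Sigma^2\cdot\nabla_pH(\Sigma^1,\Sigma^2)$ in $q$ and the term $-\nabla_pH\cdot\nabla_q\Sigma^2_t$ coming from differentiating $-H(\Sigma^1,\Sigma^2)$ in $q$, after which both expressions reduce to $\Sigma^2\cdot\nabla^2_{pq}H\,\nabla_q\Sigma^1 + \Sigma^2\cdot\nabla^2_{pp}H\,\nabla_q\Sigma^2 - \nabla_q\Sigma^1\cdot(\nabla_qH+\nabla_qF)$, all evaluated along $(\Sigma^1_t,\Sigma^2_t)$. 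A $C^1$-in-$t$ function with vanishing derivative that vanishes at $t=0$ is identically zero, so the identity holds for all $t$.

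The remaining steps are bookkeeping. Differentiating $U(t,\Sigma^1_t(q))=z(t,q)$ in $q$ and inverting the (invertible, by Lemma~\ref{lemma:invertibilityofsigma}) matrix $\nabla_q\Sigma^1_t$ with the identity above yields $\nabla_x U(t,\Sigma^1_t(q))=\Sigma^2_t(q)$, i.e.\ $\nabla_x U(t,x)=\mathcal V_t[s,\mu](x)$. Writing $x=\Sigma^1_t(q)$ and using $\partial_t X_t=-(\nabla_q\Sigma^1_t\circ X_t)^{-1}(\partial_t\Sigma^1_t\circ X_t)$, $\partial_t\Sigma^1_t=\nabla_pH(\Sigma^1_t,\Sigma^2_t)$ and $\nabla_q z=(\nabla_q\Sigma^1_t)^\top\Sigma^2_t$, the chain-rule term $\nabla_q z(t,q)\cdot\partial_t X_t(x)$ in $\partial_t U(t,x)=\partial_t z(t,q)+\nabla_q z(t,q)\cdot\partial_t X_t(x)$ collapses to $-\Sigma^2_t(q)\cdot\nabla_pH(x,\Sigma^2_t(q))$, cancelling the matching term in $\partial_t z(t,q)$ and leaving $\partial_t U(t,x)=-H(x,\Sigma^2_t(q))-F(x,\sigma_t)=-H(x,\nabla_x U(t,x))-F(x,\sigma_t)$, which is (\ref{eq:mfg1}); finally $U(0,x)=z(0,X_0(x))=g(\Sigma^1_0(X_0(x)),\sigma_0)=g(x,\sigma_0)$ is (\ref{eq:mfg3}). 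The one point needing genuine care, and the main obstacle, is the regularity bookkeeping that justifies the $q$-differentiations under the time integrals and the interchanges of $\partial_t$ and $\nabla_q$, since $\Sigma^1,\Sigma^2$ are a priori only in $W^{2,2;\infty}$ --- hence $C^1$ in $q$ but with merely Lipschitz $q$-gradients; one checks that only first-order $q$-derivatives of $\Sigma$ and of $z$ ever enter the identity, and these are continuous, so the argument goes through.
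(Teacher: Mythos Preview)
Your proposal is correct and follows essentially the same route as the paper's proof. The paper defines $r_i(t)=\partial_{q^{(i)}}z-\sum_j(\Sigma^2_t)^{(j)}\partial_{q^{(i)}}(\Sigma^1_t)^{(j)}$, checks $r_i(0)=0$ and $\dot r_i\equiv 0$ via the Hamiltonian ODEs (\ref{eq:hamODEs}), and then reads off $\nabla_qU(t,\Sigma^1_t(q))=\Sigma^2_t(q)$ and the Hamilton--Jacobi equation; this is exactly your key identity $\nabla_q z=(\nabla_q\Sigma^1_t)^\top\Sigma^2_t$ proved by matching values at $t=0$ and $t$-derivatives, with the same cancellation and the same regularity justification for swapping $\partial_t$ and $\nabla_q$.
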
 \begin{proof} Since $s$ and $\mu$ are fixed, we abbreviate $\Sigma[s,\mu](t,q) = \Sigma_t(q).$ Observe that the right-hand side of (\ref{eq:zode}) is $C^1$ in $q,$ $C^0$ in $t,$ so $z$ is $C^1$ in $q,$ $C^1$ in $t.$ Therefore, $U$ is $C^1$ in both variables $t$ and $q,$ because of Proposition \ref{prop:XisC1int}. Moreover, since $\partial_t z(t,q)$ is $C^1$ in $q,$ then (e.g.~see \cite[Thm.~9.41]{babyrudin}) $\nabla^2_{tq} z$ exists and is equal to $\nabla^2_{qt} z.$ Thus, the calculations below are legitimate. We have $U(t,\Sigma_t^1(q)) = z(t,q),$ so $\partial_t z(t,q) = \partial_t (U(t,\Sigma_t^1(q))$ and  \begin{align} \partial_t z(t,q) = &\  \partial_t (U(t,\Sigma_t^1(q))) = \partial_t U(t,\Sigma_t^1(q)) + \nabla_q U(t,\Sigma_t^1(q)) \cdot \partial_t \Sigma^1_t(q) \notag \\ = &\ \partial_t U(t,\Sigma_t^1(q)) + \nabla_q  U(t,\Sigma_t^1(q)) \cdot \nabla_pH(\Sigma_t^1(q),\Sigma_t^2(q)) \label{eq:totaltimeU} \end{align} Now, \textit{if} \begin{equation}\label{eq:importantif} \nabla_q U(t,\Sigma_t^1(q)) = \Sigma_t^2(q), \quad t \in (0,T), q\in \dtorus, \end{equation} then, comparing (\ref{eq:totaltimeU}) and (\ref{eq:zode}), we get $$ \partial_t U(t,\Sigma_t^1(q)) = -H(\Sigma_t^1(q),\Sigma_t^2(q)) - F(\Sigma_t^1(q),\sigma_t), $$ and the change of variable $x=\Sigma_t^1(q)$ then yields (\ref{eq:HJEx}), (\ref{eq:HJExinitial}). We set out to prove (\ref{eq:importantif}) now. Let $$ r_i(t) := \frac{\partial z(t,q)}{\partial q^{(i)} } - \sum_{j=1}^d (\Sigma_t^2)^{(j)}(q)\frac{\partial x^{(j)}}{\partial q^{(i)}} , $$ $i=1,\ldots,d,$ where $x=\Sigma^1_t(q).$  We know that $r_i(0) = 0,$ from the initial conditions, and \begin{align*} \dot{r}_i(q) = \frac{\partial^2 z(t,q)}{\partial t\partial q^{(i)}} - \sum_{j=1}^d \partial_t (\Sigma_t^2)^{(j)}(q) \frac{\partial}{\partial q^{(i)}}\frac{\partial}{\partial t}x^{(j)}  =: a - b .\end{align*} Using the first line, $\partial_t \Sigma^1_t(q) = \nabla_pH(\Sigma^1_t(q),\Sigma^2_t(q)),$ in (\ref{eq:hamODEs}), we have \begin{align*} a =&\ \partial_{q^{(i)}}\partial_t z(t,q) \\   = &\ \sum_{k=1}^d \big[ \sum_{k=1}^d \partial_{q^{(i)}}(\Sigma^2_t)^{(k)}(q)\partial_t (\Sigma^1)^{(k)}_t(q) +  (\Sigma_t^2)^{(k)}(q)\partial^2_{t,q^{(i)}}(\Sigma^1_t)^{(k)}(q) \big] \\ &\ -   \sum_{l=1}^d \big[ \partial_{q^{(l)}}H(\Sigma^1_t(q),\Sigma^2_t(q))\partial_{q^{(i)}} (\Sigma_t^1)^{(l)}(q) + \partial_{p^{(l)}}H(\Sigma^1_t(q),\Sigma^2_t(q))\partial_{q^{(i)}} (\Sigma_t^2)^{(l)}(q)   \big ] \\  &\ - \sum_{l=1}^d \partial_{q^{(l)}}F(\Sigma^1_t(q),\sigma_t)\partial_{q^{(i)}}(\Sigma_t^1)^{(l)}(q) ,     \end{align*} and, by the second line in (\ref{eq:hamODEs}), namely, $\partial_t\Sigma^2_t(q) = -\nabla_qH(\Sigma_t^1(q),\Sigma_t^2(q)) - \nabla_qF(\Sigma_t^1(q),\sigma_t),$ this simplifies to \begin{align*} a = \sum_{k=1}^d \big[ (\Sigma_t^2)^{(k)}(q)\partial^2_{tq^{(i)}}(\Sigma_t^1)^{(k)}(q) + \partial_t(\Sigma^2_t)^{(k)}(q)\partial_{q^{(i)}}(\Sigma_t^1)^{(k)}(q)  \big].     \end{align*} As for $b,$ \begin{align*} b = &\ \partial_t \big( \sum_{j=1}^d(\Sigma_t^2)^{(j)}(q)\partial_{q^{(i)}}(\Sigma_t^1)^{(j)}(q)\big) \\ = &\ \sum_{j=1}^d \big[ \partial_t(\Sigma_t^2)^{(j)}(q)\partial_{q^{(i)}}(\Sigma_t^1)^{(j)}(q) + (\Sigma_t^2)^{(j)}(q)\partial^2_{tq^{(i)}}(\Sigma_t^1)^{(j)}(q)  \big] ,   \end{align*} so $a=b.$ Therefore $\dot{r}_i(t)\equiv 0,$ and $r_i(t) \equiv 0$ on $(0,T],$ by the uniqueness of (\ref{eq:zode}) $[0,T].$  Now we differentiate $U$, keeping in mind that $U(t,x) = z(t,q)$; using the fact that $r_i(t) = 0,$ $0\leq t\leq T,$ we have \begin{align*} \partial_{x^{(i)}}U(t,x) = \sum_{j=1}^d\frac{\partial z(t,q)}{\partial q^{(j)}}\frac{\partial q^{(j)}}{\partial x^{(i)}}  = \sum_{j=1}^d  \sum_{k=1}^d (\Sigma_t^2)^{(k)}(q)\frac{\partial x^{(k)}}{\partial q^{(i)}} \frac{\partial q^{(j)}}{\partial x^{(i)}} = \sum_{k=1}^d (\Sigma_t^2)^{(k)}(q)\frac{\partial x^{(k)}}{\partial x^{(i)}} = (\Sigma^2_t)^{(i)}(q) ,      \end{align*} for $i=1,\ldots,d$ and $0\leq t \leq T.$ This proves (\ref{eq:importantif}), completing the proof of the lemma. \end{proof} \begin{cor}\label{cor:cortozlemma} By (\ref{eq:importantif}) in the proof of the preceding lemma, we have \begin{equation}\label{eq:mathcalvisgradient} \nabla_q U(t,q) = \mathcal{V}[s,\mu](t,q), \qquad t\in(0,T), q\in\dtorus.  \end{equation}   \end{cor}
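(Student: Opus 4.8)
The plan is to read off (\ref{eq:mathcalvisgradient}) directly from the identity (\ref{eq:importantif}) that was established inside the proof of Lemma \ref{lemma:zlemma}, by a single change of variables; the diffeomorphism property of $\Sigma^1_t[s,\mu]$ supplied by Lemma \ref{lemma:invertibilityofsigma} is what makes this legitimate. I do not expect any genuine obstacle here: the only point worth keeping in mind is that (\ref{eq:importantif}) holds at \emph{every} $q\in\dtorus$ (not merely almost every $q$), which is ensured since $U\in C^1((0,T)\times\dtorus)$ by Lemma \ref{lemma:zlemma} and $\Sigma^2_t[s,\mu]$, $X_t[s,\mu]$ are continuous.

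Concretely, first I would recall from (\ref{eq:importantif}) that for all $t\in(0,T)$ and all $q'\in\dtorus$ one has
\begin{equation*}
\nabla_q U(t,\Sigma^1_t[s,\mu](q'))=\Sigma^2_t[s,\mu](q').
\end{equation*}
Then, fixing $t\in(0,T)$ and $q\in\dtorus$ arbitrarily, I would invoke Lemma \ref{lemma:invertibilityofsigma}: the map $q'\mapsto\Sigma^1_t[s,\mu](q')$ is a $C^1$ diffeomorphism of $\dtorus$ onto itself with inverse $X_t[s,\mu]$, so putting $q':=X_t[s,\mu](q)$ gives $\Sigma^1_t[s,\mu](q')=q$. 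Substituting this choice of $q'$ into the displayed identity and then using the definition (\ref{eq:mathcalV}) of $\mathcal{V}$ yields
\begin{equation*}
\nabla_q U(t,q)=\Sigma^2_t[s,\mu](X_t[s,\mu](q))=\mathcal{V}[s,\mu](t,q),
\end{equation*}
and since $t\in(0,T)$ and $q\in\dtorus$ were arbitrary this is exactly (\ref{eq:mathcalvisgradient}).

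As a consistency check, one may note that the right-hand side $\mathcal{V}[s,\mu]$ is continuous in $(t,q)$, because $\Sigma^2[s,\mu]$ is continuous and $X[s,\mu](t,\cdot)$ is $C^1$ in $q$ (indeed $X[s,\mu]\in W^{2,2;\infty}$ by Proposition \ref{prop:XisC1int}); this is compatible with the regularity $\nabla_q U\in C^0$ coming from $U\in C^1$ in Lemma \ref{lemma:zlemma}, so the argument requires nothing further.
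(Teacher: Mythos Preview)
Your proposal is correct and matches the paper's approach: the corollary is stated as an immediate consequence of (\ref{eq:importantif}), and the only step is precisely the change of variables $q'\mapsto q=\Sigma^1_t[s,\mu](q')$ via the diffeomorphism of Lemma \ref{lemma:invertibilityofsigma}, together with the definition (\ref{eq:mathcalV}) of $\mathcal{V}$. There is nothing further to add.
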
 The dependence of $U$ on the parameters $s$ and $\mu,$ made clear by its definition, should not be forgotten. This corollary means that $ \mathcal{V}[s,\mu](t,\cdot)$ is the $C^1$ gradient of a function. Thus, \begin{equation*} \nabla_q\mathcal{V}[s,\mu](t,q) \textrm{ \emph{is symmetric, for every }} t\in (0,T), q\in\dtorus. \end{equation*} To conclude our statement about the MFG system, observe that \begin{equation}\label{eq:observethat} v_t(q) =  \partial_t\Sigma_t^1((\Sigma_t^1)^{-1}(q)) = \nabla_pH(q,\mathcal V(t,q)) =  \nabla_pH(q,\nabla_q U(t,q)). \end{equation} Hence, combining with Proposition \ref{prop:hmvisavelocity}, we have the following. \begin{thm}\label{thm:soltomfg} (Existence of solution to the MFG system) Let $\mu\in\wasstwospace,$ $T$ be in accordance with Remark \ref{remark:remarkaboutconstantT} and Proposition \ref{prop:m0islipschitz}, $0 < s < T,$ and let $\sigma_t,$ $v_t$ be as in (\ref{eq:settingsigma}), where $(\Sigma^1[s,\mu],\Sigma^2[s,\mu])$ is the unique solution to (\ref{eq:hamODEs}) with parameters $s,\mu$. Then the pair $(U,\sigma),$ where $U$ is as in Lemma \ref{lemma:zlemma}, is a classical solution to the mean-field game system (\ref{eq:mfg1}-\ref{eq:mfg4}) in the sense explained in Section \ref{subsection:defnofclassicalmfg}.    \end{thm}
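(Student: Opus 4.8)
The plan is to verify, one at a time, the three requirements in the definition of a classical solution to the MFG system in Section \ref{subsection:defnofclassicalmfg}, using the objects and properties already built in this section. First I would fix, via Corollary \ref{cor:hamODEshassolution}, the unique solution $\Sigma[s,\mu]=(\Sigma^1[s,\mu],\Sigma^2[s,\mu])$ of the Hamiltonian ODEs (\ref{eq:hamODEs}) lying in $\mathcal{M}_0(A_1,\theta A_2,\theta B,E,E_1,\theta E_2,T)$, with $\theta>2\kappa$; by Lemma \ref{lemma:invertibilityofsigma} the map $q\mapsto\Sigma^1_t[s,\mu](q)$ is a $C^1$ diffeomorphism of $\dtorus$ with $C^1$ inverse $X_t[s,\mu]$, so $\sigma_t$ and $v_t$ of Definition \ref{defn:sigmaandvvfield} and the value function $U$ of Lemma \ref{lemma:zlemma} are all well defined for $0<t<T$. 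That a single $T$ makes all of these statements hold simultaneously is guaranteed by Remark \ref{remark:remarkaboutconstantT} together with Proposition \ref{prop:m0islipschitz}.

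The Hamilton--Jacobi part of the definition is handled directly by Lemma \ref{lemma:zlemma}: it already gives $U\in C^1((0,T)\times\dtorus)$, that equation (\ref{eq:mfg1}) holds pointwise on $(0,T)\times\dtorus$, and that the initial condition (\ref{eq:mfg3}), $U(0,\cdot)=g(\cdot,\sigma_0)$, is met. The terminal condition (\ref{eq:mfg4}) is immediate from the third line of (\ref{eq:hamODEs}): since $\Sigma^1(s,q)=q$ for all $q$, the map $\Sigma^1[s,\mu](s,\cdot)$ is the identity on $\dtorus$, whence $\sigma_s=\Sigma^1[s,\mu](s,\cdot)_{\#}\mu=\mu$.

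It remains to establish the continuity equation (\ref{eq:mfg2}) in the distributional sense. Here I would invoke Proposition \ref{prop:hmvisavelocity}, which says $\sigma\in AC^2(0,T;\wasstwospace)$, that $v$ is a velocity field for $\sigma$ with $v_t\in L^2(\dtorus,\sigma_t)$, and that $\partial_t\sigma_t+\textrm{div}(v_t\sigma_t)=0$ in $\mathcal{D}'((0,T)\times\dtorus)$, and then identify the drift: by the definition of $v$ in (\ref{eq:settingsigma}), by Corollary \ref{cor:cortozlemma} (i.e.\ $\nabla_qU(t,q)=\mathcal{V}[s,\mu](t,q)$), and by the first line of (\ref{eq:hamODEs}), one has exactly (\ref{eq:observethat}), namely $v_t(q)=\partial_t\Sigma^1_t\circ X_t(q)=\nabla_pH(q,\mathcal V(t,q))=\nabla_pH(q,\nabla_qU(t,q))$. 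Substituting this into the continuity equation yields $\partial_t\sigma_t+\textrm{div}(\sigma_t\nabla_pH(q,\nabla_qU))=0$ in $\mathcal{D}'((0,T)\times\dtorus)$, which, tested against $\varphi\in C_c^\infty((0,T)\times\dtorus)$, is precisely the integral identity demanded in Section \ref{subsection:defnofclassicalmfg}.

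Collecting the three points, $(U,\sigma)$ meets every condition in the definition, so it is a classical solution. I do not expect a genuine obstacle here, since the theorem is a synthesis of Lemmas \ref{lemma:zlemma}--\ref{lemma:invertibilityofsigma} and Proposition \ref{prop:hmvisavelocity}; the one step that needs care is the bookkeeping that ties the abstract velocity field $v$ of Proposition \ref{prop:hmvisavelocity} to the drift $\nabla_pH(q,\nabla_qU)$ appearing in (\ref{eq:mfg2}) --- this is the content of (\ref{eq:observethat}), which in turn rests on the identification of $\nabla_qU$ with $\mathcal V$ from Corollary \ref{cor:cortozlemma}.
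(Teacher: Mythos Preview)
Your proposal is correct and follows essentially the same approach as the paper. The paper's own argument is precisely the synthesis you describe: it states equation (\ref{eq:observethat}) immediately before the theorem and then writes ``Hence, combining with Proposition \ref{prop:hmvisavelocity}, we have the following,'' relying on Lemma \ref{lemma:zlemma} and Corollary \ref{cor:cortozlemma} for the Hamilton--Jacobi part and on the identification $v_t=\nabla_pH(q,\nabla_qU)$ together with Proposition \ref{prop:hmvisavelocity} for the continuity equation, with $\sigma_s=\mu$ coming from $\Sigma^1(s,\cdot)=\mathrm{id}$.
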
 Note that, by Proposition \ref{prop:XisC1int}, the function $U$ in the pair $(U,\sigma)$ constructed above is in $W^{2,2;\infty}((0,T)\times\dtorus)\times AC^2(0,T;\wasstwospace).$ The following is, in a sense, a consistency, or restricted uniqueness, complement to the latter theorem.  \begin{thm}\label{thm:uniquenessmfg} (The case of $W^{2,3;\infty}((0,T)\times\dtorus)\times AC^2(0,T;\wasstwospace)$ solutions to the MFG system) Let $(\tilde U,\tilde \sigma)$ be a classical solution to the MFG system (\ref{eq:mfg1}-\ref{eq:mfg4}), in the sense explained in Section \ref{subsection:defnofclassicalmfg}, such that $U\in W^{2,3;\infty}((0,T)\times\dtorus).$ Then $(\tilde{U},\tilde{\sigma})=(U,\sigma),$ where $(U,\sigma)$ is the pair constructed for Theorem \ref{thm:soltomfg}.    \end{thm}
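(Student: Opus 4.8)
The plan is to run the construction of Section~\ref{section:ODEs} \emph{in reverse}: given a classical solution $(\tilde U,\tilde\sigma)$ of \eqref{eq:mfg1}--\eqref{eq:mfg4} with its parameters $s,\mu$ (so $\tilde\sigma_s=\mu$ and $\tilde U(0,\cdot)=g(\cdot,\tilde\sigma_0)$), I would extract from it a pair of maps solving the Hamiltonian ODEs \eqref{eq:hamODEs} with those parameters, appeal to uniqueness of \eqref{eq:hamODEs} to identify that pair with $(\Sigma^1[s,\mu],\Sigma^2[s,\mu])$, and finally read off the value function to conclude $(\tilde U,\tilde\sigma)=(U,\sigma)$. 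First I would set $v_t(q):=\nabla_pH(q,\nabla_q\tilde U(t,q))$. Since $\tilde U\in W^{2,3;\infty}$, the field $\nabla_q\tilde U$ is $C^1$ in $(t,q)$ with bounded derivatives, so $v$ is bounded and Lipschitz in $q$ uniformly in $t$; let $\tilde\Sigma^1$ be its flow with $\tilde\Sigma^1_s=\mathrm{id}$. By Cauchy--Lipschitz it exists and is unique, and, for $T$ small, it is a $C^1$ diffeomorphism of $\dtorus$ for each $t$ (a Gronwall estimate on $\nabla_q\tilde\Sigma^1$, exactly as in the proof of Lemma~\ref{lemma:invertibilityofsigma}). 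Then $t\mapsto(\tilde\Sigma^1_t)_\#\mu$ solves the continuity equation \eqref{eq:mfg2} with the same velocity $v$ and the same value $\mu$ at $t=s$; since $v$ is Lipschitz in space, the continuity equation has a unique solution in $AC^2(0,T;\wasstwospace)$ with a datum prescribed at one time (a standard fact, cf.~\cite{gradientflows}), so $\tilde\sigma_t=(\tilde\Sigma^1_t)_\#\mu$ for all $t$.

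Next I would define $\tilde\Sigma^2_t(q):=\nabla_q\tilde U(t,\tilde\Sigma^1_t(q))$ and check that $(\tilde\Sigma^1,\tilde\Sigma^2)$ solves \eqref{eq:hamODEs}. The first and third lines hold by construction ($\partial_t\tilde\Sigma^1=\nabla_pH(\tilde\Sigma^1,\tilde\Sigma^2)$ because $\tilde\Sigma^2=\nabla_q\tilde U\circ\tilde\Sigma^1$, and $\tilde\Sigma^1_s=\mathrm{id}$), and the fourth holds because $\tilde\Sigma^2_0(q)=\nabla_q\tilde U(0,\tilde\Sigma^1_0(q))=\nabla_qg(\tilde\Sigma^1_0(q),\tilde\sigma_0)=\nabla_qg(\tilde\Sigma^1_0(q),(\tilde\Sigma^1_0)_\#\mu)$ by \eqref{eq:mfg3}. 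For the second line I would differentiate the Hamilton--Jacobi equation \eqref{eq:mfg1} in $q$ — legitimate precisely because $\tilde U\in W^{2,3;\infty}$, which gives $\nabla^2_{qq}\tilde U$ and $\nabla^2_{tq}\tilde U$ continuous and $\nabla^2_{tq}\tilde U=\nabla^2_{qt}\tilde U$ — obtaining $\nabla^2_{tq}\tilde U+\nabla_qH+\nabla^2_{qq}\tilde U\,\nabla_pH+\nabla_qF(\cdot,\tilde\sigma_t)=0$; differentiating $\tilde\Sigma^2_t(q)=\nabla_q\tilde U(t,\tilde\Sigma^1_t(q))$ in $t$ by the chain rule and substituting both identities (and $\tilde\sigma_t=(\tilde\Sigma^1_t)_\#\mu$) then yields $\partial_t\tilde\Sigma^2_t(q)=-\nabla_qH(\tilde\Sigma^1_t(q),\tilde\Sigma^2_t(q))-\nabla_qF(\tilde\Sigma^1_t(q),(\tilde\Sigma^1_t)_\#\mu)$, which is the second line of \eqref{eq:hamODEs}.

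To conclude by uniqueness, note that the rescaled pair $(\tilde\Sigma^1,\tilde\Sigma^2/\theta)$ is a fixed point of $\frmbar^{s,\mu}$ with $\|\tilde\Sigma^2/\theta\|_\infty\le\|\nabla_q\tilde U\|_{L^\infty}/\theta=:B_0$. The Lipschitz estimate in Proposition~\ref{prop:m0islipschitz} used nothing about the two competitors beyond the bound $B$ on their $\bar P$-components (the ball $\{\|\bar P\|_\infty\le B\}$ is convex), and likewise $\frmbar^{s,\mu}$ maps that ball into itself once $B>c/\theta$ and $T$ is small. Hence, choosing $B$ larger than $B_0$, than $c/\theta$, and than the bound on $\Sigma^2[s,\mu]/\theta$ from Corollary~\ref{cor:hamODEshassolution}, and shrinking $T$ accordingly (this is the point where $T$ must be taken smaller, in a way that depends on $\|\tilde U\|_{W^{2,3;\infty}}$), $\frmbar^{s,\mu}$ has a unique fixed point in $\{\|\bar P\|_\infty\le B\}$, forcing $\tilde\Sigma^1=\Sigma^1[s,\mu]$ and $\tilde\Sigma^2=\Sigma^2[s,\mu]$. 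Then $\tilde\sigma_t=(\tilde\Sigma^1_t)_\#\mu=\sigma_t$; and by Corollary~\ref{cor:cortozlemma}, $\nabla_q\tilde U(t,\Sigma^1_t(q))=\tilde\Sigma^2_t(q)=\Sigma^2_t(q)=\nabla_qU(t,\Sigma^1_t(q))$, so, $\Sigma^1_t$ being onto, $\nabla_q\tilde U=\nabla_qU$; thus $\tilde U(t,\cdot)-U(t,\cdot)$ is a constant $c(t)$ with $c(0)=0$ by \eqref{eq:mfg3} and $\dot c\equiv0$ by \eqref{eq:mfg1}, giving $\tilde U=U$.

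The step I expect to be the main obstacle is the last one: reconciling the a priori size of the recovered pair $(\tilde\Sigma^1,\tilde\Sigma^2)$ with the class $\mathcal{M}_0$ in which uniqueness for \eqref{eq:hamODEs} was established, which is exactly what forces the time horizon to be shrunk in a way depending on the candidate solution $\tilde U$ (hence the statement only claims agreement on a possibly shorter interval). The continuity-equation uniqueness invoked in the first paragraph is standard but must be stated carefully, and the $q$-differentiation of \eqref{eq:mfg1} in the second paragraph is precisely where the extra derivative in the hypothesis ($W^{2,3;\infty}$, versus the $W^{2,2;\infty}$ regularity of the constructed $U$) is genuinely used.
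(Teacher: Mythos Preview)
Your argument is correct and follows the paper's route: build the flow $\tilde\Sigma^1$ of $\nabla_pH(\cdot,\nabla_q\tilde U)$, identify $\tilde\sigma_t=(\tilde\Sigma^1_t)_\#\mu$ by uniqueness for the continuity equation with Lipschitz velocity (the paper cites \cite[Prop.~8.1.7]{gradientflows} for exactly this), set $\tilde\Sigma^2:=\nabla_q\tilde U\circ\tilde\Sigma^1$, differentiate \eqref{eq:mfg1} in $q$ to recover \eqref{eq:hamODEs}, and conclude by uniqueness of the fixed point.

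The one substantive difference is in how you close the last step. The paper (its Step~2) verifies, bound by bound, that for small $T$ the pair $(\tilde\Sigma^1,\tilde\Sigma^2)$ lies in the \emph{original} class $\mathcal{M}_0(A_1,\theta A_2,\theta B,E,E_1,\theta E_2,T)$---checking in particular the second-derivative bound on $\tilde\Sigma^2$, which is where $\nabla^3_{qqq}\tilde U$ is used a second time---and then invokes uniqueness in that fixed class. You instead observe that the contraction estimate of Proposition~\ref{prop:m0islipschitz} and the self-mapping condition~(a) in the proof of Lemma~\ref{lemma:existenceofABE} use nothing about the competitors beyond $\|\bar P\|_\infty\le B$, so one may simply enlarge $B$ past $\|\nabla_q\tilde U\|_\infty/\theta$ and shrink $T$ to restore the contraction on the larger closed ball $\{\|\bar P\|_\infty\le B\}\subset\mathcal M$. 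This shortcut is valid and more economical; it also makes transparent that the extra spatial derivative in $W^{2,3;\infty}$ is needed only to justify the $q$-differentiation of \eqref{eq:mfg1} and the ensuing chain rule for $\partial_t\tilde\Sigma^2$. The paper's longer verification buys one thing yours does not: it places $(\tilde\Sigma^1,\tilde\Sigma^2)$ in $\mathcal M_0$ with the \emph{same} constants as the constructed $\Sigma$, rather than in a $\tilde U$-dependent enlargement---a cosmetic point here, since both approaches must shrink $T$ in a way depending on $\tilde U$.
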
 \begin{proof} Let $(\tilde{U},\tilde{\sigma})$ be a solution to the system (\ref{eq:mfg1}-\ref{eq:mfg4}) with parameters $s\in(0,T)$ and $\mu\in\wasstwospace$, according to the definition of Section \ref{subsection:defnofclassicalmfg}, and suppose, moreover, that $\tilde{U}$ is $W^{3;\infty}$ in $q.$  

1. We will prove that the characteristics of the MFG system satisfied by $(\tilde U,\tilde \sigma)$ must solve the Hamiltonian system (\ref{eq:hamODEs}). Set $$ \tilde v_t(q) = \tilde v(t,q) := \nabla_p H(q,\nabla_q \tilde{U}(t,q)),$$  $0\leq t\leq T,$ $q\in\dtorus.$  Since $\tilde{U}\in W^{2,3;\infty}((0,T)\times\dtorus)$ and $H\in C^3,$ we have that $\textrm{Lip}(\tilde v_t,K) + \sup_{q\in K}|\tilde v_t(q)|$ is bounded on $[0,T],$ where $K$ is any compact subset of $\R^d$ and $\textrm{Lip}(\tilde v_t,K)$ is the Lipschitz constant of $\tilde v_t|_{K}.$ By elementary ODE theory (see, e.g., \cite[Lemma 8.1.4]{gradientflows}), if $q\in\R^d,$ the ODE \begin{equation}\label{eq:ODEQ} \tilde \Sigma^1(s,q) = q, \quad \frac{\partial}{\partial t}\tilde \Sigma^1(t,q) = \tilde v_t(\tilde\Sigma^1(t,q)) \end{equation} has a unique maximal solution in a neighborhood $I(q,s)\subset (0,T)$ of $s,$ but, since $\tilde \Sigma^1(t,\cdot)$ is periodic, and therefore bounded for every $t\in(0,T),$ then $I(q,s) = (0,T).$ Clearly, the path $t\mapsto \tilde\Sigma^1(t,\cdot)_{\#}\mu$ solves the continuity equation with velocity $\tilde v_t.$ We can apply Proposition 8.1.7 of \cite{gradientflows} to conclude that \begin{equation}\label{eq:tildesigmatildesigma1} \tilde{\sigma}_t = \tilde \Sigma^1(t,\cdot)_{\#}\mu,\end{equation} $0\leq t\leq T.$
Let \begin{equation}\label{eq:ODEQ1} \tilde{\mathcal{V}}(t,q) := \nabla_q \tilde{U}(t,q) \ , \qquad \tilde\Sigma^2(t,q) := \tilde{\mathcal{V}}(t,\tilde \Sigma^1(t,q)),  \end{equation} $0\leq t\leq T,$ $q\in\dtorus.$ Then \begin{equation}\label{eq:ODEQ2} \partial_t \tilde\Sigma^1(t,q)  = \tilde v_t(\tilde\Sigma^1(t,q)) = \nabla_p H(\tilde\Sigma^1(t,q),\tilde\Sigma^2(t,q)) , \end{equation} which is the first equation in (\ref{eq:hamODEs}). To obtain the second one, observe that $$ \partial_t \tilde\Sigma^2(t,q) = \partial_t\tilde{\mathcal{V}}(t,\tilde\Sigma^1(q,t)) + \nabla_q\tilde{\mathcal{V}}(t,\tilde\Sigma^1(t,q))\partial_t \tilde\Sigma^1(t,q) = \nabla_{tq}^2 \tilde{U}(t,\tilde\Sigma^1(t,q)) + \nabla^2_{qq}\tilde{U}(t,\tilde\Sigma^1(t,q)) \partial_t \tilde\Sigma^1(t,q), $$ while, differentiating the Hamilton-Jacobi equation with respect to $q$ gives $$ \nabla_{qt}^2 \tilde{U}(t,q) + \nabla_q H(q,\nabla_q \tilde{U}(t,q)) + \nabla_p H(q,\nabla_q \tilde{U}(t,q))\nabla^2_{qq} \tilde{U}(t,q) + \nabla_q F(q,\tilde{\sigma}_t) = 0 ,$$ which, evaluating at $\tilde\Sigma^1(t,q)$ in place of $q,$ and using (\ref{eq:ODEQ1}), (\ref{eq:ODEQ2}), serves to simplify the former equality to $$ \partial_t \tilde\Sigma^2(t,q) = -\nabla_q H(\tilde\Sigma^1(t,q),\tilde\Sigma^2(t,q)) - \nabla_q F(\tilde\Sigma^1(t,q),\tilde{\sigma}_t),   $$ which is the second equation in (\ref{eq:hamODEs}). The condition $\tilde\Sigma^2(0,q) = \nabla_q g(\tilde\Sigma^1(0,q),\tilde\Sigma^1(0,\cdot)_{\#}\mu)$ follows readily from (\ref{eq:ODEQ1}) and (\ref{eq:HJExinitial}).

2. We prove that, for a possibly smaller $T,$ the solutions $(\tilde\Sigma^1,\tilde\Sigma^2)$ to (\ref{eq:hamODEs}) from the previous paragraph belongs to $\mathcal{M}_0(A_1,\theta A_2,\theta B,E,E_1,\theta E_2,T),$ i.e., they satisfy the bounds (\ref{eq:defofM0theta}) and (iii) of Definition \ref{defn:Msubzero}.  If $T$ is small enough, since $|\tilde\Sigma^2(0,q)|=|\nabla_q g(\tilde\Sigma^1_0(q),\tilde{\sigma}_0)|\leq\kappa,$ continuity implies $|\tilde\Sigma^2(t,q)|\leq \kappa+\eps,$ $0\leq t\leq T,$ $q\in\dtorus$ for an $\eps>0$ such that $$ |\tilde\Sigma^2(t,q)| \leq  \theta \frac{\kappa}{\theta} + \eps \leq  \theta \frac{1}{\theta}\max\{d,\kappa\} + \eps = \theta c/\theta + \eps \leq \theta B,  $$ because $B$ in the proof of Lemma \ref{lemma:existenceofABE} was chosen as $B > c/\theta$ (in these lines we are referring back to the proof of Lemma \ref{lemma:existenceofABE}, in particular (\ref{eq:A1}), (\ref{eq:A2}) and the paragraph preceding those inequalities). This is the third line in (\ref{eq:defofM0theta}). Since $\|\tilde\Sigma^2\|_{\mathcal{M}^2}\leq \theta B,$ we have $|\partial_t \tilde\Sigma^1(t,q)|=|\nabla_p H(\tilde\Sigma^1(t,q),\tilde\Sigma^2(t,q))| \leq \bar{l}(B)$ (see Definition \ref{defn:defofM}, ``Coefficient bounds I'') for small enough $T$ and all $q,$ and since $A_1$ in Lemma \ref{lemma:existenceofABE} was chosen to be larger than $\bar l(B),$ we obtain the bound for $\|\partial_t \tilde\Sigma^1\|$ in (\ref{eq:defofM0theta}). The one for $\|\partial_t \tilde\Sigma^2\|_{\mathcal{M}^2}$ goes in a similar way, because $A_2$ in Lemma \ref{lemma:existenceofABE} was chosen larger than $\bar l(B)/\theta.$ 
The bounds for the second-order time derivatives are dealt with in a similar way, keeping in mind the way $E_1$ and $E_2$ were chosen in the proof of Lemma \ref{lemma:existenceofABE}.

From (\ref{eq:ODEQ}), $\tilde\Sigma^1(t,q) = q + \int_s^t v_{\tau}(\tilde\Sigma^1(\tau,q)) d\tau,$ which makes it clear that, upon taking the gradient in $q,$ if $T$ is small enough, the norm of $\nabla_q \tilde\Sigma^1(t,q)$ will be only slightly larger than $\sqrt{d},$ making it less than $A_1,$ because of (\ref{eq:A1}). The bound for $\|\nabla^2_{qq}\tilde\Sigma^1\|$, due to $\nabla^2_{qq}(q) \equiv 0,$ can actually be made arbitrarily small by choosing $T$ small enough. To address $\nabla_q \tilde\Sigma^2$ and $\nabla_{qq}^2 \tilde\Sigma^2,$ since $$ \tilde{U}(t,q) = g(q,\tilde{\sigma}_0) + \int_0^t[H(q,\nabla_q \tilde{U}(\tau,q))+F(q,\tilde{\sigma}_{\tau})]d\tau,$$ and $\nabla_q \tilde\Sigma^2(t,q) = \nabla_{qq}^2\tilde{U}(t,\tilde\Sigma^1(t,q))\nabla_q\tilde\Sigma^1(t,q),$ the norm of $\nabla_q \tilde\Sigma^2$ is the product of a number slightly larger than $\kappa$ and one slightly larger than $\sqrt{d},$ for small times $T.$ But the constant $E$ in the proof of Lemma \ref{lemma:existenceofABE} is larger than $c=\max\{d,\kappa\},$ and $A_2 > \frac{1}{\theta}cE(E+1).$ This ensures that $\|\nabla_{q}\tilde\Sigma^2 \| \leq \theta A_2.$ Next, given that $$ \nabla_{qq}^2\tilde\Sigma^2 = \nabla^3_{qqq}\tilde{U}\nabla_q \tilde\Sigma^1\nabla_q \tilde\Sigma^1 +\nabla_{qq}^2\tilde{U}\nabla_{qq}^2 \tilde\Sigma^1 $$ (because $\tilde{U}$ is $W^{3;\infty}$ in $q$), and $|\nabla_{qq}^2 \tilde\Sigma^1|$, as already mentioned, can be made as small as needed by reducing $T$, the same argument shows that $\|\nabla_{qq}^2 \tilde\Sigma^2 \|$ is also no greater than $\theta A_2,$ since the norm of $ \nabla^3_{qqq}\tilde{U}\nabla_q \tilde\Sigma^1\nabla_q \tilde\Sigma^1$ is the product of a number slightly larger than $\kappa$ and one slightly than $d.$  Finally, $\|\nabla_q\tilde\Sigma^1_0\|, \|\nabla^2_{qq}\tilde\Sigma^1_0\| \leq E$ follow from $E$ having been picked larger than $d$ (and, therefore, than $\sqrt{d}$), and taking $T$ smaller if necessary.

Thus, the mapping $(\tilde\Sigma^1,\tilde\Sigma^2)$ constructed in (\ref{eq:ODEQ}) and (\ref{eq:ODEQ1}) from $(\tilde{U},\tilde{\sigma})$ coincides with the unique solution $(\Sigma^1[s,\mu],\Sigma^2[s,\mu])$ of (\ref{eq:hamODEs}) in $\mathcal{M}_0(A_1,\theta A_2,\theta B,E,E_1,\theta E_2,T)$ during a possibly shorter interval $[0,T].$ Consequently, by (\ref{eq:tildesigmatildesigma1}), we further have that $ \tilde{\sigma} =  \sigma.$ Also, now that $\tilde{\mathcal{V}}(t,q) = \mathcal{V}(t,q),$ $0\leq t \leq T, $  we get \begin{align*} \tilde U(t,q) = g(x,\tilde\sigma_0) -\int_0^t [H(q,\tilde{\mathcal{V}}(\tau,q)-F(x,\tilde\sigma_{\tau})]d\tau  = g(x,\sigma_0)-\int_0^t [H(x,\mathcal{V}(\tau,q)) - F(q,\sigma_{\tau})]d\tau = U(t,q) ,  \end{align*} for any $t\in[0,T].$ Thus, $(\tilde U,\tilde \sigma)=(U,\sigma)$ on the possibly smaller interval $[0,T].$ 

        \end{proof}  \subsection{The full value function $u(s,q,\mu)$} In this section we begin our study of the dependence of our solution $U$ to (\ref{eq:HJEx}) on the parameter $\mu.$ First, we present a list of facts that will be used in this and following sections.  \begin{prop}\label{prop:compositions1}  Let $0\leq s, t_0\leq T,$  $\mu\in\wasstwospace.$ Set 
$$ 
\sigma_{t_0} = \Sigma_{t_0}^1[s,\mu]_{\#}\mu. 
$$ 
Then, 
\begin{enumerate}[(i)] 
\item For every $0\leq t\leq T$: 
\begin{equation}\label{eq:ifwereunique} 
\Sigma_t[t_0,\sigma_{t_0}]\circ \Sigma_{t_0}^1[s,\mu] = \Sigma_t[s,\mu]. 
\end{equation} 
\item  For every $0\leq t\leq T$: 
\begin{align}
\Sigma^1_t[t_0,\Sigma^1_{t_0}[t,\mu]_{\#}\mu] &\ \quad \textrm{and} \quad \Sigma^1_{t_0}[t,\mu] \ \quad \textrm{are inverses of each other , } \label{eq:3.16ii} \\ v_t[s,\mu] &\ = v_t[t_0,\sigma_{t_0}] \ ,\label{eq:3.16iii} \\  \Sigma^2_t[t_0,\sigma_{t_0}]\circ \Sigma_{t_0}^1[s,\mu] &\ = \Sigma^2_t[s,\mu]   \ , \label{eq:3.16iiiNEW} \\  \partial_s \Sigma^1_t[s,\mu]     &\ = -\nabla_q \Sigma^1_t[s,\mu] v_s[t,\sigma_t].   \label{eq:3.16iv}       \end{align} 
\item If $0\leq \tau, t\leq T,$ then 
\begin{equation}
\label{eq:mathcalVttau1} 
\Sigma_{\tau}^2[t,\sigma_t]\circ (\Sigma_{\tau}^1[t,\sigma_t])^{-1} = \Sigma_{\tau}^2[s,\mu]\circ \Sigma^1_{\tau}[s,\mu]^{-1} . 
\end{equation}  
\end{enumerate}
\end{prop}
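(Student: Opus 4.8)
The plan is to derive every identity in the proposition from the uniqueness of solutions to the Hamiltonian ODEs \eqref{eq:hamODEs} (Corollary \ref{cor:hamODEshassolution}), combined with purely algebraic bookkeeping about flows and pushforwards. The only part requiring real work is (i); once \eqref{eq:ifwereunique} is in hand, \eqref{eq:3.16ii}, \eqref{eq:3.16iii}, \eqref{eq:3.16iiiNEW} and \eqref{eq:mathcalVttau1} fall out by composition and cancellation of inverses, while \eqref{eq:3.16iv} comes from differentiating in the base time $s$.

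For part (i) I would set $\Xi_t:=\Sigma_t[s,\mu]\circ X_{t_0}[s,\mu]$ for $0\le t\le T$, where $X_{t_0}[s,\mu]=(\Sigma^1_{t_0}[s,\mu])^{-1}$ as in Lemma \ref{lemma:invertibilityofsigma}, and show that $\Xi=(\Xi^1,\Xi^2)$ solves \eqref{eq:hamODEs} with parameters $(t_0,\sigma_{t_0})$. The two dynamic equations follow by the chain rule from the equations for $\Sigma[s,\mu]$ (note $X_{t_0}[s,\mu]$ is $t$-independent), once one records the pushforward identity
\[ (\Xi^1_t)_{\#}\sigma_{t_0}=\big(\Sigma^1_t[s,\mu]\circ X_{t_0}[s,\mu]\big)_{\#}\big(\Sigma^1_{t_0}[s,\mu]_{\#}\mu\big)=\Sigma^1_t[s,\mu]_{\#}\mu, \]
which uses $X_{t_0}[s,\mu]_{\#}\sigma_{t_0}=\mu$; this is precisely what aligns the nonlocal $\nabla_q F$- and $\nabla_q g$-terms. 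The terminal condition $\Xi^1_{t_0}=\mathrm{id}$ is immediate since $X_{t_0}[s,\mu]$ is the inverse of $\Sigma^1_{t_0}[s,\mu]$, and $\Xi^2_0=\nabla_q g(\Xi^1_0,(\Xi^1_0)_{\#}\sigma_{t_0})$ follows from the initial condition for $\Sigma^2_0[s,\mu]$ plus the same pushforward identity. It then remains to place $\Xi$ where uniqueness applies: $\Xi$ is continuous and bounded, and the bounds of Definition \ref{defn:Msubzero} for $\Sigma[s,\mu]$ together with \eqref{eq:boundsfornablaqsigma1} for $\nabla_q X$ bound $\partial_t\Xi$, $\nabla_q\Xi$, $\nabla^2_{qq}\Xi$, $\partial^2_{tt}\Xi$ by constants depending only on the coefficients (the compositions merely inflate the constants). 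After shrinking $T$ (Remark \ref{remark:remarkaboutconstantT}), the Lipschitz estimate of Proposition \ref{prop:m0islipschitz}, whose leading term $2\kappa/\theta<1$ because $\theta>2\kappa$, makes $\mathfrak{m}^{t_0,\sigma_{t_0}}$ a contraction on the closed ball about $\Sigma[t_0,\sigma_{t_0}]$ of that (coefficient-dependent) radius, a ball containing $\Xi$; since $\Xi$ and $\Sigma[t_0,\sigma_{t_0}]$ are both fixed points of $\mathfrak{m}^{t_0,\sigma_{t_0}}$ (equivalently, both solve \eqref{eq:hamODEs} with those parameters), they coincide. Post-composing $\Xi=\Sigma[t_0,\sigma_{t_0}]$ with $\Sigma^1_{t_0}[s,\mu]$ yields \eqref{eq:ifwereunique}, whose second component is \eqref{eq:3.16iiiNEW}.

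The rest of (ii) and (iii) is then formal. Putting $t=s$ in \eqref{eq:ifwereunique} (after renaming the free time variable) and using $\Sigma^1_s[s,\mu]=\mathrm{id}$ shows $\Sigma^1_s[t_0,\sigma_{t_0}]$ is a one-sided, hence two-sided, inverse of $\Sigma^1_{t_0}[s,\mu]$ (both being $C^1$ diffeomorphisms by Lemma \ref{lemma:invertibilityofsigma}), which is \eqref{eq:3.16ii}. Substituting \eqref{eq:ifwereunique} into $v_t[s,\mu]=\partial_t\Sigma^1_t[s,\mu]\circ X_t[s,\mu]$ and cancelling the $t$-independent factor $\Sigma^1_{t_0}[s,\mu]$ against its inverse gives $v_t[s,\mu]=\partial_t\Sigma^1_t[t_0,\sigma_{t_0}]\circ X_t[t_0,\sigma_{t_0}]=v_t[t_0,\sigma_{t_0}]$, i.e.\ \eqref{eq:3.16iii}; inserting \eqref{eq:ifwereunique} and \eqref{eq:3.16iiiNEW} into the definition \eqref{eq:mathcalV} of $\mathcal{V}$ and again cancelling inverses gives \eqref{eq:mathcalVttau1}. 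For \eqref{eq:3.16iv} I would differentiate the inverse relation $\Sigma^1_t[s,\mu]\circ X_t[s,\mu]=\mathrm{id}$ (equivalently \eqref{eq:3.16ii}) in the base time $s$, the $s$-regularity coming from Lemma \ref{lemma:continuityofSigmains}, using $\partial_t\Sigma^1_t=\nabla_pH(\Sigma^1_t,\Sigma^2_t)$ and that $s$ enters \eqref{eq:hamODEs} only through $\Sigma^1_s[s,\mu]=\mathrm{id}$ (so $\partial_s\Sigma^1_t[s,\mu]|_{t=s}=-\partial_t\Sigma^1_t[s,\mu]|_{t=s}=-v_s$), and propagate this along $t$.

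I expect the two real obstacles to be exactly these last points: (a) in (i), the verification that $\Xi$ sits in a set on which $\mathfrak{m}^{t_0,\sigma_{t_0}}$ contracts — composing with $X_{t_0}[s,\mu]$ enlarges the natural constants, so one cannot remain inside the original $\mathcal{M}_0$ and must instead re-run the contraction estimate on a larger ball with a correspondingly smaller $T$; and (b) the parameter-differentiation producing \eqref{eq:3.16iv}, where one must carefully track that the base time $s$ influences the characteristics only via the terminal condition and not the underlying measure path.
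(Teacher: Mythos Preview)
Your approach to (i) is the same as the paper's: define the composite $\Xi_t=\Sigma_t[s,\mu]\circ X_{t_0}[s,\mu]$, check it solves \eqref{eq:hamODEs} with parameters $(t_0,\sigma_{t_0})$ (the pushforward identity being the crucial observation for the nonlocal terms), and invoke uniqueness. Your obstacle (a), however, is overblown. If you reread the Lipschitz estimate \eqref{eq:mislipschitz} in Proposition~\ref{prop:m0islipschitz}, the only place the constants $A_1,A_2,E,\dots$ are used is through the bound $|\bar P|\le B$, which guarantees $M^1_{\tau,q},M^2_{\tau,q}\le\bar h(B)$; the convexity of $\{|\bar P|\le B\}$ is all that is needed there. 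Since $\Xi^2=\Sigma^2[s,\mu]\circ X_{t_0}$ has the same sup-norm as $\Sigma^2[s,\mu]$, namely $\le\theta B$, the contraction bound applies to the pair $(\Xi,\Sigma[t_0,\sigma_{t_0}])$ without any enlargement of the ball or further shrinking of $T$. The paper therefore simply says ``since solutions to \eqref{eq:hamODEs} are unique'' and moves on. Your treatment of \eqref{eq:3.16ii}, \eqref{eq:3.16iii}, \eqref{eq:3.16iiiNEW} and \eqref{eq:mathcalVttau1} matches the paper.

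The genuine gap is in \eqref{eq:3.16iv}. The identity $\Sigma^1_t[s,\mu]\circ X_t[s,\mu]=\mathrm{id}$ is \emph{not} equivalent to \eqref{eq:3.16ii}: differentiating the former in $s$ only relates $\partial_s X$ to $\partial_s\Sigma^1$ and never produces $v_s[t,\sigma_t]$. Your alternative plan, to anchor $\partial_s\Sigma^1$ at $t=s$ and ``propagate along $t$'', does not close either: the linearized system in $t$ for $(\partial_s\Sigma^1,\partial_s\Sigma^2)$ picks up an integral term $\int\nabla^2_{\mu q}F(\cdot)(\Sigma^1_t(x))\,\partial_s\Sigma^1_t(x)\,\mu(dx)$ coming from the pushforward in $\nabla_qF$, whereas the linearized system for $(\nabla_q\Sigma^1\,w,\nabla_q\Sigma^2\,w)$ with $w=v_s[s,\mu](q)$ carries no such measure term. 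The two systems are different and a Gronwall comparison fails. What \eqref{eq:3.16ii} actually buys you is that the inverse is itself a characteristic map at a shifted base time: $X_t[s,\mu]=\Sigma^1_s[t,\sigma_t]$. The paper's route is to write
\[
\mathrm{id}=\Sigma^1_t[s,\mu]\circ\Sigma^1_s[t,\sigma_t]
\]
and differentiate this in $s$; the running-time $s$ now appears explicitly in the inner map, and one identifies $\partial_s\Sigma^1_s[t,\sigma_t]\circ(\Sigma^1_s[t,\sigma_t])^{-1}=v_s[t,\sigma_t]$ directly from the definition of $v$, yielding \eqref{eq:3.16iv} with no propagation argument.
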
 
\begin{proof} (i)  Let \begin{align*} Q(t,q) &\ = (\Sigma_t^1[s,\mu]\circ \Sigma_{t_0}^1[s,\mu]^{-1})(q) , \\  P(t,q) &\ = (\Sigma_t^2[s,\mu]\circ \Sigma_{t_0}^1[s,\mu]^{-1})(q) , \end{align*} for $0\leq t \leq T,$ $q\in\dtorus.$ By differentiating, and noting that $Q(0,\cdot)_{\#}\sigma_{t_0} = \Sigma_0^1[s,\mu]_{\#}\mu,$ one verifies that $Q$ and $P$ defined this way satisfy the Hamiltonian ODEs (\ref{eq:hamODEs})  with $$ s=t_0, \quad \mu = \sigma_{t_0}.$$  Since solutions to (\ref{eq:hamODEs}) are unique, we conclude that $$ (Q_t,P_t) = (\Sigma^1_t[t_0,\sigma_{t_0}],\Sigma^2_t[t_0,\sigma_{t_0}]) ,  $$ yielding (\ref{eq:ifwereunique}).

    (ii) Fact (\ref{eq:3.16ii}) follows readily from (i), by setting $t=s.$ For (\ref{eq:3.16iii}), see \cite[p. 6593]{mfgmain}. Formula (\ref{eq:3.16iiiNEW}) is just the second component of (\ref{eq:ifwereunique}). 

    By (\ref{eq:3.16ii}), with $t=s,$ $t_0=t$, we have $$ id = \Sigma^1_t[s,\mu]\circ \Sigma_s^1[t,\sigma_t] = \Sigma^1[s,\mu](t,\Sigma^1_s[t,\sigma_t]). $$ By Lemma \ref{lemma:continuityofSigmains}, we can differentiate both sides with respect to $s$: \begin{align*} 0 = \partial_s\Sigma^1[s,\mu](t,\Sigma_s^1[t,\sigma_t](q))+\nabla_q\Sigma^1[s,\mu](t,\Sigma_s^1[t,\sigma_t](q))\partial_s \Sigma_s^1[t,\sigma_t](q), \quad q\in \dtorus. \end{align*} Substituting $\Sigma_s^1[t,\sigma_t](q)$ for $q,$ we get \begin{align*} 0 = &\ \partial_s\Sigma^1[s,\mu](t,q) + \nabla_q\Sigma^1[s,\mu](t,q) \partial_s \Sigma^1_s[t,\sigma_t](\Sigma_s^1[t,\sigma_t]^{-1}) \\ = &\ \partial_s \Sigma^1[s,\mu](t,q) + \nabla_q\Sigma^1[s,\mu](t,q) v_s[t,\sigma_t],     \end{align*} which gives (\ref{eq:3.16iv}). 
    
    (iii) For (\ref{eq:mathcalVttau1}), simply use (\ref{eq:3.16iiiNEW}) with $\tau$ in place of $t$ and $t$ in place of $t_0$: \begin{align*} \Sigma_{\tau}^2[t,\sigma_t] \circ (\Sigma^1_{\tau}[t,\sigma_t])^{-1} &\ =  \Sigma_{\tau}^2[s,\mu]\circ \Sigma_t[s,\mu]^{-1}\circ \Sigma^1_{\tau}[t,\sigma_t]^{-1} \\ &\ = \Sigma_{\tau}^2\circ(\Sigma_{\tau}^1[t,\sigma_t]\circ \Sigma_t[s,\mu])^{-1} = \Sigma^2_{\tau}[s,\mu]\circ\Sigma^1_{\tau}[s,\mu]^{-1}.    \end{align*} 
    \end{proof} 
Given $s\in [0,T],$ $q\in\dtorus,$ $\mu\in\wasstwospace,$ define 
\begin{equation}
\label{eq:defofu} 
u(s,q,\mu) = g(q,\Sigma^1[s,\mu](0,\cdot)_{\#}\mu) - \int_0^s \big[ H(q,\mathcal{V}[s,\mu](\tau,q))+F(q,\Sigma^1[s,\mu](\tau,\cdot)_{\#}\mu \big] d\tau, 
\end{equation} 
and, as before, $\sigma_t = \Sigma^1_t[s,\mu]_{\#}\mu,$ $0\leq t\leq T.$ Note that (\ref{eq:mathcalVttau1}) reads now as 
\begin{equation*}
\mathcal V_{\tau}[t,\sigma_t] = \mathcal V_{\tau}[s,\mu]. 
\end{equation*} 
This, coupled with the fact that, by (\ref{eq:ifwereunique}) of Proposition \ref{prop:compositions1}, $\Sigma_{\tau}^1[t,\sigma_t]\circ \Sigma_t^1[s,\mu] = \Sigma_{\tau}^1[s,\mu], $ $0\leq \tau\leq T,$ gives us \begin{align} u(t,q,\sigma_t) &\ = g(q,\Sigma_0^1[t,\sigma_t]_{\#}\sigma_t) - \int_0^t \big[  H(q,\mathcal{V}[t,\sigma_t](\tau,q)) + F(q,\Sigma^1[t,\sigma_t](\tau,\cdot)_{\#}\mu) \notag \big] d\tau  \\ &\ = g(q,\sigma_0) - \int_0^t \big[ H(q,\mathcal V[s,\mu](\tau,q)) + F(q,\Sigma_{\tau}^1[s,\mu]_{\#}\mu)   \big] d\tau  \label{eq:deffofu}.  \end{align} Since $$ \mathcal{V}[s,\mu](t,\Sigma^1[s,\mu](t,q)) = \Sigma^2[s,\mu](t,q), $$ and $\Sigma^2$ satisfies the second of the Hamiltonian ODEs (\ref{eq:hamODEs}), it follows, by taking the total time derivative of $ \mathcal{V}[s,\mu](t,\Sigma^1[s,\mu](t,q)),$ and then changing variable from $\Sigma^1(t,q)$ to $q,$ that $\mathcal{V}(t,q)=\mathcal{V}[s,\mu](t,q)$ satisfies the equation \begin{align} \partial_t \mathcal{V}(t,q) + \nabla_q\mathcal{V}(t,q) \nabla_pH(q,\mathcal{V}(t,q)) = &\ -  \nabla_qH(q,\mathcal{V}(t,q)) - \nabla_qF(q,{{}\Sigma^1_t}_{\#}\mu), \label{eq:mathcalVsatisfies1} \\ \mathcal{V}(0,q) = \nabla_qg(q,\sigma_0). \label{eq:mathcalVsatisfies2}  
\end{align} 
If we differentiate $u(t,q,\sigma_t)$ with respect to $q$ in (\ref{eq:deffofu}), and use (\ref{eq:mathcalVsatisfies1}) and (\ref{eq:mathcalVsatisfies2}), we get 
\begin{align*} 
\nabla_qu(t,q,\sigma_t) =  \nabla_qg(q,\sigma_0)  +  \int_0^t &\big[ \partial_t\mathcal{V}[s,\mu](\tau,q) + \nabla_q\mathcal{V}[s,\mu](\tau,q)\nabla_pH(q,\mathcal{V}[s,\mu](\tau,q)) \\ & -  \nabla_pH(q,\mathcal{V}[s,\mu](\tau,q)\nabla_q\mathcal{V}[s,\mu](\tau,q)      \big] \ d\tau .  
\end{align*} 
Since $\nabla_q\mathcal{V}_{\tau}$ is a symmetric matrix for $\tau\in[0,T],$ only the term $\partial_t\mathcal{V}$ survives in the integral. Hence 
\begin{equation}
\label{eq:nablaquequalsmathcalV} 
\nabla_qu(t,q,\sigma_t) = \mathcal{V}[s,\mu](t,q) , \quad 0\leq t \leq T, \ q \in \dtorus.   
\end{equation}  
Differentiating now with respect to $t$ in (\ref{eq:deffofu}), and substituting (\ref{eq:nablaquequalsmathcalV}) into it, we conclude that
\begin{equation}
\label{eq:totalofu} 
\partial_t (u(t,q,\sigma_t)) + H(q,\nabla_qu(t,q,\sigma_t)) + F(q,\sigma_t) = 0 . 
\end{equation} 
Thus, we have shown: 
\begin{lemma}
\label{lemma:dependsonbeingsymmetric} 
For $s\in[0,T],$ $\mu\in\wasstwospace,$ we have: \begin{enumerate}[(i)] \item For any $(t,q)\in[0,T]\times\dtorus,$ 
$$ 
u(0,\cdot,\mu) = g(\cdot,\mu)  \ , \quad \nabla_qu(t,q,\sigma_t) = \mathcal{V}[s,\mu](t,q). 
$$ 
\item The function $t\mapsto u(t,q,\sigma_t)$ is continuously differentiable and $$ \partial_t (u(t,q,\sigma_t)) + H(q,\nabla_qu(t,q,\sigma_t)) + F(q,\sigma_t) = 0 \ ,  \quad(t,q)\in[0,T]\times\dtorus. 
$$ 
\end{enumerate}   
\end{lemma}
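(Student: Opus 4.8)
The argument has in fact been largely laid out in the computation preceding the statement, so the plan is to organize those steps and supply the regularity justifications. The backbone is the \emph{self-consistency identity} (\ref{eq:deffofu}), which rewrites $u(t,q,\sigma_t)$ as $g(q,\sigma_0)-\int_0^t\big[H(q,\mathcal{V}[s,\mu](\tau,q))+F(q,\Sigma^1_\tau[s,\mu]_{\#}\mu)\big]\,d\tau$, with the $g$-term independent of the moving time $t$; this comes from the flow (semigroup) identities of Proposition \ref{prop:compositions1}, namely $\Sigma^1_\tau[t,\sigma_t]\circ\Sigma^1_t[s,\mu]=\Sigma^1_\tau[s,\mu]$ and $\mathcal{V}_\tau[t,\sigma_t]=\mathcal{V}_\tau[s,\mu]$. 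The other ingredient is the transport equation (\ref{eq:mathcalVsatisfies1})--(\ref{eq:mathcalVsatisfies2}) for $\mathcal{V}=\mathcal{V}[s,\mu]$, which I would derive by taking the total time derivative of the identity $\mathcal{V}(t,\Sigma^1_t(q))=\Sigma^2_t(q)$, inserting the two Hamiltonian ODEs of (\ref{eq:hamODEs}), and changing variables through $X_t=(\Sigma^1_t)^{-1}$, legitimate by the diffeomorphism property of Lemma \ref{lemma:invertibilityofsigma}.

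For part (i): the initial condition $u(0,\cdot,\mu)=g(\cdot,\mu)$ is immediate from the defining formula (\ref{eq:defofu}) with $s=0$, since then $\Sigma^1[0,\mu](0,q)=q$ by the terminal condition in (\ref{eq:hamODEs}) (so $\Sigma^1[0,\mu](0,\cdot)_{\#}\mu=\mu$) and the integral is empty. For the gradient identity, I would differentiate (\ref{eq:deffofu}) in $q$ under the integral sign: the chain rule on the $H$-term gives $\nabla_q H+\nabla_p H\,\nabla_q\mathcal{V}(\tau,q)$, and substituting $\nabla_q H+\nabla_q F=-\partial_\tau\mathcal{V}(\tau,q)-\nabla_q\mathcal{V}(\tau,q)\,\nabla_p H$ from (\ref{eq:mathcalVsatisfies1}) leaves the integrand equal to $\partial_\tau\mathcal{V}(\tau,q)+\nabla_q\mathcal{V}(\tau,q)\,\nabla_p H-\nabla_p H\,\nabla_q\mathcal{V}(\tau,q)$. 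The last two terms cancel because $\nabla_q\mathcal{V}(\tau,q)$ is symmetric (Corollary \ref{cor:cortozlemma}, which rests on Lemma \ref{lemma:zlemma}), and the fundamental theorem of calculus together with $\mathcal{V}(0,q)=\nabla_q g(q,\sigma_0)$ then gives $\nabla_q u(t,q,\sigma_t)=\mathcal{V}[s,\mu](t,q)$, i.e.\ (\ref{eq:nablaquequalsmathcalV}).

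For part (ii): the function $t\mapsto u(t,q,\sigma_t)$ is $C^1$ because in (\ref{eq:deffofu}) the integrand $\tau\mapsto H(q,\mathcal{V}(\tau,q))+F(q,\sigma_\tau)$ is continuous in $\tau$ — here one uses continuity in time of $\mathcal{V}$, which follows from $\Sigma^1,\Sigma^2\in W^{2,2;\infty}$ and the bounds of Lemma \ref{lemma:invertibilityofsigma}, together with continuity of $\tau\mapsto\sigma_\tau$ in $\mathscr{W}$ (since $\sigma\in AC^2(0,T;\wasstwospace)$) and of $F$ in the measure variable. Differentiating (\ref{eq:deffofu}) in $t$ then yields $\partial_t(u(t,q,\sigma_t))=-H(q,\mathcal{V}[s,\mu](t,q))-F(q,\sigma_t)$, and substituting $\mathcal{V}[s,\mu](t,q)=\nabla_q u(t,q,\sigma_t)$ from part (i) produces the claimed Hamilton--Jacobi identity.

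The main obstacle is the bookkeeping rather than any deep idea: one must check carefully that (\ref{eq:deffofu}) really follows from Proposition \ref{prop:compositions1}, that the transport PDE for $\mathcal{V}$ holds with enough regularity to differentiate under the integral in both $q$ and $t$, and — the one genuinely load-bearing fact — that $\nabla_q\mathcal{V}(\tau,q)$ is symmetric; without that symmetry the cross terms in the $q$-derivative of $u$ would not cancel and the identity $\nabla_q u=\mathcal{V}$ would break down.
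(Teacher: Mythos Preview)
Your proposal is correct and follows the paper's approach essentially line for line: the paper's proof is precisely the computation displayed in the paragraphs preceding the lemma (the lemma is stated as ``Thus, we have shown:''), and you have accurately identified and organized its ingredients --- the self-consistency identity (\ref{eq:deffofu}) via Proposition \ref{prop:compositions1}, the transport equation (\ref{eq:mathcalVsatisfies1})--(\ref{eq:mathcalVsatisfies2}) for $\mathcal{V}$, the cancellation in the $q$-derivative using the symmetry of $\nabla_q\mathcal{V}$ from Corollary \ref{cor:cortozlemma}, and the direct $t$-differentiation of (\ref{eq:deffofu}). Your emphasis on the symmetry of $\nabla_q\mathcal{V}$ as the load-bearing step is exactly right.
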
    
\section{Regularity of $\Sigma[s,\cdot](t,q)$}\label{section:regularityinmu} 
Besides the conditions of Section \ref{subsection:dataformfg}, we now activate the conditions of Section \ref{subsection:medata} for the remainder of the paper. 
\subsection{The discretized map $M$}\label{subsection:discretizedM} 
For the remainder of the paper, let \begin{equation}\label{eq:kgreaterthan} \theta  > \max\{ 1, 5\sqrt{2}\kappa\}, \end{equation} and $A_1,$ $A_2,$ $B,$ $E,$ $T,$ $D$ be as in Proposition \ref{prop:m0islipschitz} and Corollary \ref{cor:hamODEshassolution}, with $T$ being subject to Remark \ref{remark:remarkaboutconstantT}. The functions $\Sigma[s,\mu]$, $s\in[0,T],\mu\in\wasstwospace,$ as before, denote the fixed points of the operators $\mathfrak{m}^{s,\mu},$ while $\bar\Sigma[s,\mu]=(\Sigma^1[s,\mu],\frac{1}{\theta }\Sigma^2[s,\mu])=:(\bar\Sigma^1[s,\mu],\bar\Sigma^2[s,\mu])$ are the fixed points of the operators $\bar{\mathfrak{m}}^{s,\mu}$; recall (\ref{eq:SigmabartoSigma}) from the proof of Corollary \ref{cor:hamODEshassolution}. 

Let $M=(M_1,M_2)$ be the map $\Sigma=(\Sigma^1,\Sigma^2)$ restricted to average of Dirac masses. Namely: 
\begin{defn}
\label{defn:mapM} 
For any $s,t\in[0,T],$ $q\in\dtorus,$ $x\in(\dtorus)^n,$ let 
\begin{align*} 
M=(M_1,M_2): [0,T]\times[0,T]\times\dtorus\times(\dtorus)^n &\ \longrightarrow \dtorus\times\R^d \\ (t,s,q,x) &\ \longmapsto \Sigma[s,\mu^x](t,q) \\ & \ \qquad = (\Sigma^1[s,\mu^x](t,q),\Sigma^2[s,\mu^x](t,q)) . 
\end{align*} 
\end{defn} 
\noindent \textbf{Note.} The domain of the mapping $M$ depends on $n\in Z^+.$ 
$\sslash$ 
\begin{defn}
\label{defn:Mksequence} 
(i) For $n\in\N,$ Let $\bar M^k,$ $k=0,1,\ldots$ be the sequence of $\dtorus\times\R^d$-valued functions on $[0,T]\times[0,T]\times\dtorus\times(\dtorus)^n$ defined by 
$$ 
\bar M^0 \equiv (q,0), \quad  \bar M^{k+1}  = \bar{\mathfrak{m}}^{s,\mu^x}(\bar M^k(\cdot,s,\cdot,x)), 
$$ 
where $\mu^x = \frac{1}{n}\sum_{j=1}^n \delta_{x_j},$ $x=(x_1,\ldots,x_n)\in(\dtorus)^n.$ 
That is, 
\begin{equation*}
\bar  M_1^{k+1}(t,s,q,x) =  q+\int_s^t \nabla_pH(\bar M_1^k(\tau,s,q,x),\theta  \bar M_2^k(\tau,s,q,x)) d\tau  \end{equation*}
and
\begin{align}\bar M_2^{k+1}(t,s,q,x) = &\  \frac{1}{\theta} \nabla_q g_n(\bar{M}^k_1(0,s,q,x),\bar{M}^k_1(0,s,x_1,x),\cdots,\bar{M}^k_1(0,s,x_n,x)) \notag \\   & -  \frac{1}{\theta} \int_0^t [ \nabla_p H(\bar{M}^k_1(\tau,s,q,x),\theta\bar{M}^k_2(\tau,s,q,x)) \notag \\ & \quad \qquad + \nabla_q F_n(\bar{M}^k_1(t,s,q,x),\bar{M}^k_1(t,s,q,x_1),\cdots,\bar{M}^k_1(t,s,q,x_n)) ]  d\tau, \notag
\end{align} 
where 
$$ 
F_n(q,x) :=  F(q,\mu^x) , \quad g_n(q,x) := g(q,\mu^{x}).
$$(ii) Let $M^k, k=0,1,\ldots$ be the the sequence of $\dtorus\times\R^d$-valued functions on $[0,T]\times[0,T]\times\dtorus\times(\dtorus)^n$ defined by 
$$ 
M^0 \equiv (q,0), \quad  M^{k+1}  = \mathfrak{m}^{s,\mu^x}(M^k(\cdot,s,\cdot,x)). 
$$   
\end{defn} 
\begin{remark}
\label{remark:MandMbar} 
It follows that, for every $k=0,1,\ldots$ $$ M_1^{k}(t,s,q,x) = \bar M_1^k(t,s,q,x) ; \quad M_2^k(t,s,q,x) = \theta \bar M_2^k(t,s,q,x) , $$ $t, s \in [0,T], q\in\dtorus,$ $x\in(\dtorus)^n.$ $\sslash$ 
\end{remark} 
The objective now is to obtain estimates on the derivatives of $M^{k}$ with respect to $x.$ Using the definition of Wasserstein gradient directly, one verifies the formulas
\begin{equation}
\label{eq:firstwassfla}
\nabla^2_{x_i q}F_n(q,x) = \frac{1}{n}\nabla_{\mu}\nabla_q F(q,\mu^x)(x_i), \qquad \nabla^2_{x_i q}g_n(q,x) = \frac{1}{n}\nabla_{\mu}\nabla_q g(q,\mu^x)(x_i). 
\end{equation} 
The previous is a matrix equality: the entries on the left hand side are $\partial_{x_i^{(j)}}\partial_{q^{(l)}}F_n(q,x)$ and those on the right are $\frac{1}{n}\nabla_{\mu_j}\partial_{q^{(l)}}F(q,\mu^x)(x_i),$ for $j,l=1,\ldots,d,$ where $\nabla_{\mu_{j}}F(q,\mu)$ denotes the $j$-th component of the Wasserstein gradient of $\partial_{q^{(l)}}F(q,\mu^x)$ at $x_i.$ Furthermore, since $\nabla_q F$ and $\nabla_q g$ are twice differentiable in the measure variable, we also know that 
\begin{equation}
\label{eq:secondwassfla} 
\nabla^2_{x_jx_i}\nabla_q F_n(q,x) = \frac{1}{n^2} \nabla^2_{\mu\mu}\nabla_q F(q,\mu^x)(x_i,x_j),   \qquad \nabla^2_{x_jx_i}\nabla_q g_n(q,x) = \frac{1}{n^2} \nabla^2_{\mu\mu}\nabla_q g(q,\mu^x)(x_i,x_j). 
\end{equation} 
We begin with the $x_j$-derivative of the $(k+1)$-th iteration ($j=1,\ldots,n$), and in this and subsequent calculations, we will include in the arguments of the functions only the variables that are relevant to them.
\begin{align} 
\nabla_{x_j}\bar{M}^{k+1}_1(t,s,q,x) = \int_s^t [ \nabla_{qp}^2 H(\bar{M}^k_1,\theta\bar{M}^k_2)\nabla_{x_j}\bar{M}^k_1 + \theta\nabla_{pp}^2H(\bar{M}^k_1,\theta\bar{M}^k_2)\nabla_{x_j}\bar{M}^k_2] d\tau  \label{eq:nablaxjM1k}   ,  
\end{align}  
\begin{align} 
\nabla_{x_j} \bar{M}^{k+1}_2(t,s,q,x) = &\ \frac{1}{\theta} \nabla^2_{qq}g_n(\bar{M}^k_1(q,x),\bar{M}^k_1(x_1,x),\ldots,\bar{M}^k_1(x_n,x))\nabla_{x_j}\bar{M}^k_1(q,x) \notag \\ &\ +\frac{1}{\theta}\nabla_{x_j q}^2 g_n(\bar{M}^k_1(q,x),\bar{M}^k_1(x_1,x),\ldots,\bar{M}^k_1(x_n,x))[\nabla_q \bar{M}^k_1(x_j,x) + \nabla_{x_j}\bar{M}^k_1(x_j,x)] \notag \\ &\ +  \frac{1}{\theta}\sum_{i\neq j} \nabla_{x_i q}^2 g_n(\bar{M}^k_1(q,x),\bar{M}^k_1(x_1,x),\ldots,\bar{M}^k_1(x_n,x))\nabla_{x_j}\bar{M}^k_1(x_i,x) \notag \\ &\ - \frac{1}{\theta}\int_0^t [ \nabla_{qq}^2 H(\bar{M}^k_1,\theta\bar{M}^k_2)\nabla_{x_j}\bar{M}^k_1 + \theta\nabla_{pq}^2H(\bar{M}^k_1,\theta\bar{M}^k_2)\nabla_{x_j}\bar{M}^k_2] d\tau \notag \\ &\ - \frac{1}{\theta}\int_0^t [ \nabla^2_{qq}F_n(\bar{M}^k_1(q,x),\bar{M}^k_1(x_1,x),\ldots,\bar{M}^k_1(x_n,x))\nabla_{x_j}\bar{M}^k_1(q,x) \notag \\ &\ \qquad +\nabla_{x_j q}^2 F_n(\bar{M}^k_1(q,x),\bar{M}^k_1(x_1,x),\ldots,\bar{M}^k_1(x_n,x))[\nabla_q \bar{M}^k_1(x_j,x) + \nabla_{x_j}\bar{M}^k_1(x_j,x)] \notag \\ &\ \qquad +  \sum_{i\neq k} \nabla_{x_i q}^2 F_n(\bar{M}^k_1(q,x),\bar{M}^k_1(x_1,x),\ldots,\bar{M}^k_1(x_n,x))\nabla_{x_j}\bar{M}^k_1(x_i,x) ]  d\tau. \label{eq:nablaxjM2k} 
\end{align} 
For the next lemma, we remind the reader of Remark \ref{remark:Dbar}. 
\begin{lemma}
\label{lemma:nablaxjMk} 
Fix $n\in\Z^+.$ Using the terminology of Definition \ref{defn:Mksequence}, the following hold: \begin{enumerate}[(i)] \item For each $k=0,1,\ldots$~and each $x\in(\dtorus)^n,$ 
$$
M^k(\cdot,\cdot,\cdot,x)\in \mathcal M_{0,D}^*(A_1,\theta A_2,\theta B,E,E_1,\theta E_2,T).
$$  
\item There is a constant $C>0,$ independent of $k$ such that for any $j=1,\ldots,n $: 
\begin{equation}
\label{eq:ineqsnablaxjMk} 
\| \nabla_{x_j}M^k \|_{\infty} \leq \frac{C}{n}.    \end{equation} 
\end{enumerate}  
\end{lemma}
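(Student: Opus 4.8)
The plan is to argue by induction on $k$. Part (i) is already essentially packaged: for each fixed $x\in(\dtorus)^n$ the sequence $M^k(\cdot,\cdot,\cdot,x)$ is precisely the Picard iteration for the fixed-point operator $\mathfrak{m}^{s,\mu^x}$ applied ``for all $s$ at once'', which is exactly the operator $\mathfrak{m}^{\mu^x}$ from the proof of Lemma~\ref{lemma:continuityofSigmains}. Since $M^0\equiv(q,0)$ lies in $\mathcal{M}_{0,D}^*(A_1,\theta A_2,\theta B,E,E_1,\theta E_2,T)$ (the bounds there are trivially met by the constant-in-$P$, identity-in-$Q$ map once $T$ is small), and by Lemma~\ref{lemma:continuityofSigmains}(i) together with Corollary~\ref{cor:hamODEshassolution} this space is mapped into itself by $\mathfrak{m}^{\mu^x}$, every iterate stays in it. So (i) follows with no new work beyond invoking those two results; the only point to check is that the constants $A_1,A_2,B,E,E_1,E_2,D,T$ are the ones already fixed and are uniform in $n$ and $x$, which they are because $F_n,g_n$ inherit the bounds of $F,g$ (same $\kappa$, same sup-norms on the mixed derivatives).

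For (ii) I would run the induction using the explicit formulas (\ref{eq:nablaxjM1k}) and (\ref{eq:nablaxjM2k}). The base case $k=0$ is immediate since $M^0$ does not depend on $x$, so $\nabla_{x_j}M^0\equiv 0$. For the inductive step, set $a_k:=\sup_{j}\|\nabla_{x_j}M^k\|_\infty$ and estimate the right-hand sides. The key observation is the scaling built into (\ref{eq:firstwassfla}): $\nabla^2_{x_iq}F_n$ and $\nabla^2_{x_iq}g_n$ carry a factor $1/n$, while $\nabla^2_{qq}F_n,\nabla^2_{qq}H$, etc., are $O(1)$. In (\ref{eq:nablaxjM1k}) one gets $\|\nabla_{x_j}\bar M_1^{k+1}\|_\infty \le T\,h(\theta B)(1+\theta)\,a_k$ (using $\|\nabla_{x_j}M_1^k\|,\|\nabla_{x_j}M_2^k\|\le a_k$ and Remark~\ref{remark:MandMbar}). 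In (\ref{eq:nablaxjM2k}) the diagonal terms involving $\nabla^2_{qq}g_n,\nabla^2_{qq}F_n,\nabla^2_{qq}H,\nabla^2_{pq}H$ contribute $C_1(a_k + T\,a_k)$; the term $\nabla^2_{x_jq}g_n[\nabla_q\bar M_1^k(x_j,\cdot)+\nabla_{x_j}\bar M_1^k(x_j,\cdot)]$ contributes $\frac{1}{n}\|\nabla^2_{\mu q}g\|_\infty(A_1 + a_k)$ by (\ref{eq:firstwassfla}) and the bound $\|\nabla_q\bar M_1^k\|\le A_1$ from (i); and the off-diagonal sum $\sum_{i\ne j}\nabla^2_{x_iq}g_n\,\nabla_{x_j}\bar M_1^k(x_i,\cdot)$ has $n-1$ terms each of size $\le\frac1n\|\nabla^2_{\mu q}g\|_\infty\,a_k$, hence total size $\le\|\nabla^2_{\mu q}g\|_\infty\,a_k$. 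Collecting everything,
\begin{equation*}
a_{k+1} \le \frac{C_0}{n} + \big(\lambda + C_2 T\big)\,a_k,
\end{equation*}
where $C_0$ depends on $A_1$ and $\|\nabla^2_{\mu q}F\|_\infty,\|\nabla^2_{\mu q}g\|_\infty$, and $\lambda$ collects the $T$-independent contributions, namely those proportional to $\kappa/\theta$ (from $\nabla^2_{qq}g$ divided by $\theta$) and to $\|\nabla^2_{\mu q}g\|_\infty$ from the off-diagonal sum.

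The one genuine obstacle is ensuring the multiplier $\lambda + C_2 T$ is strictly less than $1$, uniformly in $n$, so that the recursion gives a geometric series bounded by $\frac{C_0/n}{1-(\lambda+C_2T)}=:\frac{C}{n}$. The $C_2 T$ part is harmless: shrink $T$ (invoking Remark~\ref{remark:remarkaboutconstantT}). The delicate part is $\lambda$, which is $T$-\emph{independent}; one must check that the choice $\theta>\max\{1,5\sqrt2\kappa\}$ in (\ref{eq:kgreaterthan}) makes it small. The $\kappa/\theta$-type terms are $\le\frac{C_3\kappa}{\theta}<\frac{C_3}{5\sqrt2}$, and the off-diagonal $\|\nabla^2_{\mu q}g\|_\infty$ term needs to be absorbed — here one uses that $\|\nabla^2_{\mu q}F\|_\infty$ and $\|\nabla^2_{\mu q}g\|_\infty$ are \emph{fixed finite} constants, so after fixing $\theta$ large (if needed, also rescaling through the $\theta$-division that already appears in front of $\nabla_{x_j}\bar M_2^{k+1}$ in (\ref{eq:nablaxjM2k})) and $T$ small, one secures $\lambda+C_2T<1$. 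Then $a_k\le\frac{C}{n}$ for all $k$ by induction, which is exactly (\ref{eq:ineqsnablaxjMk}). I would close by remarking that $C$ depends only on the coefficients (through $A_1,h(\theta B),\kappa,\|\nabla^2_{\mu q}F\|_\infty,\|\nabla^2_{\mu q}g\|_\infty,\theta,T$) and not on $k$ or $n$.
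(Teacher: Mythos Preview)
Your approach is the paper's approach: part (i) by invoking Lemma~\ref{lemma:continuityofSigmains} and Corollary~\ref{cor:hamODEshassolution}, part (ii) by deriving a linear recursion $a_{k+1}\le \tfrac{a}{n}+b\,a_k$ with $b<1$ from (\ref{eq:nablaxjM1k})--(\ref{eq:nablaxjM2k}) and (\ref{eq:firstwassfla}), exactly as in the paper (which cites \cite[Remark~8.1]{mfgmain} for the recursion).

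There is one genuine slip in your bookkeeping that you should fix. When you estimate the off-diagonal sum $\sum_{i\neq j}\nabla^2_{x_iq}g_n\,\nabla_{x_j}\bar M^k_1(x_i,x)$ you write ``total size $\le\|\nabla^2_{\mu q}g\|_\infty\,a_k$'' and then worry that this $T$-independent constant has to be absorbed by ``fixing $\theta$ large (if needed)''. But $\theta$ is \emph{not} a free parameter here: it was fixed once and for all in (\ref{eq:kgreaterthan}), and all of $A_1,A_2,B,E,E_1,E_2,T$ depend on that choice, so you cannot enlarge it mid-proof. The point you are missing is that the $1/\theta$ in front of \emph{every} term of (\ref{eq:nablaxjM2k}) already applies to the off-diagonal sum; combined with the $\kappa$-Lipschitz assumption on $\nabla_q g$ (Section~\ref{subsection:dataformfg}), which bounds $\|\nabla^2_{\mu q}g\|_\infty$ by $\kappa$, every $T$-independent contribution to $b$ is of the form $c\sqrt{2}\kappa/\theta$. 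The paper collects them as $3\sqrt{2}\kappa/\theta$, and (\ref{eq:kgreaterthan}) gives $3\sqrt{2}\kappa/\theta<3/5<1$ directly. So the recursion closes with the \emph{already chosen} $\theta$; no further enlargement is needed or permitted. Once you track that $1/\theta$ through, your write-up matches the paper's proof line for line, including the final conversion from $\bar M^k$ to $M^k$ via Remark~\ref{remark:MandMbar}.
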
 
\begin{proof} 
(i) Clearly, $\bar{M}^0(\cdot,\cdot,\cdot,x)\in \mathcal{M}_{0,\bar D}^*(A_1,A_2,B,E,E_1,E_2,T),$ and by Lemma \ref{lemma:continuityofSigmains}, each \newline $\bar{M}^k(\cdot,\cdot,\cdot,x)$ $\in$ $\mathcal{M}_{0,\bar D}^*(A_1,A_2,B,E,E_1,E_2,T).$ By Remark \ref{remark:Dbar}, each $M^k(\cdot,\cdot,\cdot,x)\in\mathcal{M}_{0,D}^*(A_1,\theta A_2,\theta B,E,E_1,\theta E_2,T).$ 

(ii) Recall formulas (\ref{eq:firstwassfla}), (\ref{eq:secondwassfla})---since $\nabla_q F,$ $\nabla_q g$ and their first and second order Wasserstein gradients are uniformly bounded (conditions imposed in Section \ref{subsection:medata}), we get 
\begin{align*} 
\| \nabla_{x_j} \bar{M}^{k+1} \|_{\infty} \leq & \big(\frac{\sqrt{2}\kappa}{\theta}E + \frac{\sqrt{2}\kappa}{\theta}TA_1\big)\frac{1}{n}  \\ & + \|\nabla_{x_j} \bar M\|_{\infty} \big( \frac{\sqrt{2}\kappa}{\theta} + \frac{\sqrt{2}\kappa}{\theta}\frac{1}{n} + \frac{\sqrt{2}\kappa}{\theta}\frac{n-1}{n} + \\ & \hspace{3cm} T\big[ 2\sqrt{2}(1+\theta)\bar{h}(B) + \frac{\sqrt{2}\kappa}{\theta} + \frac{\sqrt{2}\kappa}{\theta}\frac{1}{n} + \frac{\sqrt{2}\kappa}{\theta}\frac{n-1}{n} \big] \big) \\ \leq & \frac{\sqrt{2}\kappa}{\theta}(E+TA_1)\frac{1}{n}  +  \|\nabla_{x_j} \bar M^k\|_{\infty} \big( 3\frac{\sqrt{2}\kappa}{\theta} + T(2\sqrt{2}(1+\theta)\bar{h}(B) + 3\frac{\sqrt{2}\kappa}{\theta})\big) .  
\end{align*} 
 By (\ref{eq:kgreaterthan}), we can invoke Remark \ref{remark:remarkaboutconstantT} to obtain that the latter is an expression of the form 
 $$  \|\nabla_{x_j}\bar M^{k+1}\|_{\infty} \leq \frac{a}{n} + b\| \nabla_{x_j}\bar M^k \|_{\infty} $$ with positive constants $a, b$ in which $ b<1.$ This holds for every $k=0,1,\ldots$ Applying this inequality recursively (see \cite[Remark 8.1]{mfgmain}), we find a constant $C>0$ such that $\|\nabla_{x_j}\bar M^k\|_{\infty}\leq C/n, $ for every $k\in\Z^+.$ Thus, by Remark \ref{remark:MandMbar}, 
 \begin{equation}
 \label{eq:MbarMCn} 
 (\|\nabla_{x_j}M_1^k\|_{\infty}^2 + \|\frac{1}{\theta}\nabla_{x_j}M_2^k\|_{\infty}^2)^{1/2} = (\|\nabla_{x_j}\bar M_1^k\|_{\infty}^2 + \|\nabla_{x_j}\bar M_2^k\|_{\infty}^2)^{1/2} \leq \frac{C}{n}.
 \end{equation} 
 Multiplying by $\theta$ on both sides we get, since $\theta >1,$ that $  (\|\nabla_{x_j}M_1^k\|^2 + \|\nabla_{x_j}M_2^k\|^2)^{1/2} $ $\leq \frac{\theta C}{n},$ which is (\ref{eq:ineqsnablaxjMk}) for a larger constant $C.$ \end{proof}
\subsubsection{Regularity of $M$ in $x$, $q$ and $s$}
\label{subsubsection:regofMinxandq} 
\begin{cor}
\label{cor:firstcortonablaxjMx} 
The sequence $\{M^k\}_1^{\infty}$ of Definition \ref{defn:Mksequence}(ii) converges uniformly to the function $M$ of Definition \ref{defn:mapM}, with 
$$
M(\cdot,\cdot,\cdot,x) \in \mathcal{M}_{0,D}^*(A_1,\theta A_2,\theta B,E,E_1,\theta E_2,T) 
$$ 
for every $x\in(\dtorus)^n$ and $M(t,s,q,x)= M(t,s,q,\bar{x})$ whenever $\bar{x}$ is a permutation of $x.$ Moreover, there is a constant $C > 0$ such that 
\begin{align} 
\|\nabla^2_{qx_j}M\|_{\infty} &\ \leq \frac{C}{n}, \quad j=1,\ldots,n \ ,  \label{eq:firstcortonablaxjMxone}  \\ \|\nabla^2_{x_{i}x_j}M\|_{\infty} &\ \leq \frac{C}{n^2}, \quad i\neq j, \ i,j\in\{1,\ldots,n\} \ ,  \label{eq:firstcortonablaxjMxtwo}   \\  \|\nabla^2_{x_jx_j}M\|_{\infty} &\ \leq \frac{C}{n}, \quad j=1,\ldots,n \ , \label{eq:firstcortonablaxjMxthree} \\
 \|\nabla^2_{sx_j} M\|_{\infty} &\ \leq \frac{C}{n}, \quad j=1,\ldots,n. \label{eq:nablasxjM}  
 \end{align}
\textit{Note.}~Since on $W^{2;\infty}(\dtorus\times\dtorus)$ the mixed partial derivatives $\nabla^2_{x_jx_i}$ and $\nabla^2_{x_i x_j}$ are equal, estimate (\ref{eq:firstcortonablaxjMxone}) holds for $ \nabla^2_{x_j q}M$ too. \end{cor}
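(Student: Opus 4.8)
The plan is to dispatch the four assertions of the corollary in order, with essentially all of the work going into the second-order estimates (\ref{eq:firstcortonablaxjMxone})--(\ref{eq:nablasxjM}). For the first part, observe that for each fixed $x\in(\dtorus)^n$ the functions $M^k(\cdot,s,\cdot,x)$ are exactly the Picard iterates of the contraction $\mathfrak{m}^{s,\mu^x}$ on $\mathcal{M}_0(A_1,\theta A_2,\theta B,E,E_1,\theta E_2,T)$ (Corollary \ref{cor:hamODEshassolution}), so for each $(s,x)$ they converge, uniformly in $(t,q)$, to the fixed point $\Sigma[s,\mu^x]=M(\cdot,s,\cdot,x)$. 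By Lemma \ref{lemma:nablaxjMk}(i) the whole family $\{M^k(\cdot,\cdot,\cdot,x)\}_{k,x}$ lies in $\mathcal{M}_{0,D}^*$, hence is uniformly bounded and equicontinuous in $(t,s,q)$, while (\ref{eq:ineqsnablaxjMk}) supplies equicontinuity in $x$ with modulus $O(1/n)$; combined with the $(s,x)$-pointwise convergence just noted, a compactness (Arzel\`a--Ascoli) argument promotes this to uniform convergence $M^k\to M$ on $[0,T]\times[0,T]\times\dtorus\times(\dtorus)^n$. Membership $M(\cdot,\cdot,\cdot,x)\in\mathcal{M}_{0,D}^*$ then follows from the closedness of $\mathcal{M}_{0,D}^*$ under uniform convergence (noted after Definition \ref{defn:Mzerostar}), and $M(t,s,q,x)=M(t,s,q,\bar{x})$ for $\bar{x}$ a permutation of $x$ is immediate from $\mu^x=\mu^{\bar{x}}$.

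For the second-order bounds I would repeat, one level up, the recursive scheme used for Lemma \ref{lemma:nablaxjMk}(ii). Differentiating the identities (\ref{eq:nablaxjM1k})--(\ref{eq:nablaxjM2k}) once more --- in $q$, in $x_i$ for $i\neq j$, and in $x_j$ --- and, for (\ref{eq:nablasxjM}), differentiating $\nabla_{x_j}\bar M^{k+1}$ in $s$ with the lower-limit bookkeeping from the proof of Lemma \ref{lemma:continuityofSigmains}, one expresses each of $\nabla^2_{qx_j}\bar M^{k+1}$, $\nabla^2_{x_ix_j}\bar M^{k+1}$, $\nabla^2_{x_jx_j}\bar M^{k+1}$, $\nabla^2_{sx_j}\bar M^{k+1}$ in terms of the already-controlled lower-order quantities ($\bar M^k$, $\nabla_q\bar M^k$, $\nabla^2_{qq}\bar M^k$, $\partial_s\bar M^k$ and their mixed $t$-derivatives, all $O(1)$), of $\nabla_{x_j}\bar M^k$ (which is $O(1/n)$ by (\ref{eq:ineqsnablaxjMk})), of the derivatives of $H$ up to order three, and of the Wasserstein derivatives of $\nabla_qF,\nabla_qg$ up to second order, which through (\ref{eq:firstwassfla})--(\ref{eq:secondwassfla}) carry the prefactors $1/n$ and $1/n^2$ and are uniformly bounded by the assumptions of Section \ref{subsection:medata}. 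Collecting terms, each of the four quantities satisfies a recursion $\|D^2\bar M^{k+1}\|_\infty\le a\,n^{-\alpha}+b\,\|D^2\bar M^k\|_\infty$ with $b<1$ (using $\theta$ large as in (\ref{eq:kgreaterthan}) and $T$ small as in Remark \ref{remark:remarkaboutconstantT}), with $\alpha=2$ in the off-diagonal case $i\neq j$ and $\alpha=1$ in the other three; the diagonal $\nabla^2_{x_jx_j}$ fails to reach $n^{-2}$ precisely because of the single term in which $\nabla^2_{x_jq}F_n$ (or $g_n$), of size $1/n$, meets $\nabla^2_{qq}\bar M^k$, which is $O(1)$ rather than $O(1/n)$.

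Finally, iterating these recursions as in \cite[Remark 8.1]{mfgmain} yields bounds on $\|D^2\bar M^k\|_\infty$ uniform in $k$, and differencing each recursion shows that, like $\{\bar M^k\}$ itself, each derivative sequence is uniformly Cauchy, hence converges uniformly; its limit is then the corresponding derivative of $\bar M$, with the same bound. Reinstating the factor $\theta$ as in Lemma \ref{lemma:nablaxjMk}(ii) (Remark \ref{remark:MandMbar}) converts these into (\ref{eq:firstcortonablaxjMxone})--(\ref{eq:nablasxjM}), and the Note is just equality of mixed partials on $W^{2;\infty}$. The main obstacle is the second differentiation of the long formula (\ref{eq:nablaxjM2k}): keeping that calculation --- and the parallel one for differences of iterates needed for the Cauchy estimates --- organized, and correctly separating the terms that carry $n^{-2}$ from those that only carry $n^{-1}$, since the contrast between the three $O(1/n)$ estimates and the single $O(1/n^2)$ estimate hinges entirely on that accounting.
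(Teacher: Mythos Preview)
Your proposal is correct and follows essentially the same route as the paper: uniform convergence via Arzel\`a--Ascoli from the equicontinuity supplied by Lemma \ref{lemma:nablaxjMk}, permutation invariance from $\mu^x=\mu^{\bar x}$, and the four second-order bounds obtained by differentiating (\ref{eq:nablaxjM1k})--(\ref{eq:nablaxjM2k}) once more and closing a recursion of the form $\|D^2\bar M^{k+1}\|_\infty\le a n^{-\alpha}+b\|D^2\bar M^k\|_\infty$ with $b<1$. The only minor difference is your final step: you propose showing the derivative sequences are themselves uniformly Cauchy, whereas the paper simply notes that the uniform-in-$k$ bounds on $D^2\bar M^k$ pass to the limit $M$ directly (closedness of the relevant $W^{2;\infty}$ balls under uniform convergence); your route is slightly more work but equally valid, and your identification of the $\nabla^2_{x_jq}g_n\cdot\nabla^2_{qq}\bar M^k_1$ term as the bottleneck that keeps the diagonal case at $O(1/n)$ is more explicit than what the paper records.
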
 \begin{proof} The proof of Lemma \ref{lemma:continuityofSigmains} shows that for every $x\in(\dtorus)^n,$ $M^k(\cdot,\cdot,\cdot,x)$ converges uniformly to $\Sigma[\cdot,\mu^x](\cdot,\cdot).$ Formula (\ref{eq:ineqsnablaxjMk}) means that the sequence $M^k$ is equicontinuous, and uniformly bounded, on $[0,T]\times[0,T]\times\dtorus\times(\dtorus)^n.$ It follows, by Ascoli's theorem, that the convergence of $M^k$ to $M$ is uniform, and $M$ also satisfies (\ref{eq:ineqsnablaxjMk}) of Lemma \ref{lemma:nablaxjMk}; to be more precise: \begin{equation}\label{eq:ineqnablaxjM} \| \nabla_{x_j}M \|_{\infty} \leq \frac{C}{n}.   \end{equation} If $\bar x$ is a permutation of $x$ then $\mu^x = \mu^{\bar{x}}$ and so $M(t,s,q,x) = M(t,s,q,\bar{x}).$ The three estimates above will be true of the limit function $M$ if they hold for every $M^k$. Like before, we are unable to obtain them for $M^k$ directly due to the size of the constant $\kappa,$ so we again do it for $\bar M^k$ first. 
Differentiating (\ref{eq:nablaxjM1k}) and (\ref{eq:nablaxjM2k}) with respect to $q,$ we get
\begin{align*} 
\nabla^2_{qx_j}\bar{M}^{k+1}_1(t,s,q,x)  = \int_s^t & [  (\nabla^3_{qqp}H(\bar{M}^k_1,\theta\bar{M}^k_2)\nabla_q \bar{M}^k_1 + \theta\nabla^3_{pqp}H(\bar{M}^k_1,\theta\bar{M}^k_2)\nabla_q \bar{M}^k_2)\nabla_{x_j}\bar{M}^k_1 \\  & + \nabla^2_{qp}H(\bar{M}^k_1,\theta\bar{M}^k_2)\nabla^2_{qx_j}\bar{M}^k_1  \\  &+\theta(\nabla^3_{qpp}H(\bar{M}^k_1,\theta\bar{M}^k_2)\nabla_q \bar{M}^k_1 + \theta\nabla^3_{ppp}H(\bar{M}^k_1,\theta\bar{M}^k_2)\nabla_q \bar{M}^k_2)\nabla_{x_j}\bar{M}^k_2 \\  & + \theta\nabla^2_{pp}H(\bar{M}^k_1,\bar{M}^k_2)\nabla^2_{qx_j}\bar{M}^k_2] d\tau,   
\end{align*} 
and\footnote{For the second order gradients of $\bar{K}_2,$ we will only write the part corresponding to $g_n,$ knowing that the one corresponding to $F_n$ has the exactly the same structure, and the expression coming from $H$ is the same as the one just displayed, except that the last subindex in $\nabla^3_{\square \square \square} $ is $q.$}
\begin{align*} 
& \qquad \qquad \qquad  \nabla^2_{q x_j} \bar{M}^{k+1}_2(t,s,q,x) = \\  & \ =   \frac{1}{\theta}\nabla^3_{qqq}g_n(\bar{M}^k_1(q,x),\bar{M}^k_1(x_1,x),\ldots,\bar{M}^k_1(x_n,x))\nabla_q \bar{M}^k_1(q,x) \nabla_{x_j} \bar{M}^k_1(q,x)  \\ & + \frac{1}{\theta}\nabla^2_{qq}g_n(\bar{M}^k_1(q,x),\bar{M}^k_1(x_1,x),\ldots,\bar{M}^k_1(x_n,x))\nabla^2_{x_j q}\bar{M}^k_1(q,x) \\ & + \frac{1}{\theta}\sum_{l\neq k} \big[ \nabla^3_{qx_lq}g_n(\bar{M}^k_1(q,x),\bar{M}^k_1(x_1,x),\ldots,\bar{M}^k_1(x_n,x))\nabla_q \bar{M}^k_1(q,x) \nabla_{x_j}\bar{M}^k_1(x_l,x) \\ &\ \qquad \qquad + \nabla^2_{x_l q}g_n(M_1(q,x),M_1(x_1,x),\ldots,M_1(x_n,x))\nabla^2_{qx_j}M_1(x_l,x) \big] \\ & + \frac{1}{\theta}\nabla^3_{qx_jq}g_n(\bar{M}^k_1(q,x),\bar{M}^k_1(x_1,x),\ldots,\bar{M}^k_1(x_n,x))\nabla_q \bar{M}^k_1(q,x)(\nabla_q\bar{M}^k_1(x_j,x)+\nabla_{x_j}\bar{M}^k_1(x_j,x)) \\ & \  +\frac{1}{\theta}\nabla^2_{x_jq}g_n(\bar{M}^k_1(q,x),\bar{M}^k_1(x_1,x),\ldots,\bar{M}^k_1(x_n,x))[\nabla^2_{qq}\bar M^k_1(x_j,x)+\nabla^2_{qx_j}\bar M^{k}_1(x_j,x)] \\ & - \frac{1}{\theta} \int_0^t \cdots \ d\tau.        
\end{align*} 
Using (\ref{eq:ineqsnablaxjMk}) and the bounds on the coefficients, we get: 
\begin{align*} 
\|\nabla_{qx_j}^2\bar{M}^{k+1}\|_{\infty} \leq  & \ 2 \sqrt{2}T(1+\theta)\bar{h}(B)(A_1+\theta A_2)\frac{C}{n} + \sqrt{2}\frac{\kappa}{\theta}\frac{1}{n}(E(1+C)+\frac{n-1}{n}E\frac{C}{n} + E(E+\frac{C}{n})) \\ & + \sqrt{2}\frac{\kappa}{\theta}T\frac{1}{n}(A_1(1+C)+\frac{n-1}{n}A_1\frac{C}{n} + A_1(A_1+\frac{C}{n})) \\   & \ + \|\nabla_{qx_j}^2\bar{M}^k\|_{\infty}\big(\sqrt{2}T\frac{1+\theta}{\theta}\bar{h}(B) + \frac{\sqrt{2}\kappa}{\theta}(\frac{1}{n} + 1 + \frac{n-1}{n}) + T\frac{\sqrt{2}\kappa}{\theta n}   \big)  , 
\end{align*} 
which is an inequality of the form 
$$ 
\|\nabla^2_{qx_j}\bar M^{k+1}\|_{\infty} \leq \frac{a}{n} + b  \|\nabla^2_{qx_j}\bar M^{k}\|_{\infty} 
$$ 
for constants $a,$ $b$ with $b<1,$ because $\theta > 4$ and $T$ is small. By induction again, increasing $C$ and switching back to $M^k$ in similar fashion to (\ref{eq:MbarMCn}), we obtain (\ref{eq:firstcortonablaxjMxone}) in the limit as $k\to\infty.$

We do not present here the full calculations (the reader may refer to \cite{majorga}), similar to the preceding one, that lead to (\ref{eq:firstcortonablaxjMxtwo}) and (\ref{eq:firstcortonablaxjMxthree}), but we provide a short description:

\textit{Case $i\neq j$:} 
The portion involving $g$ has the same bound as that involving $F,$ except the latter will be multiplied by $T$ in the estimation, so we can focus our attention on the terms coming from $g.$ We see that $1/n^2$ factors out from the terms that do not involve $\|\nabla^2_{x_jx_i}\bar{M}^k\|_{\infty},$ while the term multiplying $\|\nabla^2_{x_jx_i}\bar{M}^k\|_{\infty}$ is bounded by
$$ \frac{\kappa}{\theta} + \frac{(n-2)\kappa}{\theta n } +\frac{\kappa}{\theta n} + \frac{\kappa}{\theta n} + T(\cdots).
$$ 
Thus, because of the lower bound imposed on $\theta,$ we get that $$ \| \nabla^2_{x_jx_i}\bar M^{k+1}\|_{\infty} \leq  \frac{a}{n^2} + b\|\nabla_{x_jx_i}^2\bar{M}^{k+1}\|_{\infty},$$ $a,$ $b,$ with $b<1.$ The conclusion (\ref{eq:firstcortonablaxjMxtwo}) follows.

\textit{Case $i=j:$} This time only $1/n$, not $1/n^2,$ factors out from the terms not involving $\| \nabla^2_{x_jx_j} \bar{M}^k\|,$ and the term multiplying $\| \nabla^2_{x_jx_j} \bar{M}^k\|$ is 
$$ 
\frac{\kappa}{\theta} + (n-1)\frac{\kappa}{\theta n} + \frac{\kappa}{\theta n} + T(\cdots) .
$$
It then follows that
$$  
\|\nabla^2_{x_jx_j}\bar M^{k+1}\|_{\infty} \leq \frac{a}{n} + b  \|\nabla^2_{x_jx_j}\bar M^{k}\|_{\infty}
$$ 
with $b<1,$ for all $k\in\Z^+.$  Therefore (\ref{eq:firstcortonablaxjMxthree}) holds. Going through the same process once more, this time with the operator $\nabla_{sx_j}^2,$ ends up in
$$ 
\| \nabla_{sx_j}^2 \bar M^{k+1}\|_{\infty} \leq  a/n + b\|\nabla^2_{sx_j}\bar M^k\|_{\infty},
$$
with $b<1,$ from which the estimate (\ref{eq:nablasxjM}) follows.
\end{proof}
\subsubsection{First-order Taylor estimate} Our next step is to get an appropriate bound on the remainder of a first-order Taylor approximation of $M(\cdot,s,\cdot,\cdot)$ around $(t,s,q,x).$ Since 
\begin{align*} \partial_t M_1(t,s,q,x) = &\  \nabla_p H(M_1(t,s,q,x),M_2(t,s,q,x)) , 
\\ 
\partial_t M_2(t,s,q,x) = &\ - \nabla_q H(M_1(t,s,q,x),M_2(t,s,q,x)) -  \nabla_q F_n(M_1(t,s,q,x),M_1(t,s,q,x_1),\cdots,M_1(t,s,q,x_n)), 
\end{align*} 
differentiating once more with respect to $t$ and knowing that $M(\cdot,\cdot,\cdot,x)\in\mathcal{M}_{0,D}^*(A_1,\theta A_2,\theta B,E,E_1,\theta E_2,T)$ for every $x\in(\dtorus)^n,$ we obtain \begin{equation}\label{eq:threemore} \|\partial_{tt}^2 M\|_{\infty}, \ \|\nabla_{qt}^2 M\|_{\infty} \  \leq \ 2\sqrt{2}(h(\theta B)(A_1+\theta A_2)+\kappa A_1), \quad \|\nabla^2_{x_jt} M\|_{\infty} \leq \sqrt{2}(4h(\theta B)\frac{C}{n}+\frac{\kappa}{n}(A_1+2C)),   \end{equation} for $j=1,\ldots,n.$   Equipped now with estimates on all the second order derivatives of $M(\cdot,s,\cdot,\cdot),$ we proceed to obtain the Taylor estimate in the following corollary. \begin{cor}\label{cor:taylor1} Let $M = M(t,s,q,x),$ $M'=M(t',s,q',x'),$ with the notation of Corollary \ref{defn:Mksequence}. There is a constant $C > 0$ such that  \begin{align*}  & \ |M' - M - \partial_t M (t'-t) - \nabla_qM \cdot (q'-q) -\nabla_x M \cdot (x'-x)  | \\ \leq &\ C(|t'-t|^2 + |q-q'|^2 + |x-x'|^2).    \end{align*} The constant $C$ does not depend on $n.$  \end{cor}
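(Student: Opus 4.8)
The plan is to derive the estimate from a second-order Taylor expansion of $M(\cdot,s,\cdot,\cdot)$, with $s$ held fixed, along the segment joining $(t,q,x)$ to $(t',q',x')$ in $[0,T]\times\dtorus\times(\dtorus)^n$ (working with periodic representatives in $\R\times\R^d\times(\R^d)^n$, where the domain is convex and straight segments make sense; since $M$ is globally Lipschitz this is harmless). The only inputs are the bounds on the second-order partial derivatives of $M$ in $(t,q,x)$ that the preceding results already provide, together with their dependence on $n$: from the membership $M(\cdot,\cdot,\cdot,x)\in\mathcal{M}^*_{0,D}(A_1,\theta A_2,\theta B,E,E_1,\theta E_2,T)$ of Corollary \ref{cor:firstcortonablaxjMx}, from Definition \ref{defn:Msubzero}, from \eqref{eq:threemore}, and from the estimates \eqref{eq:firstcortonablaxjMxone}--\eqref{eq:firstcortonablaxjMxthree}, there is a constant $C_0$, independent of $n$, with
\begin{align*}
\|\partial^2_{tt}M\|_\infty,\ \|\nabla^2_{tq}M\|_\infty,\ \|\nabla^2_{qq}M\|_\infty &\le C_0, \\
\|\nabla^2_{tx_j}M\|_\infty,\ \|\nabla^2_{qx_j}M\|_\infty,\ \|\nabla^2_{x_jx_j}M\|_\infty &\le \frac{C_0}{n}\quad(j=1,\dots,n), \\
\|\nabla^2_{x_ix_j}M\|_\infty &\le \frac{C_0}{n^2}\quad(i\ne j).
\end{align*}
In particular $\partial_tM$, $\nabla_qM$, $\nabla_xM$ are Lipschitz with constants controlled by $C_0$, so the function $\lambda\mapsto M\big((1-\lambda)(t,q,x)+\lambda(t',q',x')\big)$ is $C^{1,1}$ on $[0,1]$, and the standard $C^{1,1}$ Taylor inequality (equivalently, the fundamental theorem of calculus applied twice along the segment) gives
\[
\big|M'-M-\partial_tM\,(t'-t)-\nabla_qM\cdot(q'-q)-\nabla_xM\cdot(x'-x)\big| \le \tfrac12\sup\big|\,\delta\cdot D^2M\,\delta\,\big|,
\]
where $\delta=(t'-t,q'-q,x'-x)$, $D^2M$ is the Hessian of $M(\cdot,s,\cdot,\cdot)$, existing a.e.\ along the segment (which is enough), and the supremum is over the segment.

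It then only remains to bound the quadratic form $\delta\cdot D^2M\,\delta$ block by block, which is routine once the scalings above are in hand. The $(t,t)$, $(t,q)$ and $(q,q)$ blocks contribute at most $C_0(|t'-t|+|q'-q|)^2\le 2C_0(|t'-t|^2+|q'-q|^2)$. For the cross blocks involving $x_j$, using $2|a||b|\le|a|^2+|b|^2$,
\[
\sum_{j=1}^n\|\nabla^2_{tx_j}M\|_\infty\,|t'-t|\,|x_j'-x_j| \le \frac{C_0}{2n}\sum_{j=1}^n\big(|t'-t|^2+|x_j'-x_j|^2\big) = \frac{C_0}{2}|t'-t|^2+\frac{C_0}{2n}|x'-x|^2,
\]
and identically for the $(q,x_j)$ blocks with $|q'-q|$ in place of $|t'-t|$. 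The diagonal blocks give $\sum_j\|\nabla^2_{x_jx_j}M\|_\infty|x_j'-x_j|^2\le\frac{C_0}{n}|x'-x|^2$, and the off-diagonal ones give
\[
\sum_{i\ne j}\|\nabla^2_{x_ix_j}M\|_\infty\,|x_i'-x_i|\,|x_j'-x_j| \le \frac{C_0}{2n^2}\sum_{i\ne j}\big(|x_i'-x_i|^2+|x_j'-x_j|^2\big) = \frac{C_0(n-1)}{n^2}|x'-x|^2 \le \frac{C_0}{n}|x'-x|^2.
\]
Assembling all blocks (with the harmless factors of $2$ from the symmetry of $D^2M$), $\tfrac12|\delta\cdot D^2M\,\delta|\le C(|t'-t|^2+|q'-q|^2+|x'-x|^2)$ for a constant $C$ depending only on $C_0$, hence on the coefficients and not on $n$, which is the claim.

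The genuinely delicate point is precisely the bookkeeping just sketched: the $x$-part of the Hessian carries $n^2$ sub-blocks, so a crude estimate would produce a factor $n$ in front of $|x'-x|^2$ and destroy the uniformity in $n$; it is exactly the scalings $\|\nabla^2_{x_jx_j}M\|_\infty=O(1/n)$ and $\|\nabla^2_{x_ix_j}M\|_\infty=O(1/n^2)$ from Corollary \ref{cor:firstcortonablaxjMx} — themselves a consequence of the $\tfrac1n$ and $\tfrac1{n^2}$ prefactors in the Wasserstein-gradient identities \eqref{eq:firstwassfla}--\eqref{eq:secondwassfla} — that make the diagonal and off-diagonal contributions both collapse to $O(\tfrac1n)|x'-x|^2$. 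A minor technical caveat is that $M$ is a Sobolev rather than classical $C^2$ function, so $D^2M$ exists only almost everywhere; this is handled in the customary way by proving the inequality first for a mollification $M\ast\rho_\eps$, for which all the bounds above pass through, and then letting $\eps\to0$, using the continuity of $M$ and of its first-order derivatives.
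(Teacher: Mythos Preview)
Your proof is correct and follows essentially the same approach as the paper's: a second-order Taylor expansion (the paper phrases it as the mean-value theorem applied componentwise) together with the block-by-block bookkeeping of the Hessian using the $O(1)$, $O(1/n)$, and $O(1/n^2)$ bounds on the second derivatives from Corollary~\ref{cor:firstcortonablaxjMx} and \eqref{eq:threemore}. Your treatment is in fact slightly cleaner---you invoke Young's inequality directly and you flag the Sobolev-vs-$C^2$ issue explicitly via mollification, whereas the paper leaves that implicit---but the substance is the same.
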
 \noindent \textbf{Note.} The norm in the left-hand side of the latter inequality is the Euclidean norm on $\R^{2d}.$ $\sslash$    \begin{proof} Let $i\in\{1,2\},$ $j\in\{1,\ldots,d\}.$ Denoting $t'-t=\Delta t,$ $q'-q=\Delta q,$ $x_l'-x_l=\Delta x_l,$ and $|\Delta x| = (\sum_{l=1}^n|\Delta x_l|^2)^{1/2},$ the mean-value theorem implies that \begin{align*} & |M_i'^{(j)}-M_i^{(j)} -\partial_t M_i^{(j)}(t'-t) - \nabla_q M_i^{(j)}\cdot (q'-q) -\nabla_x M_i^{(j)}(x'-x)|     \\ & \quad \leq  \|\partial_{tt}^2M^{(j)}_i\|_{\infty}|\Delta t|^2 + 2 \| \nabla_{tq}^2 M^{(j)}_i \|_{\infty} |\Delta t| |\Delta q| + 2 \sum_{l=1}^n \| \nabla^2_{tx_l}M^{(j)}_i\|_{\infty} |\Delta t||\Delta x_l|  +  \|\nabla^2_{qq}M^{(j)}_i\|_{\infty}|\Delta q|^2  \\  &\ \qquad + 2 \sum_{l=1}^n \| \nabla^2_{qx_l}M^{(j)}_i \|_{\infty} |\Delta q||\Delta x_l| + \sum_{\substack{l,m=1\\l\neq m}}^n\| \nabla^2_{x_lx_m}M^{(l)}_i\|_{\infty} |\Delta x_l||\Delta x_m| + \sum_{l=1}^n \| \nabla^2_{x_lx_l}M^{(j)}_i\|_{\infty} |\Delta x_l|^2  .     \end{align*} Therefore, bringing in the estimates obtained in the foregoing paragraphs, we get \begin{align*} & \qquad  |M' - M - \partial_t M (t'-t) - \nabla_qM \cdot (q'-q) -\nabla_x M \cdot (x'-x)| \\ \leq &\ \|\partial_{tt}^2M\|_{\infty}|\Delta t|^2 + 2 \| \nabla_{tq}^2 M \|_{\infty} |\Delta t| |\Delta q| + 2 \sum_{l=1}^n \| \nabla^2_{tx_l}M\|_{\infty} |\Delta t||\Delta x_l|  +  \|\nabla^2_{qq}M\|_{\infty}|\Delta q|^2  \\  &\ + 2 \sum_{l=1}^n \| \nabla^2_{qx_l}M \|_{\infty} |\Delta q||\Delta x_l| + \sum_{\substack{l,m=1\\l\neq m}}^n\| \nabla^2_{x_lx_m}M^{(l)}_i\|_{\infty} |\Delta x_l||\Delta x_m| + \sum_{l=1}^n \| \nabla^2_{x_lx_l}M\|_{\infty} |\Delta x_l|^2  \end{align*} \begin{align*}  \leq &\  2\sqrt{2}(A_1+A_2)(\kappa + h(\theta B))(|\Delta t|^2 + 2|\Delta t||\Delta q|) + 4\sqrt{2}\sqrt{2}n\frac{C}{n}(2 h(\theta B)+\kappa)|\Delta x||\Delta t| \\ &\ \ + \sqrt{2}(A_1+A_2)|\Delta q|^2  + 2 n \frac{C}{n} \sqrt{2}|\Delta x||\Delta q| + n(n-1) \frac{C}{n^2} \sqrt{2}\sqrt{2}|\Delta x|^2 + n \frac{C}{n}|\Delta x|^2 .    \end{align*}  Thus, there is a larger constant, still denoted by $C,$ and not depending on $n,$ such that inequality in the corollary's statement holds.  
\end{proof} 
Given $x, x'\in(\dtorus)^n,$ we can reorder and shift the coordinates of $x'=(x'_1,\ldots,x'_n)$ so that $|x-x'|^2 = \mathscr{W}^2(\mu^x,\mu^{x'}).$ Thus, the inequality of Corollary \ref{cor:taylor1} reads 
\begin{align}  
& \ |M' - M - \partial_t M (t'-t) - \nabla_qM \cdot (q'-q) -\nabla_x M \cdot (x'-x)  | \notag \\ &\ \quad \leq C(|t'-t|^2 + |q-q'|^2 + \mathscr{W}^2(\mu^x,\mu^{x'})). \label{eq:taylor1} 
\end{align} 
\subsection{Regularity of the inverse of $M$}\label{subsection:regularityinversemaster} 
Let us define 
\begin{align} 
N : [0,T]\times[0,T]\times\dtorus\times(\dtorus)^n &\ \longrightarrow \dtorus \notag \\ (t,s,q,x) &\ \longmapsto X[s,\mu^x](t,q)  ; \label{eq:capitaln} 
\end{align} 
recall that $X[s,\mu](t,\cdot)$ is the inverse of $\Sigma^1[s,\mu](t,\cdot).$ 
The function $N$ takes values in $\dtorus,$ so it has only one component, unlike $M=(M_1,M_2).$ Thus, $M_1$ and $N$ are related by 
\begin{equation} 
M^1(t,s,N(t,s,q,x),x) = q , \qquad t,s\in[0,T],q\in\dtorus,x\in(\dtorus)^n. \notag 
\end{equation} 
We are going to derive now the Lipschitz property of $X[s,\cdot](\cdot,\cdot)$ before addressing the full regularity of $\Sigma[s,\cdot](\cdot,\cdot)$. Recall estimate (\ref{eq:boundsfornablaqsigma1}): \begin{equation*} \| \nabla_q X[s,\mu](t,\cdot) \|_{\infty} < 4(1+\sqrt{d})^{d-1}, \qquad s,t\in[0,T],\mu\in\wasstwospace.  \end{equation*}  
Differentiating the identity $ q \equiv X[s,\mu](t,\Sigma^1[s,\mu](t,q))$ with respect to $t,$ we have
\begin{equation*}
0 = \partial_tX[s,\mu](t,\Sigma^1[s,\mu](t,q)) + \nabla_q X[s,\mu](t,\Sigma^1[s,\mu](t,q))\partial_t\Sigma^1[s,\mu](t,q) , 
\end{equation*} 
from which \begin{equation}\label{eq:partialtX} \partial_t X[s,\mu](t,q) = - \nabla_q X[s,\mu](t,q) v[s,\mu](t,q),\end{equation} at any $s,t\in[0,T], q\in\dtorus, \mu\in\wasstwospace.$ Therefore \begin{equation}\label{eq:tinverse} \| \partial_t X_t[s,\mu] \|_{\infty} \leq \|\nabla_qX_t[s,\mu]\|_{\infty} \|v_t[s,\mu]\|_{\infty} \leq 4A_1(1+\sqrt{d})^{d-1}.  \end{equation} 
%
For the regularity with respect to $x=(x_1,\ldots,x_n)\in(\dtorus)^n,$ we use the identity $$ M_1(t,s,N(t,s,q,x),x) \equiv q , $$ which holds by definition. Taking the derivative with respect to $x_j,$ $j=1,\ldots,n,$ gives \begin{align*} - \nabla_{x_j}N(t,s,q,x) = [\nabla_q M_1(t,s,N(t,s,q,x),x)]^{-1}\nabla_{x_j}M_1(t,s,q,x) = \nabla_q N(t,s,q,x) \nabla_{x_j}M_1(t,s,q,x).   \end{align*} Thus, $\|\nabla_{x_j} N \|_{\infty} \leq 4(1+\sqrt{d})^{d-1} \frac{C}{n},$ which, increasing the value of $C,$ gives  \begin{equation}\label{eq:nablaxjN} \|\nabla_{x_j} N \|_{\infty} \leq  \frac{C}{n}. 
\end{equation}   
\begin{cor}\label{cor:taylor2} 
Let $t,t',s\in[0,T],$ $q',q\in\dtorus,$ $\mu,\nu\in\wasstwospace.$ Then there is a constant $C>0$ such that \begin{align*} |X[s,\nu](t',q')-X[s,\mu](t,q)| \leq C (|t-t'|+|q'-q|+\mathscr{W}(\mu,\nu)).   \end{align*}  \end{cor}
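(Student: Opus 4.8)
The plan is to prove the estimate by treating the variables $(t,q)$ and the variable $\mu$ separately and then combining them through an intermediate point. For the first two variables everything is already available: by $(\ref{eq:boundsfornablaqsigma1})$ and $(\ref{eq:tinverse})$, for any fixed $s\in[0,T]$ and $\nu\in\wasstwospace$ one has
$$|X[s,\nu](t',q')-X[s,\nu](t,q)|\ \le\ \|\nabla_q X\|_{\infty}\,|q'-q|+\|\partial_t X_t[s,\nu]\|_{\infty}\,|t'-t|\ \le\ C\big(|t-t'|+|q'-q|\big),$$
with $C$ depending only on the coefficients. Hence, via the triangle inequality through $X[s,\nu](t,q)$, the corollary reduces to the uniform bound $|X[s,\mu](t,q)-X[s,\nu](t,q)|\le C\,\mathscr{W}(\mu,\nu)$.

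First I would establish this $\mu$-Lipschitz bound for averages of Dirac masses. Fix $n\in\Z^+$ and $x,x'\in(\dtorus)^n$, and recall that $N(t,s,q,x)=X[s,\mu^x](t,q)$ together with $(\ref{eq:nablaxjN})$, i.e.\ $\|\nabla_{x_j}N\|_{\infty}\le C/n$ with $C$ independent of $n$. Integrating the gradient along the segment joining $x'$ to $x$ (representatives in $\R^d$, using periodicity of $N$ in each $x_j$) and applying Cauchy--Schwarz gives
$$|N(t,s,q,x)-N(t,s,q,x')|\ \le\ \frac{C}{n}\sum_{j=1}^n|x_j-x_j'|\ \le\ \frac{C}{\sqrt{n}}\Big(\sum_{j=1}^n|x_j-x_j'|^2\Big)^{1/2}.$$
After relabeling the coordinates of $x'$ so that $\sum_{j=1}^n|x_j-x_j'|^2=n\,\mathscr{W}^2(\mu^x,\mu^{x'})$ — possible since the optimal plan between two averages of $n$ Dirac masses is induced by a permutation — the right-hand side equals $C\,\mathscr{W}(\mu^x,\mu^{x'})$, so
$$|X[s,\mu^x](t,q)-X[s,\mu^{x'}](t,q)|\ \le\ C\,\mathscr{W}(\mu^x,\mu^{x'}),\qquad x,x'\in(\dtorus)^n,$$
with $C$ independent of $n$, $s$, $t$ and $q$.

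Next I would pass to arbitrary $\mu,\nu\in\wasstwospace$ by density. By the density statement recalled in the Preliminaries, pick $\mu(n),\nu(n)$, each an average of $n$ Dirac masses, with $\mathscr{W}(\mu(n),\mu)\to0$ and $\mathscr{W}(\nu(n),\nu)\to0$, so that also $\mathscr{W}(\mu(n),\nu(n))\to\mathscr{W}(\mu,\nu)$. The previous step gives $|X[s,\mu(n)](t,q)-X[s,\nu(n)](t,q)|\le C\,\mathscr{W}(\mu(n),\nu(n))$, and to pass to the limit I need $\mu\mapsto X[s,\mu](t,q)$ to be continuous. This follows from Lemma \ref{lemma:continuityofSigmainmu}, which gives $\Sigma^1[s,\mu(n)](t,\cdot)\to\Sigma^1[s,\mu](t,\cdot)$ uniformly, combined with Lemma \ref{lemma:invertibilityofsigma}: all the maps $\Sigma^1[s,\,\cdot\,](t,\cdot)$ are $C^1$ diffeomorphisms of $\dtorus$ whose inverses have gradient bounded by the fixed constant $4(1+\sqrt{d})^{d-1}$, and for homeomorphisms with a uniform Lipschitz bound $L$ on the inverses one has $\|f_n^{-1}-f^{-1}\|_{\infty}\le L\,\|f_n-f\|_{\infty}$; hence $X[s,\mu(n)](t,\cdot)\to X[s,\mu](t,\cdot)$ uniformly, and likewise for $\nu$. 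Letting $n\to\infty$ yields $|X[s,\mu](t,q)-X[s,\nu](t,q)|\le C\,\mathscr{W}(\mu,\nu)$, and combining with the $(t,q)$-estimate (enlarging $C$) finishes the proof.

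The only step that is not routine is the continuity of the inverse in $\mu$ in the last paragraph: uniform convergence of homeomorphisms does not by itself imply convergence of inverses, so this is exactly where the uniform bi-Lipschitz control of Lemma \ref{lemma:invertibilityofsigma} is indispensable. Everything else is a mean value estimate together with the already-proved bound $(\ref{eq:nablaxjN})$ and the permutation characterization of $\mathscr{W}$ on averages of Dirac masses.
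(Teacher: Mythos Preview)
Your proof is correct and follows essentially the same approach as the paper: both use the gradient bounds $(\ref{eq:boundsfornablaqsigma1})$, $(\ref{eq:tinverse})$, $(\ref{eq:nablaxjN})$ together with Cauchy--Schwarz and the permutation characterization of $\mathscr{W}$ on averages of Dirac masses, and then pass to the limit by density. The only differences are cosmetic---the paper estimates all variables $t,q,x$ simultaneously for $N$ rather than splitting via the triangle inequality---and you are in fact more explicit than the paper in justifying the continuity of $X$ in $\mu$ via the uniform bi-Lipschitz control of Lemma~\ref{lemma:invertibilityofsigma}, whereas the paper simply invokes ``the continuity of $X$ in all its variables''.
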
 \begin{proof}  Let $x,x'\in(\dtorus)^n,$ and $N = N(t,s,q,x),$ $N'=N(t',s,q',x'),$ where $N$ is defined in (\ref{eq:capitaln}). By the bounds (\ref{eq:tinverse}), (\ref{eq:boundsfornablaqsigma1}), (\ref{eq:nablaxjN}), and relabeling the sequence $x_1,\ldots,x_n$ and shifting the points so that $\mathscr{W}^2(\mu^x,\mu^{x'})=\sum_{j=1}^n|x_j-x_j'|^2,$ \begin{align*}  |N(t',s,q',x')-N(t,s,q,x)| \leq &\ \|\partial_tN\|_{\infty}|t'-t| + \|\nabla_q N\|_{\infty}|q'-q| + \sum_{j=1}^n \|\nabla_{x_j}N\|_{\infty}|x_j'-x_j| \\ \leq &\ 4A_1(1+\sqrt{d})^{d-1} |t'-t| + 4(1+\sqrt{d})^{d-1})|q'-q| + \sum_{j=1}^n \frac{C}{n}|x_j'-x_j| ;    \end{align*} therefore, since $\sum\frac{C}{n}|x_j'-x_j|\leq C(\sum 1/n)^{1/2}(\sum |x_j'-x_j|^2/n)^{1/2},$ we get, by increasing $C,$  \begin{align*}  |N' - N | \leq &\  C(|t'-t| + |q-q'| + \mathscr{W}(\mu^x,\mu^{x'})).    \end{align*} The constant $C$ does not depend on $n.$ Applying the last fact in the list of Section \ref{section:preliminaries}, we now extend this to the arbitrary measure case: let $\mu,\nu\in\wasstwospace,$ and $\{x(n)\}_{n=1}^{\infty}, \{x'(n)\}_{n=1}^{\infty}$, with $x(n),x'(n)\in(\dtorus)^n,$ sequences such that $$ \lim_{n\to\infty}\mathscr{W}(\mu^{x(n)},\mu) = 0, \qquad \lim_{n\to\infty}\mathscr{W}(\mu^{x'(n)},\nu) = 0. $$ Since, by definition, $N(t,s,q,x) = X[s,\mu^x](t,q),$ the latter estimate means $$| X[s,\mu^{x'(n)}](t',q') - X[s,\mu^{x(n)}](t,q) | \leq  C(|t'-t| + |q-q'| + \mathscr{W}(\mu^{x(n)},\mu^{x'(n)})), $$ for every $n\in\Z^n.$ Letting $n\to\infty,$ the continuity of $X$ in all its variables finalizes the proof.\end{proof} The regularity of $\Sigma$ in all its variables simultaneously is obtained in the next paragraphs from the foregoing properties of its discretized version. 
\subsection{Regularity properties of $\Sigma$ and composite functions}
\label{subsection:regpropsmastermap} 
We will follow the extension method of \cite{mfgmain} to get the regularity of $\Sigma$ in the measure variable.   The idea, roughly speaking, begins with introducing a Lipschitz extension of the derivative $\nabla_{x_j}M_1,$ $j=1,\ldots,n,$ that is defined at every measure $\mu,$ by way of a Moreau-Yosida type of extension, which becomes closer to $n\nabla_{x_j}M_1$ the larger $n$ ---the number of particles--- is. When the $n$-particle ordered sets $x^n=(x_1^n,\ldots,x_n^n)$ are chosen in such a way that $\delta^{x^n}\to \mu,$ the extension just mentioned will reveal itself as the Wasserstein gradient in the first-order Taylor approximation derived in the preceding paragraphs; recall (\ref{eq:eqvwassgradient}).

For fixed $n\in\Z^+,$ let \begin{align*} \mathcal{B} := &\ [0,T]\times[0,T]\times\dtorus\times\{(y_j,\mu^y) \ | \ y=(y_1,\ldots,y_n)\in(\dtorus)^n, j\in\{1,\ldots,n\} \} \\ \subset &\ [0,T] \times [0,T]\times \dtorus \times [(\dtorus)\times\wasstwospace].    \end{align*} A typical element of $\mathcal B$ is thus $ (t,s,q,(y_j,\mu^{y})) $ where $y$ is any $n$-particle ordered set $(y_1,\ldots,y_n)\in(\dtorus)^n$ and $y_j$ is any of its component particles. If $m\in\Z^+$ and $f:\mathcal{B}\to\R^m$ is a continuous function, let $$ \|f\|_{\mathcal{B}} := \sup\{ |f(t,s,q,(x_j,\mu^x))| \ \big| \ t,s\in[0,T],q\in\dtorus,x\in(\dtorus)^n, j\in\{1,\ldots,n\} \}. $$  For any continuous function $f=(f^{(1)},\ldots,f^{(m)}):\mathcal B\to\R^m$ such that \begin{equation}\label{eq:conditiononf} |f(t,s,q,(y_j,\mu^y)) - f(t,s,q,(x_i,\mu^x)) | \leq C (|x_i-y_j|_{\dtorus} + \mathscr{W}(\mu^x,\mu^y) + \frac{1}{n}),  \end{equation} where $t,s\in[0,T], q\in\dtorus, x,y\in(\dtorus)^n, i,j\in\{1,\ldots,n\},$ define \begin{align*} g^{(l)}(t,s,q,z,\mu) := &\ \inf\big\{f^{(l)}(t,s,q,(y_j,\mu^y))+ C (|z-y_j|_{\dtorus} + \mathscr{W}(\mu,\mu^y)) \ \big| \ y\in(\dtorus)^n, j\in\{1,\ldots,n\} \big\} , \\ l=1,\ldots,m , \\ g := &\ (g^{(1)},\ldots,g^{(m)}) ,   \end{align*} at any fixed $z\in\dtorus, \mu\in\wasstwospace.$ The function $g$ is thus an extension of $f$ from $\mathcal{B}$ to the full space $[0,T]^2\times\dtorus\times[\dtorus\times\wasstwospace].$ The following is \cite[Lemma 8.10]{mfgmain}. \begin{prop}\label{prop:extensionlemma} Suppose that (\ref{eq:conditiononf}) holds, and for any $x\in(\dtorus)^n,$ $j\in\{1,\ldots,n\},$ $f(\cdot,\cdot,\cdot,(x_j,\mu^x))$ is $C$-Lipschitz. Then \begin{enumerate}[(i)] \item $g$ is $\sqrt{3}C$-Lipschitz, \item $\|g|_{\mathcal{B}}-f\|_{\mathcal{B}}\leq C/n.$ \end{enumerate}  \end{prop}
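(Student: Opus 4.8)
The plan is to verify that the infimal-convolution (McShane--Whitney / Moreau--Yosida type) extension $g$ inherits both properties directly from the triangle inequality, treating one scalar component $g^{(l)}$ at a time and collecting the $m$ components at the end; this also reproves \cite[Lemma 8.10]{mfgmain}. Throughout, abbreviate a generic point of $[0,T]^2\times\dtorus\times\dtorus\times\wasstwospace$ by $P=(t,s,q,z,\mu)$ and use the product metric
\[
d(P,P')^2 \ := \ |t-t'|^2+|s-s'|^2+|q-q'|_{\dtorus}^2+|z-z'|_{\dtorus}^2+\mathscr{W}^2(\mu,\mu').
\]
Note that $\sqrt{|t-t'|^2+|s-s'|^2+|q-q'|_{\dtorus}^2}$, $|z-z'|_{\dtorus}$ and $\mathscr{W}(\mu,\mu')$ are three numbers the sum of whose squares is exactly $d(P,P')^2$, so by Cauchy--Schwarz their sum is at most $\sqrt3\,d(P,P')$; this is where the constant $\sqrt3$ comes from.

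First I would prove (i). Fix $l$ and two points $P,P'$; by symmetry it is enough to bound $g^{(l)}(P)-g^{(l)}(P')$ from above. Given $\eps>0$, choose $y\in(\dtorus)^n$ and $j\in\{1,\dots,n\}$ that are $\eps$-optimal in the infimum defining $g^{(l)}(P')$, and use the same pair $(y_j,\mu^y)$ as a competitor in the infimum defining $g^{(l)}(P)$. Subtracting, the difference of $f^{(l)}$-values is $f^{(l)}(t,s,q,(y_j,\mu^y))-f^{(l)}(t',s',q',(y_j,\mu^y))$, which by the hypothesis that $f(\cdot,\cdot,\cdot,(y_j,\mu^y))$ is $C$-Lipschitz is at most $C\sqrt{|t-t'|^2+|s-s'|^2+|q-q'|_{\dtorus}^2}$; and the penalty terms contribute at most $C\big(|z-z'|_{\dtorus}+\mathscr{W}(\mu,\mu')\big)$ because $|z-y_j|_{\dtorus}-|z'-y_j|_{\dtorus}\le|z-z'|_{\dtorus}$ and $\mathscr{W}(\mu,\mu^y)-\mathscr{W}(\mu',\mu^y)\le\mathscr{W}(\mu,\mu')$. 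Hence
\[
g^{(l)}(P)-g^{(l)}(P')\ \le\ C\Big(\sqrt{|t-t'|^2+|s-s'|^2+|q-q'|_{\dtorus}^2}+|z-z'|_{\dtorus}+\mathscr{W}(\mu,\mu')\Big)+\eps\ \le\ \sqrt3\,C\,d(P,P')+\eps.
\]
Letting $\eps\to0$ and swapping $P,P'$ shows each $g^{(l)}$ is $\sqrt3\,C$-Lipschitz for $d$; reading $\R^m$ with the max norm (equivalently, interpreting the bound componentwise) gives (i).

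Next I would prove (ii). Let $(t,s,q,(x_i,\mu^x))\in\mathcal B$. Taking $y=x$, $j=i$ as competitor gives at once $g^{(l)}(t,s,q,x_i,\mu^x)\le f^{(l)}(t,s,q,(x_i,\mu^x))$. For the matching lower bound, for an arbitrary competitor $(y,j)$ insert $\pm f^{(l)}(t,s,q,(x_i,\mu^x))$ and invoke \eqref{eq:conditiononf} componentwise,
\[
\big|f^{(l)}(t,s,q,(y_j,\mu^y))-f^{(l)}(t,s,q,(x_i,\mu^x))\big|\ \le\ C\big(|x_i-y_j|_{\dtorus}+\mathscr{W}(\mu^x,\mu^y)+\tfrac1n\big),
\]
so that $f^{(l)}(t,s,q,(y_j,\mu^y))+C\big(|x_i-y_j|_{\dtorus}+\mathscr{W}(\mu^x,\mu^y)\big)\ge f^{(l)}(t,s,q,(x_i,\mu^x))-C/n$, the two penalty contributions cancelling exactly. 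Taking the infimum over $(y,j)$ yields $g^{(l)}(t,s,q,x_i,\mu^x)\ge f^{(l)}(t,s,q,(x_i,\mu^x))-C/n$, and together with the upper bound this gives $|g^{(l)}-f^{(l)}|\le C/n$ on $\mathcal B$ for every $l$, hence $\|g|_{\mathcal B}-f\|_{\mathcal B}\le C/n$.

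There is no deep obstacle here — the computation is entirely elementary — and the only point requiring care is the bookkeeping. Since the variables $(t,s,q)$ carry no penalty term in the definition of $g^{(l)}$, their contribution to Lipschitz continuity must be extracted from the genuine $C$-Lipschitz hypothesis on $f(\cdot,\cdot,\cdot,(x_j,\mu^x))$ rather than from the cross-estimate \eqref{eq:conditiononf}; and the slack $1/n$ in \eqref{eq:conditiononf} is absorbed precisely because the penalty coefficient built into $g^{(l)}$ is the same constant $C$, so that the $C\big(|x_i-y_j|_{\dtorus}+\mathscr{W}(\mu^x,\mu^y)\big)$ terms annihilate and leave exactly $C/n$. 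A fully written-out version of this argument is \cite[Lemma 8.10]{mfgmain}.
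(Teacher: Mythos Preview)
Your proof is correct and is precisely the standard McShane--Whitney infimal-convolution argument; the paper does not supply its own proof but simply cites \cite[Lemma 8.10]{mfgmain}, the same reference you invoke, so there is no alternative approach to compare against. The only cosmetic point is the one you already flag: the componentwise bound gives $\sqrt{3}C$-Lipschitz in the $\ell^\infty$ norm on $\R^m$, which in the Euclidean norm would acquire a harmless factor $\sqrt{m}$ --- inconsequential here, since in the application (Corollary~\ref{cor:chin}) the constant is immediately enlarged anyway.
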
 As in \cite{mfgmain}, we set, for $s,t\in[0,T],$ $q\in\dtorus,$ $x=(x_1,\ldots,x_n)\in(\dtorus)^n,$ $j=1,\ldots,n,$ \begin{equation}\label{eq:zetan} \zeta^n(t,s,q,(x_j,\mu^x)) = n\nabla_{x_j}M(t,s,q,x). \end{equation} The periodicity of $M$ in $q$ and $x$ ensures that $\zeta^n$ is well defined on $\mathcal{B}.$ \begin{cor}\label{cor:chin} (Extension of $\zeta^n$) For each $n\in\Z^+,$ there is a function $$ \chi^n : [0,T]\times[0,T]\times\dtorus\times\dtorus\times\wasstwospace \to \R^{d^2}\times\R^{d^2} $$ such that $ \chi^n |_{\mathcal{B}} = \zeta^n$ and, with a larger value of $C$ than before, \begin{enumerate}[(i)] \item $\chi^n$ is $C$-Lipschitz, \item $\|\chi^n|_{\mathcal{B}} - \zeta^n\|_{\mathcal{B}} \leq \frac{C}{n}.$   \end{enumerate}   \end{cor}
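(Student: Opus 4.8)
The plan is to obtain $\chi^n$ as a direct application of the extension result Proposition \ref{prop:extensionlemma} (which is \cite[Lemma 8.10]{mfgmain}) to the function $f=\zeta^n$ on the set $\mathcal{B}$, with $m=2d^2$ so that the target $\R^{d^2}\times\R^{d^2}$ of $\zeta^n$ is accommodated. It then suffices to verify the two hypotheses of that proposition; a common Lipschitz constant $C$ will be the maximum of the constants occurring in Corollary \ref{cor:firstcortonablaxjMx}, in (\ref{eq:threemore}) and in (\ref{eq:nablasxjM}). Once both hypotheses hold, $\chi^n:=g$ does the job, with Lipschitz constant $\sqrt{3}C$ (the ``larger value of $C$'' in the statement) and with conclusion (ii) of the corollary being exactly part (ii) of the proposition.

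The first hypothesis --- that for fixed $n$, $x\in(\dtorus)^n$, $j\in\{1,\dots,n\}$ the map $(t,s,q)\mapsto \zeta^n(t,s,q,(x_j,\mu^x))=n\,\nabla_{x_j}M(t,s,q,x)$ is $C$-Lipschitz uniformly in $n,x,j$ --- is immediate: by (\ref{eq:firstcortonablaxjMxone}), (\ref{eq:nablasxjM}) and (\ref{eq:threemore}) one has $\|\nabla^2_{qx_j}M\|_\infty,\ \|\nabla^2_{sx_j}M\|_\infty,\ \|\nabla^2_{tx_j}M\|_\infty\le C/n$, so multiplying by $n$ bounds every first-order partial of $\zeta^n(\cdot,\cdot,\cdot,(x_j,\mu^x))$ on $[0,T]\times[0,T]\times\dtorus$ by $C$. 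The substantive hypothesis is the cross-measure estimate (\ref{eq:conditiononf}) for $\zeta^n$, namely
$$
|\,n\,\nabla_{y_j}M(t,s,q,y)-n\,\nabla_{x_i}M(t,s,q,x)\,|\ \le\ C\Big(|x_i-y_j|_{\dtorus}+\mathscr{W}(\mu^x,\mu^y)+\tfrac1n\Big)
$$
with $C$ independent of $n$. To prove it I would first use the permutation symmetry of $M$ in the $x$-variables (the same symmetry that makes $\zeta^n$ well defined on $\mathcal{B}$) to reduce to the case $i=j$ with the distinguished particle of $x$ and that of $y$ placed in the same slot, so that $|x_i-y_i|=|x_i-y_j|_{\dtorus}$; then relabel the remaining $n-1$ coordinates of $x$ and $y$ so that $\sum_{l\ne i}|x_l-y_l|^2\le C\,n\,\mathscr{W}^2(\mu^x,\mu^y)+C|x_i-y_i|^2$. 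Setting $z^0=x$, $z^n=y$, with $z^l$ agreeing with $y$ in the first $l$ slots and with $x$ afterwards, a telescoping sum together with the bounds $\|\nabla^2_{x_ix_i}M\|_\infty\le C/n$ and $\|\nabla^2_{x_lx_i}M\|_\infty\le C/n^2$ ($l\ne i$) from Corollary \ref{cor:firstcortonablaxjMx} contributes $n\cdot(C/n)|x_i-y_i|=C|x_i-y_i|$ from the $l=i$ step and $\sum_{l\ne i}n\cdot(C/n^2)|x_l-y_l|=\tfrac Cn\sum_{l\ne i}|x_l-y_l|\le C\mathscr{W}(\mu^x,\mu^y)+C|x_i-y_i|$ from the remaining steps (Cauchy--Schwarz plus the relabelling bound), which gives (\ref{eq:conditiononf}).

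The one genuinely delicate point is the relabelling: the permutation realizing $\mathscr{W}^2(\mu^x,\mu^y)=\tfrac1n\sum_l|x_l-y_{p(l)}|^2$ need not send the distinguished particle of $x$ to the distinguished particle of $y$, so I would correct it by a single transposition forcing $p(i)=i$ and estimate the resulting extra transport cost via the triangle inequality; the separate $|x_i-y_j|_{\dtorus}$ slot in (\ref{eq:conditiononf}) is precisely what absorbs this correction. Everything else is bookkeeping, so with both hypotheses verified, Proposition \ref{prop:extensionlemma} yields $\chi^n$ satisfying (i) and (ii), completing the proof.
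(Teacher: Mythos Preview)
Your proposal is correct and follows essentially the same approach as the paper: both reduce to verifying the hypotheses of Proposition \ref{prop:extensionlemma} for $f=\zeta^n$, cite the same mixed second-derivative bounds for the Lipschitz-in-$(t,s,q)$ hypothesis, and prove (\ref{eq:conditiononf}) by relabelling the particles and applying the estimates of Corollary \ref{cor:firstcortonablaxjMx}. The only minor technical difference is in the relabelling for (\ref{eq:conditiononf}): the paper places the two distinguished particles in slots $1$ and $2$ (so the ``cross'' distance $|y_i-x_j|$ is bounded by the diameter of $\dtorus$, which is where the $1/n$ term originates), whereas you place both in the same slot and absorb the mismatch with the optimal permutation via a single transposition---your version actually yields (\ref{eq:conditiononf}) without needing the $1/n$ term at all.
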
 \begin{proof} We check that $f=\zeta^n$ satisfies the conditions of Proposition \ref{prop:extensionlemma}.  The Lipschitz property in $t$ and $q$ follows from (\ref{eq:threemore}), while, in $s,$ from (\ref{eq:nablasxjM}). 
Hence, to obtain the Corollary, it is enough to prove that the condition (\ref{eq:conditiononf}) is satisfied by $f=\zeta^n.$ Fix then $x,y\in(\dtorus)^n,$ $i,j\in\{1,\ldots,n\},$ $s,t\in[0,T],$ $q\in\dtorus.$ Since the order in which we take the $n$ particles $x_1,\ldots,x_n$, which make up $x\in(\dtorus)^n,$ does not change $M(\cdots,x),$ and $\nabla_{x_j}M(\cdots,x)$ is periodic in $x,$ it can be assumed that: \begin{align*} \sum_{k\neq i,j}|x_k-y_k|^2 \leq \mathscr{W}^2(\mu^x,\mu^y) , \qquad |x_j-y_i| = |x_j-y_i|_{\dtorus}, \quad |x_i-y_j| = |x_i-y_j|_{\dtorus} ,  \\ \nabla_{x_j} M(t,s,q,y) = \nabla_{x_1}M(t,s,q,\bar{y}), \quad \nabla_{x_i}M(t,s,q,x) = \nabla_{x_1}M(t,s,q,\bar{x}) ,    \end{align*} where $\bar{y}$ denotes the result of shifting $y_j$ and $y_i$ to the first and second positions, respectively, in the $n$-uple $y,$ and $\bar{x}$ denotes the result of shifting $x_i$ and $x_j$ to the first and second positions, respectively, in the $n$-uple $x.$ Suppose, too, without loss of generality, that $i<j.$ In view of these simplifications, 
\begin{align*} 
|\nabla_{x_j}M(t,s,q,y)-&\nabla_{x_i}M(t,s,q,x)|\\  \leq &\  \|\nabla^2_{x_1,x_1} M\|_{\infty} |y_j-x_i| + \|\nabla^2_{x_1,x_2}M \|_{\infty}|y_i-x_j| +\sum_{k=1}^{i-1}\|\nabla^2_{x_1,x_{k+2}}M\|_{\infty}|y_k-x_k| \\  &\  \ + \sum_{k=i+1}^{j-1}\|\nabla^2_{x_1,x_{k+1}}M\|_{\infty}|y_k-x_k| + \sum_{k=j+1}^n\|\nabla^2_{x_1,x_k}M\|_{\infty}|y_k-x_k|.  
\end{align*} 
Therefore, by the bounds of Corollary \ref{cor:firstcortonablaxjMx}, \begin{align*} |\nabla_{x_j}M(t,s,q,y)-\nabla_{x_i}M(t,s,q,x)|  \leq &\ \frac{C}{n} |y_j-x_i| + \frac{C}{n^2}|y_i-x_j| + \frac{C}{n^2}  \sum_{k\neq i,j}|y_k-x_k| \\  \leq &\  \frac{C}{n} |y_j-x_i|_{\dtorus} + \frac{C}{n^2}|y_i-x_j|_{\dtorus} + \frac{C\sqrt{n}}{n^2}\sqrt{\sum_{k\neq i,j}|y_k-x_k|^2} \\ \leq &\ \frac{C}{n}(|y_j-x_i|_{\dtorus} + \frac{\sqrt{d}}{2n} + \mathscr{W}(\mu^x,\mu^y)), \end{align*} where $\sqrt{d}/2$ is the diamater of $\dtorus.$  Thus, 
\begin{equation*}
|n\nabla_{x_j}M(t,s,q,y) - n\nabla_{x_i}M(t,s,q,x)| \leq \sqrt{d} C (|y_j-x_i|_{\dtorus} + \mathscr{W}(\mu^x,\mu^y) + \frac{1}{n}),   
\end{equation*} 
which proves property (\ref{eq:conditiononf}) for $f=\zeta^n,$ since $i$ and $j$ were arbitrary.     
\end{proof} 
\begin{lemma}
\label{lemma:mastermapisdiffble} 
For every $s\in[0,T],$ the $\dtorus\times\R^d$-valued map $\Sigma[s,\cdot](\cdot,\cdot)$ is differentiable on $\wasstwospace\times[0,T]\times\dtorus,$ that is: there exists a mapping \begin{align*} \bar\nabla_{\mu}\Sigma:[0,T]\times[0,T]\times\dtorus\times\dtorus\times\wasstwospace &\ \longrightarrow \ \R^{d^2}\times\R^{d^2} \\ (t,s,q,x,\mu) &\ \longmapsto \bar\nabla_{\mu}\Sigma[s,\mu](t,q,x)  \end{align*} such that, for every $s,t,t'\in[0,T],$ $q,q'\in\dtorus,$ $\mu,\nu\in\wasstwospace,$ $\gamma\in\Gamma_0(\mu,\nu)$, \begin{align}\big|&\Sigma[s,\nu](t',q')-\Sigma[s,\mu](t,q) - \partial_t\Sigma[s,\mu](t,q)(t'-t) - \nabla_q\Sigma[s,\mu](t,q)\cdot (q'-q)\notag \\ &\ \quad  - \underset{\dtorus\times\dtorus}{\int} \bar\nabla_\mu\Sigma[s,\mu](t,q,x)\cdot (y-x)\gamma(dx,dy)   \big|  \notag \\  &\leq C (|t'-t|^2 + |q'-q|^2 + \mathscr{W}^2(\mu,\nu)) .   \label{eq:munugammazero} \end{align} Moreover, the mapping $\bar\nabla_{\mu}\Sigma$ is Lipschitz.     \end{lemma}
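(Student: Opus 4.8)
The strategy is the discretization-and-extension method, following \cite{mfgmain}. The candidate derivative will be built from the extended Lipschitz field $\chi^n$ of Corollary \ref{cor:chin}. First I would fix a sequence of $n$-particle approximations: for each $\mu\in\wasstwospace$, choose $x(n)=(x_1(n),\ldots,x_n(n))\in(\dtorus)^n$ with $\mu^{x(n)}\to\mu$ and, by the last item of Section \ref{section:preliminaries}, with each $x_j(n)\in\supp(\mu)$. Similarly approximate $\nu$ by $\mu^{x'(n)}$, ordering and shifting coordinates so that $\mathscr{W}^2(\mu^{x(n)},\mu^{x'(n)}) = \sum_j |x_j(n)-x_j'(n)|^2$. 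The discrete Taylor estimate (\ref{eq:taylor1}) then reads
\begin{align*}
\big| M' - M - \partial_t M(t'-t) - \nabla_q M\cdot(q'-q) - \tfrac1n\sum_{j=1}^n \zeta^n(t,s,q,(x_j,\mu^{x}))\cdot(x_j'-x_j) \big| \leq C(|t'-t|^2 + |q'-q|^2 + \mathscr{W}^2(\mu^{x},\mu^{x'})),
\end{align*}
since $\nabla_{x_j}M = \tfrac1n\zeta^n$. The plan is to replace $\tfrac1n\sum_j \zeta^n(\cdots)\cdot(x_j'-x_j)$ by $\int_{\dtorus\times\dtorus}\chi^n(t,s,q,x,\mu^{x})\cdot(y-x)\,\gamma_n(dx,dy)$, where $\gamma_n\in\Gamma_0(\mu^{x(n)},\mu^{x'(n)})$ is the optimal plan realizing the reordering; here Corollary \ref{cor:chin}(ii) controls the error $\|\chi^n|_{\mathcal B}-\zeta^n\|_{\mathcal B}\leq C/n$, contributing a term of size $\tfrac{C}{n}\mathscr{W}(\mu^x,\mu^{x'})$, which is absorbed into the quadratic right-hand side once we also use $\mathscr{W}(\mu^x,\mu^{x'})\to\mathscr{W}(\mu,\nu)$.

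Next I would define the candidate $\bar\nabla_\mu\Sigma[s,\mu](t,q,x)$. Since $\{\chi^n\}_n$ is a sequence of uniformly $C$-Lipschitz maps on the compact space $[0,T]^2\times\dtorus\times\dtorus\times\wasstwospace$, by Arzel\`a--Ascoli a subsequence converges uniformly to a $C$-Lipschitz limit; I would take $\bar\nabla_\mu\Sigma$ to be this limit (a diagonal argument, or uniqueness of the limit identified via the Taylor inequality, removes the subsequence). The Lipschitz property of $\bar\nabla_\mu\Sigma$ is then immediate. To pass the estimate to the limit: using Corollary \ref{cor:firstcortonablaxjMx} and the continuity of $\Sigma$ in all variables (Lemmas \ref{lemma:continuityofSigmains}, \ref{lemma:continuityofSigmainmu}), $M(t,s,q,x(n))=\Sigma[s,\mu^{x(n)}](t,q)\to\Sigma[s,\mu](t,q)$, $\partial_t M\to\partial_t\Sigma[s,\mu](t,q)$, $\nabla_q M\to\nabla_q\Sigma[s,\mu](t,q)$; and by uniform convergence $\chi^n(t,s,q,\cdot,\mu^{x(n)})\to\bar\nabla_\mu\Sigma[s,\mu](t,q,\cdot)$ uniformly, while $\gamma_n\to$ some $\gamma\in\Gamma_0(\mu,\nu)$ narrowly (by stability of optimal plans under $\mathscr{W}$-convergence of marginals on a compact space), so $\int \chi^n(\cdots)\cdot(y-x)\gamma_n \to \int \bar\nabla_\mu\Sigma[s,\mu](t,q,x)\cdot(y-x)\gamma(dx,dy)$. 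Taking $n\to\infty$ in the discrete estimate yields (\ref{eq:munugammazero}) for that particular $\gamma$. A separate argument—exploiting that the quadratic error is symmetric in the roles of the two measures, or re-running the approximation with the roles of $\mu,\nu$ and the plan reversed—upgrades this to all $\gamma\in\Gamma_0(\mu,\nu)$, as the statement requires.

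The main obstacle I anticipate is the limit-passage in the coupling term: one must ensure that the optimal discrete plans $\gamma_n$, which are implicitly defined by the coordinatewise reordering used to make $\mathscr{W}^2(\mu^x,\mu^{x'})=\sum|x_j-x_j'|^2$, converge to an \emph{optimal} plan between $\mu$ and $\nu$, and that the two sequences of particles $x(n)$ and $x'(n)$ can be chosen \emph{jointly} consistently with such a plan (not just each marginally dense). This is handled by first fixing $\gamma\in\Gamma_0(\mu,\nu)$, approximating $\gamma$ by empirical measures $\gamma_n = \tfrac1n\sum\delta_{(x_j(n),x_j'(n))}$ with $\gamma_n\to\gamma$ narrowly (density of empirical measures in $\mathscr{P}_2$, as invoked in Section \ref{section:preliminaries}), and noting that then $\mathscr{W}(\mu^{x(n)},\mu^{x'(n)})\leq \|\pi^1-\pi^2\|_{\gamma_n}\to\mathscr{W}(\mu,\nu)$ with the reverse inequality holding in the limit, so $\mathscr{W}^2(\mu^{x(n)},\mu^{x'(n)})\to\mathscr{W}^2(\mu,\nu)$ and the reordering is asymptotically the optimal one. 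Everything else—the Taylor estimate, the boundedness and equi-Lipschitz bounds on $\chi^n$, the convergence $M^k\to M$—is already in place from Corollaries \ref{cor:taylor1}, \ref{cor:chin} and \ref{cor:firstcortonablaxjMx}, so the remaining work is bookkeeping of error terms to confirm that each contribution is $O(|t'-t|^2+|q'-q|^2+\mathscr{W}^2(\mu,\nu))$ uniformly in $n$.
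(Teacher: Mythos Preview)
Your proposal is correct and follows essentially the same route as the paper: fix $\gamma\in\Gamma_0(\mu,\nu)$, approximate it by empirical measures $\gamma_n=\tfrac1n\sum\delta_{(x_j(n),x_j'(n))}$, apply the discrete Taylor estimate (\ref{eq:taylor1}), swap $\zeta^n$ for the Lipschitz extension $\chi^n$ at cost $O(1/n)$, extract a uniform limit $\bar\nabla_\mu\Sigma$ via Arzel\`a--Ascoli, and pass to the limit. The one refinement the paper makes that you do not mention explicitly is that by choosing the atoms $(x_j(n),x_j'(n))$ in $\supp(\gamma)$, the set $\{(x_j(n),x_j'(n))\}_j$ is automatically $|\cdot|_{\dtorus}$-cyclically monotone, so $\gamma_n\in\Gamma_0(\mu^{x(n)},\mu^{x'(n)})$ directly; this makes the identification $\tfrac1n\sum|x_j-x_j'|^2=\mathscr{W}^2(\mu^{x(n)},\mu^{x'(n)})$ exact rather than asymptotic, but your version (passing $\|\pi^1-\pi^2\|_{\gamma_n}^2\to\mathscr{W}^2(\mu,\nu)$) works just as well in the limit.
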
 \begin{proof} Let $\mu, \nu\in\wasstwospace,$ and let $\gamma\in\Gamma_0(\mu,\nu).$ Appealing to the last fact in the fist list of the Preliminaries, there is a sequence $\{\gamma(n)\}_{n=1}^{\infty},$ converging narrowly to $\gamma$ in $\mathcal{P}(\dtorus\times\dtorus),$ such that $$ \gamma(n) = \frac{1}{n}\sum_{j=1}^n \delta_{(x_j(n),y_j(n))} , $$ and for each $j\in\{1,\ldots,n\},$ $(x_j(n),y_j(n))$ belongs to the support of $\gamma.$ Due to this latter fact (see, for instance, \cite[Theorem 6.1.4]{gradientflows}), for each $n\in\Z^+,$ the sequence $\{(x_j(n),y_j(n))\}_{j=1}^n$ is $|\cdot|_{\dtorus}$-monotone, and, therefore, $$ \gamma(n) \in \Gamma_0(\mu^{x(n)},\mu^{y(n)}) , \  \qquad n\in\Z^+.$$ It is also true that 
$$ 
\lim_{n\to\infty} \mathscr{W}(\mu,\mu^{x(n)}) = 0, \qquad \lim_{n\to\infty} \mathscr{W}(\nu,\mu^{y(n)}) = 0. 
$$ 
Let $\zeta^n$ be defined as in (\ref{eq:zetan}), so that, as a consequence, for each $x(n)$ of our sequence, \begin{align*} \zeta^n(t,s,q,(x_j(n),\mu^{x(n)})) =  n\nabla_{x_j(n)}M(t,s,q,x(n)) =  n\nabla_{x_j(n)}\Sigma[s,\mu^{x(n)}](t,q)  , \end{align*} $j\in\{1,\ldots,n\}.$   Recall the second-order estimate (\ref{eq:taylor1}), now with $x=x(n)$ and $x'=y(n)$: \begin{align*} \big|&\Sigma[s,\mu^{y(n)}](t,q)-\Sigma[s,\mu^{x(n)}](t',q') - \partial_t\Sigma[s,\mu^{x(n)}](t,q)(t'-t) - \nabla_q\Sigma[s,\mu^{x(n)}](t,q)\cdot (q'-q)  \\ &\ \quad  - \sum_{j=1}^n \nabla_{x_j(n)}\Sigma[s,\mu^{x(n)}](t,q)\cdot (y_j(n)-x_j(n)) \big|\\ &\leq C (|t'-t|^2 + |q-q'|^2 + \mathscr{W}^2(\mu^{x(n)},\mu^{y(n)})).   \end{align*} Since 
$$ \frac{1}{n} \sum_{j=1}^n \zeta^n(t,s,q,(x_j(n),\mu^{x(n)})) \cdot (y_j(n)-x_j(n)) = \underset{\dtorus\times\dtorus}{\int} \zeta^n(t,s,q,(x,\mu^{x(n)}))\cdot (y-x)\gamma(n)(dx,dy) ,  $$ the latter inequality is the same as \begin{align*} \big|&\Sigma[s,\mu^{y(n)}](t',q')-\Sigma[s,\mu^{x(n)}](t,q) - \partial_t\Sigma[s,\mu^{x(n)}](t,q)(t'-t) - \nabla_q\Sigma[s,\mu^{x(n)}](t,q)\cdot (q'-q)  \\ &\ \quad  - \underset{\dtorus\times\dtorus}{\int} \zeta^n(t,s,q,(x,\mu^{x(n)}))\cdot (y-x)\gamma(n)(dx,dy)   \big|\\ &\leq C (|t'-t|^2 + |q-q'|^2 + \mathscr{W}^2(\mu^{x(n)},\mu^{y(n)})). \end{align*} Denote by $\chi^n$ the extension of $\zeta^n$ furnished by Corollary \ref{cor:chin}. The same inequality holds if we substitute $\chi^n$ for $\zeta^n$ in the previous inequality, because these functions coincide on the set $\mathcal B,$ which includes the support of $\gamma(n).$ But, since we will pass to the limit, we rather write \begin{align} \big|&\Sigma[s,\mu^{y(n)}](t,q)-\Sigma[s,\mu^{x(n)}](t,q) - \partial_t\Sigma[s,\mu^{x(n)}](t,q)(t'-t) - \nabla_q\Sigma[s,\mu^{x(n)}](t,q)\cdot (q'-q)\notag   \\ &\ \quad  - \underset{\dtorus\times\dtorus}{\int} \chi^n(t,s,q,(x,\mu^{x(n)}))\cdot (y-x)\gamma(n)(dx,dy)   \big| \notag  \\ &\leq C (|t'-t|^2 + |q-q'|^2 + \mathscr{W}^2(\mu^{x(n)},\mu^{y(n)})) \notag  \\ &\ \quad + \big|\underset{\dtorus\times\dtorus}{\int} [ \zeta^n(t,s,q,(x,\mu^{x(n)})) - \chi^n(t,s,q,(x,\mu^{x(n)})) ]\cdot (y-x)\gamma(n)(dx,dy) \big| \notag \\ &\leq C (|t'-t|^2 + |q-q'|^2 + \mathscr{W}^2(\mu^{x(n)},\mu^{y(n)})) + \frac{C}{n} \mathscr{W}(\mu^{x(n)},\mu^{y(n)}), \label{eq:longineq}   \end{align} by Corollary \ref{cor:chin}(ii) and the fact $\gamma(n)\in\Gamma_0(\mu^{x(n)},\nu^{y(n)}).$ Now, by Corollary \ref{cor:chin}(i), each $\chi^n,$ $n=1,\ldots$ is $C$-Lipschitz on the bounded domain $[0,T]^2\times(\dtorus)^2\times\wasstwospace.$ The functions $\chi^n$ are also pointwise uniformly bounded, because of (\ref{eq:ineqnablaxjM}). Thus, the sequence $\{\chi^n\}_{n=1}^{\infty}$ is equicontinuous and pointwise uniformly bounded, so a subsequence of it converges to a $C$-Lipschitz mapping, which we define as the mapping $\bar \nabla_{\mu}\Sigma$ introduced in the statement of this lemma. Passing to the limit as $n\to\infty$ in (\ref{eq:longineq}), we prove (\ref{eq:munugammazero}), in particular, that $\Sigma$ is differentiable in the $\mu$ variable.
\end{proof} We will denote by $\bar\nabla_{\mu}\Sigma^1$ the first component (the $\dtorus$-valued part) of $\bar\nabla_{\mu}\Sigma,$ and $\bar\nabla_{\mu}\Sigma^2$ the second component ($\R^d$-valued) of $\bar\nabla_{\mu}\Sigma.$ 

Next we prove the analogue of Lemma \ref{lemma:mastermapisdiffble} for $X=(\Sigma^1)^{-1}.$ 
\begin{defn}
\label{defn:nablabarmuX} 
For $t,s\in[0,T],$ $\mu\in\wasstwospace,$ $q,x\in\dtorus,$ put 
$$ 
\bar\nabla_{\mu}X[s,\mu](t,q,x) := -\nabla_qX[s,\mu](t,q)\bar\nabla_{\mu}\Sigma^1[s,\mu](t,X[s,\mu](t,q),x) .  
$$   
\end{defn} 
Before stating and proving the lemma, we recall formula (\ref{eq:partialtX}): \begin{align*}   \partial_t X[s,\mu](t,q) = -\nabla_qX[s,\mu](t,q)\partial_t\Sigma^1[s,\mu](t,\cdot)\circ X[s,\mu](t,q).  \end{align*} \begin{lemma}\label{lemma:inversemastermapisdiffble} For every $s\in[0,T],$ the $\R^d$-valued map $X[s,\cdot](\cdot,\cdot)$ is differentiable on $\wasstwospace\times[0,T]\times\dtorus,$ i.e., there is a constant $C>0$ such that for every $s,t,t'\in[0,T],$ $q,q'\in\dtorus,$ $\mu,\nu\in\wasstwospace,$ $\gamma\in\Gamma_0(\mu,\nu),$ \begin{align} \big|& X[s,\nu](t',q')-X[s,\mu](t,q) - \partial_tX[s,\mu](t,q)(t'-t) - \nabla_qX[s,\mu](t,q)\cdot (q'-q)\notag \\ &\ \quad  - \underset{\dtorus\times\dtorus}{\int} \bar\nabla_\mu X[s,\mu](t,q,x)\cdot (y-x)\gamma(dx,dy)   \big|  \notag \\  &\leq C (|t'-t|^2 + |q-q'|^2 + \mathscr{W}^2(\mu,\nu)) ,   \label{eq:munugammazero1} \end{align} where $\partial_t X,$ $\nabla_q X,$ and the mapping $\bar\nabla_{\mu}X$ of Definition \ref{defn:nablabarmuX}, are continuous.     
\end{lemma}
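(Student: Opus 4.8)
The plan is to transfer the differentiability of $\Sigma^1$ established in Lemma \ref{lemma:mastermapisdiffble} to its inverse $X = (\Sigma^1)^{-1}$ via the identity $\Sigma^1[s,\mu](t,X[s,\mu](t,q)) = q$, treating $(t,q,\mu)$ simultaneously. First I would fix $s\in[0,T]$, take $\mu,\nu\in\wasstwospace$, $\gamma\in\Gamma_0(\mu,\nu)$, and $t,t',q,q'$, and write $w := X[s,\nu](t',q')$, $w_0 := X[s,\mu](t,q)$. The starting point is that $\Sigma^1[s,\nu](t',w) = q' = q + (q'-q)$ while $\Sigma^1[s,\mu](t,w_0) = q$. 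Applying the first-order Taylor estimate for $\Sigma^1$ (inequality \eqref{eq:munugammazero}, restricted to the first component) at the base point $(t,w_0,\mu)$ and the perturbed point $(t',w,\nu)$ gives
\begin{align*}
q'-q = \Sigma^1[s,\nu](t',w) - \Sigma^1[s,\mu](t,w_0) = &\ \partial_t\Sigma^1[s,\mu](t,w_0)(t'-t) + \nabla_q\Sigma^1[s,\mu](t,w_0)(w-w_0) \\ &\ + \int_{\dtorus\times\dtorus}\bar\nabla_\mu\Sigma^1[s,\mu](t,w_0,x)\cdot(y-x)\gamma(dx,dy) + R,
\end{align*}
where $|R| \le C(|t'-t|^2 + |w-w_0|^2 + \mathscr{W}^2(\mu,\nu))$. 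Then I would solve for $w - w_0$ by inverting $\nabla_q\Sigma^1[s,\mu](t,w_0) = [\nabla_q X[s,\mu](t,q)]^{-1}$ (the inverse exists and is bounded by Lemma \ref{lemma:invertibilityofsigma}), using $\partial_t X = -\nabla_q X\,\partial_t\Sigma^1\circ X$ (formula \eqref{eq:partialtX}) and Definition \ref{defn:nablabarmuX} of $\bar\nabla_\mu X$ to recognize the resulting linear terms as exactly $\partial_t X[s,\mu](t,q)(t'-t) + \nabla_q X[s,\mu](t,q)(q'-q) + \int \bar\nabla_\mu X[s,\mu](t,q,x)\cdot(y-x)\gamma(dx,dy)$. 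This yields \eqref{eq:munugammazero1} provided $|R|$ together with the error from linearizing the inversion is controlled by $C(|t'-t|^2+|q-q'|^2+\mathscr{W}^2(\mu,\nu))$.

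The one genuine obstacle is that $|R|$ is quadratic in $|w-w_0| = |X[s,\nu](t',q') - X[s,\mu](t,q)|$, not in the input increments. This is resolved by invoking the Lipschitz estimate for $X$ already proved in Corollary \ref{cor:taylor2}, which gives $|w-w_0| \le C(|t'-t| + |q'-q| + \mathscr{W}(\mu,\nu))$, hence $|w-w_0|^2 \le 3C^2(|t'-t|^2 + |q'-q|^2 + \mathscr{W}^2(\mu,\nu))$; so $R$ is absorbed into the right-hand side of \eqref{eq:munugammazero1} at the price of enlarging $C$. The same Lipschitz bound handles the cross terms that appear when one writes $\nabla_q\Sigma^1[s,\mu](t,w_0)(w-w_0)$ and compares with $\nabla_q X[s,\mu](t,q)(q'-q)$ after substituting the expansion back in: every mismatch is a product of a bounded matrix with a quadratic-in-increments quantity.

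Finally, for the continuity claims: $\partial_t X$ and $\nabla_q X$ are continuous by the inverse function theorem applied to $\Sigma^1 \in W^{2,2;\infty}$ (as in the proof of Lemma \ref{lemma:invertibilityofsigma}, using \eqref{eq:inverseformula} and continuity of matrix inversion), and continuity of $\bar\nabla_\mu X$ follows from Definition \ref{defn:nablabarmuX} as a composition of the continuous maps $\nabla_q X$, $X$ itself (continuous in all variables by Corollary \ref{cor:taylor2}), and $\bar\nabla_\mu\Sigma^1$, which is Lipschitz by Lemma \ref{lemma:mastermapisdiffble}. I expect the bookkeeping in the linearization of the matrix inversion to be the most tedious part, but it is entirely routine given the uniform bounds \eqref{eq:boundsfornablaqsigma} and \eqref{eq:boundsfornablaqsigma1}.
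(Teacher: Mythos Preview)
Your proposal is correct and follows essentially the same route as the paper's proof: both set $\tilde q = X[s,\mu](t,q)$, $\tilde q' = X[s,\nu](t',q')$ (your $w_0,w$), apply the Taylor estimate \eqref{eq:munugammazero} for $\Sigma^1$ at the pair $(\tilde q,\tilde q')$, multiply through by $\nabla_q X = (\nabla_q\Sigma^1)^{-1}$ to identify the linear terms via \eqref{eq:partialtX} and Definition~\ref{defn:nablabarmuX}, and then invoke Corollary~\ref{cor:taylor2} to convert $|\tilde q'-\tilde q|^2$ into the desired quadratic bound. The only difference is cosmetic: the paper factors out $\nabla_q X$ directly from the left-hand side of \eqref{eq:munugammazero1}, so no ``cross terms'' or extra linearization errors appear --- the identification is exact, and your anticipated bookkeeping collapses to a single application of \eqref{eq:boundsfornablaqsigma1}.
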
 \begin{proof} The continuity of $\bar\nabla_{\mu}X$ is immediate from its definition, and the continuity of $\nabla_q X$ and $\partial_t X$ has been known since Lemma \ref{lemma:invertibilityofsigma} and formula (\ref{eq:partialtX}) respectively. Let us put \begin{equation}\label{eq:qtilde} \tilde q = X[s,\mu](t,q), \quad \tilde q' = X[s,\nu](t',q').  \end{equation} We write out the expression on the left hand side of (\ref{eq:munugammazero1}) and factor out $\nabla_q X[s,\mu](t,q)$, while also using the fact that $\nabla_qX[s,\mu](t,q)$ and $\nabla_q\Sigma^1[s,\mu](t,\tilde q)$ are inverses of one another: \begin{align*} &\ \bigg| \nabla_qX[s,\mu](t,q)  \bigg[ \nabla_q\Sigma^1[s,\mu](t,\tilde q)(\tilde q' -\tilde q) +    \partial_t \Sigma^1[s,\mu](t,\tilde q)(t'-t) \\ &\ - (\Sigma^1[s,\nu](t',\tilde q') - \Sigma^1[s,\mu](t,\tilde q)) + \int_{\dtorus\times\dtorus} \bar\nabla_{\mu}\Sigma^1[s,\mu](t,\tilde q,x)\cdot (y-x)\gamma(dx,dy) \bigg] \bigg| \\  = &  \ \bigg| -\nabla_q X[s,\mu](t,q) \bigg[ \Sigma^1[s,\nu](t',\tilde q') - \Sigma^1[s,\mu](t,\tilde q) -   \partial_t \Sigma^1[s,\mu](t,\tilde q)(t'-t) \\ &\  - \nabla_q\Sigma^1[s,\mu](t,\tilde q)(\tilde q' -\tilde q)  - \int_{\dtorus\times\dtorus}  \bar\nabla_{\mu}\Sigma^1[s,\mu](t,\tilde q,x)\cdot (y-x)\gamma(dx,dy) \bigg]  \bigg| \\ \leq\  & \ 4(1+\sqrt{d})^{d-1} C(|t'-t|^2 + |\tilde q'-\tilde q|^2 + \mathscr{W}^2(\mu,\nu)) \\  = & \   4(1+\sqrt{d})^{d-1}C(|t'-t|^2 + |X[s,\nu](t',\tilde q')-X[s,\mu](t,q)|^2 + \mathscr{W}^2(\mu,\nu)).   \end{align*} But, by Corollary \ref{cor:taylor2}, the term $|X[s,\nu](t',q')-X[s,\mu](t,q)|$ is bounded by $C(|t'-t|+|q'-q|+\mathscr{W}(\mu,\nu))$ for some $C>0.$ Inserting this bound into the last expression, after expanding and raising the value of $C,$ one obtains (\ref{eq:munugammazero1}).     \end{proof} 
\paragraph{Regularity of $\mathcal V$}
Let us look back at the definition of $\mathcal{V},$ given by (\ref{eq:mathcalV}). Set now \begin{equation}\label{eq:defofnablamumathcalV} \bar\nabla_{\mu}\mathcal V[s,\mu](t,q,x) := \bar\nabla_{\mu}\Sigma^2[s,\mu](t,X[s,\mu](t,q),x) + \nabla_q\Sigma^2[s,\mu](t,X[s,\mu](t,q))\bar\nabla_{\mu}X[s,\mu](t,q,x),      \end{equation} for $s,t\in[0,T],$ $q,x\in\dtorus,$ $\mu\in\wasstwospace.$ \begin{lemma}\label{lemma:mathcalVisdiffble} For every $s\in[0,T],$ the $\R^d$-valued map $\mathcal V[s,\cdot](\cdot,\cdot)$ is differentiable on $\wasstwospace\times[0,T]\times\dtorus,$ i.e., there is a constant $C>0$ such that for every $s,t,t'\in[0,T],$ $q,q'\in\dtorus,$ $\mu,\nu\in\wasstwospace,$ $\gamma\in\Gamma_0(\mu,\nu),$ \begin{align} \big|& \mathcal V[s,\nu](t',q')-\mathcal V[s,\mu](t,q) - \partial_t\mathcal V[s,\mu](t,q)(t'-t) - \nabla_q\mathcal V[s,\mu](t,q)\cdot (q'-q)\notag \\ &\ \quad  - \underset{\dtorus\times\dtorus}{\int} \bar\nabla_\mu \mathcal V[s,\mu](t,q,x)\cdot (y-x)\gamma(dx,dy)   \big|  \notag \\  &\leq C (|t'-t|^2 + |q-q'|^2 + \mathscr{W}^2(\mu,\nu)) ,   \label{eq:munugammazero2} \end{align} where the mapping $\bar\nabla_{\mu}\mathcal V,$ defined by (\ref{eq:defofnablamumathcalV}), and $\partial_t\mathcal V,$ $\nabla_q\mathcal V,$ are continuous. \end{lemma}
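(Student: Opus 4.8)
The plan is to exploit the composition structure: by (\ref{eq:mathcalV}) one has $\mathcal V[s,\mu](t,q) = \Sigma^2[s,\mu](t,X[s,\mu](t,q))$, so we should chain together the second-order Taylor estimate (\ref{eq:munugammazero}) for the second component $\Sigma^2$ of $\Sigma$ (Lemma \ref{lemma:mastermapisdiffble}) with the one, (\ref{eq:munugammazero1}), for its inverse $X$ (Lemma \ref{lemma:inversemastermapisdiffble}). As in the proof of Lemma \ref{lemma:inversemastermapisdiffble}, abbreviate $\tilde q := X[s,\mu](t,q)$ and $\tilde q' := X[s,\nu](t',q')$, so that $\mathcal V[s,\nu](t',q') - \mathcal V[s,\mu](t,q) = \Sigma^2[s,\nu](t',\tilde q') - \Sigma^2[s,\mu](t,\tilde q)$. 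Fix $\gamma\in\Gamma_0(\mu,\nu)$ as in the statement.

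First I would apply the second component of (\ref{eq:munugammazero}) with base point $(t,\tilde q,\mu)$ and increment point $(t',\tilde q',\nu)$. This writes $\Sigma^2[s,\nu](t',\tilde q') - \Sigma^2[s,\mu](t,\tilde q)$ as $\partial_t\Sigma^2[s,\mu](t,\tilde q)(t'-t) + \nabla_q\Sigma^2[s,\mu](t,\tilde q)(\tilde q'-\tilde q) + \int_{\dtorus\times\dtorus}\bar\nabla_\mu\Sigma^2[s,\mu](t,\tilde q,x)\cdot(y-x)\gamma(dx,dy)$ plus an error of size $C(|t'-t|^2 + |\tilde q'-\tilde q|^2 + \mathscr W^2(\mu,\nu))$. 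By Corollary \ref{cor:taylor2}, $|\tilde q'-\tilde q| = |X[s,\nu](t',q') - X[s,\mu](t,q)| \le C(|t'-t| + |q'-q| + \mathscr W(\mu,\nu))$, so $|\tilde q'-\tilde q|^2$ is already of the required second order in $(|t'-t|,|q'-q|,\mathscr W(\mu,\nu))$ and the error is absorbed. Next I would substitute, in the linear term $\nabla_q\Sigma^2[s,\mu](t,\tilde q)(\tilde q'-\tilde q)$, the first-order expansion of $\tilde q'-\tilde q$ provided by (\ref{eq:munugammazero1}), namely $\tilde q'-\tilde q = \partial_t X[s,\mu](t,q)(t'-t) + \nabla_q X[s,\mu](t,q)(q'-q) + \int_{\dtorus\times\dtorus}\bar\nabla_\mu X[s,\mu](t,q,x)\cdot(y-x)\gamma(dx,dy) + O(|t'-t|^2 + |q'-q|^2 + \mathscr W^2(\mu,\nu))$. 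Since $\Sigma\in\mathcal M_0(A_1,\theta A_2,\theta B,E,E_1,\theta E_2,T)$, the matrix $\nabla_q\Sigma^2[s,\mu](t,\tilde q)$ is uniformly bounded (by $\theta A_2$), so multiplying it against that $O(\cdots)$ residue keeps everything of second order.

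Then I would collect terms by the three directions. The coefficient of $(t'-t)$ is $\partial_t\Sigma^2[s,\mu](t,\tilde q) + \nabla_q\Sigma^2[s,\mu](t,\tilde q)\,\partial_t X[s,\mu](t,q)$, which is exactly $\partial_t\mathcal V[s,\mu](t,q)$ by the chain rule applied to the composition (\ref{eq:mathcalV}) (recall $\tilde q = X[s,\mu](t,q)$, and $\Sigma^2$, $X$ are $C^1$ in $(t,q)$ by $\mathcal M_0$-membership and Proposition \ref{prop:XisC1int}); the coefficient of $(q'-q)$ is $\nabla_q\Sigma^2[s,\mu](t,\tilde q)\,\nabla_q X[s,\mu](t,q) = \nabla_q\mathcal V[s,\mu](t,q)$; and, pulling the bounded factor $\nabla_q\Sigma^2[s,\mu](t,\tilde q)$ inside the integral, the integral contribution becomes $\int_{\dtorus\times\dtorus}\big[\bar\nabla_\mu\Sigma^2[s,\mu](t,\tilde q,x) + \nabla_q\Sigma^2[s,\mu](t,\tilde q)\,\bar\nabla_\mu X[s,\mu](t,q,x)\big]\cdot(y-x)\gamma(dx,dy)$, where the bracket is precisely $\bar\nabla_\mu\mathcal V[s,\mu](t,q,x)$ as defined in (\ref{eq:defofnablamumathcalV}). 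Summing the three leftover error contributions yields (\ref{eq:munugammazero2}).

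For the continuity assertions: $\partial_t X$ and $\nabla_q X$ are continuous by (\ref{eq:partialtX}), (\ref{eq:inverseformula}) together with continuity of $\nabla_q\Sigma^1$ and of $v$; $\bar\nabla_\mu X$ is continuous by Lemma \ref{lemma:inversemastermapisdiffble}; $\partial_t\Sigma^2$ is continuous because, by the ODE (\ref{eq:hamODEs}), it equals $-\nabla_qH(\Sigma^1,\Sigma^2) - \nabla_qF(\Sigma^1,\Sigma^1[s,\mu](t,\cdot)_{\#}\mu)$, a continuous function of continuous data (using Lemma \ref{lemma:continuityofSigmainmu} and its extensions); $\nabla_q\Sigma^2$ is continuous (Lipschitz in $q$, continuous in $s,\mu$); $\bar\nabla_\mu\Sigma^2$ is Lipschitz by Lemma \ref{lemma:mastermapisdiffble}; and $X$ is continuous by Corollary \ref{cor:taylor2}. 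As $\partial_t\mathcal V$, $\nabla_q\mathcal V$, $\bar\nabla_\mu\mathcal V$ are finite sums of products and compositions of these maps, they are continuous. The step I expect to be the main bookkeeping obstacle is precisely the substitution in the second paragraph — making sure that replacing $\tilde q'-\tilde q$ inside the linear term of the $\Sigma^2$-expansion produces no first-order residue; this hinges entirely on the uniform boundedness of $\nabla_q\Sigma^2$, $\partial_t\Sigma^2$, $\nabla_q X$, $\partial_t X$, which is guaranteed by $\Sigma\in\mathcal M_0(A_1,\theta A_2,\theta B,E,E_1,\theta E_2,T)$ and by Corollary \ref{cor:taylor2}.
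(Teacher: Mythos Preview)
Your proposal is correct and follows essentially the same approach as the paper: both proofs chain the second-order Taylor estimate (\ref{eq:munugammazero}) for $\Sigma^2$ with the estimate (\ref{eq:munugammazero1}) for $X$, using the notation $\tilde q = X[s,\mu](t,q)$, $\tilde q' = X[s,\nu](t',q')$, the uniform bound on $\nabla_q\Sigma^2$, and Corollary \ref{cor:taylor2} to convert $|\tilde q'-\tilde q|^2$ into second-order terms in the original increments. The only difference is presentational---the paper writes out the full left-hand side of (\ref{eq:munugammazero2}), factors out $\nabla_q\Sigma^2[s,\mu](t,\tilde q)$, and adds and subtracts $\tilde q'-\tilde q$ inside the bracket, whereas you expand $\Sigma^2$ first and then substitute for $\tilde q'-\tilde q$; these are algebraically the same manipulation.
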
 \begin{proof}  We know that $$ \nabla_q\mathcal V_t[s,\mu] = \nabla_q\Sigma^2_t[s,\mu]\nabla_qX_t[s,\mu], \quad \partial_t\mathcal V_t[s,\mu] = \partial_t\Sigma^2_t[s,\mu] + \nabla_q\Sigma^2_t[s,\mu]\partial_tX_t[s,\mu]. $$ Therefore, the continuity of the functions stated in the lemma follows from that of $\nabla_q\Sigma,$ $\nabla_q X,$ $\partial_t \Sigma,$ $\partial_t X,$ and the continuity, proved above, of $\bar\nabla_{\mu}\Sigma$ and $\bar\nabla_{\mu}X.$   Keeping the notation (\ref{eq:qtilde}), we first write down the expression to estimate, i.e.~the left hand side of (\ref{eq:munugammazero2}), and factor out $\nabla_q \Sigma^2[s,\mu](t,\tilde q)$:    \begin{align*} &\  \big| \Sigma^2[s,\nu](t',\tilde q')-\Sigma^2[s,\mu](t,\tilde q) - \nabla_q\Sigma^2[s,\mu](t,\tilde q)\nabla_q X[s,\mu](t,q)(q'-q) \\ &\ \quad - (\partial_t\Sigma^2[s,\mu](t,\tilde q) +\nabla_q\Sigma^2[s,\mu](t,\tilde q) \partial_t X[s,\mu](t,q))(t'-t) \\ &\ \quad - \int_{\dtorus\times\dtorus}\big[ 
\bar\nabla_{\mu}\Sigma^2[s,\mu](t,\tilde q,x) +  \nabla_q\Sigma^2[s,\mu](t,\tilde q)\bar\nabla_{\mu}X[s,\mu](t,q,x)   
\big] \cdot 
(y-x) \gamma(dx,dy) \big| \\ = &\  \big| \Sigma^2[s,\nu](t',\tilde q')-\Sigma^2[s,\mu](t,\tilde q) - \nabla_q\Sigma^2[s,\mu](t,\tilde q) \ \big(  \\ &\ \qquad \nabla_q X[s,\mu](t,q)(q'-q) + \partial_t X[s,\mu](t,q)(t'-t) + \int_{\dtorus\times\dtorus} \bar\nabla_{\mu}X[s,\mu](t,q)\cdot (y-x)\gamma(dx,dy) \ \big) \\ &\ - \partial_t\Sigma^2[s,\mu](t,\tilde q)(t'-t) - \int_{\dtorus\times\dtorus} \bar\nabla_{\mu}\Sigma^2[s,\mu](t,\tilde q,x)\cdot (y-x)\gamma(dx,dy) \big|. \end{align*} Inside the large round brackets we add and substract $\tilde q' - \tilde q = X_{t'}[s,\nu](q') - X_t[s,\mu](q),$ and apply (\ref{eq:munugammazero1}), to obtain that the latter expression is no greater than \begin{align*} &\ \big| \Sigma^2_{t'}[s,\nu](\tilde q') - \Sigma^2_t[s,\mu](\tilde q) - \nabla_q\Sigma^2_t[s,\mu](\tilde q)(\tilde q - q)  - \partial_t\Sigma^2_t[s,\mu](\tilde q)(t'-t) \\ &\ - \int_{\dtorus\times\dtorus}\bar\nabla_{\mu}\Sigma^2_t[s,\mu](\tilde q,x)\cdot (y-x)\gamma(dx,dy)\big| + |\nabla_q\Sigma_t^2[s,\mu](\tilde q)| C( |q'-q|^2 + |t'-t|^2 +\mathscr{W}^2(\mu,\nu)),  \end{align*} which, in turn, by (\ref{eq:munugammazero1}), is bounded above by \begin{align*} &\ C (|X[s,\nu](t',q') - X[s,\mu](t,q)|^2 + |t'-t|^2 + |q'-q|^2 + \mathscr{W}^2(\mu,\nu)) \\ &\ +  \theta A_2 C( |q'-q|^2 + |t'-t|^2 +\mathscr{W}^2(\mu,\nu)), \end{align*} and, after using Corollary \ref{cor:taylor2} again, simplifying and increasing the value of $C,$ inequality (\ref{eq:munugammazero2}) is obtained.          \end{proof} 
\paragraph{Regularity of $H(q,\mathcal{V})$} 
We finish this section by following the previous results with the regularity of what turned out to be the second function that appears in the MFG equation (\ref{eq:mfg1}), due to (\ref{eq:mathcalvisgradient}). 
Set \begin{equation}\label{eq:defofnablamuH} \bar\nabla_{\mu}H(q,\mathcal V[s,\mu](t,q))(x) := \nabla_pH(q,\mathcal{V}[s,\mu](t,q))\bar\nabla_{\mu}\mathcal{V}[s,\mu](t,q,x),    \end{equation} for $s,t\in[0,T],$ $q,x\in\dtorus,$ $\mu\in\wasstwospace.$ \begin{lemma}\label{lemma:Hisdiffble} For every $s\in[0,T],$ the $\R$-valued map $H(\cdot,\mathcal V[s,\cdot](\cdot,\cdot))$ is differentiable on $\wasstwospace\times[0,T]\times\dtorus,$ and there is a constant $C>0$ such that for every $s,t,t'\in[0,T],$ $q,q'\in\dtorus,$ $\mu,\nu\in\wasstwospace,$ $\gamma\in\Gamma_0(\mu,\nu),$ \begin{align} \big|& H(q',\mathcal V[s,\nu](t',q'))-H(q,\mathcal V[s,\mu](t,q)) - (\partial_t)(H(q,\mathcal V[s,\mu](t,q))(t'-t) \notag \\  &\ \quad - (\nabla_q)(H(q,\mathcal V[s,\mu](t,q)))\cdot (q'-q)  - \underset{\dtorus\times\dtorus}{\int} \bar\nabla_\mu H(q,\mathcal V[s,\mu](t,q))(x)\cdot (y-x)\gamma(dx,dy)   \big|  \notag \\  &\leq C (|t'-t|^2 + |q-q'|^2 + \mathscr{W}^2(\mu,\nu)) ,   \label{eq:munugammazero3} \end{align} where the mapping $\bar\nabla_{\mu} H,$ defined by (\ref{eq:defofnablamuH}), is continuous.   \end{lemma}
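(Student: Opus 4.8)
The plan is to reduce the differentiability of $H(\cdot,\mathcal{V}[s,\cdot](\cdot,\cdot))$ to the already-established differentiability of $\mathcal{V}[s,\cdot](\cdot,\cdot)$ (Lemma \ref{lemma:mathcalVisdiffble}) by a first-order Taylor expansion of $H$ in both of its arguments, treating $(q,\mathcal{V})$ as the composite input. Concretely, writing $\mathcal{V}=\mathcal{V}[s,\mu](t,q)$ and $\mathcal{V}'=\mathcal{V}[s,\nu](t',q')$, I would first expand
$$
H(q',\mathcal{V}') - H(q,\mathcal{V}) = \nabla_q H(q,\mathcal{V})\cdot(q'-q) + \nabla_p H(q,\mathcal{V})\cdot(\mathcal{V}'-\mathcal{V}) + R,
$$
where, since $H\in C^3$ (in particular $C^2$) and $q,q'\in\dtorus$, $\mathcal{V},\mathcal{V}'$ range in a bounded set (they are bounded by $\theta B$, being values of $\Sigma^2$ composed with $X$), the remainder satisfies $|R|\le C(|q'-q|^2 + |\mathcal{V}'-\mathcal{V}|^2)$ by Taylor's theorem with the mean-value form. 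Then, by Corollary \ref{cor:taylor2} together with the Lipschitz bounds on $\mathcal{V}$ coming from its membership in the relevant $\mathcal{M}^*_{0,D}$-type class and Lemma \ref{lemma:mathcalVisdiffble}, we have $|\mathcal{V}'-\mathcal{V}|\le C(|t'-t|+|q'-q|+\mathscr{W}(\mu,\nu))$, so $|R|\le C(|t'-t|^2+|q'-q|^2+\mathscr{W}^2(\mu,\nu))$, which is of the required order.

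Next I would substitute the second-order Taylor expansion of $\mathcal{V}$ from Lemma \ref{lemma:mathcalVisdiffble} into the term $\nabla_p H(q,\mathcal{V})\cdot(\mathcal{V}'-\mathcal{V})$. Writing
$$
\mathcal{V}'-\mathcal{V} = \partial_t\mathcal{V}[s,\mu](t,q)(t'-t) + \nabla_q\mathcal{V}[s,\mu](t,q)\cdot(q'-q) + \int_{\dtorus\times\dtorus}\bar\nabla_\mu\mathcal{V}[s,\mu](t,q,x)\cdot(y-x)\gamma(dx,dy) + R_{\mathcal{V}},
$$
with $|R_{\mathcal{V}}|\le C(|t'-t|^2+|q'-q|^2+\mathscr{W}^2(\mu,\nu))$, and multiplying through by the bounded vector $\nabla_p H(q,\mathcal{V})$, the $(t'-t)$ and $(q'-q)$ coefficients assemble exactly into $(\partial_t)(H(q,\mathcal{V}[s,\mu](t,q)))$ and $(\nabla_q)(H(q,\mathcal{V}[s,\mu](t,q)))$ respectively, by the chain rule (using that $\nabla_q\mathcal{V}$, $\partial_t\mathcal{V}$, $\nabla_q\Sigma$, $X$ are all $C^1$ so the classical chain rule is legitimate), while the $\gamma$-integral term becomes $\int \nabla_p H(q,\mathcal{V})\bar\nabla_\mu\mathcal{V}[s,\mu](t,q,x)\cdot(y-x)\gamma(dx,dy)$, which is precisely $\int \bar\nabla_\mu H(q,\mathcal{V}[s,\mu](t,q))(x)\cdot(y-x)\gamma(dx,dy)$ by the definition (\ref{eq:defofnablamuH}). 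The $\nabla_p H\cdot R_{\mathcal{V}}$ contribution and the remainder $R$ above are both $O(|t'-t|^2+|q'-q|^2+\mathscr{W}^2(\mu,\nu))$ after using the uniform bound $|\nabla_p H(q,\mathcal{V})|\le C$; collecting all error terms and enlarging $C$ gives (\ref{eq:munugammazero3}).

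Finally, the continuity of $\bar\nabla_\mu H$ follows immediately from its defining formula (\ref{eq:defofnablamuH}): $\nabla_p H$ is continuous (as $H\in C^3$), $\mathcal{V}[s,\mu](t,q)$ is continuous in $(s,t,q,\mu)$ by Lemma \ref{lemma:continuityofSigmainmu} and Lemma \ref{lemma:invertibilityofsigma} (continuity of $X$) together with Lemma \ref{lemma:mathcalVisdiffble}, and $\bar\nabla_\mu\mathcal{V}$ is continuous by Lemma \ref{lemma:mathcalVisdiffble}; a composition and product of continuous maps is continuous. The main obstacle in this argument is purely bookkeeping: one must be careful that the chain-rule reassembly of the $(t'-t)$ and $(q'-q)$ coefficients genuinely reproduces the stated $\partial_t$ and $\nabla_q$ of the composite function $H(q,\mathcal{V}[s,\mu](t,q))$ — this is the step where one uses that all the inner maps are $C^1$ so that $\partial_t[H(q,\mathcal{V}_t)] = \nabla_p H(q,\mathcal{V}_t)\cdot\partial_t\mathcal{V}_t$ and $\nabla_q[H(q,\mathcal{V}(t,q))] = \nabla_q H(q,\mathcal{V}) + \nabla_p H(q,\mathcal{V})\nabla_q\mathcal{V}$ hold in the classical sense — and that the quadratic error terms are controlled uniformly in $n$ (which they are, since every constant invoked from Corollary \ref{cor:taylor2} and Lemmas \ref{lemma:mathcalVisdiffble}, and the $C^2$-norm of $H$ on the relevant compact set, is independent of $n$).
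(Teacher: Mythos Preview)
Your proposal is correct and follows essentially the same approach as the paper: Taylor-expand $H$ in $(q,p)$ to isolate a quadratic remainder $R$, plug in the second-order expansion of $\mathcal{V}$ from Lemma \ref{lemma:mathcalVisdiffble} into the $\nabla_p H\cdot(\mathcal{V}'-\mathcal{V})$ term, reassemble via the chain rule identities $(\partial_t)(H(q,\mathcal{V}))=\nabla_p H\,\partial_t\mathcal{V}$ and $(\nabla_q)(H(q,\mathcal{V}))=\nabla_q H+\nabla_p H\,\nabla_q\mathcal{V}$, and control $|\mathcal{V}'-\mathcal{V}|$ by the Lipschitz estimate coming from Corollary \ref{cor:taylor2} (applied to $\Sigma$ and $X$). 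The only cosmetic difference is that the paper begins by writing out the left-hand side of (\ref{eq:munugammazero3}) and factoring $\nabla_p H(q,\mathcal{V})$, whereas you start from the Taylor expansion of $H$ and substitute; your closing remark about uniformity ``in $n$'' is unnecessary here since the argument already works directly with the limiting objects $\Sigma$, $X$, $\mathcal{V}$.
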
 \begin{proof} Let us abbreviate $\mathcal{V} = \mathcal{V}_t[s,\mu](q)$ and $\mathcal{V}'=\mathcal{V}_{t'}[s,\nu](q')$. Since $$(\partial_t)(H(q,\mathcal{V})) = \nabla_p H(q,\mathcal{V})\partial_t\mathcal{V}, \quad (\nabla_q)(H(q,\mathcal{V})) = \nabla_qH(q,\mathcal{V}) + \nabla_pH(q,\mathcal{V})\nabla_q\mathcal{V},$$ the left hand side of (\ref{eq:munugammazero3}) is, after factoring out $\nabla_pH(q,\mathcal{V}),$  \begin{align*} & \ \big| H(q',\mathcal{V}') - H(q,\mathcal{V})  -\nabla_pH(q,\mathcal V)\big[ -(\mathcal V' - \mathcal V) + \nabla_q\mathcal{V}(q'-q) + \partial_t\mathcal{V}(t'-t) \\ &\ \hspace{5.7cm} + \int_{\dtorus\times\dtorus}\bar\nabla_{\mu}\mathcal{V}_t[s,\mu](q)\cdot(y-x)\gamma(dx,dy)    \big] \\ &\ \ -\nabla_qH(q,\mathcal{V})(q'-q) -\nabla_pH(q,\mathcal{V}')(\mathcal{V}-\mathcal{V}')\big| \\ \leq &\ |   H(q',\mathcal{V}') - H(q,\mathcal{V}) - \nabla_qH(q,\mathcal{V})(q'-q) -\nabla_pH(q,\mathcal{V})(\mathcal{V}'-\mathcal{V}) | \\ &\ + |\nabla_pH(q,\mathcal{V})|C(|t'-t|^2+|q'-q|^2+\mathscr{W}^2(\mu,\nu)) .       \end{align*} Remember now that $|\Sigma^2|\leq \theta B$ (see Corollary \ref{cor:hamODEshassolution}(ii)) at any $t,q,s,\mu ;$ recall Definition \ref{defn:databoundsii}. Therefore, the right-hand side of this inequality is bounded by \begin{align*} h(\theta B)(|q'-q|^2 + |\mathcal{V}'-\mathcal{V}|^2) + l(\theta B)C(|t'-t|^2+|q'-q|^2+\mathscr{W}^2(\mu,\nu)). \end{align*} To deal with the term $|\mathcal{V}'-\mathcal{V}|^2,$ note that Corollary \ref{cor:taylor2} is also valid for $\Sigma$ in place of $X,$ following a similar argument. With the notation (\ref{eq:qtilde}), $$ |\mathcal{V}' - \mathcal{V}| = |\Sigma^2[s,\nu](t',\tilde q')-\Sigma^2[s,\mu](t,\tilde q)| \leq C (|t'-t| + |\tilde q'-\tilde q| + \mathscr{W}(\mu,\nu)). $$ Applying Corollary \ref{cor:taylor2}, and raising the value of $C,$ we get $$|\mathcal{V}'-\mathcal{V}|\leq C  (|t'-t| + |q'-q| + \mathscr{W}(\mu,\nu)).$$ Substituting this into the bounding expression, and simplifying, one arrives at (\ref{eq:munugammazero3}), for some larger value of $C.$ The continuity of $\bar\nabla_{\mu}H$ in all its variables is clear from the definition.
\end{proof}
\section{Solution to the master equation}
\label{section:solutiontothemasterequation} 
Let us recall the definition of the function $u$ from Section \ref{subsection:thecharacteristicsmethod}:  \begin{equation}
\tag{\ref{eq:defofu}} 
u(s,q,\mu) = g(q,\Sigma^1[s,\mu](0,\cdot)_{\#}\mu) - \int_0^s \big[ H(q,\mathcal{V}[s,\mu](\tau,q))+F(q,\Sigma^1[s,\mu](\tau,\cdot)_{\#}\mu \big] d\tau. 
\label{eq:specificu} 
\end{equation} 
\subsection{Pathwise gradients of the couplings}\label{subsection:gradientsofFandg}
For any $s, t\in [0,T),$ $q\in\dtorus,$ $\mu\in\wasstwospace,$ define
\begin{align*} \mathcal{N}^F_t[s,\mu](q)(x) := &\ - \nabla_{\mu}F(q,\sigma_t)(\Sigma^1_t[s,\mu](x))  \nabla_q \Sigma^1_t[s,\mu](x) \\ & + \int_{\dtorus}\nabla_{\mu}F(q,\sigma_t)(\Sigma^1_t[s,\mu](r)) \bar\nabla_{\mu}\Sigma^1_t[s,\mu](r)(x)\mu(dr),
\end{align*}
and 
\begin{align*} \mathcal{N}^g_t[s,\mu](q)(x) := &\ - \nabla_{\mu}g(q,\sigma_t)(\Sigma^1_t[s,\mu](x))  \nabla_q \Sigma^1[s,\mu](t,x) \\ & + \int_{\dtorus}\nabla_{\mu}g(q,\sigma_t)(\Sigma^1_t[s,\mu](r)) \bar\nabla_{\mu}\Sigma^1_t[s,\mu](r)(x)\mu(dr).
\end{align*}
Likewise,
\begin{align*}
(\partial_t)(F(q,\Sigma^1_t[s,\mu]_{\#}\mu)): = \nabla_{\mu}F(q,\sigma_t)(\Sigma^1_t[s,\mu](x))\cdot \partial_t \Sigma^1[s,\mu](t,x)\mu(dx),
\end{align*}
with an analogous definition for $(\partial_t)(g(q,\Sigma^1_t[s,\mu]_{\#}\mu)).$
In preparation for Lemma \ref{lemma:diffofF} below, we are going to adopt this notation: for $t,t'\in (0,T),$ $q,q'\in\dtorus,$ $\mu,\nu\in\wasstwospace,$ and $0\leq \tau \leq 1,$ and $\gamma\in\Gamma(\mu,\nu),$
\begin{equation}\label{eq:interpols}
\left\{
\begin{aligned}
t^{\tau} = (1-\tau)t+ \tau t', \quad q^{\tau} = & (1-\tau)q+\tau q', \quad \mu^{\tau} = ((1-\tau)\pi^1+\tau\pi^2)_{\#}\gamma, \\
\sigma'_{t'}=\Sigma^1[s,\nu](t',q'), & \quad \sigma_t = \Sigma^1[s,\mu](t,q), \\ 
\sigma^{\tau} = & \Sigma^1_{t^{\tau}}[s,\mu^{\tau}]_{\#}\mu^{\tau}.
\end{aligned}
\right.
\end{equation}
Recall that, by definition, $X = (\Sigma^1)^{-1}.$ In this context, we are going to need the following:
\begin{prop}\label{prop:vvfinterpol}
Let $\mu,\nu\in\wasstwospace,$ $q,q'\in\dtorus,$ $t,t'\in (0,T),$ and let the notation (\ref{eq:interpols}) be in force. Let $w^{\tau}$ be the velocity vector field of the geodesic $\mu^{\tau}.$ \begin{enumerate}[(i)] \item The vector field
\begin{align}
v^{\tau}(y) =\  &(t'-t)v_{t^{\tau}}[s,\mu^{\tau}](y)  +  \int_{\dtorus} \bar\nabla_{\mu}\Sigma^1_{t^{\tau}}[s,\mu^{\tau}](X_{t^{\tau}}[s,\mu^{\tau}](y))(x)w^{\tau}(x)\mu^{\tau}(dx) \notag \\ & - \nabla_q\Sigma^1_{t^{\tau}}[s,\mu^{\tau}](X_{t^{\tau}}[s,\mu^{\tau}](y))w^{\tau}(X_{t^{\tau}}[s,\mu^{\tau}](y)) , \quad y \in \dtorus, \label{eq:vtau}
\end{align} 
is a velocity vector field for the path $\sigma^{\tau}.$ 
\item 
$$ 
\|v^{\tau}\|_{L^2(\sigma^{\tau})}  \leq C (|t'-t| + \mathscr{W}(\mu,\nu)), \quad 0\leq \tau \leq 1
$$
for some constant $C.$ 
\end{enumerate}
\end{prop}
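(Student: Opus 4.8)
The plan is to verify (i) by a direct computation of the total time derivative of a test function along the path $\sigma^{\tau}$, and (ii) by bounding each of the three terms in \eqref{eq:vtau} using the estimates already established. For (i), recall that, by construction, $\sigma^{\tau} = \Sigma^1_{t^{\tau}}[s,\mu^{\tau}]_{\#}\mu^{\tau}$ and that the map $\tau\mapsto(t^{\tau},\mu^{\tau})$ is the (affine, geodesic) interpolation whose velocity in the measure slot is $w^{\tau}$ and whose velocity in the time slot is the constant $t'-t$. The idea is to take $\varphi\in C^{\infty}(\dtorus;\R)$ and differentiate $\int_{\dtorus}\varphi\, d\sigma^{\tau} = \int_{\dtorus}\varphi(\Sigma^1_{t^{\tau}}[s,\mu^{\tau}](x))\mu^{\tau}(dx)$ with respect to $\tau$. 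There are three contributions: one from differentiating $\Sigma^1$ in its time argument $t^{\tau}$ (producing $(t'-t)\,\partial_t\Sigma^1_{t^{\tau}}[s,\mu^{\tau}]$, which after the change of variables $y=\Sigma^1_{t^{\tau}}[s,\mu^{\tau}](x)$ becomes the first term $(t'-t)v_{t^{\tau}}[s,\mu^{\tau}]$ via Definition \ref{defn:sigmaandvvfield}); one from differentiating $\Sigma^1$ in its measure argument $\mu^{\tau}$, for which we invoke the differentiability of $\Sigma$ in $\mu$ from Lemma \ref{lemma:mastermapisdiffble}, together with Remark \ref{remark:littlefact}(ii) to replace the $\gamma$-integral against $(y-x)$ by the integral against $w^{\tau}$, giving the $\bar\nabla_{\mu}\Sigma^1$ term; and one from differentiating $\mu^{\tau}$ itself in the outer integral, which by the continuity equation for $\mu^{\tau}$ with velocity $w^{\tau}$ produces $\int\nabla\big(\varphi\circ\Sigma^1_{t^{\tau}}[s,\mu^{\tau}]\big)\cdot w^{\tau}\,d\mu^{\tau}$, i.e.\ the term $-\nabla_q\Sigma^1_{t^{\tau}}[s,\mu^{\tau}]\,w^{\tau}$ after the chain rule and the same change of variables. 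Matching $\frac{d}{d\tau}\int\varphi\,d\sigma^{\tau}$ against $\int\nabla\varphi\cdot v^{\tau}\,d\sigma^{\tau}$ then identifies $v^{\tau}$ in \eqref{eq:vtau} as a velocity for $\sigma^{\tau}$; one should also check the integrability condition $\int\int|v^{\tau}|\,d\sigma^{\tau}\,d\tau<\infty$, which follows from the uniform bounds below.

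For (ii), estimate the $L^2(\sigma^{\tau})$-norm of each of the three summands in \eqref{eq:vtau}. The first term has norm $\le |t'-t|\,\|v_{t^{\tau}}[s,\mu^{\tau}]\|_{L^2(\sigma^{\tau})}$, and since $v_t = \partial_t\Sigma^1_t\circ X_t$ with $\|\partial_t\Sigma^1\|_{\infty}\le A_1$ by the membership $\Sigma\in\mathcal M_0(A_1,\theta A_2,\theta B,E,E_1,\theta E_2,T)$, this is $\le A_1|t'-t|$. For the third term, change variables $y=\Sigma^1_{t^{\tau}}[s,\mu^{\tau}](z)$ so that it becomes $\nabla_q\Sigma^1_{t^{\tau}}[s,\mu^{\tau}](z)\,w^{\tau}(z)$ integrated against $\mu^{\tau}$; then $\|\nabla_q\Sigma^1\|_{\infty}\le A_1$ and Remark \ref{remark:littlefact}(i), $\|w^{\tau}\|_{L^2(\mu^{\tau})}=\mathscr W(\mu,\nu)$, give the bound $A_1\,\mathscr W(\mu,\nu)$. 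For the middle term, after the same change of variables it reads $\int_{\dtorus}\bar\nabla_{\mu}\Sigma^1_{t^{\tau}}[s,\mu^{\tau}](z)(x)w^{\tau}(x)\mu^{\tau}(dx)$ integrated against $\mu^{\tau}(dz)$; by Cauchy--Schwarz in $x$ and the fact that $\bar\nabla_{\mu}\Sigma$ is bounded (being Lipschitz on the compact domain, per Lemma \ref{lemma:mastermapisdiffble}), its $L^2(\sigma^{\tau})$-norm is $\le \|\bar\nabla_{\mu}\Sigma^1\|_{\infty}\|w^{\tau}\|_{L^2(\mu^{\tau})} = \|\bar\nabla_{\mu}\Sigma^1\|_{\infty}\mathscr W(\mu,\nu)$. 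Summing the three gives $\|v^{\tau}\|_{L^2(\sigma^{\tau})}\le C(|t'-t|+\mathscr W(\mu,\nu))$ with $C$ depending only on the coefficients, uniformly in $\tau\in[0,1]$.

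The main obstacle I anticipate is part (i): rigorously justifying the term-by-term differentiation in $\tau$ of $\tau\mapsto\int\varphi(\Sigma^1_{t^{\tau}}[s,\mu^{\tau}](x))\mu^{\tau}(dx)$. This requires simultaneously using the joint regularity of $\Sigma$ in $(t,q,\mu)$ from Lemma \ref{lemma:mastermapisdiffble} (to differentiate the integrand), the continuity equation satisfied by the geodesic $\mu^{\tau}$ with velocity $w^{\tau}$ (to differentiate the measure), and Remark \ref{remark:littlefact}(ii) (to convert the first-order Taylor term $\int\bar\nabla_{\mu}\Sigma^1\cdot(y-x)\,d\gamma$ appearing in the $\mu$-differentiation into the $w^{\tau}$-form needed for the continuity-equation bookkeeping). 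One must take care that the Taylor remainder in \eqref{eq:munugammazero} is genuinely $o(\mathscr W(\mu^{\tau_1},\mu^{\tau_2}))$ as $\tau_1\to\tau_2$ along the geodesic — which it is, being $O(\mathscr W^2)$ — so that the difference quotients converge and the resulting velocity field is exactly \eqref{eq:vtau}; the absolute continuity of $\sigma^{\tau}$ in $AC^2$ then follows from the uniform $L^2$ bound in (ii).
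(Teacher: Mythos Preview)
Your proposal is correct and follows essentially the same approach as the paper: for (i) you differentiate $\tau\mapsto\int_{\dtorus}\varphi(\Sigma^1_{t^{\tau}}[s,\mu^{\tau}](y))\,\mu^{\tau}(dy)$, split into the three contributions (time slot, measure slot via Lemma~\ref{lemma:mastermapisdiffble}, outer $\mu^{\tau}$ via the continuity equation with velocity $w^{\tau}$), and change variables $y=\Sigma^1_{t^{\tau}}[s,\mu^{\tau}](z)$ to read off $v^{\tau}$; for (ii) you bound the three summands using the uniform bounds on $\partial_t\Sigma^1$, $\nabla_q\Sigma^1$, $\bar\nabla_{\mu}\Sigma^1$ together with Remark~\ref{remark:littlefact}(i). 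The paper does exactly this, with part (ii) dispatched in a single sentence invoking the same ingredients.
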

\begin{proof}
\textit{(i)} Fix any $\gamma\in\Gamma_0(\mu,\nu)$ and for $0\leq \tau\leq 1,$ put $\mu^{\tau}$ as in (\ref{eq:interpols}). By definition, $v^{\tau}$ must satisfy 
\begin{equation}\label{eq:leftright}
\frac{d}{d\tau} \int_{\dtorus} \varphi(y) \sigma^{\tau}(dy) = \int_{\dtorus} \nabla\varphi(y) \cdot v^{\tau}(y) \sigma^{\tau} (dy)
\end{equation}
for every $\varphi\in C^{\infty}(\dtorus)$ and for $\mathscr{L}^1$-a.e.~$t\in (0,1).$ 
Computing the derivative on the left hand side,
\begin{align*} 
&\ \qquad \frac{d}{d\tau} \int_{\dtorus} \varphi(\Sigma^1[s,\mu^{\tau}](t^{\tau},y))\mu^{\tau}(dy) =  \\ & =\int_{\dtorus} \frac{d}{d\tau} \varphi(\Sigma^1[s,\mu^{\tau}](t^{\tau},y)) \mu^{\tau}(dy) - \int_{\dtorus} (\nabla_y) [\varphi(\Sigma^1[s,\mu^{\tau}](t^{\tau},y))]w^{\tau}(y)\mu^{\tau} (dy)  ,   
\end{align*} 
 where $w^{\tau}$ is the velocity vector field of the geodesic $\mu^{\tau},$ and we have again used the definition of velocity, since $\varphi\circ\Sigma^1$ is $C^{\infty}$ in $y.$ Doing the differentiation with respect to $\tau$ and $y,$ we get 
 \begin{align*}  
 & \hspace{3cm} \frac{d}{d\tau} \int_{\dtorus}\varphi(y)\sigma^{\tau}(dy) =  \\ &  = \int_{\dtorus} \big[ \nabla\varphi (\Sigma^1[s,\mu^{\tau}](t^{\tau},y)\cdot  \bigg( \partial_t \Sigma^1[s,\mu^{\tau}](t^{\tau},y)(t'-t)  + \int_{\dtorus}\bar\nabla_{\mu}\Sigma^1[s,\mu^{\tau}](t^{\tau},y)(x) w^{\tau}(x)\mu^{\tau}(dx)\bigg)\big]\mu^{\tau}(dy) \\ & \quad   - \int_{\dtorus} \nabla\varphi (\Sigma^1[s,\mu^{\tau}](t^{\tau},y))\cdot [\nabla_q \Sigma^1[s,\mu^{\tau}](t^{\tau},y) w^{\tau}(y)]\mu^{\tau}(dy).
\end{align*} 
Since $\Sigma^1_{t^{\tau}}[s,\mu^{\tau}]_{\#}\mu^{\tau} = \sigma^{\tau}$ by definition, we obtain (\ref{eq:vtau}) after writing the latter expression as an integral with respect to $\sigma^{\tau}$ and comparing against the right hand side of (\ref{eq:leftright}).

\textit{(ii)} This follows by Remark \ref{remark:littlefact}(i) and the boundedness of $\partial_t\Sigma^1,$ $\nabla_q\Sigma^1,$ $\bar\nabla_{\mu}\Sigma^1$.
\end{proof}
\begin{remark}\label{remark:vtau2}
It follows that, evaluated at $\Sigma^1_{t^{\tau}}[s,\mu^{\tau}](y),$ $v^{\tau}$ has the simpler expression:
\begin{align}
v^{\tau}(\Sigma^1_{t^{\tau}}[s,\mu^{\tau}](y)) = &\ (t'-t)\partial_t\Sigma^1_{t^{\tau}}[s,\mu^{\tau}](y) + \int_{\dtorus}\bar\nabla_{\mu}\Sigma^1_{t^{\tau}}[s,\mu^{\tau}](y)(x)w^{\tau}(x)\mu^{\tau}(dx)  \notag \\
 & \ \ -\nabla_q\Sigma^1_{t^{\tau}}[s,\mu^{\tau}](y)w^{\tau}(y), \qquad y\in\dtorus. \qquad \qquad \qquad \quad    \qquad \sslash \notag
\end{align}
\end{remark}
\begin{lemma}
\label{lemma:diffofF} 
Let $t,t'\in(0,T),$ $\mu,\nu\in\wasstwospace,$ $q,q'\in\dtorus$ be arbitrary, and put $\sigma_t = \Sigma^1_t[s,\mu]_{\#}\mu,$ $\sigma'_{t'} = \Sigma^1_{t'}[s,\nu]_{\#}\nu.$ Then there exists a constant $C$ such that \begin{align}  \big| & F(q',\sigma'_{t'}) - F(q,\sigma_t) - \nabla_q F(q,\sigma_t)\cdot (q'-q) \notag \\ &  - \int_{\dtorus}\nabla_{\mu}F(q,\sigma_t)(\Sigma^1_t[s,\mu](x))\cdot \partial_t \Sigma^1[s,\mu](t,x)\mu(dx)(t'-t) \notag \\  & - \int_{\dtorus\times\dtorus} \mathcal{N}^F_t[s,\mu](q)(x)\cdot (y-x)\gamma(dx,dy) \big| \notag  \\ \leq & \    C(|t'-t|^2 + |q'-q|^2 + \mathscr{W}^2(\mu,\nu)), \notag \end{align}
for any $\gamma\in\Gamma_0(\mu,\nu).$ The functions $\mathcal{N}^F_t[s,\mu](q)(x)$ and $(\partial_t)(F(q,\Sigma_t^1[s,\mu]_{\#}\mu)$ are continuous in all its variables.
\end{lemma}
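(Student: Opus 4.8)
Fix $\gamma\in\Gamma_0(\mu,\nu)$, adopt the notation (\ref{eq:interpols}), write also $q^\tau=(1-\tau)q+\tau q'$, and set $\epsilon:=|t'-t|+|q'-q|+\mathscr W(\mu,\nu)$. The plan is to study the scalar function $\psi(\tau):=F(q^\tau,\sigma^\tau)$, $0\le\tau\le1$, which satisfies $\psi(0)=F(q,\sigma_t)$ and $\psi(1)=F(q',\sigma'_{t'})$ because $\mu^0=\mu$, $\mu^1=\nu$, $t^0=t$, $t^1=t'$. Since $F$ is continuous in the measure variable with bounded Wasserstein gradient it is Lipschitz on $\wasstwospace$, and $\|v^\tau\|_{L^2(\sigma^\tau)}\le C\epsilon$ by Proposition \ref{prop:vvfinterpol}(ii), so $\tau\mapsto\sigma^\tau$ is Lipschitz, $\psi$ is absolutely continuous, and for a.e.\ $\tau$
\[
\psi'(\tau)=\nabla_qF(q^\tau,\sigma^\tau)\cdot(q'-q)+\int_{\dtorus}\nabla_\mu F(q^\tau,\sigma^\tau)(z)\cdot v^\tau(z)\,\sigma^\tau(dz)
\]
by the chain rule along $AC^2$ paths (the use of the velocity $v^\tau$ rather than its tangential projection is harmless by (\ref{eq:projfact})). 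I will then write $\psi(1)-\psi(0)=\psi'(0)+\int_0^1\big(\psi'(\tau)-\psi'(0)\big)\,d\tau$ and establish two things: that $\psi'(0)$ equals the linear expression subtracted on the left of the asserted inequality, and that the integral remainder is $O(\epsilon^2)$.

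To compute $\psi'(0)$, I would change variables $z=\Sigma^1_t[s,\mu](y)$, $y\sim\mu$, in the measure integral, insert the simplified form of $v^0$ from Remark \ref{remark:vtau2} (namely $v^0(\Sigma^1_t[s,\mu](y))=(t'-t)\partial_t\Sigma^1_t[s,\mu](y)+\int_{\dtorus}\bar\nabla_\mu\Sigma^1_t[s,\mu](y)(x)w^0(x)\mu(dx)-\nabla_q\Sigma^1_t[s,\mu](y)w^0(y)$), and then apply Remark \ref{remark:littlefact}(ii) at $\tau=0$ (which reads $\int f(a)\cdot w^0(a)\mu(da)=\int f(a)\cdot(b-a)\gamma(da,db)$ for continuous $f$) to the two $w^0$-terms, together with Fubini on the double integral. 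The $(t'-t)$-term becomes $(t'-t)\int\nabla_\mu F(q,\sigma_t)(\Sigma^1_t[s,\mu](x))\cdot\partial_t\Sigma^1_t[s,\mu](x)\mu(dx)$, the remaining two terms combine exactly into $\int_{\dtorus\times\dtorus}\mathcal N^F_t[s,\mu](q)(x)\cdot(y-x)\gamma(dx,dy)$, and the $q$-term is $\nabla_qF(q,\sigma_t)\cdot(q'-q)$; this is precisely the linear part in the statement.

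For the remainder, I would decompose $\psi'(\tau)=\psi'_q(\tau)+\psi'_1(\tau)+\psi'_2(\tau)+\psi'_3(\tau)$ following the three-term splitting of $v^\tau(\Sigma^1_{t^\tau}[s,\mu^\tau](y))$ in Remark \ref{remark:vtau2}, with $\psi'_q$ the $\nabla_qF\cdot(q'-q)$ contribution and $\psi'_1$ the $(t'-t)\partial_t\Sigma^1$ one. The differences $\psi'_q(\tau)-\psi'_q(0)$ and $\psi'_1(\tau)-\psi'_1(0)$ are $O(\tau\epsilon^2)$: each is a factor of size $|q'-q|$, resp.\ $|t'-t|$, times the increment along $\tau$ of a function of $(t^\tau,q^\tau,\mu^\tau)$ that is Lipschitz, controlled using the joint Lipschitzianity of $\nabla_qF$, $\nabla_\mu F$, $\nabla^2_{\mu\mu}F$, $\nabla^2_{x\mu}F$ and the fact that $\Sigma^1$, $\nabla_q\Sigma^1$, $\partial_t\Sigma^1$ and $\sigma_t$ are Lipschitz in $(t,\mu)$ (the analogue of Corollary \ref{cor:taylor2} for $\Sigma^1$, the boundedness of $\bar\nabla_\mu\Sigma$ and $\nabla^2_{qq}\Sigma^1$, and $H\in C^3$). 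For $\psi'_2$ and $\psi'_3$, after the change of variables $z=\Sigma^1_{t^\tau}[s,\mu^\tau](y)$ and Fubini, each is of the form $\int_{\dtorus}F^\tau(y)\cdot w^\tau(y)\mu^\tau(dy)$ for a continuous $F^\tau$ (built from $\nabla_\mu F$, $\Sigma^1$, and either $\nabla_q\Sigma^1$ or $\bar\nabla_\mu\Sigma^1$), so picking $\gamma^\tau\in\Gamma_0(\mu,\mu^\tau)$ and invoking Remark \ref{remark:littlefact}(iii) gives $\psi'_j(\tau)-\psi'_j(0)=\int_{\dtorus\times\dtorus}\big[F^\tau(b)-F^0(a)\big]\cdot\tfrac{b-a}{\tau}\,\gamma^\tau(da,db)$. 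By Cauchy--Schwarz this is at most $\|F^\tau(b)-F^0(a)\|_{L^2(\gamma^\tau)}\,\|\tfrac{b-a}{\tau}\|_{L^2(\gamma^\tau)}$; the second factor equals $\mathscr W(\mu,\nu)$ because $\mu^\tau$ is a constant-speed geodesic, while the first is at most $\|F^\tau-F^0\|_\infty+\mathrm{Lip}(F^0)\,\mathscr W(\mu,\mu^\tau)\le C\tau\epsilon$ by the same Lipschitz facts and the Lipschitzianity of $\bar\nabla_\mu\Sigma$ (Lemma \ref{lemma:mastermapisdiffble}). Hence $\psi'_j(\tau)-\psi'_j(0)=O(\tau\epsilon^2)$, and integrating all four pieces over $[0,1]$ yields the remainder bound $O(\epsilon^2)\le C(|t'-t|^2+|q'-q|^2+\mathscr W^2(\mu,\nu))$. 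The continuity of $\mathcal N^F_t[s,\mu](q)(x)$ and of $(\partial_t)(F(q,\Sigma^1_t[s,\mu]_\#\mu))$ in all arguments then follows from the continuity in all variables of $\nabla_\mu F$, $\Sigma^1$, $\nabla_q\Sigma^1$, $\partial_t\Sigma^1$, $\bar\nabla_\mu\Sigma^1$ proved above, together with the continuity of $\mu\mapsto\int\phi(\mu,\cdot)\,d\mu$ for bounded continuous $\phi$.

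The hard part is the remainder estimate for $\psi'_2$ and $\psi'_3$: the a priori size of $\psi'(\tau)$ is only first order, $O(\epsilon)$, and gaining the extra order needed for a quadratic bound relies squarely on the algebraic identity of Remark \ref{remark:littlefact}(iii) combined with the geodesic relation $\mathscr W(\mu,\mu^\tau)=\tau\mathscr W(\mu,\nu)$, and on having genuine \emph{joint} Lipschitz control --- in $(t,\mu)$ as well as in the spatial variable --- of the composite objects $\Sigma^1$, $\nabla_q\Sigma^1$, $\partial_t\Sigma^1$, $\bar\nabla_\mu\Sigma^1$. A couple of these (for instance Lipschitzianity of $\nabla_q\Sigma^1$ in $\mu$) are not recorded verbatim in the previous sections and will require a short separate verification from the estimates of Section \ref{section:regularityinmu}.
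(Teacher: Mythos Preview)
Your proposal is correct and follows the paper's overall scaffold: both arguments interpolate via $\psi(\tau)=F(q^\tau,\sigma^\tau)$, compute $\psi'(0)$ by changing variables through $\Sigma^1_t[s,\mu]$ and invoking Remark~\ref{remark:littlefact}(ii) (this is the paper's Step~1), and then bound the remainder $\int_0^1(\psi'(\tau)-\psi'(0))\,d\tau$. The genuine difference is in how the remainder is split and controlled. The paper groups the entire $\nabla_\mu F$ contribution into a single expression and telescopes it as $B_1+B_2+B_3$; the key term $B_2$ is then handled by the trick $\tau D = \tau D + (\phi_2(y)-\phi_1(x)) + (\phi_1(x)-\phi_2(y))$, which allows the second-order Taylor estimate~(\ref{eq:munugammazero}) for $\Sigma^1$ to be applied twice, yielding $|\tau D|\le C(\tau^2|t'-t|^2+\tau^2\mathscr W^2(\mu,\nu)+|x-y|^2)$ and hence $\|D\|_{L^1(\gamma^\tau)}=O(\tau\epsilon^2)$. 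You instead split along the three summands of $v^\tau$ in Remark~\ref{remark:vtau2} and, for the two $w^\tau$-pieces, combine Remark~\ref{remark:littlefact}(iii) with Cauchy--Schwarz; the product structure $\|F^\tau(b)-F^0(a)\|_{L^2(\gamma^\tau)}\cdot\|\tfrac{b-a}{\tau}\|_{L^2(\gamma^\tau)}\le C\tau\epsilon\cdot\mathscr W(\mu,\nu)$ then gives the same $O(\tau\epsilon^2)$ directly.

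Your route is arguably more elementary --- it never appeals to the quadratic estimate~(\ref{eq:munugammazero}) --- but the price is exactly what you flag at the end: you need \emph{joint} Lipschitz control in $(t,\mu)$ of $\nabla_q\Sigma^1$ and $\partial_t\Sigma^1$ (Lipschitzianity in $\mu$ of $\bar\nabla_\mu\Sigma$ is already in Lemma~\ref{lemma:mastermapisdiffble}). These follow from the discrete estimates $\|\nabla^2_{qx_j}M\|_\infty\le C/n$ and $\|\nabla^2_{tx_j}M\|_\infty\le C/n$ of Corollary~\ref{cor:firstcortonablaxjMx} and~(\ref{eq:threemore}) by the same density/extension argument used for Corollary~\ref{cor:taylor2}, so the gap is genuinely short to close. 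The paper's approach trades that detour for a single, heavier, application of the Taylor estimate already on the shelf.
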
 
Naturally, the same result holds for $g$ in place of $F.$
\begin{proof}
The assertion about continuity follows immediately from the continuity of the functions that enter in the definition of $\mathcal{N}^F_t[s,\mu](q)(x)$ and $(\partial_t)(F(q,\Sigma_t^1[s,\mu]_{\#}\mu)$.

Let $\gamma\in\Gamma_0(\mu,\nu),$ and define $\mu^{\tau}$, $0\leq \tau\leq 1$ as in Remark \ref{remark:littlefact}. Let the notation (\ref{eq:interpols}) be in effect, so that $\tau\mapsto\sigma^{\tau}$ is a continuous path joining $\sigma_t$ with $\sigma'_{t'}$ and Proposition \ref{prop:vvfinterpol} holds, with $w^{\tau}$ as defined therein. Denote by $E$ the expression inside the bars on the left-hand side of the inequality of the lemma.

\textit{\underline{Step 1.}} 

\textit{Claim 1.} 
\begin{align*}
\int_{\dtorus}\nabla_{\mu}F(q,\sigma_t)(x)\cdot v^0(x)\sigma_t(dx) = &\ \int_{\dtorus}\nabla_{\mu}F(q,\sigma_t)(\Sigma^1_t[s,\mu](x))\cdot \partial_t\Sigma^1_t[s,\mu](x)\mu(dx)(t'-t) \\ 
& \  +\int_{\dtorus\times\dtorus}\mathcal{N}^F_t[s,\mu](q)(x)(y-x)\gamma(dx,dy).
\end{align*}

\textit{Proof of Claim 1.} Using Remark \ref{remark:vtau2} with $\tau = 0,$
\begin{align}
& \ \qquad \int_{\dtorus}\nabla_{\mu}F(q,\sigma_t)(x)\cdot v^0(x)\sigma_t(dx) \notag \\
  = &\  \int_{\dtorus}\nabla_{\mu}F(q,\sigma_t)(\Sigma^1_t[s,\mu](x))\cdot \partial_t \Sigma^1_{t}[s,\mu](y)\mu(dx)(t'-t) \notag
  \\ 
  & - \int_{\dtorus}\nabla_{\mu}F(q,\sigma_t)(\Sigma^1_t[s,\mu](x))\cdot \big[\nabla_q\Sigma^1_t[s,\mu](x)w^{0}(x)  -  \int_{\dtorus} \bar\nabla_{\mu}\Sigma^1_t[s,\mu](x)(b)w^0(b)\mu(db)  \big]\mu(dx). \label{eq:beforefubini}  
\end{align}
Note that
\begin{align*}
 & \int_{\dtorus}\nabla_{\mu}F(q,\sigma_t)(\Sigma_t^1[s,\mu](x))\cdot \big[\int_{\dtorus}\bar\nabla_{\mu}\Sigma_t^1[s,\mu](x)(b)w^0(b)\mu(db)\big]\mu (dx) \\ = & \ \int_{\dtorus}\int_{\dtorus}\nabla_{\mu}F(q,\sigma_t)(\Sigma_t^1[s,\mu](x))\cdot \bar\nabla_{\mu}\Sigma_t^1[s,\mu](x)(b)\mu(dx) w^0(b)\mu(db) \\ 
  = &  \ \int_{\dtorus}\int_{\dtorus}\nabla_{\mu}F(q,\sigma_t)(\Sigma_t^1[s,\mu](r))\cdot \bar\nabla_{\mu}\Sigma_t^1[s,\mu](r)(x)\mu(dr) w^0(x)\mu(dx).
\end{align*}
Substituting this identity into (\ref{eq:beforefubini}), we see that 
\begin{align}
& \ \qquad \int_{\dtorus}\nabla_{\mu}F(q,\sigma_t)(x)\cdot v^0(x)\sigma_t(dx)  \notag \\
  = &\  \int_{\dtorus}\nabla_{\mu}F(q,\sigma_t)(\Sigma^1_t[s,\mu](x))\cdot \partial_t \Sigma^1_{t}[s,\mu](y)\mu(dx)(t'-t) + \int_{\dtorus}\mathcal{N}^F_t[s,\mu](q)(x) \cdot w^0(x) \mu(dx) \notag 
\end{align}
Finally, we use Remark \ref{remark:littlefact}(ii) for the last integral in the latter expression, and the claim is proved.

\textit{Claim 2.} 
\begin{align*}
E = F(q',\sigma'_{t'}) - F(q,\sigma_t) - \nabla_q F(q,\sigma_t)\cdot (q'-q) - \int_{\dtorus}\nabla_{\mu}F(q,\sigma_t)(x)\cdot v^0(x) \sigma_t(dx). 
\end{align*}

\textit{Proof of Claim 2.} This follows immediately from Claim 1 and the definition of $E$. 

Thus, the lemma will be proved if we show that
\begin{align}
& \ |F(q',\sigma'_{t'}) - F(q,\sigma_t) - \nabla_q F(q,\sigma_t)\cdot (q'-q) - \int_{\dtorus}\nabla_{\mu}F(q,\sigma_t)(x)\cdot v^0(x) \sigma_t(dx)|  \notag \\ 
\leq & \ C(|t'-t|^2+|q'-q|^2 +\mathscr{W}^2(\mu,\nu)) \label{eq:wanttoshow2} 
\end{align}
for some constant $C.$ 

\textit{\underline{Step 2.}} Before we set out to prove (\ref{eq:wanttoshow2}), we transform the expression for $E$ some more. By the chain rule, we have that 
\begin{align} 
& F(q',\sigma'_{t'}) - F(q,\sigma_t) = \int_0^1 \big[ \nabla_qF(q^{\tau},\sigma^{\tau})\cdot (q'-q) + \nabla_{\mu}F(q^{\tau},\sigma^{\tau})(x)\cdot v^{\tau}(x)\sigma^{\tau}(dx)  \big] d\tau ,  \notag    
\end{align}
so
\begin{align}
E = &  \int_0^1 \bigg( [ \nabla_q F(q^{\tau},\sigma^{\tau}) - \nabla_q F(q,\sigma_t)] \cdot (q'-q) \notag \\ & \qquad + \int_{\dtorus}\nabla_{\mu}F(q^{\tau},\sigma^{\tau})\cdot v^{\tau}(x)\sigma^{\tau}(dx) - \int_{\dtorus}\nabla_{\mu}F(q,\sigma_t)\cdot v^{0}(x)\sigma_t(dx)  \bigg) d\tau. \notag
\end{align}
With our knowledge of $v^{\tau},$ that is, from Remark \ref{remark:vtau2}), we may rewrite $E$ as 
\begin{align*}
E = \int_0^1 \bigg[ (\nabla_q & F(q^{\tau},\sigma^{\tau}) - \nabla_q F(q,\sigma_t)) \cdot (q'-q) \\ & + \int_{\dtorus} \nabla_{\mu}F(q^{\tau},\sigma^{\tau})(\Sigma^1_{t^{\tau}}[s,\mu^{\tau}](x))\cdot \bigg( \partial_t\Sigma^1_{t^{\tau}}[s,\mu^{\tau}](x)(t'-t) \notag \\ & \qquad \qquad + \int_{\dtorus} \bar\nabla_{\mu}\Sigma^1_{t^{\tau}}[s,\mu^{\tau}](x)(r)w^{\tau}(r)\mu^{\tau}(dr) - \nabla_q \Sigma^1_{t^{\tau}}[s,\mu^{\tau}](x)w^{\tau}(x)\bigg) \mu^{\tau}(dx) \notag \\  &- \int_{\dtorus} \nabla_{\mu}F(q,\sigma_t)(\Sigma_t^1[s,\mu](x))\cdot \bigg(\partial_t \Sigma_t^1[s,\mu](x)(t'-t) \notag \\ & \qquad \qquad + \int_{\dtorus}\bar\nabla_{\mu}\Sigma^1_t[s,\mu](x)(r)w^0(r)\mu(dr)  - \nabla_q\Sigma^1_t[s,\mu](x)w^0(x) \bigg)\mu(dx) \bigg] d\tau.
\end{align*}
Now, let 
$$
\gamma^{\tau} \in \Gamma(\mu,\mu^{\tau}), \quad 0<\tau<1.
$$ 
Then $E$ takes the form
\begin{align} 
& E =  \int_0^1  (\nabla_qF(q^{\tau},\sigma^{\tau})-\nabla_q F(q,\sigma_t))\cdot (q'-q) d\tau \notag \\ &\ \quad + \int_0^1  \int_{\dtorus\times \dtorus} \bigg[  \nabla_{\mu}F(q^{\tau},\sigma^{\tau})(\Sigma^1_{t^{\tau}}[s,\mu^{\tau}](y))\cdot \bigg( \partial_t\Sigma^1_{t^{\tau}}[s,\mu^{\tau}](y)(t'-t) \notag  \\ & \qquad \qquad \qquad \qquad + \int_{\dtorus} \bar\nabla_{\mu}\Sigma^1_{t^{\tau}}[s,\mu^{\tau}](y)(r)w^{\tau}(r)\mu^{\tau}(dr) - \nabla_q \Sigma^1_{t^{\tau}}[s,\mu^{\tau}](y)w^{\tau}(y)\bigg) \notag  \\ 
 & \qquad \qquad \ \  \qquad  -  \nabla_{\mu}F(q,\sigma_t)(\Sigma_t^1[s,\mu](x))\cdot \bigg(\partial_t \Sigma_t^1[s,\mu](x)(t'-t) \notag \\ & \ \qquad \qquad \qquad \qquad  + \int_{\dtorus}\bar\nabla_{\mu}\Sigma^1_t[s,\mu](x)(r)w^0(r)\mu(dr) - \nabla_q\Sigma^1_t[s,\mu](x)w^0(x) \bigg)\bigg] \gamma^{\tau}(dx,dy) d\tau. \notag \\
 & =: E_1 + E_2, \notag
\end{align}
where
\begin{equation*}
E_1 = \int_0^1 (\nabla_q  F(q^{\tau},\sigma^{\tau}) - \nabla_q F(q,\sigma_t)) \cdot (q'-q) d\tau, \quad E_2 = E - E_1.
\end{equation*}
\textit{\underline{Step 3} (estimates).} In the following, we will be making use of the boundedness of the quantities displayed in Section \ref{subsection:medata}.
With an abuse of notation, for each fixed $\tau\in(0,1],$ let $\tau(h) = h\tau,$ $0\leq h \leq 1,$ so that $t^{\tau(\cdot)}$ has endpoints $t,$ $t^{\tau},$ $q^{\tau(\cdot)}$ has endpoints $q,$ $q^{\tau}$ and $\sigma^{\tau(\cdot)}$ has endpoints $\sigma_t,$ $\sigma^{\tau}.$ The velocity vector field of the path $h \mapsto v^{\tau(h)}$ is $\tau v^{\tau(h)}$ (see, e.g., \cite{gradientflows}). 
An argument similar to the one in the proof of Proposition \ref{prop:timeder}, based on the continuity of $\nabla^2_{qq}F$ and $\nabla^2_{\mu q}F,$ shows that  
\begin{align*} 
&  \nabla_q F(q^{\tau},\sigma^{\tau}) - \nabla_q F(q,\sigma_t) = \\
  = &\ \int_0^1 \big[ \tau \nabla^2_{qq}F(q^{\tau(h)},\sigma^{\tau(h)})(q'-q) 
 +   \int_{\dtorus} \tau \nabla^2_{\mu q}  F(q^{\tau(h)},\sigma^{\tau(h)})(x) v^{\tau(h)}(x) \sigma^{\tau(h)}(dx) \big] dh.
\end{align*}
Therefore, by Proposition (\ref{prop:vvfinterpol})(ii), we get \begin{equation}\label{eq:C1} |E_1| \leq C|q'-q|\big(|q'-q|+|t'-t|+\mathscr{W}(\mu,\nu)\big)\end{equation}
for some positive constant $C$. 
We break $E_2$ down as follows:
\begin{equation*} E_2 = \int_0^1 \int_{\dtorus\times\dtorus} ( B_1 + B_2 + B_3 ) \gamma^{\tau} (dx,dy) d\tau , \end{equation*}
where
\begin{align*}
B_1  := & \  \big[ \nabla_{\mu}F(q^{\tau},\sigma^{\tau})(\Sigma^1_{t^{\tau}}[s,\mu^{\tau}](y)) - \nabla_{\mu} F(q,\sigma_t)(\Sigma_t^1[s,\mu](y)) \big] \cdot \\ &  \ \cdot \big[ \partial_t\Sigma^1_{t^{\tau}}[s,\mu^{\tau}](y)(t'-t) - \nabla_q\Sigma^1_{t^{\tau}}[s,\mu^{\tau}](y)w^{\tau}(y) + \int_{\dtorus}\bar\nabla_{\mu}\Sigma^1_{t^{\tau}}[s,\mu^{\tau}](y)(r)w^{\tau}(r)\mu^{\tau}(dr) \big] , 
\end{align*}
\begin{align*}
B_2 := \ \nabla_{\mu}F(q,\sigma_t)(\Sigma^1_t[s,\mu](y))\cdot \big[ & \partial_t\Sigma^1_{t^{\tau}}[s,\mu^{\tau}](y)(t'-t) - \nabla_q \Sigma^1_{t^{\tau}}[s,\mu^{\tau}](y)w^{\tau}(y) \\ 
& \qquad + \int_{\dtorus}\bar\nabla_{\mu}\Sigma^1_{t^{\tau}}[s,\mu^{\tau}](y)(r)w^{\tau}(r)\mu^{\tau}(dr) \\ &  - \partial_t\Sigma^1_{t}[s,\mu](x)(t'-t) + \nabla_q \Sigma^1_{t}[s,\mu](x)w^{0}(x) \\ 
& \qquad - \int_{\dtorus}\bar\nabla_{\mu}\Sigma^1_{t}[s,\mu](x)(r)w^{0}(r)\mu(dr) \big] ,
\end{align*}
and
\begin{align*}
B_3 := & \  \big[ \nabla_{\mu}F(q,\sigma_t)(\Sigma^1_{t}[s,\mu](y)) - \nabla_{\mu} F(q,\sigma_t)(\Sigma_t^1[s,\mu](x) \big] \cdot \\ &  \ \cdot \big[ \partial_t\Sigma^1_{t}[s,\mu](x)(t'-t) - \nabla_q\Sigma^1_{t}[s,\mu](x)w^{0}(x) + \int_{\dtorus}\bar\nabla_{\mu}\Sigma^1_{t}[s,\mu](x)(r)w^{0}(r)\mu(dr) \big].
\end{align*}
To estimate $\int_0^1\int_{(\dtorus)^2}B_1\gamma^{\tau}d\tau,$ we address the first square bracket in the definition of $B_1.$ With $\tau w^{\tau(h)}$ being the velocity vector field of the path $h\mapsto w^{\tau(h)},$ we have
\begin{align*}
& \nabla_{\mu}F(q^{\tau},\sigma^{\tau})(\Sigma_{t^{\tau}}^1[s,\mu^{\tau}](y)) - \nabla_{\mu}F(q,\sigma_t)(\Sigma_{t}^1[s,\mu](y))  = 
\\ 
= & \  \int_0^1 \bigg[ \tau \nabla^2_{q\mu}F(q^{\tau(h)},\sigma^{\tau(h)})(\Sigma^1_{t^{\tau(h)}}[s,\mu^{\tau(h)}](y))(q'-q) \\ &\qquad + \int_{\dtorus}\tau \nabla^2_{\mu\mu}F(q^{\tau(h)},\sigma^{\tau(h)})(\Sigma^1_{t^{\tau(h)}}[s,\mu^{\tau(h)}](y))(b)v^{\tau(h)}(b)\sigma^{\tau(h)}(db) 
\\ 
& \qquad + \nabla^2_{x \mu}F(q^{\tau(h)},\sigma^{\tau(h)})(\Sigma^1_{t^{\tau(h)}}[s,\mu^{\tau(h)}](y))\big[ \tau(t'-t)\partial_t \Sigma^1_{t^{\tau(h)}}[s,\mu^{\tau(h)}](y) \\ & \hspace{7.4cm}  + \int_{\dtorus}\tau \bar\nabla_{\mu}\Sigma^1_{t^{\tau(h)}}[s,\mu^{\tau(h)}](y)(b) w^{\tau(h)}(b)\mu^{\tau(h)}(db) \big] \bigg] dh ,
\end{align*}
so
\begin{align}
& \ | \nabla_{\mu}F(q^{\tau},\sigma^{\tau})(\Sigma_{t^{\tau}}^1[s,\mu^{\tau}](y)) - \nabla_{\mu}F(q,\sigma_t)(\Sigma_{t}^1[s,\mu](y)) | \notag \\ \leq  \ &  C|q'-q| + C(|t'-t| + \mathscr{W}(\mu,\nu)) + C(|t'-t| + \mathscr{W}(\mu,\nu))  \notag
\end{align} 
and thus,
\begin{align}
 \nabla_{\mu}F(q^{\tau},\sigma^{\tau})(\Sigma_{t^{\tau}}^1[s,\mu^{\tau}](y)) - \nabla_{\mu}F(q,\sigma_t)(\Sigma_{t}^1[s,\mu](y)) | \leq C(|t'-t|+|q'-q|+\mathscr{W}(\mu,\nu)) \notag
 \end{align}
for some constant $C.$
Invoking the boundedness of $\partial_t\Sigma^1,$ $\nabla_q \Sigma^1,$ $\bar\nabla_{\mu}\Sigma^1$ and Remark \ref{remark:littlefact}(i), we get
\begin{align}
\big| \int_0^1 \int_{\dtorus\times\dtorus} B_1 \gamma^{\tau}(dx,dy)d\tau \big| \leq C(|t'-t|+|q'-q|+\mathscr{W}(\mu,\nu))(|t'-t|+\mathscr{W}(\mu,\nu)). \label{eq:fromB1}
\end{align}
Recall that we denote by $\nabla^2_{x\mu}F(q,\mu)(x)$ the gradient at $x$ of the mapping $x\mapsto \nabla_{\mu}F(q,\mu)(x),$ and $\nabla^2_{x\mu}F(q,\mu)(x)$ is uniformly bounded, by assumption. Thus, 
\begin{align*}
|\nabla_{\mu} F(q,\sigma_t)(\Sigma^1_{t}[s,\mu](y)) - \nabla_{\mu} F(q,\sigma_t)(\Sigma_t^1[s,\mu](x)| &\  \leq \|\nabla^2_{x\mu}F(q,\sigma_t)\|_{\infty}|\Sigma^1_{t}[s,\mu](y)-\Sigma^1_{t}[s,\mu](x)| \\ &\ \leq 2\|\nabla^2_{x\mu}F(q,\sigma_t)\|_{\infty}A_1|y-x|,   
\end{align*}
for any $x,y\in\dtorus.$ Therefore, for some constant $C,$ 
\begin{align}
\big| \int_0^1 \int_{\dtorus\times\dtorus} B_3 \gamma^{\tau}(dx,dy)d\tau \big| &\ \leq  C \int_0^1 \int_{\dtorus\times\dtorus} |x-y|(|t'-t|+\mathscr{W}(\mu,\nu))\gamma^{\tau}(dx,dy) d\tau \notag \\ &\ \leq C|x-y|(|t'-t|+\mathscr{W}(\mu,\nu)). \label{eq:fromB3} 
\end{align}
Next, we are going to estimate $\int_0^1\int_{(\dtorus)^2}B_2\gamma^{\tau}d\tau.$ To ease notation, let us make the abbreviations
\begin{displaymath}
\phi_2 = \Sigma^1_{t^{\tau}}[s,\mu^{\tau}], \quad \phi_1 = \Sigma^1_{t}[s,\mu], \quad \psi_1 = \nabla_{\mu}F(q,\sigma_t)\circ \phi_1.
\end{displaymath}
Then $\int_{(\dtorus)^2} B_2 \gamma^{\tau}(dx,dy)$ reads:
\begin{align}
\int_{\dtorus\times\dtorus}  \psi_1(y) \cdot \big[& \partial_t \phi_2(y)(t'-t) - \nabla_q \phi_2(y) w^{\tau}(y)
+ \int_{\dtorus} \nabla_{\mu}\phi_2(y)(r_2)w^{\tau}(r_2)\mu^{\tau}(dr_2) \notag \\ 
& \ -  \partial_t \phi_1(x)(t'-t) + \nabla_q \phi_1(x) w^{0}(x)
- \int_{\dtorus} \nabla_{\mu}\phi_1(x)(r_1)w^0(r_1)\mu(dr_1)  \big] \gamma^{\tau}(dx,dy). \notag
\end{align}
Therefore,
\begin{align}
\big| \int\limits_{\dtorus\times\dtorus} B_2 \gamma^{\tau}(dx,dy) \big|  
\leq \| \psi_1 \|_{L^{\infty}(\gamma^{\tau})} \| D \|_{L^1(\gamma^{\tau})}, \notag
\end{align}
where, by applying Remark \ref{remark:littlefact}(iii),
\begin{align}
D = & \ (\partial_t \phi_2(y)-\partial_t \phi_1(x))(t'-t) - (\nabla_q\phi_2(y)-\nabla_q\phi_1(x))\frac{y-x}{\tau}
 \notag \\
& \ + \int_{\dtorus\times\dtorus} (\nabla_{\mu}\phi_2(y)(b)-\nabla_{\mu}\phi_1(x)(a))\frac{b-a}{\tau} \gamma^{\tau}(da,db), \notag
\end{align}
and we are left with estimating the $L^1(\gamma^{\tau})$-norm of $D.$ We write
$$
\tau D = \tau D + (\phi_2(y) - \phi_1(x))  + (\phi_1(x) - \phi_2(y)),
$$
to apply (\ref{eq:munugammazero}), once with $\mu=\mu,$ $\nu=\mu^{\tau},$ $t=t, t' = t^{\tau},$ $q=x,$ $q'=y,$ then with $\mu=\mu^{\tau},$ $\nu = \mu,$ $t=t^{\tau},$ $t'=t,$ $q=y,$ $q'=x,$ and obtain:
$$
|\tau D| \leq 2C(\tau^2|t'-t|^2 + \tau^2 \mathscr{W}^2(\mu,\nu) + |x-y|^2).
$$ 
Therefore
\begin{align*}
\int_{\dtorus\times\dtorus} |D| \gamma^{\tau}(dx,dy) \leq &\ 2C\big( \tau |t'-t|^2 + \tau\mathscr{W}^2(\mu,\nu) + \frac{1}{\tau}\int_{\dtorus\times\dtorus}|x-y|^2\gamma^{\tau}(dx,dy) \big) 
\\ 
\leq &\ 2C\big( \tau |t'-t|^2 + \tau\mathscr{W}^2(\mu,\nu) + \frac{1}{\tau}\tau^2\mathscr{W}^2(\mu,\nu) \big).
\end{align*}
Consequently, for some constant $C,$ 
\begin{align}
\int_0^1\int_{\dtorus\times\dtorus} |B_2|\gamma^{\tau}(dx,dy)d\tau \leq C(|t'-t|^2 + \mathscr{W}^2(\mu,\nu)). \label{eq:fromB2}
\end{align}
\textit{\underline{Step 4.}} Note that all the estimates derived in the previous step, namely, (\ref{eq:C1}), (\ref{eq:fromB1}), (\ref{eq:fromB3}), (\ref{eq:fromB2}), are quadratic in the increments. Hence,
\begin{align*}
  | E | \leq  \  C(|t'-t|^2 + |q'-q|^2 + \mathscr{W}^2(\mu,\nu)), 
\end{align*}
which is (\ref{eq:wanttoshow2}), for some constant $C$. This concludes the proof.
\end{proof}
\subsection{Gradient of $u(s,q,\cdot)$ and chain rule}\label{subsection:gradientofuandchainrule} 
We collect now the results on differentiability in $\mu$ of the functions $g,$ $F,$ $H(q,\mathcal{V})$ that constitute the full value function $u,$ with the following definition and corollary.
Define the $\R^d$-valued function 
\begin{align} 
\Upsilon[s,\mu](q,y) := \mathcal{N}^g_0[s,\mu](q)(y) + \int_0^s [ \bar\nabla_{\mu}H(q,\mathcal{V}_t[s,\mu](q,y)) + \mathcal{N}^F_t[s,\mu](q)(y) ] dt ,  
\label{eq:defofupsilon}    
\end{align} 
where $s\in[0,T]$, $q\in\dtorus,$ $y\in\R^d,$ $\mu\in\wasstwospace.$ 
\begin{cor}
\label{cor:upsilonisdiffble} 
The function $\Upsilon,$ just defined, is continuous on $[0,T]\times\dtorus\times\dtorus\times\wasstwospace,$ and $u(s,q,\cdot),$ defined by (\ref{eq:defofu}), is differentiable on $\wasstwospace,$ in the sense that there exists a constant $C$ such that \begin{align*} |u(s,q,\nu)-u(s,q,\mu) - \int_{\dtorus\times\dtorus} \Upsilon[s,\mu](q,y)\cdot (x-y)\gamma(dy,dx) | \leq C \mathscr{W}^2(\mu,\nu)   \end{align*} for every $\mu, \nu\in\wasstwospace,$ $\gamma\in\Gamma_0(\mu,\nu),$ $s\in[0,T],$ $q\in\dtorus.$ 
\end{cor}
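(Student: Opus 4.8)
The plan is to differentiate the explicit formula (\ref{eq:defofu}) for $u(s,q,\mu)$ term by term and assemble the Wasserstein gradient from the pieces already obtained in Section \ref{section:regularityinmu} and Section \ref{subsection:gradientsofFandg}. Recall that $u(s,q,\mu)$ is the sum of three kinds of terms: the initial coupling $g(q,\Sigma^1_0[s,\mu]_{\#}\mu)$, the running Hamiltonian contribution $-\int_0^s H(q,\mathcal V_t[s,\mu](q))\,dt$, and the running coupling $-\int_0^s F(q,\Sigma^1_t[s,\mu]_{\#}\mu)\,dt$. For the first term we invoke Lemma \ref{lemma:diffofF} in its stated $g$-version with $t=t'=0$, $q=q'$, which gives a quadratic-in-$\mathscr W$ estimate with gradient $\mathcal N^g_0[s,\mu](q)(\cdot)$. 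For the running Hamiltonian term, Lemma \ref{lemma:Hisdiffble} (again with $t=t'$, $q=q'$) yields, for each fixed $t$, a second-order Taylor estimate with gradient $\bar\nabla_\mu H(q,\mathcal V_t[s,\mu](q))(\cdot)$ and a constant $C$ uniform in $t\in[0,T]$; integrating over $t\in[0,s]$ preserves the quadratic bound. For the running coupling term, Lemma \ref{lemma:diffofF} itself (the $F$-version, $t=t'$, $q=q'$) produces gradient $\mathcal N^F_t[s,\mu](q)(\cdot)$ with a $t$-uniform constant, and again the $t$-integral is harmless. Summing the three gradients gives exactly $\Upsilon[s,\mu](q,y)$ as defined in (\ref{eq:defofupsilon}), and summing the three quadratic remainders gives the desired estimate, with $C$ depending only on the coefficients and $s\le T$.

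Concretely, first I would fix $\mu,\nu\in\wasstwospace$, $\gamma\in\Gamma_0(\mu,\nu)$, $s\in[0,T]$, $q\in\dtorus$, write
\begin{align*}
u(s,q,\nu)-u(s,q,\mu) = &\ \big[ g(q,\Sigma^1_0[s,\nu]_{\#}\nu) - g(q,\Sigma^1_0[s,\mu]_{\#}\mu) \big] \\
&\ - \int_0^s \big[ H(q,\mathcal V_t[s,\nu](q)) - H(q,\mathcal V_t[s,\mu](q)) \big] dt \\
&\ - \int_0^s \big[ F(q,\Sigma^1_t[s,\nu]_{\#}\nu) - F(q,\Sigma^1_t[s,\mu]_{\#}\mu) \big] dt ,
\end{align*}
and then subtract the corresponding integrals of $\Upsilon[s,\mu](q,y)\cdot(x-y)$ against $\gamma$. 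Matching each bracket with the appropriate lemma (using that the formulas in Lemmas \ref{lemma:diffofF} and \ref{lemma:Hisdiffble} contain precisely the integrand pieces of $\mathcal N^g_0$, $\bar\nabla_\mu H$, $\mathcal N^F_t$ when the time and space increments vanish), the triangle inequality bounds $|u(s,q,\nu)-u(s,q,\mu) - \int \Upsilon\cdot(x-y)\,d\gamma|$ by a sum of three terms each $\le C\mathscr W^2(\mu,\nu)$. The continuity of $\Upsilon$ on $[0,T]\times\dtorus\times\dtorus\times\wasstwospace$ follows from the continuity assertions in Lemmas \ref{lemma:diffofF} and \ref{lemma:Hisdiffble} together with continuity of $s\mapsto\Sigma$ and the fact that the $t$-integral of a jointly continuous function is continuous in the remaining parameters; one should note that the upper limit $s$ also varies, so I would split $\int_0^{s}-\int_0^{s'}$ into $\int_{s'}^{s}$ (controlled by boundedness of the integrand) plus $\int_0^{s'}$ of a difference (controlled by continuity in the integrand), a routine argument.

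The only mild subtlety — and the step I expect to require the most care — is the uniformity in $t$ of the constants coming from Lemmas \ref{lemma:diffofF} and \ref{lemma:Hisdiffble}. Those lemmas are stated with a single constant $C$ for all $t,t'\in(0,T)$, so the $t$-dependence is already absorbed; nonetheless one must check that the $t$-integrated remainder $\int_0^s C\,\mathscr W^2(\mu,\nu)\,dt = sC\,\mathscr W^2(\mu,\nu) \le TC\,\mathscr W^2(\mu,\nu)$ does not reintroduce any hidden dependence on $\mu$ or $\nu$, which it does not since $C$ depends only on the coefficients (and the uniform bounds $\|\Sigma^2\|\le\theta B$, $\|\nabla_q\Sigma^1\|\le A_1$, etc., of Corollary \ref{cor:hamODEshassolution} and Lemma \ref{lemma:mastermapisdiffble}, all independent of $\mu$). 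A secondary point is to confirm that the $\gamma$ against which we integrate is the same optimal plan for all three terms — it is, since $\gamma\in\Gamma_0(\mu,\nu)$ is fixed once and for all and each lemma is stated for an arbitrary $\gamma\in\Gamma_0(\mu,\nu)$. With these observations the corollary follows by addition, increasing $C$ a final time.
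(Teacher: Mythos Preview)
Your proposal is correct and is exactly the approach the paper takes: the paper's proof consists of the single sentence ``The continuity of $\Upsilon$ is a consequence of the continuity of its parts, and combining Lemma \ref{lemma:Hisdiffble} with Lemma \ref{lemma:diffofF} produces the stated estimate,'' which is precisely your term-by-term decomposition applied with $t=t'$, $q=q'$ and then summed. Your added remarks on the $t$-uniformity of the constants and on the fixed optimal plan $\gamma$ are accurate elaborations of details the paper leaves implicit.
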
 
\begin{proof} 
The continuity of $\Upsilon$ is a consequence of the continuity of its parts, and combining Lemma \ref{lemma:Hisdiffble} with Lemma \ref{lemma:diffofF} produces the stated estimate.
\end{proof} 
We refer back to Section \ref{subsection:differentiabilityinwasserstein} for the definition of $\mathscr{T}_{\mu}\wasstwospace.$ Since we do not know whether $\Upsilon[s,\mu](q,\cdot)$ belongs to the $L^2(\mu)$ closure of $\{\nabla\varphi \ | \ \varphi\in C_c^{\infty}(\dtorus)\},$ we make the following definition. \begin{defn}\label{defn:wassgradientofu} Let $u=u(s,q,\mu)$ be as in (\ref{eq:specificu}), for $s\in[0,T],$ $q\in\dtorus,$ $\mu\in\wasstwospace,$ and $\Upsilon$ be as in (\ref{eq:defofupsilon}). At every $s,q,\mu,$ by $$ \nabla_{\mu}u(s,q,\mu) $$ we will mean the projection of $\Upsilon[s,\mu](q,\cdot)$ onto $\mathscr{T}_{\mu}\wasstwospace.$    \end{defn} We need to note that the velocity vector fields $v[s,\mu](t,\cdot)$ are not necessarily elements of $\mathscr{T}_{\sigma_t}\wasstwospace,$ even though this is true in the case $H(q,p)=\frac{1}{2}|p|^2$ (see \cite[Theorem 5.1]{mfgmain}). This leads to the following definition. \begin{defn}\label{defn:projectionofv} At every $s,t\in[0,T],$ $\mu\in\wasstwospace,$  by $$ \bar v[s,\mu](t,\cdot)$$ we will mean the projection of $v[s,\mu](t,\cdot)$ onto $\mathscr{T}_{\sigma_t}\wasstwospace,$ where $\sigma_t=\Sigma^1_t[s,\mu](\cdot)_{\#}\mu.$     \end{defn} Note that, if $w\in L^2(\dtorus,\mu)$ is arbitrary and $\bar w$ is its projection onto $\mathscr{T}_{\mu}\wasstwospace,$ then 
\begin{equation}
\label{eq:doubleproj} 
\int_{\dtorus} \Upsilon[s,\mu](q,x)\cdot \bar w(x)\mu(dx) = \int_{\dtorus} \nabla_{\mu}u(s,q,\mu)(x)\cdot w(x)\mu(dx),
\end{equation} 
which follows from (\ref{eq:projfact}).
We are now ready to prove the third main statement of the paper: \begin{thm}\label{thm:masterequation1} 
Let $H,F,g$ be as in Sections \ref{subsection:dataformfg}, \ref{subsection:medata}, and $\Sigma$ be the unique solution to the system (\ref{eq:hamODEs}) obtained in Corollary \ref{cor:hamODEshassolution}(ii). Let $u=u(s,q,\mu)$ be defined as in (\ref{eq:defofu}). Then: 
\begin{enumerate}[(i)] 
\item For any $s\in[0,T],$ $\mu\in\wasstwospace,$ there exists $\sigma\in AC^2(0,T;\wasstwospace)$ such that $\sigma_s=\mu$ and the continuity equation 
\begin{equation*} 
\partial_t \sigma_t + \textrm{div}(\nabla_p H(q,\nabla_q u(t,q,\sigma_t)\sigma_t) = 0 \quad \textrm{ in } \mathcal D'((0,T)\times\wasstwospace) 
\end{equation*}
holds; \item The function $u$ is a classical solution to the master equation 
\begin{equation}
\tag{\ref{eq:ME}}  
\left\{ \begin{aligned}   \partial_s &u(s,q,\mu) + \int_{\dtorus} \nabla_{\mu} u(s,q,\mu)(x) \cdot \nabla_pH(x,\nabla_q u(s,x,\mu)) \mu(dx) \\ &\ \quad  + H(q,\nabla_q u(s,q,\mu)) + F(q,\mu) = 0  \  \qquad \qquad \textrm{ in } (0,T)\times\dtorus\times\wasstwospace , \\ &\ \hspace{4cm} u(0,q,\mu) =   g(q,\mu)   \qquad  \textrm{ on } \dtorus\times\wasstwospace , \end{aligned} \right. 
\end{equation}
 in the sense explained in Section \ref{subsection:defnofclassicalmfg}. 
\end{enumerate} 
\end{thm}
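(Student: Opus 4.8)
\textbf{Proof plan for Theorem \ref{thm:masterequation1}.}

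The plan is to establish part (i) first and then use it, together with the pathwise results of Sections \ref{section:regularityinmu}--\ref{subsection:gradientofuandchainrule}, to verify (ii) by a chain-rule argument along the curve $t\mapsto(t,q,\sigma_t)$. For part (i), I would simply take $\sigma_t = \Sigma^1_t[s,\mu](\cdot)_{\#}\mu$ as in Definition \ref{defn:sigmaandvvfield}; Proposition \ref{prop:hmvisavelocity} already gives $\sigma\in AC^2(0,T;\wasstwospace)$ with velocity $v[s,\mu]$, and the identity $v_t(q)=\nabla_pH(q,\nabla_qu(t,q,\sigma_t))$ is precisely (\ref{eq:observethat}) combined with Lemma \ref{lemma:dependsonbeingsymmetric}(i) (note $\nabla_q u(t,q,\sigma_t)=\mathcal V[s,\mu](t,q)$, so $\nabla_q u$ plays the role of $\mathcal V$ here). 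The boundary condition $\sigma_s=\mu$ holds since $\Sigma^1_s[s,\mu]=\mathrm{id}$, and $u(0,q,\mu)=g(q,\mu)$ follows from (\ref{eq:defofu}) since $\Sigma^1_0[0,\mu]=\mathrm{id}$ when $s=0$; more to the point, $u(0,\cdot,\mu)=g(\cdot,\mu)$ is the first claim of Lemma \ref{lemma:dependsonbeingsymmetric}(i).

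For part (ii), the regularity requirements of Section \ref{subsection:defnofclassicalmfg} are met: $\nabla_q u$ is continuous in all three variables because $\nabla_q u(s,q,\mu)=\mathcal V[s,\mu](s,q)$ via (\ref{eq:nablaquequalsmathcalV}) (taking $t=s$) and $\mathcal V$ is continuous; differentiability of $u$ in $\mu$ with $\nabla_\mu u$ the projection of $\Upsilon[s,\mu](q,\cdot)$ is Corollary \ref{cor:upsilonisdiffble} together with Definition \ref{defn:wassgradientofu}; differentiability in $s$ will come out of the computation below. The heart of the matter is the PDE itself. I would compute $\tfrac{d}{dt}\big[u(t,q,\sigma_t)\big]$ in two ways. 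On one hand, Lemma \ref{lemma:dependsonbeingsymmetric}(ii) gives directly
$$
\frac{d}{dt}\big[u(t,q,\sigma_t)\big] = -H(q,\nabla_q u(t,q,\sigma_t)) - F(q,\sigma_t).
$$
On the other hand, expanding the total derivative through the chain rule in the three slots $(t,q,\mu)$ — using $\partial_s u$ for the first slot, $0$ for the (frozen) $q$ slot, and the Wasserstein gradient $\nabla_\mu u$ paired with the velocity $v_t$ of $\sigma_t$ for the measure slot — gives
$$
\frac{d}{dt}\big[u(t,q,\sigma_t)\big] = \partial_s u(t,q,\sigma_t) + \int_{\dtorus} \nabla_\mu u(t,q,\sigma_t)(x)\cdot v_t(x)\,\sigma_t(dx).
$$
Since $v_t(x)=\nabla_pH(x,\nabla_q u(t,x,\sigma_t))$ from part (i), equating the two expressions and evaluating at $t=s$, $\sigma_s=\mu$ yields exactly (\ref{eq:ME}); in passing, this identity also shows $\partial_s u$ exists and is continuous in $\mu$, because every other term in it is.

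\textbf{Main obstacle.} The delicate point is justifying the chain rule in the second computation: I must differentiate $t\mapsto u(t,q,\sigma_t)$ and show the measure-slot contribution is genuinely $\int \nabla_\mu u\cdot v_t\,d\sigma_t$. The issue is twofold. First, the differentiability estimate for $u(s,q,\cdot)$ in Corollary \ref{cor:upsilonisdiffble} is stated with the generating field $\Upsilon$ and optimal plans $\gamma\in\Gamma_0$, whereas the infinitesimal motion along $\sigma_t$ need not be transported along optimal plans; this is where Lemma \ref{lemma:keylemma} is indispensable — it upgrades the $\Gamma_0$-differentiability of the $\mu$-dependence to the $\Gamma$-version, letting one compute $\tfrac{d}{dt}u(t,q,\sigma_t)$ against an arbitrary velocity field. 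Second, $v_t$ is not in general tangent, i.e.\ $v_t\notin\mathscr T_{\sigma_t}\wasstwospace$, so one must use the projection identity (\ref{eq:doubleproj}) (equivalently (\ref{eq:projfact})) to replace $\int\Upsilon\cdot v_t\,d\sigma_t$ by $\int\nabla_\mu u\cdot v_t\,d\sigma_t = \int\nabla_\mu u\cdot\bar v_t\,d\sigma_t$, with $\bar v_t$ as in Definition \ref{defn:projectionofv}. Assembling the per-$t$ Taylor expansion into an honest derivative along the curve also requires the uniform (in $t$) quadratic bounds from Lemmas \ref{lemma:Hisdiffble} and \ref{lemma:diffofF} together with the $AC^2$ control on $\sigma_t$ and the continuity of $\partial_t\mathcal V$, $\nabla_q\mathcal V$; these are all in hand, so the argument closes.
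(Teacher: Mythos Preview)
Your plan is correct and matches the paper's proof: part (i) is assembled exactly as you say, and part (ii) is obtained by comparing the total derivative of $t\mapsto u(t,q,\sigma_t)$ from Lemma \ref{lemma:dependsonbeingsymmetric}(ii) against its expansion in the $s$- and $\mu$-slots, with Lemma \ref{lemma:keylemma} and the projection identity (\ref{eq:doubleproj}) playing precisely the roles you assign them. The one point where the paper is more careful than your write-up is the apparent circularity of invoking a chain rule that already presupposes $\partial_s u$ exists: the paper instead telescopes the raw increment $u(s+h,q,\mu)-u(s,q,\mu)$ through an explicit intermediate $\hat\sigma_{s+h}:=(\mathrm{id}+h\,v_s)_\#\mu$, uses Lemma \ref{lemma:keylemma} with the non-optimal plan $\gamma_h=(\mathrm{id}\times(\mathrm{id}+h v_s))_\#\mu\in\Gamma(\mu,\hat\sigma_{s+h})$, controls $\mathscr{W}(\sigma_{s+h},\hat\sigma_{s+h})=O(h^2)$ via the bound on $\partial^2_{tt}\Sigma^1$, and thereby \emph{derives} the existence of $\partial_s u$ from the limit of the difference quotient rather than assuming it.
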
 
The full value function $u=u(s,q,\mu)$ is the value of the solution $U$ of the MFG system's Hamilton-Jacobi equation (\ref{eq:mfg1}) at the time ($t=s$) at which the terminal condition $\sigma_{t=s}=\mu$ is prescribed for the continuity equation (\ref{eq:mfg2}).
\begin{proof} (i) Let $s\in[0,T],$ $\mu\in\wasstwospace.$ Set $\sigma_t := \Sigma_t^1[s,\mu]_{\#}\mu.$ Then the statement follows from Proposition \ref{prop:hmvisavelocity}, Corollary \ref{cor:cortozlemma}, formula (\ref{eq:observethat}) and Lemma \ref{lemma:dependsonbeingsymmetric}.

(ii) The regularity of $u$ in $q$ is the same as the regularity of $U$ in $q,$ which was discussed in Lemma \ref{lemma:zlemma}. Fix $0<s<T,$ $q\in\dtorus,$ $\mu\in\wasstwospace.$ As usual, $\sigma_t = \Sigma_t^1[s,\mu]_{\#}\mu,$ and $v_t = \partial_t\Sigma_t^1[s,\mu]\circ X_t[s,\mu],$ $0\leq t\leq T.$ Set $$ \hat\sigma_t := (id + (t-s)v_s)_{\#}\mu, \quad \bar\sigma_t := (id + (t-s)\bar v_s)_{\#}\mu , $$ where $\bar v_s$ is the projection of $v_s$ to $\mathscr{T}_{\mu}\wasstwospace.$ Through $\sigma_{s+h}\times\hat\sigma_{s+h},$ we estimate $\mathscr{W}(\sigma_{s+h},\hat\sigma_{s+h})$: \begin{align} \mathscr{W}^2(\sigma_{s+h},\hat\sigma_{s+h}) \leq  &\ \int\limits_{\dtorus\times\dtorus} |x-y|^2 (\sigma_{s+h}\times\hat\sigma_{s+h})(dx,dy)  \notag \\ = &\ \int\limits_{\dtorus} |\Sigma^1_{s+h}[s,\mu](y) - \Sigma^1_s[s,\mu](y) - h v_s[s,\mu](y)|^2 \notag \mu(dy).       \end{align} Note that $v_s[s,\mu](q) = \partial_t \Sigma^1_t[s,\mu](q) |_{t=s},$ since $X_s[s,\mu] = id.$ Therefore, \begin{equation}\label{eq:hfour} \mathscr{W}(\sigma_{s+h},\hat\sigma_{s+h}) \leq |h|^2 \|\partial^2_{tt}\Sigma^1\|^2_{\infty} .\end{equation} Let $$ \gamma_h := (id\times(id+hv_s))_{\#}\mu \ \in \ \Gamma(\mu,\hat\sigma_{s+h}). $$ Since, by definition, $\nabla_{\mu}u(s+h,q,\mu)\in\mathscr{T}_{\mu}\mathscr{P}(\dtorus),$ we apply Lemma \ref{lemma:keylemma} to write $$ |u(s+h,q,\hat\sigma_{s+h})-u(s+h,q,\mu)-\int\limits_{\dtorus\times\dtorus}\nabla_{\mu}u(s+h,q,\mu)(x)\cdot(y-x)\gamma_h(dx,dy) | = o(\|\pi^2-\pi^1\|_{\gamma_h}), $$ which is the same as 
\begin{equation}
\label{eq:littleogammah} 
|u(s+h,q,\hat\sigma_{s+h})-u(s+h,q,\mu)-h\int\limits_{\dtorus\times\dtorus}\nabla_{\mu}u(s+h,q,\mu)(x)\cdot v_s(x)\mu(dx) | = o(|h|),   
\end{equation} 
because $o(\|\pi^2-\pi^1\|_{\gamma_h})=o(|h|)$ as can be easily checked. Recall now (\ref{eq:doubleproj}). Formula (\ref{eq:defofupsilon}) shows that $\Upsilon[\cdot,\mu](q,y)$ is continuous, so there is a modulus of continuity $\omega$ such that 
\begin{align*} 
& \int_{\dtorus} \nabla_{\mu} u(s+h,q,\mu)(x)\cdot v_s(x) \mu(dx) = \int_{\dtorus}\Upsilon[s+h,\mu](q,x)\cdot \bar v_s(x)\mu(dx) \\ = &\ \int_{\dtorus}\Upsilon[s,\mu](q,x)\cdot \bar v_s(x) \mu(dx) + \omega(|h|) \\ = &\  \int_{\dtorus} \nabla_{\mu}u(s,q,\mu)(x)\cdot v_s(x) \mu(dx) + \omega(|h|). 
\end{align*} 
Therefore (\ref{eq:littleogammah}) is improved to 
\begin{align} 
|u&(s+h,q,\hat\sigma_{s+h})-u(s+h,q,\mu) -h\int_{\dtorus}\nabla_{\mu}u(s,q,\mu)(x)\cdot v_s(x)\mu(dx)| \notag \\ &\ = o(|h|) + |h|\omega(|h|). 
\label{eq:intermediatestep1} 
\end{align} 
Corollary \ref{cor:upsilonisdiffble} shows that $u(s,q,\cdot)$ is $\kappa_1$-Lipschitz for some constant $\kappa_1,$ because $[0,T]\times\dtorus$ is compact. Using the bound (\ref{eq:hfour}), we then have \begin{equation}\label{eq:intermediatestep2} |u(s+h,q,\hat\sigma_{s+h})-u(s+h,q,\sigma_{s+h})|\leq \kappa_1 h^2 \|\partial^2_{tt}\Sigma^1\|^2_{\infty}.   \end{equation} Invoking (\ref{eq:totalofu}), we write \begin{equation}\label{eq:intermediatestep3} | u(s+h,q,\sigma_{s+h})-u(s,q,\mu) +h\big[ H(q,\nabla_q u(s,q,\sigma_s))+F(q,\sigma_s)\big]| = o(|h|).   \end{equation} Finally, (\ref{eq:intermediatestep1}), (\ref{eq:intermediatestep2}) and (\ref{eq:intermediatestep3}) are needed to obtain: \begin{align}  | u(s+h,q,\mu)& - u(s,q,\mu) + h \int_{\dtorus}\nabla_{\mu}u(s,q,\mu)(x)\cdot v_s(x)\mu(dx) + h \big[ H(q,\nabla_q u(s,q,\sigma_s))+F(q,\sigma_s)\big] |   \notag \\ =  &\ |u(s+h,q,\mu) - u(s+h,q,\hat\sigma_{s+h}) + h \int_{\dtorus}\nabla_{\mu}u(s,q,\mu)(x)\cdot v_s(x)\mu(dx) \notag \\ &\ \ +  u(s+h,q,\hat\sigma_{s+h}) - u(s+h,q,\sigma_{s+h}) \notag \\ &\ \  + u(s+h,q,\sigma_{s+h}) - u(s,q,\mu) +  h \big[ H(q,\nabla_q u(s,q,\sigma_s))+F(q,\sigma_s)\big] | \notag  \\ = &\ o(|h|) + |h|\omega(|h|) + \kappa_1 h^2 \|\partial^2_{tt}\Sigma^1\|^2_{\infty} + o(|h|) = o(|h|). \end{align} We divide by $h$, remember that $v_s(x)=\nabla_p H(x,\nabla_q u(s,x,\mu)),$ $\mu=\sigma_s$ and let $h\to 0$ to obtain \begin{equation*} - \partial_s u(s,q,\mu) = \int_{\dtorus}\nabla_{\mu}u(s,q,\mu)(x)\cdot \nabla_p H(x,\nabla_q u(s,x,\mu))\mu(dx) + H(q,\nabla_q u(s,q,\mu)) + F(q,\mu) .    \end{equation*} Let us check the continuity of $s\mapsto \partial_s u(s,q,\mu).$ Due to (\ref{eq:nablaquequalsmathcalV}), $\nabla_q u(s,q,\mu) = \mathcal{V}[s,\mu](s,q) = \Sigma^2[s,\mu](s,q),$ which is continuous in $s,$ and the continuity of $H$ and $F$ takes care of the non-integral term in the formula for $\partial_s u.$ For the integral term, we use once again (\ref{eq:doubleproj}). Let $s'\in (0,T).$ Then \begin{align*} |\int_{\dtorus}&\Upsilon[s,\mu](q,x)\cdot \bar v_s(x)\mu(dx) - \int_{\dtorus}\Upsilon[s',\mu](q,x)\cdot\bar v_{s'}(x)\mu(dx)| \\ \leq & \ |\int_{\dtorus}\Upsilon[s,\mu](q,x)\cdot \bar v_s(x)\mu(dx) -\int_{\dtorus}\Upsilon[s,\mu](q,x)\cdot \bar v_{s'}(x)\mu(dx) | \\ & \ + |  \int_{\dtorus}\Upsilon[s,\mu](q,x)\cdot \bar v_{s'}(x)\mu(dx) - \int_{\dtorus}\Upsilon[s',\mu](q,x)\cdot \bar v_{s'}(x)\mu(dx) | \\  \leq &\ \|\Upsilon[s,\mu](q,\cdot)\|_{L^2(\mu)}\|\bar v_s-\bar v_{s'}\|_{L^2(\mu)} + \|\Upsilon[s,\mu](q,\cdot) - \Upsilon[s',\mu](q,\cdot)\|_{L^2(\mu)}\|\bar v_{s'}\|_{L^2(\mu)}.    \end{align*} By the fact that $\bar v_s,$ $\bar v_{s'}$ are the projections of $v_s,$ $v_{s'}$ on a subspace of $L^2(\mu),$ we know that $\|\bar v_s-\bar v_{s'}\|_{L^2(\mu)} \leq\| v_s- v_{s'}\|_{L^2(\mu)}.$ Letting $s'\to s$ we conclude the continuity. The continuity of $\partial_s u(s,\cdot,\mu)$ is treated in the same fashion, since $v[s,\mu](s,\cdot)$ is continuous. This completes the proof. 
\end{proof}
\begin{remark}
\label{remark:finalremark} 
We do not claim that the function $\nabla_{\mu}u(s,q,\cdot)$ is continuous on $\wasstwospace,$ which is true in the case \cite{mfgmain} of $H(q,p)=\frac{1}{2}|p|^2.$ The reason is that we have had to define $\nabla_{\mu} u$ as the projection of a vector field (Definition \ref{defn:wassgradientofu}) that, in general, is not in the tangent space $\mathscr{T}_{\mu}\wasstwospace,$ whereas for the quadratic Hamiltonian, $\Upsilon[s,\mu](q,\cdot)$ and $\nabla_{\mu}u(s,q,\mu)$ are the same \cite{majorga}. $\sslash$ 
\end{remark}
\section{Appendix}
\textbf{Proof of Lemma \ref{lemma:keylemma}}.
We begin by noting the following.~Let $\mu, \nu \in \wasstwospace,$ and $\gamma,\bar\gamma\in\Gamma(\mu,\nu).$ Then, for any $\varphi\in C^2(\dtorus),$ \begin{equation}\label{eq:ancillarywg1} \big| \int_{\R^d\times\R^d} \nabla\varphi(x) \cdot (y-x) (\gamma-\bar\gamma) (dx,dy) \big| \leq \frac{ \|\pi^1-\pi^2\|_{\gamma}^2 +  \|\pi^1-\pi^2\|_{\bar\gamma}^2  }{2} \|\nabla^2\varphi\|_{\infty}. \end{equation} Indeed, Taylor expansion gives a Borel function $r:\dtorus\times\dtorus\to[-1,1]$ such that $$ \varphi(y)-\varphi(x) - \nabla\varphi(x)\cdot (y-x) = r(x,y) \|\nabla^2\varphi\|_{\infty}\frac{|x-y|^2}{2}. $$ Integrating both sides of this equality over $\R^d\times\R^d$ once with respect to $\gamma$ and then with respect to $\gamma',$ remembering that $\gamma$ and $\gamma'$ have the same marginals $\mu$ and $\nu,$ and substracting one of the resulting expressions from the other, yields (\ref{eq:ancillarywg1}). Fix now $\mu\in\wasstwospace.$ Let $\nu\in\wasstwospace$ and $\gamma\in\Gamma_0(\mu,\nu),$ $\bar\gamma\in\Gamma(\mu,\nu).$ Let $\varphi\in C^{\infty}(\dtorus).$ Write $$ e(\nu,\nabla_{\mu}\mathcal{W}(\mu),\gamma) = e(\nu,\nabla\varphi,\gamma) - \int_{\R^d\times\R^d} (\nabla_{\mu}\mathcal{W}(\mu)(x)-\nabla\varphi(x))\cdot(y-x)\gamma(dx,dy) , $$ and the same expression which holds with $\bar\gamma$ in place of $\gamma.$ Substracting one from the other and taking absolute value gives, using H\"older's inequality, \begin{align*} |e(\nu,\nabla_{\mu}\mathcal{W}(\mu),\gamma)-e(\nu,\nabla_{\mu}\mathcal{W}(\mu),\bar\gamma) | \leq &\  |e(\nu,\nabla\varphi,\gamma) - e(\nu,\nabla\varphi,\bar\gamma) | \\  &\ + \|\nabla_{\mu}\mathcal{W}(\mu)-\nabla\varphi\|_{L^2(\mu)} (\|\pi^2-\pi^1\|_{\gamma} + \|\pi^2-\pi^1\|_{\bar\gamma} ).      \end{align*} Now, $\|\pi^2-\pi^1\|_{\gamma} \leq \| \pi^2-\pi^1 \|_{\bar\gamma},$ and, using (\ref{eq:ancillarywg1}),  \begin{align*} |e(\nu,\nabla_{\mu}\mathcal{W}(\mu),\gamma)-e(\nu,\nabla_{\mu}\mathcal{W}(\mu),\bar\gamma) | \leq  \|\pi^1-\pi^2\|_{\bar\gamma}\big(\|\pi^1-\pi^2\|_{\bar\gamma}\|\nabla^2\varphi\|_{\infty} + 2 \|\nabla_{\mu}\mathcal{W}(\mu)-\nabla\varphi\|_{L^2(\mu)}\big).\end{align*} Dividing by $\|\pi^2-\pi^1\|_{\bar\gamma}$ and once again because $\|\pi^2-\pi^1\|_{\gamma} \leq \| \pi^2-\pi^1 \|_{\bar\gamma},$ we obtain \begin{align*} 
\big|\frac{e(\nu,\nabla_{\mu}\mathcal{W}(\mu),\bar\gamma)}{\|\pi^1-\pi^2\|_{\bar\gamma}} \big| 
\leq \big|\frac{e(\nu,\nabla_{\mu}\mathcal{W}(\mu),\gamma)}{\|\pi^1-\pi^2\|_{\gamma}}\big| 
+ \big(\|\pi^1-\pi^2\|_{\bar\gamma}\|\nabla^2\varphi\|_{\infty} + 2 \|\nabla_{\mu}\mathcal{W}(\mu)-\nabla\varphi\|_{L^2(\mu)}\big) . 
\end{align*}  
This holds for any $\nu\in\wasstwospace,$ $\gamma\in\Gamma_0(\mu,\nu),$ $\bar\gamma\in\Gamma(\mu,\nu),$ $\varphi\in C^{\infty}(\dtorus).$ Fix $r>0$, and, on the right-hand side, fix $\bar\gamma\in\Gamma(\mu,\nu)$ such that $\|\pi^2-\pi^1\|_{\bar\gamma}<r.$ Take then the supremum on the left-hand side over $\nu\in\wasstwospace,$ $\bar\gamma\in\Gamma(\mu,\nu)$ such that $\|\pi^2-\pi^1\|_{\bar\gamma} < r$, to obtain $$  e[\nabla_{\mu}\mathcal{W}(\mu),r] 
\leq \big|\frac{e(\nu,\nabla_{\mu}\mathcal{W}(\mu),\gamma)}{\|\pi^1-\pi^2\|_{\gamma}}\big| 
+ \big(r \|\nabla^2\varphi\|_{\infty} + 2 \|\nabla_{\mu}\mathcal{W}(\mu)-\nabla\varphi\|_{L^2(\mu)}\big)  $$ holding for any $\nu\in\wasstwospace,$ $\gamma\in\Gamma_0(\mu,\nu), $ $\varphi\in C^{\infty}(\dtorus).$ Taking now the supremum on the right-hand side over $\nu\in\wasstwospace,$ $\gamma\in\Gamma_0(\mu,\nu)$ such that $\|\pi^2-\pi^1\|_{\gamma} < r,$ and then letting $r\to 0^+$ on both sides yields $$ \lim_{r\to 0^+} e[\nabla_{\mu}\mathcal{W}(\mu),r] \leq \lim_{r\to 0^+} e_0[\nabla_{\mu}\mathcal{W}(\mu),r] + 2 \|\nabla_{\mu}\mathcal{W}(\mu)-\nabla\varphi\|_{L^2(\mu)} = 2 \|\nabla_{\mu}\mathcal{W}(\mu)-\nabla\varphi\|_{L^2(\mu)},  $$ by the hypothesis, for any $\varphi\in C^{\infty}(\dtorus).$  By the fact that $\nabla_{\mu}\mathcal{W}(\mu)$ is an $L^2(\mu)$ limit of gradients of smooth periodic functions $\varphi,$ the conclusion follows. \null\hfill\qedsymbol \\

\noindent \textbf{Proof of Proposition \ref{prop:timeder}}.
\textit{(i)} Let us invoke Proposition 8.4.6 of \cite{gradientflows}, to say that there exists a subset $J\in I$ whose measure equals that of $I,$ such that, for every $V\in C(\dtorus\times\dtorus),$ $h_0 \in J$, we have
\begin{equation}\label{eq:846gflows}
\lim\limits_{h\to 0}\int\limits_{\dtorus\times\dtorus}V\big(x,\frac{y-x}{h}\big)\gamma_h(dx,dy) = \int_{\dtorus}
V(x,\bar{v}_{h_0}(x))\mu_{h_0}(dx),
\end{equation}
where $\bar{v}_{h_0}$ is the velocity vector field of minimal norm for $\mu_h$ at $h_0$, and $\{\gamma_h\}_{|h|>0}$ are optimal plans between $\mu_{h_0}$ and $\mu_{h_0+h}.$ Let then $h_0\in J.$ For $h$ such that $h_0+h\in I,$ let $\gamma_h\in\Gamma_0(\mu_{h_0},\mu_{h_0+h}).$ By the twice differentiability of $V,$ we have
\begin{align}
& \big| \nabla_{\mu}V(q^{h_0+h},\mu_{h_0+h})(x^{h_0+h}) - \nabla_{\mu}V(q^{h_0},\mu_{h_0})(x^{h_0})   \notag
\\ 
&\ \qquad - \nabla^2_{q\mu}V(q^{h_0},\mu_{h_0})(x^{h_0})(q^{h}-q^{h_0}) - P_{\gamma}[\mu_{h_0}](q^{h_0},x^{h_0},x^{h_0+h}) \big| \notag
\\
 \leq & \  o(|q^{h_0+h}-q^{h_0}|) \notag
\\
& \quad \qquad  + \big(\mathscr{W}(\mu_{h_0},\mu_{h_0+h})+|x^{h_0+h}-x^{h_0}|\big)\big(\rho(\mathscr{W}(\mu_{h_0},\mu_{h_0+h}))+
\epsilon(|x^{h_0+h}-x^{h_0}|)\big). \label{eq:twicediffproof}
\end{align}
Let $\bar v_{h_0}$ be the projection of $v_{h_0}$ onto $\mathscr{T}_{\mu_{h_0}}\wasstwospace.$ Since 
$$ 
\nabla^2_{\mu\mu}V(q^{h_0},\mu_{h_0})(x^{h_0},\cdot)\in \mathscr{T}_{\mu_{h_0}}\wasstwospace,
$$
(\ref{eq:projfact}) and (\ref{eq:846gflows}) give us
\begin{equation*}
\lim\limits_{h\to 0}\int_{\dtorus\times\dtorus}\nabla^2_{\mu\mu}V(q^{h_0},\mu_{h_0})(x^{h_0},r)\frac{b-r}{h}\gamma_h(dr,dy)  = \int_{\dtorus} \nabla^2_{\mu\mu}V(q^{h_0},\mu_{h_0})(x^{h_0},r)v_{h_0}(r)\mu_{h_0}(dr).
\end{equation*}
Therefore, dividing both sides of inequality (\ref{eq:twicediffproof}) by $h,$ and passing to the limit as $h\to 0,$ we obtain the desired formula.

\textit{(ii)} Under those conditions, the formula for $\frac{d}{dh}\nabla_{\mu}V(q^h,\mu_h)(x^h)$ is continuous in $h$, and the claim follows. \null\hfill\qedsymbol

\section{Acknowledgements} 
This research was partially supported by AFOSR MURI FA9550-18-1-0502.

The work presented here would not have been possible at all without the careful, patient and diligent advise of my advisors, Professors Wilfrid Gangbo and Andrzej \'Swi\k{e}ch, who taught me the main ideas and methods involved in it, and provided me with frequent and valuable feedback.

\bibliographystyle{alpha}
\bibliography{/Users/sergio_mayorgatatari/Desktop/mybibfile.bib}

\end{document}